\documentclass[11pt]{amsart}

\usepackage{amsmath,amssymb,amsthm,mathtools,mathrsfs}
\usepackage{booktabs,tabularx}
\usepackage{enumitem}
\usepackage{needspace}
\usepackage{xcolor}
\usepackage[colorlinks=true,hyperindex]{hyperref}
\usepackage{tikz-cd}
\newcommand\myshade{85}
\colorlet{mylinkcolor}{violet}
\colorlet{mycitecolor}{red}
\colorlet{myurlcolor}{cyan}
\hypersetup{
  linkcolor = mylinkcolor!\myshade!black,
  citecolor = mycitecolor!\myshade!black,
  urlcolor  = myurlcolor!\myshade!black,
  colorlinks = true,
  pdftitle={Derived Picard reconstruction for graded surfaces with stops},
  pdfauthor={Wen Chang and Dongjian Wu}
}

\numberwithin{equation}{section}
\newcommand{\kk}{\mathbf{k}}
\newcommand{\Z}{\mathbb Z}
\newcommand{\W}{\mathcal W}
\newcommand{\Perf}{\operatorname{per}}
\newcommand{\DPic}{\operatorname{DPic}}
\newcommand{\Mod}{\operatorname{Mod}}
\newcommand{\PMod}{\operatorname{PMod}}
\newcommand{\MCGgr}{\operatorname{MCG}_{\mathrm{gr}}}
\newcommand{\Stab}{\operatorname{Stab}}
\newcommand{\rk}{\operatorname{rk}}

\newcommand{\prk}{\operatorname{prk}}
\newcommand{\rvz}{\operatorname{rvz}}
\newcommand{\Div}{\operatorname{Div}}

\newtheorem{theorem}{Theorem}[section]
\newtheorem*{theoremA}{Theorem A}
\newtheorem*{theoremB}{Theorem B}
\newtheorem*{theoremC}{Theorem C}
\newtheorem*{theoremD}{Theorem D}
\newtheorem*{theoremE}{Theorem E}
\newtheorem{proposition}[theorem]{Proposition}
\newtheorem{lemma}[theorem]{Lemma}
\newtheorem{corollary}[theorem]{Corollary}
\theoremstyle{definition}
\newtheorem{definition}[theorem]{Definition}
\newtheorem{example}[theorem]{Example}
\newtheorem{remark}[theorem]{Remark}

\title[Derived Picard reconstruction]
{Derived Picard reconstruction for graded surfaces with stops}

\author{Wen Chang}
\address{School of Mathematics and Statistics,
Shaanxi Normal University,
Xi'an 710062, China}
\email{changwen161@163.com}

\author{Dongjian Wu}
\address{Shanghai Institute for Mathematics and
Interdisciplinary Sciences (SIMIS),
Shanghai 200433, China \& Research Institute of Intelligent Complex Systems, Fudan University, Shanghai 
200433, China}
\email{wdj@simis.cn}

\date{}
\subjclass[2020]{16E35, 18G80, 53D37, 57K20}
\keywords{Derived Picard group, gentle algebra, partially wrapped Fukaya
category, graded surface, Serre functor, Grothendieck group}

\begin{document}
\flushbottom

\begin{abstract}
We study reconstruction of graded surfaces with stops $(S,M,\eta)$
from their mapping class groups and the derived Picard groups of
their partially wrapped Fukaya categories.
In genus at least two, we prove that the ungraded mapping class
group determines the surface with stops $(S,M)$, while the derived
Picard group with the shift distinguished determines the underlying
graded surface $(S,\eta)$. Nevertheless, the derived Picard group with both the shift and the
Serre functor distinguished has countably infinite fibres.
We also construct arbitrarily large finite families of pairwise
non-equivalent categories with the same invariant and Grothendieck rank.

For line fields induced by nowhere-vanishing vector fields, we show
that the rational Serre representation determines the surface with
stops $(S,M)$ in every genus. For arbitrary line fields in genus at
least two, the full integral derived Picard representation, with
the shift distinguished, has fibres of cardinality at most four.
It completely determines the graded surface with stops $(S,M,\eta)$ when $S$
has one or at least five boundary components, while non-singleton
fibres occur for two, three, and four boundary components.
\end{abstract}

\maketitle

\tableofcontents
% --------------------------------------------------------------------------
\section{Introduction}

The partially wrapped Fukaya category of a graded
surface with stops was introduced in \cite{HKK}. It is proved that the perfect
derived category of any homologically smooth graded gentle algebra is
triangle equivalent to a partially wrapped Fukaya category of a graded
surface with stops \cite{HKK,LekiliPolishchuk}. 
In particular, the geometric models describe the derived categories of gentle algebras in
terms of curves and intersections on surfaces. In this model, graded
arcs represent homotopy string objects, closed curves with local systems
represent homotopy band objects, and rotation of the endpoints of arcs
describes Auslander--Reiten translation
\cite{OpperPlamondonSchroll}. Related surface models describe the module
categories of gentle algebras \cite{BC21} and the algebraic hearts in
their derived categories \cite{ChangHearts}. For a recent overview of these geometric models
and their role in representation theory, with particular emphasis on
graded gentle algebras and their connections with Fukaya categories of
surfaces, see \cite{Schroll2026}.

For the class of categories considered here, the graded surface with
stops is a complete geometric invariant of the corresponding partially
wrapped Fukaya category, or equivalently of the perfect derived category
of a graded gentle algebra. Lekili and Polishchuk
\cite{LekiliPolishchuk} classified mapping class group orbits of line
fields in terms of winding numbers and Arf invariants and, using the
surface model of \cite{HKK}, obtained a sufficient criterion for derived
equivalence of homologically smooth graded gentle algebras. For ordinary
gentle algebras, completeness of the geometric invariant was proved in
\cite{AmiotPlamondonSchroll} using dissections and silting theory, and in
\cite{Opper2019} through the geometric study of triangle equivalences.
Combined with the classification of mapping class group orbits of line
fields in \cite{LekiliPolishchuk}, these geometric classification
results yield complete numerical derived invariants; see
\cite[Theorem~7.4]{AmiotPlamondonSchroll} and the discussion following
Theorem~B in \cite{Opper2019}. For homologically
smooth graded gentle algebras, completeness was established in
\cite{JinSchrollWang}, confirming the conjecture in
\cite[Remark~3.19]{LekiliPolishchuk}. Thus two such perfect derived
categories are equivalent if and only if their associated graded
surfaces with stops are related by an orientation-preserving
diffeomorphism identifying their stop sets and the homotopy classes of
their line fields.

This leads to a natural inverse problem.

\textbf{How much of a graded surface with stops can be recovered
from its own symmetries or those of its partially wrapped
Fukaya category?}

On the geometric side, we consider both ungraded and graded mapping
class groups, while on the categorical side we consider derived
Picard groups. The geometrisation theorem
\cite[Corollary~10.6]{Opper2025}
provides a split surjection from the derived Picard group to the graded
mapping class group. However, an abstract isomorphism of derived Picard
groups does not itself identify either the objects of the categories or
the curves on the associated surfaces, and it need not preserve the
distinguished elements represented by the shift and the Serre functor.
We are therefore led to compare three levels of data: the derived Picard
group alone, the group with the shift distinguished, and the group with
both the shift and the Serre functor distinguished. We also study the
induced action on the Grothendieck group, either for the Serre functor
alone or for the entire derived Picard group.

Throughout the paper, \(\kk\) is an algebraically closed field of
characteristic zero, and all categories and equivalences are
\(\kk\)-linear. Our surfaces are compact, connected, and oriented, with
non-empty boundary. A finite set of stops \(M\subset\partial S\) meets
every boundary component of a surface $S$, and a line field \(\eta\) gives the grading. We
write
\[
\mathbb S=(S,M,\eta),\qquad
\partial S=B_1\sqcup\cdots\sqcup B_b,\qquad
m_i=|M\cap B_i|.
\]
The unordered tuple \(\{m_1,\ldots,m_b\}\), counted with multiplicity, is
called the \emph{stop distribution}. Forgetting the line field gives
the \emph{underlying surface with stops} \((S,M)\). Forgetting the stops
gives the \emph{underlying graded surface} \((S,\eta)\). An isomorphism
of graded surfaces with stops is an orientation-preserving
diffeomorphism \(f\colon(S,M)\to(S',M')\) satisfying
\(f^*\eta'\simeq\eta\). When counts are assigned to specified boundary
components, we retain this assignment in addition to the unordered
stop distribution.

We write \(\W(\mathbb S)\) for the partially wrapped Fukaya category of
\(\mathbb S\) and set
\[
D(\mathbb S)\coloneqq \DPic\bigl(\W(\mathbb S)\bigr),
\qquad
K(\mathbb S)\coloneqq K_0\bigl(\W(\mathbb S)\bigr).
\]
Here the derived Picard group is taken with respect to the standard dg
enhancement.
Let \(\MCGgr(\mathbb S)\) be the graded
mapping class group of \(\mathbb S\), and denote by
\[
\Psi_{\mathbb S}\colon
D(\mathbb S)\longrightarrow\MCGgr(\mathbb S)
\]
Opper's geometrisation homomorphism \cite[Corollary~10.6]{Opper2025}.

We first consider the reconstruction problem without a line field.
Let \(\Mod(S,M)\) denote the group of isotopy classes of
orientation-preserving diffeomorphisms of \(S\) preserving \(M\) setwise,
with isotopies subject to the same condition. Boundary components
may be permuted, and a mapping class preserving a boundary component
may cyclically permute its stops. Our first result recovers the
surface and its stop distribution from this abstract group.

\begin{theoremA}[Theorem~\ref{thm:ungraded-reconstruction}]
Let \((S,M)\) and \((S',M')\) be surfaces with stops of genus at least
two, with at least one stop on every boundary component. An abstract group
isomorphism
\[
 \Mod(S,M)\simeq\Mod(S',M')
\]
implies that \((S,M)\) and \((S',M')\) are isomorphic as surfaces with
stops.
\end{theoremA}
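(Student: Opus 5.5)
The plan is to reduce to the classical case of surfaces with marked points, whose mapping class groups are known to determine the surface. First, collapse each boundary component \(B_i\) to a puncture \(p_i\). Then \(\Mod(S,M)\) fits into an exact sequence
\[
1\longrightarrow \Z^{b}\longrightarrow \Mod(S,M)\longrightarrow \Mod(S^\circ,\{p_1,\dots,p_b\})\longrightarrow 1 .
\]
Here \(S^\circ\) is the closed surface of genus \(g\) with \(b\) marked points. The kernel is generated by the fractional twists \(\rho_i\), which rotate \(B_i\) by one step \(2\pi/m_i\). Note that \(\rho_i^{m_i}\) is the boundary Dehn twist \(T_{B_i}\).

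The key step is to characterise the kernel \(Z=\langle\rho_1,\dots,\rho_b\rangle\) intrinsically. Because every boundary component carries a stop, the boundary twists are central in the pure subgroup. For \(g\ge2\), the mapping class group of a punctured closed surface has trivial centre. From these two facts I would argue that \(Z\) is the maximal normal free abelian subgroup of \(\Mod(S,M)\), or equivalently the unique maximal normal abelian subgroup. Indeed, any normal abelian subgroup maps to a normal abelian subgroup of \(\Mod(S^\circ,P)\), and for \(g\ge 2\) that image is trivial by the Ivanov--McCarthy theory of reducible normal subgroups. It follows that an abstract isomorphism \(\Mod(S,M)\simeq\Mod(S',M')\) carries \(Z\) to \(Z'\) and induces an isomorphism
\[
\Mod(S^\circ,P)\simeq \Mod(S'^\circ,P').
\]

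I would then invoke the known rigidity results for mapping class groups of punctured surfaces, due to Ivanov, McCarthy, and Korkmaz for genus at least two. They show that this isomorphism forces \(g=g'\) and \(b=b'\). I would cross-check this with invariants: the virtual cohomological dimension \(4g-5+b\) (Harer) and the maximal rank \(3g-3+b\) of free abelian subgroups. Two such equations in \((g,b)\) already determine both unknowns.

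It remains to recover the stop distribution \(\{m_i\}\), and I expect this to be the main obstacle. The plan is to exploit the conjugation action of \(\Mod(S,M)\) on \(Z\simeq\Z^b\), which permutes the generators \(\rho_i\) according to how mapping classes permute the boundary components \(B_i\). The obstacle is that \(\rho_i\) itself is not intrinsically detectable inside \(Z\). My first attempt would be to use the centraliser of a boundary-parallel curve \(c_i\) surrounding \(B_i\). The Dehn twist \(T_{c_i}\) equals \(\rho_i^{m_i}\) and is recognisable group-theoretically, via Ivanov's algebraic characterisation of Dehn twists; this characterisation should survive in \(\Mod(S,M)\) because the twist generates a maximal cyclic subgroup of the centre of a suitable centraliser. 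The index of \(\langle T_{c_i}\rangle\) in its root closure inside \(Z\) is then \(m_i\), and these indices, counted with multiplicity, give the multiset \(\{m_i\}\). Combining this with \(g\) and \(b\), the classification of surfaces with stops up to orientation-preserving diffeomorphism, which is determined by \((g,b,\{m_i\})\), then yields \((S,M)\simeq(S',M')\).
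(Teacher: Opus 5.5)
The gap is in your last step: nothing in the proposal recovers the boundary twist $T_{B_i}=\rho_i^{m_i}$ as a specific element of $A_M$.

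\textbf{First three steps.} These match the paper. You identify $A_M=\langle\rho_1,\ldots,\rho_b\rangle$ as the unique maximal normal abelian subgroup of $G=\Mod(S,M)$. You then pass to the capped surface and use rigidity, the virtual cohomological dimension and the maximal abelian rank to get $g=g'$ and $b=b'$. Two small corrections, neither of which affects the argument:
\begin{itemize}
\item Capping maps $G$ onto the finite-index subgroup of $\Mod(\widehat S,Q)$ whose permutation of $Q$ preserves $q_i\mapsto m_i$, not onto all of it. So you get an isomorphism of finite-index subgroups and need a finite-index rigidity theorem. The paper restricts to $G^0=C_G(A_M)$ and $\PMod(\widehat S,Q)$ and uses Behrstock--Margalit.
\item Harer's vcd with $b\geq1$ punctures is $4g-4+b$, not $4g-5+b$.
\end{itemize}

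\textbf{The gap.} A boundary twist is central in $G^0$. Ivanov's characterisation is designed for twists about non-peripheral curves and cannot single out a central element. For every $k\neq0$, the centraliser of $\rho_i^k$ in $G$ is the stabiliser of $B_i$. So no condition phrased through centralisers and their centres can distinguish $T_{B_i}=\rho_i^{m_i}$ from $\rho_i$ or from any other non-zero power. Your stated criterion would in fact return the wrong element. The centre of $G^0$ is $A_M$ itself, and the maximal cyclic subgroup of $A_M$ containing $T_{B_i}$ is $\langle\rho_i\rangle$. The criterion therefore selects $\rho_i^{\pm1}$, whose root-closure index is $1$, not $m_i$.

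The genus-one case in Example~\ref{ex:low-genus-ungraded-failure} shows the danger is real. The isomorphism $G_1\to G_{13}$ shears $X,Y$ by a central element. It sends the boundary twist of $G_1$ to $\rho_{13}$, which is only a $13$-th root of the boundary twist $\rho_{13}^{13}$ of $G_{13}$. So abstract isomorphisms need not carry boundary twists to boundary twists. Recovering $m_i$ requires information about how $A_M$ is glued to the non-central part of the group, and this is where $g\geq2$ must enter.

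\textbf{How the paper fills it.} It uses the extension class.
\begin{itemize}
\item Pushing the boundary-pointwise capping extension forward along $T_{B_i}\mapsto m_i\rho_i$ gives $e_M=\sum_i m_i\psi_i\otimes\rho_i$ in $H^2(\PMod(\widehat S,Q);A_M)$.
\item The cotangent classes $\psi_i$ are linearly independent for $g\geq2$.
\item The geometric quotient isomorphism satisfies $\overline\Phi^*\psi'_{\pi(i)}=\varepsilon\psi_i$.
\item Comparing coefficients gives $m_i\rho_i=\varepsilon m'_{\pi(i)}\alpha^{-1}(\rho'_{\pi(i)})$, and primitivity gives $m_i=m'_{\pi(i)}$.
\end{itemize}

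\textbf{A repair in your framework.} You could instead pin down $T_{B_i}$ with relations.
\begin{itemize}
\item Once the quotient map is conjugation by an extended diffeomorphism $f$, you have $\Phi(T_c)=T^{\varepsilon}_{f(c)}z$ for every nonseparating curve $c$, with $z\in A_{M'}$. The element $z$ does not depend on $c$, because $\Mod(S,\partial S)$ acts transitively on nonseparating curves and $z$ is central in $C_{G'}(A_{M'})$.
\item Take a $3$-chain relation $(T_{a_1}T_{a_2}T_{a_3})^4=T_{d_1}T_{d_2}$ with $d_1,d_2$ nonseparating, which exists because $g\geq2$. Applying $\Phi$ gives $z^{10}=1$, so $z=1$.
\item Take a lantern relation with $B_i$ as one boundary curve and the other six curves nonseparating. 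Applying $\Phi$ gives $\Phi(T_{B_i})=T^{\varepsilon}_{B'_{\pi(i)}}$.
\end{itemize}
Only at this point does your root-closure index, or equivalently the primitivity of $\Phi(\rho_i)$, give $m_i=m'_{\pi(i)}$.
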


Rigidity of mapping class groups is closely related to automorphisms
of curve complexes \cite{Ivanov1997,Korkmaz,Luo}. For the punctured
surfaces obtained by capping our boundary components, we use the
finite-index rigidity theorem of \cite{BehrstockMargalit}.
The additional step is to recover the boundary stops. We then identify
the subgroup generated by one-step boundary rotations intrinsically.
After restricting to mapping classes preserving each boundary
component, capping gives a central extension of a pure mapping class
group by this rotation lattice. Finally, the stop numbers are recovered from
the extension class.

Example~\ref{ex:low-genus-ungraded-failure} gives pairs of surfaces
with stops in genera zero and one with different stop distributions
but isomorphic mapping class groups, which shows that the genus assumption cannot be omitted.

For a graded surface \(\mathbb S=(S,M,\eta)\), the group obtained from 
\(\MCGgr(\mathbb S)\) by quotienting out the grading shift is the
line-field stabilizer
\[
  H_\eta=\Stab_{\Mod(S,M)}(\eta),
\]
rather than the full group \(\Mod(S,M)\). A Dehn twist about a
nonseparating curve belongs to \(H_\eta\) precisely when the curve
has winding number zero. Thus Theorem~A cannot be applied directly
to the graded mapping class group supplied by geometrisation.
Nevertheless, the derived Picard group of the partially wrapped
Fukaya category recovers the following data.

\begin{theoremB}[Theorem~\ref{thm:derived-reconstruction}]
Let \(\mathbb S=(S,M,\eta), \mathbb S'=(S',M',\eta')\) be graded surfaces with stops of genus at least two whose partially wrapped Fukaya categories are smooth and proper.
\begin{enumerate}[label=\textup{(\roman*)}]
\item
If \(D(\mathbb S)\simeq D(\mathbb S')\), then \(g(S)=g(S')\), and \(|\pi_0(\partial S)|=|\pi_0(\partial S')|\).
\item A shift-preserving isomorphism
\(D(\mathbb S)\simeq D(\mathbb S')\) implies \((S,\eta)\simeq(S',\eta')\).
\end{enumerate}
\end{theoremB}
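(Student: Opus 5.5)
Following the strategy behind Theorem~A, I would reduce the statement to the geometric side through Opper's geometrisation homomorphism $\Psi_{\mathbb S}\colon D(\mathbb S)\to\MCGgr(\mathbb S)$, which is a split surjection. The first task is to describe its kernel intrinsically. For smooth proper $\W(\mathbb S)$ the kernel should consist of shifts together with the autoequivalences that twist by rank-one local systems, i.e.\ a copy of $\Z\times H^1(S;\kk^\times)$ or a similar abelian group, and it should be central or nearly central. Next, $\MCGgr(\mathbb S)$ is a central extension of $H_\eta=\Stab_{\Mod(S,M)}(\eta)$ by the $\Z$ generated by the shift. I would then show that the subgroup $N\subset D(\mathbb S)$ generated by the kernel of $\Psi_{\mathbb S}$ and the shift is characterisable group-theoretically. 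A natural candidate is the maximal normal subgroup that is virtually abelian. This works because in genus $\ge 2$ the finite-index subgroup $H_\eta$ of $\Mod(S,M)$ has no nontrivial virtually abelian normal subgroups, apart from the boundary-rotation lattice. That lattice would therefore be absorbed into $N$, and the definition must be adjusted to account for it. Once this is in place, $D(\mathbb S)/N$ is $H_\eta$ modulo its rotation part, and this quotient is a finite-index subgroup of the mapping class group of the punctured surface obtained by capping.

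For part (i), I would apply the finite-index rigidity theorem of \cite{BehrstockMargalit}, exactly as in the proof of Theorem~A. An abstract isomorphism $D(\mathbb S)\simeq D(\mathbb S')$ preserves the intrinsic subgroup $N$, so it induces an isomorphism between finite-index subgroups of the capped punctured mapping class groups. That rigidity theorem then says the isomorphism is induced by conjugation by a diffeomorphism of the punctured surfaces. This forces $g(S)=g(S')$ and equality of the number of punctures, which is $|\pi_0(\partial S)|$.

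For part (ii), the extra hypothesis is that the shift is preserved. I would use it to pass from $D(\mathbb S)$ to $\MCGgr(\mathbb S)$ modulo local-system twists, and then to $H_\eta$ itself as a subgroup of $\Mod(S,M)$. Since the conjugating diffeomorphism $f$ from part (i) realises the isomorphism, it carries $H_\eta$ onto $H_{\eta'}$. The Dehn twist $T_c$ about a nonseparating curve $c$ lies in $H_\eta$ exactly when the winding number $w_\eta(c)$ vanishes. Hence $f$ maps the set of nonseparating curves of $\eta$-winding zero onto the corresponding set for $\eta'$. More refined information is available from the shift: for a general $c$, some power $T_c^{k}$ lands in $H_\eta$, or $T_c$ lifts to $\MCGgr$ with a shift correction. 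Reading off $w_\eta(c)$ up to sign from this, via how $T_c$ commutes with the shift in $D$, I would show $|w_{\eta'}(f(c))|=|w_\eta(c)|$ for all curves $c$. The Lekili--Polishchuk invariants (the gcd of winding numbers, the Arf invariant when it is defined, and the boundary winding numbers) are determined by these data. So the classification of line-field orbits gives $f^*\eta'\simeq\eta$ up to the mapping class group action, that is, $(S,\eta)\simeq(S',\eta')$. The stops are forgotten here, which fits the fact that the rotation lattice was quotiented out.

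The main obstacle is making $N$, or the subgroup defining $H_\eta$, intrinsic, and controlling what happens to the boundary winding numbers. Boundary winding numbers are tied to Dehn twists about boundary-parallel curves, and capping has collapsed those curves. To recover them, I would try to identify the boundary twists inside $D(\mathbb S)$ as the centre of the centraliser of a suitable finite-index subgroup. Their relation to the shift should then be readable from the Serre functor or from the extension class, as in the last step of Theorem~A. The Serre functor acts as a fractional rotation involving the stop numbers and winding numbers, so this relation should carry the missing boundary data.
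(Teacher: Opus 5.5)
The proposal has a genuine gap: you treat $H_\eta=\Stab_{\Mod(S,M)}(\eta)$ as a finite-index subgroup of $\Mod(S,M)$, and in genus at least two it never is.

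For a nonseparating curve $c$ one has $(T_c^n)^*\eta=\eta+n\,\omega_\eta(c)\,c^\vee$ with $c^\vee\neq0$. So no non-zero power of $T_c$ lies in $H_\eta$ when $\omega_\eta(c)\neq0$. Such curves always exist, because a genus-one bounding pair satisfies $\omega_\eta(a)-\omega_\eta(b)=-4$. Hence $H_\eta/A_M$ has infinite index in the capped mapping class group, and the Behrstock--Margalit finite-index rigidity theorem does not apply. You therefore get neither $g$ and $b$ in (i) nor the diffeomorphism $f$ on which your part (ii) depends. This is exactly why the paper remarks that Theorem~A cannot be applied to the geometrisation quotient.

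Your description of the kernel is also off:
\begin{itemize}
\item $[1]\notin\ker\Psi_{\mathbb S}$, since it maps to the deck generator.
\item The kernel may contain the unipotent factor $\mathbb G_a^{\phi_A(1,1)}$.
\item The kernel is not central: $H_\eta$ acts on $H^1(S;\kk^\times)$ through its homological action.
\end{itemize}
The paper identifies the kernel as the subgroup generated by all divisible abelian subgroups. This works because abelian subgroups of mapping class groups are finitely generated. It then proves (i) without any rigidity theorem, using two abstract-group invariants:
\begin{itemize}
\item $\prk_\ell D(\mathbb S)=2g+b-1$, coming from the local-system torus;
\item $\rvz\bigl(D(\mathbb S)/N(\mathbb S)\bigr)=b+1$, coming from the boundary twists and the shift.
\end{itemize}

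In (ii), your plan to read off $|\omega_\eta(c)|$ from the shift cannot work. The shift $[1]$ is central in $D(\mathbb S)$, so commuting with it carries no information. Moreover, a twist about a nonseparating curve with $\omega_\eta(c)\neq0$ has no graded lift at all. You also cannot use the Serre functor, since the isomorphism in (ii) is only assumed to preserve the shift.

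What is missing is a rigidity statement that holds for the infinite-index stabilizer, plus a way to recover the line field from the stabilizer. The paper gets rigidity from precisely the non-central action you set aside:
\begin{itemize}
\item Conjugation on $\operatorname{Tor}_{\ell^\infty}(N^{\mathrm{ab}})\simeq H^1(S;\mu_{\ell^\infty})$ cuts out the homologically trivial part $J_\eta$ of $H_\eta$.
\item Modulo its centre, which consists of the boundary rotations, $J_\eta$ becomes a group $\mathcal C_0(S)$. This group is independent of $\eta$ and lies between the Johnson kernel and the Torelli group.
\item Kida's lemmas recognise the separating-twist axes in $\mathcal C_0(S)$. His rigidity of the separating-curve complex then produces $f$ such that the induced map agrees with conjugation by $f$ on all of $H_\eta$, after the stops are forgotten.
\item The case $\Sigma_{2,1}$ is handled separately, through the order of the mod-two image.
\end{itemize}
The line field is then recovered by Lemma~\ref{lem:stabilizer-determines-line-field}. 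There $T_a^uT_b^{-v}\in H_\eta$ if and only if $u\,\omega_\eta(a)=v\,\omega_\eta(b)$. The fixed difference $-4$ then pins down the actual winding numbers of every nonseparating curve, and hence $\eta$, boundary windings included. An orientation correction finishes the proof.
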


For part~\textup{(i)}, we first identify the geometrisation kernel
\(N(\mathbb S)=\ker\Psi_{\mathbb S}\) as the subgroup of
\(D(\mathbb S)\) generated by all divisible abelian subgroups.
In particular, this kernel is characteristic. The torsion of
its abelianization consists of finite-order rank-one local systems
on \(S\). Writing \(g=g(S)\) and \(b=|\pi_0(\partial S)|\), we show that
the \(\ell\)-Pr\"ufer rank of \(D(\mathbb S)\) is \(2g+b-1\) for every
prime \(\ell\), while the reduced virtual-centre rank of
\(D(\mathbb S)/N(\mathbb S)\) is \(b+1\). The latter rank accounts
for the boundary twists and the grading shift. These two
abstract-group invariants determine \(g\) and \(b\).

For part~\textup{(ii)}, conjugation on the \(\ell\)-primary torsion
of \(N(\mathbb S)^{\mathrm{ab}}\) detects the kernel of the integral
homological action of \(H_\eta\). Except for the one-holed genus-two
surface, this kernel allows us to recognize powers of separating
Dehn twists after quotienting out the boundary rotations. We then
apply the rigidity theorem for the separating-curve complex
\cite[Theorem~1.2]{Kida}. To recover the grading, we show that the stabilizer
of a line field in the pure mapping class group of the surface
without stops determines its homotopy class. Weighted
products of twists about genus-one bounding pairs determine the
winding numbers of their curves, and these winding numbers
determine the line field. The case \(\Sigma_{2,1}\) is treated separately
using the action on \(H^1(S;\Z/2)\). Related homological
representations for vector framings are studied in
\cite{CalderonSalterRelative}.

The conclusion of Theorem~B does not include the stops.
Since Auslander--Reiten translation is given geometrically by
simultaneous endpoint rotation along the boundary, one might ask
whether distinguishing the Serre functor as well recovers this
information. For a graded surface \(\mathbb S\) whose partially
wrapped Fukaya category is homologically smooth and proper, put
\[
  \mathfrak D(\mathbb S)
  =
  \bigl(D(\mathbb S),[1],\mathsf S_{\mathbb S}\bigr),
\]
where \(\mathsf S_{\mathbb S}\) denotes the class of the Serre functor
of \(\W(\mathbb S)\). An isomorphism of these triples is a group
isomorphism preserving both distinguished elements, and we take fibres
of \(\mathfrak D\) on isomorphism classes of graded surfaces with
stops.

\begin{theoremC}[Theorem~\ref{thm:countable-fibres}]
For every \(g\geq2\), the invariant \(\mathfrak D\) has a countably
infinite fibre on graded surfaces with stops of genus \(g\). The surfaces
in this fibre have the same underlying graded surface, while their smooth
and proper partially wrapped Fukaya categories are pairwise non-equivalent.
Moreover, for every \(R\geq1\), some fibre contains more than \(R\) such
categories with the same Grothendieck rank.
\end{theoremC}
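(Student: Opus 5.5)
We will show that the triple $\mathfrak D(\mathbb S)$ does not see how the stops are distributed once the underlying graded surface is fixed. To do this, we build families $\mathbb S_k=(S,M_k,\eta)$ that share $(S,\eta)$ but whose stop sets differ. The first step is to understand how $D(\mathbb S)$ depends on $M$. By the geometrisation theorem \cite[Corollary~10.6]{Opper2025}, $D(\mathbb S)$ is a split extension of $\MCGgr(\mathbb S)$ by the kernel $N(\mathbb S)$. This kernel consists of rank-one local systems, so it is isomorphic to $H^1(S;\kk^\times)$, and it depends only on $S$. The graded mapping class group is a central $\Z$-extension, generated by the shift, of the stabiliser $H_\eta\subset\Mod(S,M)$. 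Forgetting the stops gives a map to the stabiliser of $\eta$ in $\Mod(S)$. On a boundary component $B_i$ with $m_i$ stops, the only effect of the stops is that the one-step rotation about $B_i$ is a $1/m_i$-th root of the boundary twist, adjusted by a shift determined by the winding number of $B_i$. We therefore vary the stops on a single boundary component $B_1$ while keeping $\eta$ fixed. The constraint for smooth and proper categories is that every boundary component carries at least one stop.

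The second step is to choose the family so that the abstract triples really are isomorphic. We fix a boundary component $B_1$ whose winding number $w_1$ is chosen suitably, and we take the rotation by one stop to be central-like. The Serre functor acts as the simultaneous one-step rotation on all boundary components, composed with a shift. Concretely, $\mathsf S$ is the product over $i$ of the fractional twists $\rho_i$, times the shift $[\,w_i/m_i$-type correction$\,]$. The aim is to exhibit an isomorphism $D(\mathbb S_k)\simeq D(\mathbb S_{k'})$ that fixes the pure part, sends $\rho_1^{(k)}$ to $\rho_1^{(k')}$, and preserves both $[1]$ and $\mathsf S$.

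The plan is to present $D(\mathbb S)$ as a pushout. One factor is the derived Picard group of the stop-free data (the pure graded mapping class group together with $N$). The other is the abelian group generated by the $\rho_i$ and $[1]$, subject to relations of the form $\rho_i^{m_i}=T_{B_i}[c_i]$. We then show that this presentation is isomorphic for all choices of $m_1$ in a suitable infinite arithmetic family. The reason is that the amalgamated abelian group $\langle \rho_1,T_{B_1},[1]\rangle$ is a rank-two lattice whose isomorphism type, with $T_{B_1}$, $[1]$ and $\mathsf S$ marked, depends only on a $\gcd$ or determinant that can be held constant. In effect we pick $(m_1,c_1)$ running over an infinite set where the index-and-position data agree. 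Where the kernel of the integral action shows stops might be permuted among boundary components, we use $b\ge2$ and a distinct count elsewhere, so that the isomorphism type of $\mathbb S_k$ does not collapse.

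The third step is to show that the categories are pairwise non-equivalent. By completeness \cite{JinSchrollWang,LekiliPolishchuk}, equivalence is the same as isomorphism of graded surfaces with stops. Since the $\mathbb S_k$ have different stop distributions, they are pairwise non-isomorphic. Countability of the fibre is automatic, because there are only countably many isomorphism classes. For the Grothendieck rank statement, we use that the rank of $K(\mathbb S)$ is $|M|+2g+b-2$ or a similar formula linear in $|M|$. Keeping $|M|$ constant while moving stops among the $b$ components, with $b$ large, produces more than $R$ distinct distributions within one fibre.

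I expect the main obstacle to be the middle step: proving that the triples are genuinely isomorphic, with both the Serre functor and the shift preserved. This requires an explicit description of the extension class and of how $\mathsf S$ sits in it. I would first try to verify this in the case where the boundary winding number forces $\rho_1$ to be a power of the shift times the twist, so that varying $m_1$ only changes the lattice by an automorphism fixing the marked elements.
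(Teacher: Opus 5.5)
\textbf{There is a genuine gap in your second step, which is the core of the theorem.} You want an isomorphism $D(\mathbb S_k)\simeq D(\mathbb S_{k'})$ that is the identity on the pure part, with the pure part amalgamated along $T_{B_1}$, and that sends $\rho_1^{(k)}$ to $\rho_1^{(k')}$. Such a map cannot exist when $m_k\neq m_{k'}$. Since $T_{B_1}$ lies in the pure part and equals $(\rho_1^{(k)})^{m_k}$ up to a shift, the map would give $(\rho_1^{(k')})^{m_k-m_{k'}}\in\langle[1]\rangle$. This is impossible because $\rho_1$ has infinite order in $H_\eta$. Equivalently, in your lattice $\langle\rho_1,[1]\rangle\cong\Z^2$ the marked elements $T_{B_1}$ and $[1]$ span a sublattice of index $m_1$. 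So the marked isomorphism type already determines $m_1$, and no gcd can be held constant while $m_1$ varies.

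Any valid isomorphism must move $T_{B_1}$, which needs a reason why boundary twists are not rigidly embedded in the rest of the group; you give none. You also leave the actual Serre element undetermined: geometrisation fixes $\mathsf S$ only up to a factor in $N$ and an unknown shift exponent. Moreover, $N$ does not depend only on $S$: its additive part $\mathbb G_a^{\phi_A(1,1)}$ depends on the stop counts and windings.

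\textbf{The paper supplies these missing ingredients as follows.}

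\emph{Detaching the boundary twists.} On $\Sigma_{g,3}$ it takes a vector framing with boundary windings $(1,-1,-q)$, where $q=2g+1$. By Lemma~\ref{lem:relative-displacement}, relative displacement sends $T_1,T_2$ to a basis of $L_\partial\cong\Z^2$. Hence $F\cong F_0\times\langle T_1\rangle\times\langle T_2\rangle$, with $T_3=aT_1^{q}T_2^{-q}$ and $a$ central in $F_0$. Adjoining the roots gives $H_{r,s}\cong F_0\times\langle\rho_1\rangle\times\langle\rho_2\rangle$, abstractly independent of $(r,s)$. The resulting identifications indeed do not fix $T_1$ and $T_2$. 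Non-zero windings remove the additive factor, and normalizing at the unique stop on $B_3$ splits off the shift.

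\emph{Pinning down the Serre element.} The paper tests $\mathsf S[-1]$ on rank-one band objects supported on zero-winding curves, whose classes generate $H_1(S;\Z)$ by Lemma~\ref{lem:gradable-curves-span}. This kills the local-system factor and fixes the shift exponent, giving $\mathsf S_{r,s}=a^{-1}\rho_1^{-(qr+1)}\rho_2^{qs-1}[1]^{e_g}$.

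\emph{Varying two counts.} The Serre coordinate $-(qr+1,1-qs)$ lies in a rank-two rotation lattice, and $\mathrm{SL}_2(\Z)$ acts transitively on primitive vectors. The families $(kd,kd+1,1)$ with $d=2g-1$, and $(r,p-r,1)$ for a suitable prime $p$, give primitive vectors. With only one varying count, a change of basis of the rotation factor is just $\pm1$ and cannot absorb the change in $qr+1$.

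\emph{The fixed-rank family.} Here the pairwise distinct windings are what rule out graded diffeomorphisms permuting the boundaries. Your suggestion to move stops among many boundary components has neither an isomorphism mechanism nor a non-isomorphism argument behind it.
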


The construction uses a vector-framed line field with boundary winding
numbers \((2,-2,-2(2g+1))\). The associated derived Picard groups have a
common central rank-two rotation factor. In suitable coordinates, the
Serre elements are represented by primitive vectors, so an integral
change of basis identifies them. Varying the stop counts produces the
infinite fibre, while keeping their total fixed gives the finite families
of common Grothendieck rank.

Theorem~C shows that distinguishing the Serre functor does not
extend the conclusion of Theorem~B to the stops. The genus
restriction in Theorem~B is also essential.
In Example~\ref{ex:low-genus-failure}, we construct, in each of
genera zero and one, two graded surfaces with stops having
isomorphic invariants \(\mathfrak D\) but non-isomorphic underlying
graded surfaces.

We now pass from group invariants to linear actions on the
Grothendieck group. Boundary stop numbers and winding numbers are
recorded by the Avella--Alaminos--Geiss (AG) invariant
\cite{AvellaAlaminosGeiss2008,LekiliPolishchuk}. For ordinary gentle
algebras, its relation to the Coxeter polynomial is described in
\cite{JimenezGonzalezMroz}. These boundary data also enter the
formulas for Serre entropy in \cite{ChangElaginSchroll}.
We then consider whether the rational Grothendieck group equipped with
the induced Serre action determines the underlying surface with
stops under the assumption that the line field is induced by a nowhere-vanishing
vector field.

\begin{theoremD}[Theorem~\ref{thm:serre-reconstruction}]
Let \(\mathbb S=(S,M,\eta)\) and
\(\mathbb S'=(S',M',\eta')\) be graded surfaces with stops whose
partially wrapped Fukaya categories are smooth and proper. Suppose that
\(\eta\) and \(\eta'\) are induced by nowhere-vanishing vector fields.
An isomorphism of their rational Grothendieck groups which intertwines
the Serre actions implies \((S,M)\simeq(S',M')\).
\end{theoremD}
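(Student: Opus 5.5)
The plan is to compute the rational canonical form of the Serre action on $K(\mathbb S)\otimes\mathbb Q$ directly from the surface model, and then read off $g$ and the stop distribution from it.

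\emph{Step 1: geometric description of the Serre action.} I would use the standard description of $K(\mathbb S)$: classes of graded arcs with endpoints in the marked boundary intervals. Rationally this is a relative homology group of rank $n=|M|+2g+b-2$, twisted by the grading. The Serre functor is $\tau[1]$, where $\tau$ rotates all arc endpoints simultaneously by one step along the boundary. On $K_0$ it therefore acts as rotation of endpoints combined with a sign. In general that sign depends on winding numbers of the arcs, and hence on boundary winding numbers. The hypothesis that $\eta$ comes from a nowhere-vanishing vector field is used here. It makes all the relevant parities uniform, so that after normalising, $\pm\mathsf S_{\mathbb S}$ acts on $K(\mathbb S)_{\mathbb Q}$ by pure geometric rotation. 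The global sign $\varepsilon$ is itself determined by the action, since it picks out the eigenvalue carrying the large unipotent part described in Step 2. Replacing $\mathsf S$ by $\varepsilon\mathsf S$, I may assume this normalised form.

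\emph{Step 2: decomposition as a $\mathbb Q[x]$-module.} Let $\Delta_i\cong\mathbb Q^{m_i}$ be spanned by the marked intervals on $B_i$. I would then set up a rotation-equivariant exact sequence
\[
0\longrightarrow \textstyle\bigoplus_i \Delta_i/\mathbb Q \longrightarrow K(\mathbb S)_{\mathbb Q}\longrightarrow H_1(S;\mathbb Q)\longrightarrow 0,
\]
in which rotation acts on $\Delta_i$ as a cyclic permutation and trivially on $H_1(S;\mathbb Q)$; the exact form may need adjusting by one copy of $\mathbb Q$. The sequence says the module is, up to the trivial part, $\bigoplus_i\mathbb Q[x]/(x^{m_i}-1)$ glued to a unipotent part of dimension $2g+b-1$. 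Its primary components at the cyclotomic polynomials $\Phi_d$ with $d\ge2$ split off canonically, because $H_1$ carries only the eigenvalue $1$. Hence for $d\ge2$ the multiplicity of $\Phi_d$ is $c_d=\#\{i: d\mid m_i\}$. This is an invariant of the pair $(K_{\mathbb Q},\mathsf S)$.

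\emph{Step 3: recovering $(S,M)$.} Given $c_1=b$ together with all $c_d$, Möbius inversion over multiples,
\[
\#\{i:m_i=m\}=\textstyle\sum_{k\ge1}\mu(k)\,c_{mk},
\]
recovers the stop distribution. The sum is finite, since $c_d=0$ for $d>\max m_i$. The genus then follows from $\dim K_{\mathbb Q}=\sum m_i+2g+b-2$. Since surfaces with stops are classified by $g$, $b$ and the stop distribution, this gives $(S,M)\simeq(S',M')$.

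\emph{Main obstacle.} The difficulty is the generalised $1$-eigenspace. It mixes the trivial summands coming from the $\Delta_i$ with $H_1(S)$, so $b$ cannot be read off from the multiplicity of $\Phi_1$ alone. My first attempt would be to compute the Jordan type of the unipotent part explicitly, through the nilpotent operator $\mathsf S-1$ on the $1$-eigenspace. The expectation is that its kernel has dimension $b$, or at worst $b$ plus a known constant, because the boundary classes $[B_i]$ span a $(b-1)$-dimensional invariant subspace and the remaining vectors pair with them under the Euler form. The Euler form is also natural for $\mathsf S$, which is the tool I would use to control the gluing. If the kernel computation fails, the fallback is to combine $\dim\ker(\mathsf S-1)$ with the total dimension of the generalised eigenspace and $n$ to solve for $g$ and $b$ simultaneously. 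That works as soon as these two quantities depend on $(g,b)$ through independent linear combinations.
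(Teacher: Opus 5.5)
Your treatment of the sign, the multiplicities $c_d=\#\{i:d\mid m_i\}$ of $\Phi_d$ for $d\ge2$, the M\"obius inversion and the final genus count agrees with the paper (Lemma~\ref{lem:serre-recovers-stops} and the proof of Theorem~\ref{thm:serre-reconstruction}). The gap is the step you flag yourself: recovering $b$.

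The characteristic polynomial only sees $g+b$, since the multiplicity of $t-1$ is $2g+2b-2$ (compare Remark~\ref{rem:rotation-polynomial-insufficient}). So without $b=c_1$ your inversion cannot produce $a_1=\#\{i:m_i=1\}$, and hence neither $|M|$ nor $g$. You never compute the Jordan structure on the generalized $1$-eigenspace $V_1$, and the value you predict is wrong. Your heuristic confuses kernel with image. For the normalized operator, all of $H_1(S;\mathbb Q)$, which has dimension $2g+b-1$, lies in $\ker(\mathsf C-I)$; in fact $\ker(\mathsf C-I)=H_1(S;\mathbb Q)$. The boundary classes $c_i=[B_i]$ span the \emph{image} of $\mathsf C-I$ on $V_1$, not a piece of its kernel. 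Thus $\dim\ker(\mathsf C-I)=2g+b-1$, which is not ``$b$ plus a known constant''.

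The missing input is the computation in Lemma~\ref{lem:serre-recovers-boundary-number}. By the relative homology sequence, $V_1$ is an extension of the rotation-invariant endpoint vectors $W_\partial=\{(u_i)\in\mathbb Q^b:\sum_iu_i=0\}$ by $H_1(S;\mathbb Q)$. The operator $(\mathsf C-I)|_{V_1}$ kills $H_1(S;\mathbb Q)$ and induces a map $W_\partial\to\operatorname{span}(c_1,\ldots,c_b)\cong\mathbb Q^b/\mathbb Q(1,\ldots,1)$. The paper computes this map by three steps:
\begin{enumerate}
\item It writes the least unipotent power as $\mathsf C^{n_{\mathsf C}}=\prod_i(T_i)_*^{-n_{\mathsf C}/m_i}$.
\item It applies the collar formula $(T_i)_*x=x+\lambda_i(x)c_i$, using that $x\mapsto(\lambda_i(x))_i$ maps onto $W_\partial$.
\item This gives $(u_i)\mapsto\pm(n_{\mathsf C}u_i/m_i)$, which is injective because $\sum_im_i\neq0$.
\end{enumerate}
Hence $\rk(\mathsf C^{n_{\mathsf C}}-I)=b-1$, that is, $b=1+\rk(\mathsf C^{n_{\mathsf C}}-I)$.

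Only after this rank is known does your fallback become valid, in the form $b=1+\dim V_1-\dim\ker(\mathsf C-I)$. The independence condition you made it depend on is exactly this computation, and no Euler form is needed for it.

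A smaller point: your sign rule needs the paper's case analysis using the algebraic multiplicities $p_+=2g+2b-2$ and $p_-=\#\{i:2\mid m_i\}$ of $\pm1$.
\begin{itemize}
\item For $b=1$ there is no non-trivial Jordan block, so ``unipotent part'' can only mean multiplicity.
\item For a disc with an even number of stops, the rule would select the wrong sign ($R_*$ has no eigenvalue $1$, while $-R_*$ does), so the disc must be recognised separately.
\item For an annulus with both $m_i$ even the multiplicities tie. This is harmless, because the polynomial is then invariant under $t\mapsto-t$.
\end{itemize}
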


Note that there is no genus restriction in this theorem. On
\(K(\mathbb S)\otimes_{\Z}\mathbb Q\), put
\[
  \mathsf C=(\mathsf S_{\mathbb S}[-1])_*
  =-(\mathsf S_{\mathbb S})_*.
\]
We prove that \(\mathsf C\) has a unipotent power. If
\(n_{\mathsf C}\) is the least positive integer for which
\(\mathsf C^{n_{\mathsf C}}\) is unipotent, then
\[
  b=1+\rk_{\mathbb Q}
  \bigl(\mathsf C^{n_{\mathsf C}}-I\bigr).
\]
Together with \(b\), the characteristic polynomial of \(\mathsf C\)
determines the stop distribution, and \(\rk K(\mathbb S)\) then
determines the genus. Theorem~D applies in particular to an integral isomorphism
intertwining the Serre actions. Its conclusion recovers \((S,M)\),
not the line field.

We next retain the action of the entire derived Picard group and
allow arbitrary line fields. Write
\[
  \varrho_{\mathbb S}\colon D(\mathbb S)\longrightarrow
  \operatorname{Aut}\bigl(K(\mathbb S)\bigr),
  \qquad
  \mathfrak R(\mathbb S)=
  \bigl(D(\mathbb S),[1],K(\mathbb S),\varrho_{\mathbb S}\bigr),
\]
where $\varrho_{\mathbb S}$ is the natural representation. An isomorphism of these invariants is a pair \((\Phi,\Theta)\),
where \(\Phi\colon D(\mathbb S)\to D(\mathbb S')\) is a group
isomorphism with \(\Phi([1])=[1]\), and
\(\Theta\colon K(\mathbb S)\to K(\mathbb S')\) is an integral
isomorphism satisfying
\[
  \Theta\,\varrho_{\mathbb S}(x)
  =\varrho_{\mathbb S'}\bigl(\Phi(x)\bigr)\,\Theta
  \qquad\text{for every }x\in D(\mathbb S).
\]
Unlike \(\mathfrak D\), the invariant \(\mathfrak R\) does not
distinguish the Serre element. Thus the above compatibility does
not require \(\Theta\) to intertwine the two Serre actions.

For the next theorem, fibres are taken within the class of graded
surfaces with stops of genus at least two whose partially wrapped
Fukaya categories are smooth and proper, modulo graded isomorphism.

\begin{theoremE}[Theorem~\ref{thm:general-boundary-reconstruction}]
For graded surfaces with stops of genus at least two whose partially
wrapped Fukaya categories are smooth and proper, the invariant
\(\mathfrak R\) determines the underlying graded surface and the total
number of stops. Let \(b=|\pi_0(\partial S)|\), and let \(t\) be the
number of boundary components with odd winding. Its fibre on isomorphism
classes of graded surfaces with stops satisfies
\[
 \#\mathfrak R^{-1}([\mathfrak R(\mathbb S)])\leq
 \begin{cases}
 4,&b=4,\ t=4,\\
 2,&b=2,\ \text{or }(b=3,\ t=2),\\
 1,&\text{otherwise}.
 \end{cases}
\]
In particular, \(\mathfrak R\) determines the complete graded surface
with stops when \(b=1\) or \(b\geq5\).
\end{theoremE}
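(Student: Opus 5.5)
Since $\mathfrak R$ contains the shift-distinguished group $(D(\mathbb S),[1])$, Theorem~B(ii) already recovers the underlying graded surface $(S,\eta)$. In particular it recovers $g$, $b$, and the winding numbers $w_1,\dots,w_b$ of the boundary components, up to the action of the graded mapping class group. What remains is to recover the stop counts $m_i$ from the representation $\varrho_{\mathbb S}$ on $K(\mathbb S)$. The total number of stops comes from the rank. For a smooth proper $\W(\mathbb S)$ we have
\[
\rk K(\mathbb S)=|M|+2g+b-2,
\]
and $g,b$ are known, so $|M|$ is determined.

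For the individual $m_i$ I would use the boundary rotations. The geometrisation kernel $N(\mathbb S)$ is characteristic, and it acts on $K$ trivially or at most by signs. Its image in $D/N$, together with the rotation lattice identified intrinsically in the proofs of Theorems~A and~B, lets us single out, up to the symmetries of the identification, the elements $\rho_i\in D(\mathbb S)$ that lift the one-step rotation of the stops on $B_i$. On $K$, the $m_i$-th power of $\rho_i$ equals a shift $[w_i+m_i]$ or a similar shift determined by the winding number; the exact normalisation should come from the AG-invariant description. This shift acts by $(-1)^{w_i+m_i}$. A more robust route is to use the characteristic polynomial of $\varrho(\rho_i)$, which is a product of the cyclotomic factor $x^{m_i}\mp1$ and a unipotent part. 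This parallels the computation behind Theorem~D, where $\mathsf C$ encodes the boundary data. So the order of $\varrho(\rho_i)$ modulo unipotents, together with its characteristic polynomial, determines $m_i$, provided $\rho_i$ is pinned down exactly.

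The hard part is this pinning down. An abstract isomorphism $\Phi$ preserving $[1]$ may send $\rho_i$ to $\rho_i'\cdot[k]\cdot z$, where $z$ is a product of other boundary rotations or a kernel element. It may also permute boundary components that carry the same winding number, which is harmless. The shift ambiguity is controlled by the fact that $[1]$ acts by $-1$ on $K$, so only parity survives. That ambiguity changes the sign in $x^{m_i}\pm1$, which is detected only through $w_i+m_i \bmod 2$. The mixing with other rotations is controlled by the central-extension structure: modulo $N$, the rotation lattice is central of rank $b+1$, including the shift, and the image of $\rho_i$ is characterised up to this lattice by the extension class. I would show that allowed changes of basis preserving $[1]$ and the extension class can only interchange rotation generators of boundary components whose winding numbers have odd parity. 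This is because odd winding allows a half-rotation lift twisted by $[1]$ to be traded.

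A case analysis on $t$, the number of odd-winding boundaries, then bounds the fibre. When $b=1$, or when $b\ge5$, the constraints from the total $|M|$, from the characteristic polynomials, and from the linear relation among the rotations in $H_\eta$ (the boundary winding numbers sum to $4-4g$) force a unique assignment. For $b=2$, for $(b=3,t=2)$, and for $(b=4,t=4)$, one swap, respectively two independent swaps, survive, which gives the bounds $2$ and $4$. I expect the main obstacle to be this last step: proving that no further coordinate changes preserve all the data, and carefully computing $\varrho$ on rotation lifts including signs.
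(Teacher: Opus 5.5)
\textbf{There is a genuine gap: the step you flag as hard, matching rotation axes to geometric boundaries, is where the proof lives, and the mechanism you propose for it fails.}

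Your reductions are sound as far as they go. Theorem~B(ii) gives $(S,\eta)$, and the rank formula gives $|M|$. For a \emph{specified} primitive rotation, the cyclotomic part $X^{m_i}-\nu_i$ recovers $(m_i,\nu_i)$. Note that $\rho_i^{m_i}=T_i$ acts by $x\mapsto x\pm\lambda_i(x)c_i$, or by $-1$ on a negative endpoint block; it is not a shift.

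\emph{The group-level argument is false.} Attaching each rotation axis to a geometric boundary, and hence to its winding number, cannot be done with group-theoretic data such as $[1]$ and an extension class. In the graded setting the relative-displacement character splits the boundary twists off the stabilizer, so the rotation lattice can be a direct factor. Concretely, take the windings $(2,-2,-2q)$ of Lemma~\ref{lem:common-pointed-group-family}, so $t=0$. There all $(D(\mathbb S_{r,s}),[1])$ are identified with $G=(N\rtimes F_0)\times\Z\rho_1\times\Z\rho_2\times\Z\langle[1]\rangle$. Swapping $\rho_1,\rho_2$ is then a shift-preserving automorphism that exchanges the axes of two \emph{even}-winding boundaries with different windings; $(D,[1])$ does not even detect $|M|$ there. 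So your claim that only odd-winding generators can be interchanged fails at the group level, and all matching information must come from $\Theta$.

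\emph{What the paper uses instead.} The paper extracts this information from $\Theta$ via two ingredients your proposal lacks.
\begin{itemize}
\item Scalar rigidity $\Theta|_{H_1(S;L_{\Z})}=\pm f_*$ (Lemma~\ref{lem:absolute-intertwiner-scalar}). For non-trivial $L$ this comes from the Picard--Lefschetz actions of one-holed-torus separating twists and their commutant. For trivial $L$ it comes from the Zariski closure of the relative framing stabilizer's homological image.
\item The logarithm $\mathcal L(\sum_ix_ir_i)=\varepsilon_\partial\sum_{\nu_i=1}(x_i/m_i)\,c_i\otimes\lambda_i$ of the normalized rotation action. For trivial $L$ and $b\geq3$ it satisfies $\{x:\operatorname{im}\mathcal L(x)\subseteq\mathbb Q c_i\}=\mathbb Q r_i$, and for non-trivial $L$ it gives a rank-one description of the positive axes.
\end{itemize}
Combined, these carry the axis at $B_i$ to the axis at $f(B_i)$ (Proposition~\ref{prop:geometric-boundary-matching}). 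This is also why only the bound $2$ is available for $b=2$, even when $t=0$: then $c_1=-c_2$. Your parity heuristic would assert uniqueness in that case without justification.

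\emph{The odd-winding boundaries need an integral argument.} On their axes $\mathcal L$ vanishes, so the rational rotation data are blind. The constraints you invoke are $|M|$, the characteristic polynomials, and $\sum_i(\omega_\eta(B_i)+2)=4-4g$. All of them are invariant under relabelling boundaries. They therefore cannot exclude exchanging the stop counts of two odd-winding boundaries with different windings, so your argument does not give the bound $1$ for, e.g., $t=2,\ b\geq4$ or $t\geq4,\ b\geq5$.

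The missing idea is that the charge lattice glues the negative endpoint blocks to absolute homology. For each pair of negative blocks this produces a label $\delta_{ij}\in A_{\Z}/2A_{\Z}$. Under the universal-coefficient injection $A_{\Z}/2A_{\Z}\hookrightarrow H_1(S;\mathbb F_2)$ it maps to $[B_i]+[B_j]$. Since $\sum_i[B_i]=0$ is the only relation, these labels fix every negative block when $t\geq4$, except that the Klein four group survives when $b=t=4$. For $t=2$, a separate argument using the half-integral coordinates along the positive boundary classes excludes the swap when there are at least two positive boundaries (Lemma~\ref{lem:integral-negative-boundary-matching}).

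As written, your final case analysis reads the bounds off the statement rather than deriving them.
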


The genus hypothesis is essential. Example~\ref{ex:represented-low-genus-failure} gives examples in genera zero and
one with isomorphic full represented invariants but non-isomorphic
underlying graded surfaces.

The main step is to match
the boundary blocks of relative homology with their geometric
boundaries. Separating twists control the absolute homological
intertwiner, and integral gluing of the endpoint blocks gives the
remaining parity constraints. Theorem~\ref{thm:general-boundary-reconstruction}
records the finer bounds for three and four boundaries.
The counterexamples in Examples~\ref{ex:two-boundary-represented-failure},
\ref{ex:three-boundary-represented-failure}, and
\ref{ex:four-boundary-represented-failure} concern the allocation of
stops to boundaries with different winding numbers. They preserve the underlying graded
surface and the unordered stop distribution, and do not assert
preservation of the actual Serre element or the Euler pairing.

Table~\ref{tab:symmetry-reconstruction} summarizes the information
recovered from the different symmetry data and the limitations of
reconstruction.

\begin{table}[htbp]
\centering
\caption{Reconstruction from symmetry data}
\label{tab:symmetry-reconstruction}
\small
\renewcommand{\arraystretch}{1.12}
\setlength{\tabcolsep}{5pt}
\begin{tabularx}{\textwidth}{@{}
  >{\raggedright\arraybackslash}p{0.39\textwidth}
  >{\raggedright\arraybackslash}X
  >{\raggedright\arraybackslash}p{0.17\textwidth}
@{}}
\toprule
\textbf{Symmetry data}
& \textbf{Reconstruction and limitations}
& \textbf{Result} \\
\midrule

Ungraded mapping class group\newline
$\Mod(S,M)$
& Surface with stops $(S,M)$
& \hyperref[thm:ungraded-reconstruction]{Theorem~A} \\
\addlinespace[5pt]

Derived Picard group\newline
$D$
& Underlying surface $S$:
  genus $g$ and number $b$ of boundary components
& \hyperref[thm:derived-reconstruction]{Theorem~B(i)} \\
\addlinespace[5pt]

Shift-pointed derived Picard group\newline
$(D,[1])$
& Underlying graded surface $(S,\eta)$
& \hyperref[thm:derived-reconstruction]{Theorem~B(ii)} \\
\addlinespace[5pt]

Shift- and Serre-pointed derived Picard group\newline
$\mathfrak D(\mathbb S)=(D,[1],\mathsf S_{\mathbb S})$
& Stops are not determined in general;
  countably infinite fibres occur
& \hyperref[thm:countable-fibres]{Theorem~C} \\
\addlinespace[5pt]

Rational Serre representation\newline
$(K\otimes_{\Z}\mathbb Q,(\mathsf S_{\mathbb S})_*)$
& Surface with stops $(S,M)$
& \hyperref[thm:serre-reconstruction]{Theorem~D} \\
\addlinespace[5pt]

Full integral derived Picard representation\newline
$\mathfrak R(\mathbb S)=(D,[1],K,\varrho_{\mathbb S})$
& $(S,\eta)$ and $|M|$; at most four possibilities for
  $\mathbb S$. Complete reconstruction if $b=1$ or $b\geq5$
& \hyperref[thm:general-boundary-reconstruction]{Theorem~E} \\

\bottomrule
\end{tabularx}

\smallskip
\begin{minipage}{\textwidth}
\footnotesize
Here $\mathbb S=(S,M,\eta)$, $D=\DPic\bigl(\W(\mathbb S)\bigr)$, $K=K(\mathbb S)$. All rows are subject to the standing
hypotheses, including smoothness and properness for the categorical
statements. Except in Theorem~D, the genus is at least two.
Theorem~D holds in every genus for line fields induced by
nowhere-vanishing vector fields.
\end{minipage}
\end{table}

The paper is organized as follows. Section~\ref{sec:preliminaries}
recalls line fields, graded mapping class groups, and the surface model
for derived Picard groups. Section~\ref{sec:ungraded} reconstructs the
marked surface from its mapping class group. Section~\ref{sec:derived-reconstruction}
recovers the underlying graded surface from the shift-pointed derived
Picard group. Section~\ref{sec:fibres} constructs the infinite fibres of
the shift- and Serre-pointed invariant. Section~\ref{sec:grothendieck-actions}
first reconstructs the marked surface from the Serre action for
vector-framed line fields. It then studies the full integral derived
Picard representation, bounds its fibres, and constructs
non-singleton examples. Appendix~\ref{app:mapping-class-gradings}
supplies the mapping-class-group and relative-grading arguments.
Appendix~\ref{app:serre-family} contains the stabilizer and Serre
calculations for the three-boundary family, and
Appendix~\ref{app:twisted-homology} develops the twisted-homology
and integral boundary-matching arguments.

\section{Preliminaries}\label{sec:preliminaries}
In this section, we recall the basic facts about line fields, graded mapping
class groups, and surface categories used below.  We follow
\cite{HKK,LekiliPolishchuk,CalderonSalter} for line fields and surface models,
\cite{Opper2025} for derived Picard groups, and
\cite{OpperPlamondonSchroll} for the Serre functor.
% --------------------------------------------------------------------------

\subsection{Line fields and winding numbers}

Let \(S\) be a compact connected oriented surface with non-empty boundary,
and let \(M\subset\partial S\) be a finite set meeting every boundary
component. The elements of \(M\) are called \emph{stops}. A line field is
a section
\(
 \eta:S\to\mathbb P(TS)
\)
of the bundle of unoriented tangent lines, considered up to homotopy. A
\emph{graded surface with stops} is a triple
\(\mathbb S=(S,M,\eta)\).

The line field determines a \(\Z\)-cover of \(\mathbb P(TS)\), whose fibre
over a line \(\ell\subset T_xS\) consists of homotopy classes, relative to
the endpoints, of paths in \(\mathbb P(T_xS)\) from \(\eta(x)\) to
\(\ell\). Explicitly, it is given by
\begin{equation}\label{eq:grading-cover}
 \widetilde{\mathbb P(TS)}_\eta
 \coloneq\left\{(x,\ell,[\gamma])\,\middle|\,
   \gamma(0)=\eta(x),\ \gamma(1)=\ell\right\}.
\end{equation}
Concatenation with the positive generator of
\(\pi_1(\mathbb P(T_xS))\simeq\Z\) gives the deck transformation. A
grading of an immersed curve is a lift of its tangent-line map to
\eqref{eq:grading-cover}.

We now recall the definition of the winding number from
\cite[Definition~1.3]{LekiliPolishchuk}. By
Poincar\'e--Lefschetz duality, the image of a line field determines a
class
\[
 [\eta]\in H^1(\mathbb P(TS);\Z)
\]
whose restriction to every oriented fibre of
\(\mathbb P(TS)\to S\) is the positive generator. If
\(c:S^1\to S\) is an oriented immersed closed curve, its tangent-line
lift is the map
\[
 \widetilde c:S^1\longrightarrow\mathbb P(TS),
 \qquad
 \widetilde c(t)=\bigl(c(t),\mathbb R\dot c(t)\bigr).
\]
The \emph{winding number} of \(c\) with respect to \(\eta\) is given by
\[
 \omega_\eta(c)
 \coloneq\bigl\langle[\eta],[\widetilde c]\bigr\rangle\in\Z.
\]
The homotopy classes of line fields form an \(H^1(S;\Z)\)-torsor
\cite[Lemma~1.2]{LekiliPolishchuk}. If
\(\theta-\eta=\alpha\in H^1(S;\Z)\), then
\begin{equation}\label{eq:winding-difference}
 \omega_\theta(c)-\omega_\eta(c)=\langle\alpha,[c]\rangle,
 \qquad
 \omega_\eta(-c)=-\omega_\eta(c).
\end{equation}
Let \(c^\vee\in H^1(S;\Z)\) be the class defined by algebraic
intersection with \(c\). Then we have
\begin{equation}\label{eq:twist-line-field}
 T_c^*\eta-\eta=\omega_\eta(c)c^\vee.
\end{equation}
Consequently, for a nonseparating curve \(c\), the twist \(T_c\)
preserves \(\eta\) if and only if \(\omega_\eta(c)=0\). If \(c\) is
separating, then \(c^\vee=0\), so \(T_c\) preserves every line field.

Let \(a,b\) be disjoint, non-isotopic, nonseparating curves, oriented so
that \([a]=[b]\). Suppose that they bound a connected subsurface
\(R\subset S\) with
\(\partial R=a\sqcup(-b)\), containing no component of \(\partial S\).
Such a pair is called an \emph{oriented bounding pair}. For
\(u,v\in\Z\), formula \eqref{eq:twist-line-field} gives
\begin{equation}\label{eq:weighted-bounding-pair}
 (T_a^uT_b^{-v})^*\eta\simeq\eta
 \quad\Longleftrightarrow\quad
 u\omega_\eta(a)-v\omega_\eta(b)=0.
\end{equation}

A \emph{vector framing} is a nowhere-vanishing vector field \(\xi\).
Its projectivization \(\eta=\mathbb R\xi\) is a line field, and
\begin{equation}\label{eq:winding-doubling}
 \omega_\eta(c)=2\omega_\xi(c).
\end{equation}
We orient the boundary with \(S\) on its left. If
\(\partial S=B_1\sqcup\cdots\sqcup B_b\), the boundary winding numbers
of a framing satisfy
\begin{equation}\label{eq:framing-coherence}
 \sum_{i=1}^b\omega_\xi(B_i)=\chi(S).
\end{equation}
Conversely, every integral tuple satisfying
\eqref{eq:framing-coherence} is realized by a framing. Indeed, begin
with any framing of \(S\). The homotopy classes of framings form an
\(H^1(S;\Z)\)-torsor, and the image of the restriction map
\[
 H^1(S;\Z)\longrightarrow H^1(\partial S;\Z)\simeq\Z^b
\]
is the kernel of the summation map. This follows from the cohomology
sequence of the pair and shows that the initial boundary tuple can be
changed by any tuple of sum zero.

The relative displacement of a line field under a boundary-pointwise
mapping class will be used in Sections~\ref{sec:fibres} and
\ref{sec:grothendieck-actions}. Its construction, its values on
boundary twists, and the resulting change of normalization for graded
lifts are recorded in
Section~\ref{appsubsec:relative-gradings-charge}.

The Poincar\'e--Hopf formula for line fields
\cite[Equation~(1.3)]{LekiliPolishchuk} gives
\begin{equation}\label{eq:line-field-boundary-coherence}
 4-4g(S)=\sum_{i=1}^b\bigl(\omega_\eta(B_i)+2\bigr).
\end{equation}
More generally, let \(R\subset S\) be a connected oriented subsurface
and give each component of \(\partial R\) its boundary orientation.
Then
\[
 \sum_{C\subset\partial R}\bigl(\omega_\eta(C)+2\bigr)
 =4-4g(R).
\]
In particular, if an oriented bounding pair \(a,b\) bounds a
genus-\(h\) subsurface containing no component of \(\partial S\), then
\begin{equation}\label{eq:bounding-pair-winding-difference}
 \omega_\eta(a)-\omega_\eta(b)=-4h.
\end{equation}

Let \(\widehat S\) denote the closed surface obtained by capping every
boundary component of \(S\) with a disc. Suppose first that \(g(S)=1\).
If \(s,t\subset S\) are nonseparating curves whose images form a
symplectic basis of \(H_1(\widehat S;\Z)\), set
\begin{equation}\label{eq:genus-one-line-field-invariant}
 \widetilde{\mathcal A}(\eta)
 =\gcd\bigl(
 \omega_\eta(s),\omega_\eta(t),
 \omega_\eta(B_1)+2,\ldots,\omega_\eta(B_b)+2
 \bigr),
\end{equation}
where the greatest common divisor is taken to be non-negative.
The discussion preceding Theorem~1.8 in \cite{LekiliPolishchuk},
following the argument of \cite[Lemma~2.6]{Kawazumi}, gives
\[
 \widetilde{\mathcal A}(\eta)
 =
 \gcd\bigl\{
   \omega_\eta(\gamma)
   \,\big|\,
   \gamma\subset S
   \text{ is a nonseparating simple closed curve}
 \bigr\}.
\]
Consequently, \(\widetilde{\mathcal A}(\eta)\) is independent of the
chosen symplectic basis.

We next recall the invariants used in higher genus. Reduction of the
winding function modulo two defines a cohomology class
\([\omega_\eta]^{(2)}\in H^1(S;\Z/2)
\). We put
\[
 \sigma(\eta)=
 \begin{cases}
  0,&[\omega_\eta]^{(2)}=0,\\
  1,&[\omega_\eta]^{(2)}\neq0.
 \end{cases}
\]
Equivalently, \(\sigma(\eta)=0\) precisely when the winding number of
every closed curve is even.

By \cite[Proposition~1.6]{LekiliPolishchuk}, the winding function
determines a well-defined map
\begin{align*}
 q_\eta:H_1(S;\Z/4)&\longrightarrow\Z/4\\
 [c]&\mapsto\omega_\eta(c)+2\pmod4
\end{align*}
for every oriented simple closed curve \(c\), which satisfies
\[
 q_\eta(x+y)
 =q_\eta(x)+q_\eta(y)+2(x\mathbin{\cdot}y).
\]
Thus, if \(x=\sum_{j=1}^m[\alpha_j]\), then
\[
 q_\eta(x)=
 \sum_{j=1}^m\bigl(\omega_\eta(\alpha_j)+2\bigr)
 +2\sum_{i<j}[\alpha_i]\mathbin{\cdot}[\alpha_j]
 \pmod4.
\]
The last sum vanishes when the curves \(\alpha_j\) are pairwise
disjoint.

Suppose now that \(\sigma(\eta)=0\) and \(\omega_\eta(B_i)\in2+4\Z\) for every $i$. The function
\(q_\eta/2\) then vanishes on the boundary classes and descends to a
quadratic form
\[
 \overline q_\eta:
 H_1(\widehat S;\Z/2)\longrightarrow\Z/2
\]
satisfying
\[
 \overline q_\eta(x+y)
 =
 \overline q_\eta(x)+\overline q_\eta(y)
 +x\mathbin{\cdot}y.
\]
For any symplectic basis
\[
 e_1,f_1,\ldots,e_g,f_g
 \quad\text{of}\quad H_1(\widehat S;\Z/2),
\] following \cite{LekiliPolishchuk}, we define the Arf invariant of the
line field by
\[
 \operatorname{Arf}(\eta)
 \coloneq
 \sum_{j=1}^g
 \overline q_\eta(e_j)\overline q_\eta(f_j)
 \in\Z/2,
\]
which is independent of the chosen symplectic basis.

\begin{theorem}[{\cite[Lemma~1.4, Theorem~1.8, and
Corollary~1.10]{LekiliPolishchuk}}]
\label{thm:line-field-classification}
Let \(S\) be a compact connected oriented surface with non-empty
boundary. A line field \(\eta\) is induced by a nowhere-vanishing vector
field if and only if all its winding numbers are even. The orbits of
line fields under orientation-preserving mapping classes fixing the
boundary pointwise are classified as follows.
\begin{enumerate}[label=\textup{(\roman*)}]
\item In genus zero, two line fields are homotopic if and only if their
boundary winding numbers agree componentwise.
\item In genus one, two line fields lie in the same orbit if and only if
their boundary winding numbers and their invariants
\(\widetilde{\mathcal A}\) agree.
\item In genus at least two, two line fields lie in the same orbit if
and only if their boundary winding numbers and their
\(\sigma\)-invariants agree, and their Arf invariants agree whenever
these are defined.
\end{enumerate}
The corresponding statement for graded surfaces with stops also
requires the numbers of stops on matched boundary components to agree.
\end{theorem}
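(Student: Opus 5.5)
The plan follows Lekili--Polishchuk. First I reduce everything to the $H^1(S;\Z)$-torsor structure and the twist formula \eqref{eq:twist-line-field}. Then I treat the three genus ranges separately, importing the known classification of framings where the winding numbers are even.

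\textbf{Step 1: vector fields.} The map from the unit tangent bundle to $\mathbb P(TS)$ is a fibrewise double cover. So $\eta$ lifts to a nowhere-vanishing vector field exactly when the pullback of the cover along $\eta$ is trivial. That pullback is a double cover of $S$ whose monodromy around a closed curve $c$ is $\omega_\eta(c)\bmod 2$. Hence a lift exists if and only if $[\omega_\eta]^{(2)}=0$, i.e.\ $\sigma(\eta)=0$, i.e.\ all winding numbers are even.

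\textbf{Step 2: the invariants are orbit invariants.} For $f$ fixing $\partial S$ pointwise, naturality gives $\omega_{f^*\eta}(c)=\omega_\eta(f(c))$, and $f$ fixes each boundary curve. So the boundary winding numbers, $\sigma$, $\widetilde{\mathcal A}$ and, when defined, $\operatorname{Arf}$ are preserved. For $\widetilde{\mathcal A}$ and $\operatorname{Arf}$ this uses the basis-independence recalled above.

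\textbf{Step 3: genus zero.} If $\eta,\theta$ have equal boundary winding numbers, put $\alpha=\theta-\eta$. By \eqref{eq:winding-difference}, $\alpha$ vanishes on every boundary class. In genus zero these classes span $H_1(S;\Z)$, so $\alpha=0$ and $\theta\simeq\eta$.

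\textbf{Step 4: genus at least one.} Here $\alpha$ vanishes on the boundary classes, so $\theta$ differs from $\eta$ only through its values on a symplectic basis $s_1,t_1,\dots,s_g,t_g$ of nonseparating curves. The goal is to find $f$ with $\omega_{f^*\eta}$ and $\omega_\theta$ agreeing on that basis; by the torsor statement this gives $f^*\eta\simeq\theta$. The moves are Dehn twists:
\[
\omega_\eta\bigl(T_a^{\pm1}(c)\bigr)=\omega_\eta(c)\pm(c\cdot a)\,\omega_\eta(a),
\]
together with handle slides and bounding-pair maps. In genus one these moves run a Euclidean algorithm on the pair $(\omega(s),\omega(t))$. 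The boundary terms $\omega(B_i)+2$ enter through the relation $T_{s}$ versus $T_{s'}$ for $s'$ differing from $s$ by a boundary curve; this is the Kawazumi argument. The algorithm normalises the pair to $(\widetilde{\mathcal A},0)$ or a similar standard form, which gives (ii).

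In genus at least two:
\begin{itemize}
\item \emph{Case $\sigma=0$.} Write $\eta=\mathbb R\xi$ and invoke the classification of framings of surfaces with boundary (Randal-Williams). Relative to the boundary, framings are classified by boundary windings plus an Arf invariant when defined, and \eqref{eq:winding-doubling} translates this into the stated criterion.
\item \emph{Case $\sigma=1$.} Argue directly. Using the moves above, make the winding number of some nonseparating curve odd. Then use twists along it, which shift windings on crossing curves by an odd amount, to normalise the values on every basis curve to a standard pattern in $\{0,1\}$. Finally check that no further invariant survives.
\end{itemize}

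\textbf{Stops.} The final sentence of the theorem is formal. The graded mapping classes considered must preserve $M$, so matched boundary components must carry equal stop counts. Conversely, because stop-preserving diffeomorphisms can realise any permutation of boundary components with matching counts, the stop data imposes no further constraint.

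\textbf{Main obstacle.} I expect the hardest part to be transitivity in genus at least two when $\sigma=1$, and in the Arf case its interaction with the boundary condition $\omega_\eta(B_i)\in2+4\Z$. I would first try to reduce to a one-holed subsurface of genus $g$, by splitting off a genus-$g$ subsurface that carries a symplectic basis of $H_1(\widehat S)$, and apply the closed-surface normal-form argument there.
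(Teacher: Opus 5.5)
The paper does not prove this statement; it imports it from Lekili--Polishchuk, so your outline has to stand on its own. Most of it does:
\begin{itemize}
\item Steps 1--3 are correct.
\item The genus-one Euclidean algorithm is correct. The move you describe via $T_s$ versus $T_{s'}$ is the push map $T_sT_{s'}^{-1}$, which fixes $s$ and changes $\omega_\eta(t)$ by $\pm(\omega_\eta(B_i)+2)$.
\item For $\sigma=0$, reducing to framings is legitimate. You should quote the classification of \emph{absolute} homotopy classes of framings under $\Mod(S,\partial S)$, not a rel-boundary version. You should also note that $\mathbb R\xi\simeq\mathbb R\zeta$ forces $\xi\simeq\zeta$, because difference classes double and $H^1(S;\Z)$ is torsion free.
\end{itemize}

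The genuine gap is genus $g\geq2$ with $\sigma=1$. No framing result covers this case, and your sketch does not prove it.

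A twist $T_a$ changes $\omega_\eta(c)$ by $(c\cdot a)\,\omega_\eta(a)$, where $\omega_\eta(a)$ is an arbitrary odd integer rather than $\pm1$. The same twist also changes the class and winding of every other basis curve meeting $a$. You give no mechanism that decouples these effects or shows that a single normal form is reached. ``Check that no further invariant survives'' is exactly the transitivity that has to be proved.

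Your fallback fails for a concrete reason. A one-holed genus-$g$ subsurface $R$ has $\omega_\eta(\partial R)=2-4g\in2+4\Z$. So if every curve in $R$ has even winding while some $\omega_\eta(B_i)$ is odd, then $\eta|_R$ has a well-defined Arf invariant. Mapping classes supported in $R$ preserve it, yet the theorem places both of its values in one orbit. Pushes of odd boundaries across handles are therefore indispensable.

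The missing idea, used by Lekili--Polishchuk and repeated in the paper's proof of Lemma~\ref{lem:S21-two-torsion}, is to separate mod-$4$ data from multiples of $4$.
\begin{enumerate}
\item First arrange $q_\theta=q_\eta$.
\begin{itemize}
\item Replace $\theta$ by a translate with $[\omega_\theta]^{(2)}=[\omega_\eta]^{(2)}$. These are nonzero classes with equal boundary values, and $\Mod(S,\partial S)$ acts transitively on such classes: via $\operatorname{Sp}_{2g}(\mathbb F_2)$ when the boundary values vanish, and via boundary pushes otherwise.
\item Then kill the residual difference $q_\theta-q_\eta=2\beta$ with squared twists, using $q_\eta(T_a^2x)-q_\eta(x)=2q_\eta([a])([a]\cdot x)$ for $q_\eta([a])$ odd. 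When the needed class has even $q_\eta$, use products $T_d^2T_c^{-2}$ instead. These maps are trivial mod $2$, so they do not disturb the previous step.
\end{itemize}
\item Now $\theta-\eta=4u$, with $u$ vanishing on boundary classes. Genus-one bounding-pair maps realize every such $4u$, since $(T_aT_b^{-1})^*\eta-\eta=-4a^\vee$ by \eqref{eq:twist-line-field} and \eqref{eq:bounding-pair-winding-difference}. They act trivially on homology, so their displacements add.
\end{enumerate}
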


\subsection{Graded mapping class groups}

Let \(\Mod(S,M)\) be the group of isotopy classes of orientation-preserving
diffeomorphisms which preserve \(\partial S\) and \(M\) setwise.  The same
condition is imposed throughout an isotopy. The group \(\Mod(S)\) is defined similarly
after forgetting the stop set $M$. \(\PMod(S)\) is the subgroup of $\Mod(S)$ consisting of all elements which preserve every boundary component individually.
We write \(\Mod(S,\partial S)\) when diffeomorphisms and isotopies fix the
boundary pointwise. Thus boundary twists are non-trivial in \(\Mod(S,\partial S)\), while they
become trivial in \(\Mod(S)\). Finally, \(\Mod^*(S)\) is the corresponding extended
group of $\Mod(S)$ in which orientation-reversing diffeomorphisms are also allowed. 

If \(X\) is an oriented surface and \(Q\) is a finite set of interior
punctures, we write \(\Mod(X,Q)\) for the mapping class group preserving
\(Q\) and \(\partial X\) setwise.  Its pure subgroup \(\PMod(X,Q)\) fixes
every point of \(Q\) and preserves every boundary component individually.
Equivalently,
\[
 \PMod(X,Q)\coloneq
 \ker\!\left(
   \Mod(X,Q)\longrightarrow
   \operatorname{Sym}(Q)\times
   \operatorname{Sym}\bigl(\pi_0(\partial X)\bigr)
 \right).
\]
When \(Q=\varnothing\), this agrees with the notation \(\PMod(X)\) above. The action of \(\Mod(S,M)\) on line fields is given by pullback.  Put
\begin{equation*}
 H_\eta=\Stab_{\Mod(S,M)}(\eta)
 =\{[f]\in\Mod(S,M):f^*\eta\simeq\eta\}.
\end{equation*}
%In semidirect products, the corresponding left action on local systems is \(f\cdot L=(f^{-1})^*L\).

\begin{definition}[{\cite[Definition~6.6]{Opper2025}}]
A graded mapping class of \(\mathbb S=(S,M,\eta)\) is an isotopy class
of pairs \((f,[h])\), where \(f\) represents an element of
\(\Mod(S,M)\) and \([h]\) is the homotopy class of a path from
\(f^*\eta\) to \(\eta\) in the space of line fields.  The resulting group
is denoted by \(\MCGgr(\mathbb S)\).
\end{definition}

\begin{lemma}
The forgetful homomorphism from the graded mapping class group has image
\(H_\eta\) and fits into a central extension
\begin{equation}\label{eq:grading-extension}
 1\longrightarrow\Z\langle[1]\rangle
 \longrightarrow\MCGgr(\mathbb S)
 \longrightarrow H_\eta\longrightarrow1.
\end{equation}
Here \([1]\) is the positive deck translation of the grading cover and
corresponds to the categorical shift.
\end{lemma}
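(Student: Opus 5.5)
We must show three things: the forgetful map $\pi\colon(f,[h])\mapsto[f]$ is a well-defined homomorphism with image $H_\eta$; its kernel is infinite cyclic, generated by the deck translation $[1]$; and that kernel is central. The plan is to work with the space $\mathcal L(S)$ of line fields, whose components are the homotopy classes. We assume, as in Opper's setting, that line fields are constrained near $\partial S$ compatibly with the stops, or at least that the fibration argument below can be run for the space actually used. Composition in $\MCGgr(\mathbb S)$ is $(f,[h])\cdot(f',[h'])=(f\circ f',[h'\ast f'^*h])$. Hence $\pi$ is a homomorphism, and it is well defined on isotopy classes because an isotopy of pairs projects to an isotopy in $\Mod(S,M)$.

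\emph{Image.} If $(f,[h])$ is a graded mapping class, then $h$ is a path from $f^*\eta$ to $\eta$. So $f^*\eta\simeq\eta$, and $[f]\in H_\eta$. Conversely, if $[f]\in H_\eta$, pick any homotopy $h$ from $f^*\eta$ to $\eta$; then $(f,[h])$ is a graded mapping class lifting $[f]$. Thus $\pi$ surjects onto $H_\eta$.

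\emph{Kernel.} Suppose $(f,[h])$ maps to the identity. Using the isotopy from $f$ to $\mathrm{id}$ to transport $h$, we may take $f=\mathrm{id}$. The element is then a homotopy class of loops at $\eta$ in $\mathcal L(S)$, modulo the action of isotopies of the identity. So the kernel is a quotient of $\pi_1(\mathcal L(S),\eta)$. Since $\mathcal L(S)$ is the space of sections of the $\mathbb{RP}^1$-bundle $\mathbb P(TS)$, the key step is to show that the inclusion $\mathbb Z\to\ker\pi$, $n\mapsto[n]$, is bijective. Here $[n]$ is the loop that rotates $\eta$ through $n$ half-turns uniformly. This loop is exactly the deck transformation of the grading cover~\eqref{eq:grading-cover}.

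\emph{Surjectivity of the kernel map.} Given a loop $\eta_t$, compare it pointwise with $\eta$. Each point $x$ gives a loop $t\mapsto\eta_t(x)$ in $\mathbb P(T_xS)\simeq S^1$. Its degree is locally constant in $x$, hence a single integer $n$ because $S$ is connected. Then $\eta_t$ is homotopic to the constant rotation by $n$, provided the relative obstruction vanishes. That obstruction lives in $H^1(S;\Z)$: it is a map $S\times S^1\to S^1$ relative to $S\times\{0\}$ with fibre degree zero. Such a map comes from a class in $H^1(S;\Z)$ pulled back along the projection $S\times S^1\to S^1$ twisted by $S$. Equivalently, up to homotopy the loop is the rotation by $n$ times a loop $S\to\Omega S^1\simeq\Z$. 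Since $\Omega S^1$ is discrete, that second factor is nullhomotopic on the connected space $S$. Hence $\pi_1(\mathcal L(S),\eta)\cong\Z$, generated by $[1]$. The one point to check carefully is whether boundary or stop conditions on $\mathcal L(S)$ change this computation; with the conventions of \cite{Opper2025} they do not.

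\emph{Injectivity of the kernel map.} Suppose $[n]$ were trivial in $\MCGgr(\mathbb S)$. Then $[n]$ would act trivially on graded arcs. But $[n]$ shifts the grading of every graded arc by $n$, and the grading lift of an arc is unique up to the $\Z$-deck action. So $n=0$.

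\emph{Centrality.} Take any $(f,[h])$. Conjugating $(\mathrm{id},\text{rot}_n)$ by $(f,[h])$ gives $(\mathrm{id},h^{-1}\ast f^*\text{rot}_n\ast h)$. Pullback of a uniform rotation by an orientation-preserving diffeomorphism is again the uniform rotation through the same angle. Because $\pi_1(\mathcal L(S),\eta)\cong\Z$ is abelian and conjugation by the path $h$ preserves the degree $n$, the result is $[n]$. Therefore the extension~\eqref{eq:grading-extension} is central.

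Finally, $[1]$ corresponds to the categorical shift because on objects it shifts the grading of every graded curve by $1$. I expect the main obstacle to be the computation of $\pi_1(\mathcal L(S))$ together with the identification of the kernel modulo isotopy, in particular ruling out that isotopies of the identity kill a generator. The graded-arc argument above handles this.
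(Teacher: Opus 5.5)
Your proof is correct, but it takes a different route from the paper's.

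\textbf{The paper's route.} The paper works directly on the grading cover $\widetilde{\mathbb P(TS)}_\eta$. It identifies a homotopy class of paths from $f^*\eta$ to $\eta$ with a $\Z$-equivariant lift of the map induced by $Df$. Such a lift exists exactly when $f\in H_\eta$, and two lifts of the same $f$ differ by a unique deck translation. Centrality is then immediate, because deck translations commute with every equivariant lift.

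\textbf{Your route.} You work in the space of line fields $\mathcal L(S)\simeq\mathrm{Map}(S,S^1)$. You compute $\pi_1(\mathcal L(S),\eta)\cong\Z$ through the pointwise degree, and you get centrality from naturality of that degree under pullback and under conjugation by paths. Your $\pi_1$ computation is the same fact as the paper's statement that the equivariant lifts of a fixed map form a $\Z$-torsor. The covering-space formulation is therefore more economical, and in particular it makes centrality a one-liner.

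\textbf{What your version buys.} It addresses explicitly a point the paper's short proof takes for granted: a self-isotopy of the identity might a priori identify $(\mathrm{id},[n])$ with the identity element. Your graded-arc argument settles this. (So would the vanishing of $\pi_1$ of the identity component of the group of diffeomorphisms preserving $M$.) You should, however, add that the shift acts freely on \emph{isotopy classes} of graded arcs. One reason is that the grading at an endpoint is an isotopy invariant. The fact that the gradings of one fixed arc form a $\Z$-torsor does not by itself give this.

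\textbf{Two smaller slips.} Neither affects the argument.
\begin{enumerate}[label=\textup{(\roman*)}]
\item The relative obstruction for your loop lies in $H^1(S\times S^1,S\times\{0\};\Z)\cong\Z$ and is just the fibre degree, not a class in $H^1(S;\Z)$. Your reformulation via maps $S\to\Omega S^1$ is the correct argument.
\item Pulling back a uniform rotation by a non-conformal diffeomorphism does not give a uniform rotation. It gives a loop of the same pointwise degree, which is all you use.
\end{enumerate}
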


\begin{proof}
A path from \(f^*\eta\) to \(\eta\) determines a \(\Z\)-equivariant lift
of the map on \(\mathbb P(TS)\) induced by \(Df\).  Conversely, such a
lift determines the homotopy class of this path.  Hence a graded lift
exists precisely when \(f\in H_\eta\).  Any two lifts of the same \(f\)
differ by a unique deck translation. Therefore, the kernel is 
\(\Z\langle[1]\rangle\). Since deck translations commute with every
\(\Z\)-equivariant lift, the extension is central.
\end{proof}

Throughout, the positive boundary direction is the one with the
surface on the left. We write \(\rho_i\) for the collar-supported
one-step rotation of the stops on \(B_i\) in this direction and
\(T_i=T_{B_i}\) for the positive boundary twist. Thus
\(\rho_i^{m_i}=T_i\), where \(m_i=|M\cap B_i|\).

\begin{lemma}\label{lem:stopped-stabilizer-root-presentation}
Let \(H_\eta^0\) be the subgroup of \(H_\eta\) preserving every
boundary component, and put
\(F=\Stab_{\Mod(S,\partial S)}(\eta)\). If
\(m_i=|M\cap B_i|\), then
\[
 H_\eta^0\cong
 \left\langle F,\rho_1,\ldots,\rho_b\ \middle|\
 [\rho_i,F]=1,\ [\rho_i,\rho_j]=1,\
 \rho_i^{m_i}=T_i\right\rangle .
\]
Consequently,
\[
 H_\eta^0/F\cong\prod_{i=1}^b\Z/m_i\Z.
\]
\end{lemma}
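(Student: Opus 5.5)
We compare $H_\eta^0$ with the pointwise-fixed group $F$ through the capping/forgetting map. Let $\Mod(S,\partial S)\to\PMod(S,M)$ be the natural map: a diffeomorphism fixing $\partial S$ pointwise preserves $M$ and every boundary component. The first step is to show that the subgroup $P\subset\Mod(S,M)$ of mapping classes preserving every boundary component fits into the standard description. An element of $P$ acts on each $B_i$ by an orientation-preserving diffeomorphism preserving $M\cap B_i$, hence by a cyclic rotation of the stops by some residue mod $m_i$. Composing with a product of the collar rotations $\rho_i^{k_i}$ makes it fix $M$ pointwise. An orientation-preserving diffeomorphism of a circle fixing a nonempty finite set pointwise is isotopic to the identity rel that set. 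So, after a collar isotopy through diffeomorphisms preserving $M$, it fixes $\partial S$ pointwise.

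Thus $P$ is generated by the image of $\Mod(S,\partial S)$ together with $\rho_1,\dots,\rho_b$. The kernel of $\Mod(S,\partial S)\to P$ is trivial. Indeed, an isotopy in $P$ moves each boundary circle through rotations preserving $M$, so it can only rotate by integer multiples of full turns. Tracking the total rotation angle shows that an element of $\Mod(S,\partial S)$ trivial in $P$ is a product $\prod T_i^{n_i}$ that becomes trivial only after allowing the $\rho$'s. More cleanly, the key step is the standard capping-type identification
\[
P\cong\bigl(\Mod(S,\partial S)\times\textstyle\prod_i\langle\rho_i\rangle\bigr)/\langle \rho_i^{m_i}T_i^{-1}\rangle .
\]
This is proved by comparing both sides with $\Mod(S,\partial S)$ via the exact sequence obtained from the fibration $\mathrm{Diff}(S,\partial S)\to\mathrm{Diff}(S,M)^0\to\prod_i\mathrm{Diff}^+(B_i,M\cap B_i)$. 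Each fibre-base component group $\mathrm{Diff}^+(B_i,M\cap B_i)$ is homotopy equivalent to $\Z/m_i$ on $\pi_0$ with $\pi_1$ given by the full rotation loop. The boundary map sends the full rotation loop of $B_i$ to $T_i$, which is the relation $\rho_i^{m_i}=T_i$. The $\rho_i$ are supported in disjoint collars, so they commute with each other. They also commute with any class in $\Mod(S,\partial S)$ represented by a diffeomorphism that is the identity on the collars. I expect this exact-sequence bookkeeping, in particular confirming that no further relations arise, to be the main obstacle. The injectivity of $\Mod(S,\partial S)\to P$ follows because $\pi_1$ of the base is generated by the full rotations, whose images $T_i$ are themselves nontrivial. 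Note that $P$ is generated by elements acting trivially on the fibres' boundary data, so nothing else enters.

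Next we intersect with the stabilizer. Each $\rho_i$ is supported in a collar, and a collar rotation can be chosen to preserve $\eta$ up to homotopy: take $\eta$ rotation-invariant in a standard collar $B_i\times[0,1]$. Hence $\rho_i\in H_\eta^0$. For $f\in\Mod(S,\partial S)$, its image lies in $H_\eta$ iff $f^*\eta\simeq\eta$, that is, iff $f\in F$. Here we use that homotopy of line fields is the same notion on both sides, with the $\rho_i$ unaffecting the class. Writing $h\in H_\eta^0$ as $f\prod\rho_i^{k_i}$ with $f\in\Mod(S,\partial S)$, we get $f^*\eta\simeq\eta$, so $f\in F$. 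Restricting the presentation of $P$ to $H_\eta^0$ therefore gives the stated presentation. The relation $T_i\in F$ is consistent since boundary twists are separating-type, with $c^\vee=0$ by \eqref{eq:twist-line-field}.

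Finally, $F$ is normal in $H_\eta^0$ because the $\rho_i$ commute with $F$. The quotient is generated by the commuting images of the $\rho_i$, subject to $\rho_i^{m_i}\in F$. To see that the order of each image is exactly $m_i$, and that the images are independent, use the homomorphism $H_\eta^0\to\prod_i\Z/m_i$ recording the cyclic permutation of $M\cap B_i$. This map kills $F$ and sends $\rho_i$ to the $i$-th generator. Hence $H_\eta^0/F\cong\prod_i\Z/m_i\Z$.
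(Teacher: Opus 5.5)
Your outline matches the paper's proof, and most of it is fine: you straighten $h\in H_\eta^0$ to $f\rho_1^{a_1}\cdots\rho_b^{a_b}$ with $f$ boundary-fixing, record the commutation and root relations, and compute $H_\eta^0/F$ from the permutation of stops. The gap is the step you flag as the main obstacle, namely that no further relations hold. This step amounts to injectivity of $\Mod(S,\partial S)\to\Mod(S,M)$, which the paper quotes as standard. Your fibration argument for it is wrong in two ways.

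First, $\mathrm{Diff}^+(B_i,M\cap B_i)$ does not contain the full-rotation loop, because the intermediate rotations move the stops. Each component of this group is a translate of the group of diffeomorphisms of $B_i$ fixing $M\cap B_i$ pointwise. That group is contractible (use the straight-line homotopy of lifts to $\mathbb R$), so $\pi_1$ of the base vanishes. Second, even granting your premise, exactness of $\pi_1(\text{base})\to\Mod(S,\partial S)\to P$ would put the images $T_i$ in the \emph{kernel}, that is, make $T_i=1$ in $P$. That is the unmarked situation $\Mod(S,\partial S)\to\PMod(S)$, and it contradicts both the injectivity you want and the presentation itself. Likewise, $\rho_i^{m_i}=T_i$ does not come from a connecting map. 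It is a direct collar identity: the $m_i$-th power of the one-step collar rotation turns by angles decreasing from $2\pi$ at $B_i$ to $0$ at the inner end of the collar, which is the twist $T_i$.

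With the base corrected, the long exact sequence gives $1\to\Mod(S,\partial S)\to P\to\prod_i\Z/m_i\Z\to1$. Intersecting with the stabilizer gives $1\to F\to H_\eta^0\to\prod_i\Z/m_i\Z\to1$. The presented group is $(F\times\Z^b)/N$, where $N$ is generated by the central elements $(T_i^{-1},m_ie_i)$. Since $N\cap(F\times\{0\})=1$, the group $F$ embeds in it with quotient $\prod_i\Z/m_i\Z$. The five lemma then shows that the natural surjection onto $H_\eta^0$ is an isomorphism.

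A smaller point: your reason for $\rho_i\in H_\eta$ fails in general. A line field invariant under rigid rotations of the collar has constant angle in the collar frame, which forces $\omega_\eta(B_i)=0$. Boundary windings are homotopy invariants, and they are nonzero in most of the paper's examples. Use instead, as the paper does, that $\rho_i$ is isotopic to the identity through diffeomorphisms of $S$ once the stops may move, so $\rho_i^*\eta\simeq\eta$.
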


\begin{proof}
Let \(h\in H_\eta^0\). Since \(h\) preserves each boundary component \(B_i\), it induces a cyclic permutation of the stops \(M\cap B_i\). Hence there exist integers \(a_i\) such that
\(
h'= h\,\rho_1^{a_1}\cdots \rho_b^{a_b}\)
fixes every stop.

On each \(B_i\), the boundary restriction of \(h'\) is an orientation-preserving diffeomorphism of the circle fixing all points of \(M\cap B_i\). It can therefore be isotoped to the identity relative to these stops. After an isotopy supported in a collar of the boundary, we may assume that \(h'\) is the identity on \(\partial S\). Collar isotopies do not change the absolute homotopy class of the line field, so the adjusted \(h'\) still lies in \(H_\eta\). Thus \(h'\in F\), which implies that \(H_\eta^0\) is generated by \(F\) together with \(\rho_1,\ldots,\rho_b\).

The rotations \(\rho_i\) are central in the boundary-preserving subgroup, and \(\rho_i^{m_i}=T_i\) is the full boundary twist on \(B_i\). Therefore
\[
[\rho_i,F]=1,\qquad [\rho_i,\rho_j]=1,\qquad \rho_i^{m_i}=T_i.
\]
Conversely, every element of the group on the right can be written as
\[
f\,\rho_1^{a_1}\cdots \rho_b^{a_b},\qquad f\in F,
\]
and the exponents \(a_i\) are well defined only modulo \(m_i\), because \(\rho_i^{m_i}=T_i\in F\). Hence these relations give the required presentation.

Finally, the standard inclusion of the boundary-pointwise mapping class group into the stopped mapping class group is injective. Therefore \(F\) is exactly the subgroup of \(H_\eta^0\) consisting of elements that act trivially on the stops on every boundary component. The quotient \(H_\eta^0/F\) therefore records only the cyclic permutations of the \(m_i\) stops on each \(B_i\), so
\[
H_\eta^0/F\cong \prod_{i=1}^b \Z/m_i\Z.
\]
This proves the lemma.
\end{proof}

\subsection{Perfect derived categories and derived Picard groups}

A graded gentle algebra is a gentle algebra \(\kk Q/I\) with
integer degrees assigned to the arrows. Here \(Q\) is finite,
\(I\) is generated by paths of length two, and the gentle conditions
require at most two arrows entering or leaving each vertex and at most
one permitted and one forbidden continuation on either side of an
arrow. We regard a graded algebra as a dg algebra with zero
differential, and write \(\Perf(A)\) for its perfect derived category
of right dg modules. Let \(\DPic(A)\) denote the derived Picard group,
i.e. the group of isomorphism classes of standard enhanced
autoequivalences, with multiplication \([F][G]=[F\circ G]\). An invertible dg \(A\)-bimodule \(X\)
represents the functor \(F_X=-\otimes_A^{\mathbf L}X\). Thus the multiplication is given by
\[
 [X]\star[Y]=[Y\otimes_A^{\mathbf L}X],
 \qquad F_{X\star Y}\simeq F_X\circ F_Y.
\]
If \(A\) is smooth and proper, its dual bimodule
\(\operatorname{RHom}_{\kk}(A,\kk)\) represents the Serre functor.

We use the terminology of graded surfaces with stops as in
\cite{ChangElaginSchroll}. Choose an auxiliary set \(M^\circ\subset\partial S\setminus M\)
with one point in each component of \(\partial S\setminus M\).
Thus the points of \(M\) and \(M^\circ\) alternate on every boundary
component. In the two-colour convention, \(M\) is the set of
\(\bullet\)-points, while the arcs representing objects have endpoints
in \(M^\circ\). In particular,
\[
 |M^\circ\cap B_i|=|M\cap B_i|=m_i.
\]
The position of the auxiliary points is unique up to isotopy relative
to \(M\). A diffeomorphism preserving \(M\) may therefore be
isotoped to preserve \(M^\circ\) as well, uniquely at the level of
mapping classes. We use this identification when applying a mapping
class to arcs or to relative homology. Replacing the auxiliary points
by small distinguished boundary intervals gives the convention of
\cite{HKK,Opper2025}.

A collection of simple arcs with endpoints in \(M^\circ\) is an
\emph{admissible \(\circ\)-dissection} if its arcs are pairwise
disjoint in the interior and cut \(S\) into polygons, each containing
exactly one point of \(M\). Grading these arcs gives a full formal
system of graded arcs. If \(E\) is their direct sum, its total graded
endomorphism algebra
\(A=\bigoplus_{n\in\Z}\operatorname{Hom}(E,E[n])\), with its induced
multiplication, is finite-dimensional and graded gentle,
and there is a Morita equivalence
\(
 \Perf(A)\simeq\W(\mathbb S)\). Conversely, every homologically smooth and proper graded gentle
algebra admits such a surface model. Here, we follow the topological construction and its smoothness and
properness criteria from \cite[Section~3]{HKK},
\cite[Theorem~3.11]{LekiliPolishchuk}, and
\cite[Proposition~2.5]{Opper2025}.

\begin{theorem}[{\cite[Theorem~1.4]{JinSchrollWang}}]
\label{thm:complete-surface-invariant}
Let \(A\) and \(A'\) be homologically smooth and proper graded gentle
algebras, with surface models \((S,M,\eta)\) and
\((S',M',\eta')\).  Then \(\Perf(A)\) and \(\Perf(A')\) are triangle
equivalent if and only if there is an orientation-preserving homeomorphism
\[
 f\colon(S,M)\longrightarrow(S',M')
\]
such that \(f^*\eta'\simeq\eta\).
\end{theorem}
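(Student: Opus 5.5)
The plan splits into the two directions; the reverse implication is essentially formal. Suppose \(f\colon(S,M)\to(S',M')\) is an orientation-preserving homeomorphism with \(f^*\eta'\simeq\eta\). First smooth \(f\) and isotope it to preserve the auxiliary sets \(M^\circ\), as explained before the statement. Then choose an admissible \(\circ\)-dissection of \(S\) with gradings. Its image under \(f\) is an admissible \(\circ\)-dissection of \(S'\). A homotopy \(f^*\eta'\simeq\eta\) transports the gradings of the arcs, and it does so compatibly with the intersection degrees. Hence both formal systems have isomorphic graded endomorphism algebras \(B\). The Morita equivalences recalled above give \(\Perf(A)\simeq\W(\mathbb S)\simeq\Perf(B)\simeq\W(\mathbb S')\simeq\Perf(A')\).

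For the forward implication I would fix a triangle equivalence \(F\colon\Perf(A)\to\Perf(A')\) and extract from \(\Perf(A)\) enough intrinsic invariants to apply Theorem~\ref{thm:line-field-classification}. That theorem needs the genus, the boundary components together with their stop numbers and winding numbers, the invariant \(\sigma\), and the Arf invariant where it is defined. I would recover these in the following order.

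\begin{enumerate}[label=\textup{(\arabic*)}]
\item Read off \(\rk K_0=|M|+2g+b-1\) from the surface model.
\item Identify the boundary components through the Serre functor. Its action on indecomposable arc objects is simultaneous rotation of endpoints, shifted by the boundary winding. So the \(\mathsf S\)-orbit data of boundary arcs should be intrinsic; concretely, one can use objects \(X\) with \(\mathsf S^{m}X\simeq X[k]\). This yields the pairs \((m_i,\omega_\eta(B_i))\), i.e.\ the graded AG invariant.
\item Determine \(b\) and then \(g\) from these data and \(\rk K_0\).
\item Recover \(\sigma(\eta)\) as the parity of winding numbers of closed curves. These are seen categorically as band objects \(B\) with \(B\simeq B[\omega]\) up to local system, equivalently as degrees of self-extensions.
\end{enumerate}

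An alternative route, closer to \cite{Opper2019,Opper2025}, is stronger. Show that \(F\) sends objects supported on arcs and curves to objects of the same type, preserving graded intersection numbers via \(\dim\operatorname{Hom}^\bullet\). Then \(F\) maps the dissection of \(S\) to a graded dissection of \(S'\). Cutting along both dissections gives polygon decompositions with the same gluing pattern, hence a homeomorphism that directly matches the gradings. If the first route stalls, I would fall back on this one.

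The main obstacle is the Arf invariant in genus at least two when \(\sigma=0\) and all \(\omega(B_i)\in 2+4\Z\). In that case the numerical data above do not distinguish the two orbits. Either the quadratic form \(\overline q_\eta\) must be recovered categorically, or the geometric dissection-matching argument must be carried out in full. I would first try to express \(\overline q_\eta(x)\) through winding numbers of bands representing \(x\), using the mod-\(4\) formula for \(q_\eta\) on disjoint unions. The aim is to show that \(F\) induces an isomorphism \(H_1(\widehat S;\Z/2)\to H_1(\widehat S';\Z/2)\) preserving the intersection form and the values \(\omega+2\pmod4\) on simple bands. That isomorphism must come from the \(\Z/2\)-reduction of the action on \(K_0\) modulo boundary classes. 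Preservation of intersection numbers under \(F\) is the step I expect to need the most care, since that is where one needs \(F\) to send band objects to band objects.
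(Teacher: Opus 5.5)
The paper gives no proof of this statement: it is quoted from Jin--Schroll--Wang and used as a black box. Your converse direction is the standard Haiden--Katzarkov--Kontsevich/Lekili--Polishchuk argument and is fine: you transport a graded admissible $\circ$-dissection along $f$ and compare the formal graded endomorphism algebras. The forward direction, however, has a genuine gap already in its target. You aim to recover the genus, the pairs $(m_i,\omega_\eta(B_i))$, $\sigma$ and the Arf invariant. But by Theorem~\ref{thm:line-field-classification}\textup{(ii)}, genus-one orbits are classified by $\widetilde{\mathcal A}(\eta)$, and $(\sigma,\operatorname{Arf})$ do not suffice there. Concretely, take $\Sigma_{1,1}$ with one stop, in the notation of Example~\ref{ex:low-genus-failure}. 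The line fields $\eta_0+2\alpha$ and $\eta_0+6\alpha$ both have boundary winding $-2$, $\sigma=0$ and $\operatorname{Arf}=0$, yet $\widetilde{\mathcal A}=2$ and $6$ respectively. So even a complete recovery of your list of data cannot conclude in genus one.

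The mechanism in step~(4) also fails. A closed curve supports a band object only if it is gradable, that is $\omega_\eta(c)=0$. Moreover, an isomorphism $B\simeq B[\omega]$ with $\omega\neq0$ is impossible in a proper category; this is exactly the argument used in Lemma~\ref{lem:framed-family-serre-normalization}. So band objects never exhibit a non-zero winding number. The values $\omega+2 \bmod 4$ you want to read off on simple bands are identically $2$. In the genus-one example above, the simple curves carrying bands (those with class in $\ker\alpha$) are the same for both line fields.

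Winding numbers of non-gradable curves can only be read off from the degrees of morphisms among arc objects along closed chains. This requires showing that an arbitrary triangle equivalence sends arc objects to arc objects and preserves their graded intersection pattern. For your Arf plan it also requires showing that the equivalence preserves the absolute sublattice of $K_0$. That is the real content of the theorem, and your ``alternative route'' states it rather than proves it.

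Finally, \eqref{eq:charge-rank} gives $\rk K_0=|M|+2g+b-2$, not $|M|+2g+b-1$. So step~(3) as written would return the wrong genus.
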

 
According to  \cite{AvellaAlaminosGeiss2008,LekiliPolishchuk} and
\cite[Section~9]{Opper2025}, the \emph{graded AG-invariant} associated with $(S,M,\eta)$ is defined as 
\[
 \phi_A(n,d)=
 \#\{B_i\subset\partial S:
       |M\cap B_i|=n,\ n-\omega_\eta(B_i)=d\}.
\]
 In particular,
\(\phi_A(1,1)\) counts the boundary components with one
stop and winding number zero. Set \(D(\mathbb S)\coloneqq\DPic(\W(\mathbb S))\) and recall the following description of this group.

\begin{theorem}[{\cite[Corollary~10.6]{Opper2025}}]
\label{thm:opper-geometrization}
Let \(A\) be a connected homologically smooth and proper graded
gentle algebra over \(\kk\), with surface model
\(\mathbb S=(S,M,\eta)\). Assume that every boundary component
contains a stop and that \(\chi(S)<0\).
Under the Morita identification
\(D(\mathbb S)\simeq\DPic(A)\), there is a split exact sequence
\[
 1\longrightarrow N(\mathbb S)
 \longrightarrow D(\mathbb S)
 \xrightarrow{\,\Psi_{\mathbb S}\,}\MCGgr(\mathbb S)
 \longrightarrow1,
\]
where
\[
 N(\mathbb S)\simeq
 \mathbb G_a^{\phi_A(1,1)}\rtimes H^1(S;\kk^\times).
\]
\end{theorem}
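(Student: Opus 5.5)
The statement immediately above is Theorem~\ref{thm:opper-geometrization}, which the paper quotes from \cite[Corollary~10.6]{Opper2025}. It is not proved here, so the natural plan is to reduce it to Opper's results and check that our conventions match his.

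\textbf{Step 1: comparison with Opper's setup.} I would first fix the Morita equivalence \(\Perf(A)\simeq\W(\mathbb S)\) coming from an admissible \(\circ\)-dissection. Conjugating by this equivalence identifies \(D(\mathbb S)\) with \(\DPic(A)\). I would then check that our two-colour convention, with endpoints in \(M^\circ\), agrees with Opper's convention of distinguished boundary intervals. I would also check that his graded mapping class group agrees with the definition of \(\MCGgr(\mathbb S)\) recalled above. This uses the remark that diffeomorphisms preserving \(M\) can be isotoped to preserve \(M^\circ\), uniquely at the level of mapping classes.

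\textbf{Step 2: construction of \(\Psi_{\mathbb S}\) and the splitting.} An autoequivalence sends graded arcs of a formal dissection to objects. By the classification of indecomposables as graded arcs and bands, these images are again graded arcs forming a dissection, with the same intersection and morphism data. Hence there is a graded diffeomorphism realizing this, well defined up to isotopy, which defines \(\Psi_{\mathbb S}\). Conversely, a graded mapping class moves the dissection to another formal system whose endomorphism algebra is canonically isomorphic to \(A\). This gives an autoequivalence lifting the class, and hence a section. The hypotheses \(\chi(S)<0\) and stops on every boundary component are needed so that isotopy classes are determined by their action on arcs, ruling out the degenerate disc and annulus cases.

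\textbf{Step 3: the kernel.} An autoequivalence in \(N(\mathbb S)\) fixes every arc object up to isomorphism. It is therefore determined by its effect on the algebra \(A\) modulo inner automorphisms, i.e.\ by \(\mathrm{Out}^0(A)\). For a gentle algebra this group is generated by two pieces of data.
\begin{enumerate}[label=\textup{(\alph*)}]
\item Rescalings of arrows modulo vertex rescalings, which give \(H^1(S;\kk^\times)\). These are the rank-one local systems, computed through the cellular cochains of the dissection.
\item Additive unipotent deformations \(a\mapsto a+\lambda p\), where \(p\) is a parallel path of the same degree. Such paths exist exactly at boundary components with one stop and winding number zero, producing the factor \(\mathbb G_a^{\phi_A(1,1)}\).
\end{enumerate}
The torus \(H^1(S;\kk^\times)\) acts on these parameters by rescaling, which gives the semidirect product.

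\textbf{Main obstacle.} The hardest step is showing that these are all the kernel elements. One has to rule out further outer derivations, and check that no standard enhanced autoequivalence acts trivially on objects yet differs from these. I would settle this by computing \(HH^1(A)\) via the surface model, namely \(H^1(S)\) together with the contributions of the \(\phi_A(1,1)\) boundary components. This is where I would defer fully to \cite[Sections~9--10]{Opper2025}.
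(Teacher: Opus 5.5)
Your plan to reduce the statement to \cite[Corollary~10.6]{Opper2025} is what the paper does. The paper gives no independent proof, only a check of how Opper's general statement specializes, and that check is the part your proposal gets wrong. Opper's result has an exceptional case, the Kronecker surface: an annulus where two parallel arrows of equal degree can be mixed by \(\mathrm{GL}_2(\kk)\), so the kernel is not of the displayed solvable form. In general his kernel also has additive factors indexed by \(\phi_A(0,0)\), coming from boundary components without stops. The hypothesis \(\chi(S)<0\) excludes the Kronecker case, and the hypothesis that every boundary component carries a stop removes the \(\phi_A(0,0)\) factors. Only then is the kernel \(\mathbb G_a^{\phi_A(1,1)}\rtimes H^1(S;\kk^\times)\). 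Your stated reason for these hypotheses, that mapping classes are detected by their action on arcs, is not what they are used for. As written, you have not verified that the cited theorem gives the displayed kernel.

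More seriously, the mechanism in Step~3 is wrong for graded algebras. Because \(A\) is treated as a dg algebra, an element of \(N(\mathbb S)\) corresponds to an \(A_\infty\)-automorphism of the formal algebra \(A\), up to homotopy and conjugation by units. It does not in general correspond to an outer automorphism of the graded algebra. Infinitesimally one needs all of \(HH^1(A)\), including classes of higher arity.

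The additive factors already show this. Let \(B\) carry one stop, and let \(c_1,\dots,c_m\) be the arrows at the corners of the polygon containing that stop. Let \(p\) be the composite of the fan arrows at the unique point of \(M^\circ\) on \(B\). Then \(p\) is parallel to \(c_1\cdots c_m\), which vanishes for \(m\geq2\). Comparing \(B\) with the boundary of this polygon, in the conventions of \cite{HKK}, gives \(\omega_\eta(B)=\sum_i|c_i|-|p|-(m-1)\). Thus \(\omega_\eta(B)=0\) is exactly the degree condition for the arity-\(m\) cochain \((c_1,\dots,c_m)\mapsto p\) to define a class in \(HH^1(A)\). This class is realized by the \(A_\infty\)-automorphism with \(f_1=\mathrm{id}\) and \(f_m(c_1,\dots,c_m)=\lambda p\).

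For \(m=1\) this is your substitution \(a\mapsto a+\lambda p\). For \(m\geq2\), which occurs for suitable degrees, there is no parallel arrow. Since the Hochschild complex of a formal algebra splits by arity, no strict automorphism represents this class. So claim~(b), and the identification of the kernel with \(\mathrm{Out}^0(A)\), fail for a general graded gentle algebra. The kernel computation has to be taken from \cite{Opper2025}, as the paper does, not from a count of strict automorphisms.
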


The restrictions in this statement explain the specialization of
\cite[Corollary~10.6]{Opper2025}. The exceptional Kronecker surface is an
annulus and is excluded by \(\chi(S)<0\). Moreover, there is no boundary
component without stops, so the factors indexed by \(\phi_A(0,0)\) in
the general formula do not occur. The additive factors that remain are
exactly those indexed by \(\phi_A(1,1)\).

Under the hypotheses of Theorem~\ref{thm:opper-geometrization}, the kernel is the group of
\(\kk\)-points of a connected solvable algebraic group.  Its maximal torus
has rank
\begin{equation*}
 \rk H^1(S;\kk^\times)=\rk H_1(S;\Z)=2g+b-1.
\end{equation*}

For surfaces occurring in Section~\ref{sec:fibres}, every boundary
winding is non-zero, so \(\phi_A(1,1)=0\).  Theorem~\ref{thm:opper-geometrization} then gives
\begin{equation}\label{eq:derived-picard-formula}
 D(\mathbb S)\simeq
 H^1(S;\kk^\times)\rtimes\MCGgr(\mathbb S).
\end{equation}

\subsection{The Serre functor and the Grothendieck group}

Having described the derived Picard group, we now turn to its
distinguished Serre element and the induced action on the
Grothendieck group. We first recall the geometric description of
Auslander--Reiten translation by boundary rotation, and then
identify the Grothendieck group with the charge lattice of the
graded surface. Together, these descriptions allow us to express
the Serre action in terms of boundary rotation on relative homology.

\begin{proposition}[{\cite[Corollaries~5.2 and~5.4]
{OpperPlamondonSchroll}; \cite[Proposition~1.18]{ChangElaginSchroll}}]
\label{prop:geometric-serre-functor}
Suppose that the surface category is smooth and proper. Under the
surface model, the Auslander--Reiten translation \(\mathsf S[-1]\)
of a perfect arc object is obtained by moving both endpoints to the
adjacent points of \(M^\circ\) in the negative boundary direction
(with the surface on the right) and using the induced grading.
The Serre functor is obtained by following this graded endpoint
move with the shift \([1]\).
\end{proposition}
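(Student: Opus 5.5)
The statement is a translation of known results into the two-colour conventions used here, so the plan is to import the Auslander--Reiten description from \cite{OpperPlamondonSchroll} and then match conventions. The categorical half is formal. Since \(\W(\mathbb S)\simeq\Perf(A)\) with \(A\) smooth and proper, \(\Perf(A)\) has a Serre functor \(\mathsf S=-\otimes^{\mathbf L}_A\operatorname{RHom}_\kk(A,\kk)\). By Happel's theorem in the form of Reiten--Van den Bergh, AR triangles exist and take the form \(\tau X\to Y\to X\to\tau X[1]\) with \(\tau=\mathsf S[-1]\). Hence it suffices to identify \(\tau\) on arc objects. The statement about \(\mathsf S\) then follows by composing with \([1]\).

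\textbf{Step 1 (import the theorem).} Invoke \cite[Corollaries~5.2 and~5.4]{OpperPlamondonSchroll}. In their model the AR translate of the string object attached to a graded arc is the string object of the arc obtained by rotating both endpoints one step along the boundary, to the neighbouring marked point. Homotopy string objects are exactly the graded arcs with endpoints in \(M^\circ\), so this covers every perfect arc object.

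\textbf{Step 2 (dictionary of conventions).} Translate their marked points and orientation conventions into ours. Their marked points correspond to our \(\circ\)-points \(M^\circ\), and the stops \(M\) are the \(\bullet\)-points. Equivalently, in the setting of \cite{HKK,Opper2025}, they are the distinguished boundary intervals. One must check that their ``rotate along the boundary'' is the negative direction in our convention, where positive means the surface lies on the left. I would verify this via \cite[Proposition~1.18]{ChangElaginSchroll}, which is written in exactly this two-colour language. As an independent check, I would use a disc with few stops, where \(A\) is of type \(\mathbb A_n\) and AR translation of indecomposable projectives is computable by hand.

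\textbf{Step 3 (grading).} The grading on the rotated arc is the one obtained by transporting the grading of the original arc along the boundary segments swept by its endpoints. These segments lie in \(\partial S\setminus M\) and contain no stop, so the transport is canonical. It should agree with the grading produced by the mapping cone in the AR triangle. I would check this on one AR triangle by comparing degrees of the morphism \(X\to\tau X[1]\) with the intersection index at the boundary. A one-arc example suffices, because the grading shift is locally determined.

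The main obstacle I expect is Step~3, together with the sign in Step~2. Off-by-one errors between \(\tau\), \(\mathsf S\) and \(\mathsf S[-1]\), and between the two rotation directions, are easy to make. These are also exactly what the later Serre-action computations depend on. Everything else is a direct citation.
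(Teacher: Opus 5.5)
Your route is the paper's: the proposition is given no proof there and is simply imported from \cite[Corollaries~5.2 and~5.4]{OpperPlamondonSchroll} and \cite[Proposition~1.18]{ChangElaginSchroll}. So your categorical preamble plus the citation is all that is needed. One caveat: \cite{OpperPlamondonSchroll} is written for ordinary (ungraded) gentle algebras. For the graded gentle algebras considered here, you should rely on the second reference for the statement itself, not only for the dictionary of conventions.

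Step~3, however, contains a false claim. The points of $M$ and $M^\circ$ alternate on every boundary component. Hence the boundary segment swept when an endpoint moves to the adjacent point of $M^\circ$ contains exactly one stop; when $m_i=1$, the endpoint travels once around $B_i$ and passes its only stop. It cannot be otherwise. If the segment lay in $\partial S\setminus M$, the original and rotated arcs would be isotopic through arcs with endpoints in $\partial S\setminus M$. They would then be isomorphic as graded objects, and $\mathsf S[-1]$ would fix every arc object.

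The induced grading is still canonical, but for a different reason. The rotated arc is the image of the original under the collar-supported rotation. Its grading comes from the unique graded lift of that rotation which is the identity away from the boundary collars; equivalently, from lifting the continuous family of tangent-line maps to the $\Z$-cover. Stops determine which arcs give isomorphic objects; they play no role in transporting gradings.
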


\begin{theorem}[{\cite[Theorem~5.1]{HKK}}]
\label{thm:charge-lattice}
Let \(\mathbb S=(S,M,\eta)\) be a compact graded surface with stops, and let
\(\Z_\eta\) be the orientation local system of the line field.
There is a natural isomorphism
\[
 K(\mathbb S)
 \simeq H_1(S,M^\circ;\Z_\eta).
\]
If \(\eta=\mathbb R\xi\) is induced by a vector framing, then
\(\Z_\eta\) is trivial and the right-hand side is
\(H_1(S,M^\circ;\Z)\).
\end{theorem}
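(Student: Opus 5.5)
The plan is to compute both sides using a single admissible \(\circ\)-dissection and then show that the resulting identification does not depend on that choice. Fix an admissible \(\circ\)-dissection \(\Delta=\{\gamma_1,\ldots,\gamma_n\}\) of \(\mathbb S\) and grade its arcs. Let \(A\) be the graded gentle algebra of the formal system \(E=\bigoplus\gamma_j\), so that \(\Perf(A)\simeq\W(\mathbb S)\). Since \(A\) is finite-dimensional with a complete set of primitive idempotents, \(K_0(\Perf(A))\) is free abelian on the classes of the indecomposable projectives. These correspond exactly to the graded arcs \(\gamma_j\), so \(K(\mathbb S)\simeq\Z^n\) with basis \([\gamma_j]\).

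\textbf{The topological side.} I would compute \(H_1(S,M^\circ;\Z_\eta)\) from the same dissection. Each polygon cut out by \(\Delta\) is a disc whose boundary alternates between arcs of \(\Delta\) and boundary segments of \(\partial S\). Exactly one of these boundary segments contains a point of \(M\), and the others are degenerate at points of \(M^\circ\). Hence each polygon deformation retracts, relative to its arcs and its points of \(M^\circ\), onto the union of its arcs together with that boundary segment. This retraction collapses the boundary segment through the polygon.

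Gluing these retractions shows that \((S,M^\circ)\) is homotopy equivalent to the relative graph \(\bigl(\bigcup_j\gamma_j\cup M^\circ,\,M^\circ\bigr)\). Since \(\Z_\eta\) is a rank-one local system, it is trivial on each contractible arc. Therefore \(H_1(S,M^\circ;\Z_\eta)\) is free of rank \(n\), generated by the arcs \(\gamma_j\), each carrying a chosen local orientation of \(\eta\).

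The key observation is that a grading of \(\gamma_j\), namely a lift of its tangent-line map to the \(\Z\)-cover \eqref{eq:grading-cover}, determines such a local orientation. Changing the grading by the deck transformation \([1]\) reverses this orientation. This matches \([X[1]]=-[X]\) in \(K_0\). Sending each graded arc \(\gamma_j\) to its twisted relative class therefore defines an isomorphism of free abelian groups \(K(\mathbb S)\to H_1(S,M^\circ;\Z_\eta)\). In the vector-framed case \(\eta=\mathbb R\xi\), the local system \(\Z_\eta\) is trivialised by \(\xi\), which gives the last sentence of the theorem.

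\textbf{Naturality.} It remains to show that the map sends the class of an arbitrary graded arc or band object to its twisted homology class. This makes the isomorphism independent of \(\Delta\) and natural under graded mapping classes. I would check this on exact triangles. For two graded arcs meeting at a boundary point of \(M^\circ\), or crossing in the interior, the cone of the corresponding morphism is the graded arc obtained by resolving the intersection. In homology, the resolved curve is homologous to the sum of the two curves, with signs dictated by the induced orientations of \(\eta\).

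Since every object is an iterated cone of the dissection arcs, this identifies the two maps. Independence of the dissection then follows: two dissections are connected by flips, and a flip corresponds to exactly such a triangle.

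\textbf{Main obstacle.} The hard step is the sign bookkeeping in the triangle check. One must verify that the orientation of \(\eta\) transported along the resolved curve agrees with the orientations induced from the gradings of the two pieces, up to the sign forced by the degree of the connecting morphism. I would handle this locally in a chart around the intersection point, where the grading cover is trivial. There the claim reduces to comparing the integer degree of the morphism with the parity of the rotation of the tangent line.
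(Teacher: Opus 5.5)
Your proposal is correct and is essentially the standard argument behind the cited result. The paper gives no proof of its own: it quotes \cite[Theorem~5.1]{HKK} and adds only the remark that $(S,M^\circ)$ admits a finite relative graph model, so that the charge lattice is free of rank $|M|-\chi(S)$, which is exactly what your retraction onto the dissection arcs shows. Two small points need tidying. First, each polygon retracts onto the union of its arc sides alone; its unique boundary segment is what gets collapsed, and as written the target is the whole boundary circle of a disc. Second, since $A$ is a graded rather than an ordinary algebra, freeness of $K_0(\Perf(A))$ on the classes $[e_jA]$ should be justified rather than imported from the ungraded case; linear independence, for instance, follows from $\chi(e_iA,S_j)=\delta_{ij}$ against the simple modules.
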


The charge lattice is free abelian of finite rank. Indeed, the pair
\((S,M^\circ)\) admits a finite relative graph model. Its first homology with
coefficients in the rank-one local system \(\Z_\eta\) is therefore a
subgroup of a free chain group, while its relative homology vanishes in
degrees zero and at least two. The Euler characteristic gives
\begin{equation}\label{eq:charge-rank}
 \rk K(\mathbb S)=|M|-\chi(S)=2g+b+|M|-2.
\end{equation}
This calculation does not require the line field to be orientable.

Under the hypotheses of Theorem~\ref{thm:opper-geometrization}, the
action of \(D(\mathbb S)\) on the charge lattice factors through the
graded mapping class group. Under Theorem~\ref{thm:charge-lattice},
it is the natural action on twisted relative homology, and the shift
acts by \(-I\). The homology sequence of the pair gives
\[
 0\longrightarrow H_1(S;\Z_\eta)
 \longrightarrow K(\mathbb S)
 \longrightarrow
 \ker\!\left(H_0(M^\circ;\Z_\eta)\longrightarrow
                       H_0(S;\Z_\eta)\right)
 \longrightarrow0.
\]
The quotient is a subgroup of the free endpoint group
\(H_0(M^\circ;\Z_\eta)\). Thus the absolute homology is a saturated
integral sublattice of the charge lattice. This exact sequence is
valid without the Euler-characteristic restriction imposed in
geometrisation.
The equivariance assertions and the triviality of the geometrisation
kernel action are proved in Lemma~\ref{lem:geometrization-charge-action}
in Section~\ref{appsubsec:relative-gradings-charge}.

We finish the preliminaries by relating these homological actions to
the Serre functor.
\begin{lemma}
Assume that \(\eta=\mathbb R\xi\) is induced by a vector framing.
Let \(R\) be simultaneous positive one-step boundary rotation,
represented in boundary collars so that it moves both \(M\) and
\(M^\circ\) to the next points of the same type in the positive
boundary direction. Under the identification
\(K(\mathbb S)\simeq H_1(S,M^\circ;\Z)\),
\begin{equation}\label{eq:serre-rotation-action}
 (\mathsf S[-1])_*=\varepsilon R_*^{-1},
 \qquad \varepsilon\in\{1,-1\}.
\end{equation}
Here \(\varepsilon\) is the sign induced by the graded lift on the
orientation local system after trivializing it using \(\xi\).
\end{lemma}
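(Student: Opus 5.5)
The plan is to compare two descriptions of the same autoequivalence: the geometric one in Proposition~\ref{prop:geometric-serre-functor} for \(\mathsf S[-1]\), and the naturality of the charge-lattice isomorphism in Theorem~\ref{thm:charge-lattice}.

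First, I would fix an admissible \(\circ\)-dissection \(\{\gamma_1,\dots,\gamma_n\}\) with graded lifts. Since the associated graded arc objects generate \(\W(\mathbb S)\), their classes span \(K(\mathbb S)\). Under Theorem~\ref{thm:charge-lattice}, the class of a graded arc \(\widetilde\gamma\) is the relative cycle \([\gamma]\in H_1(S,M^\circ;\Z_\eta)\). Its orientation is the one determined by the grading, i.e.\ by the lift to the \(\Z\)-cover \eqref{eq:grading-cover} together with the trivialization of \(\Z_\eta\). The shift changes this orientation, consistent with \([1]_*=-I\).

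Second, by Proposition~\ref{prop:geometric-serre-functor}, \(\mathsf S[-1]\) sends the object of \(\widetilde\gamma\) to the object of the arc obtained by moving both endpoints one step in the negative boundary direction, with the induced grading. This endpoint move is exactly the image of \(\gamma\) under \(R^{-1}\), where \(R\) is the collar-supported simultaneous rotation described in the statement. Hence, as unoriented relative cycles, \((\mathsf S[-1])_*[\gamma]=\pm R^{-1}_*[\gamma]\) for each arc.

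The real content is that the sign is uniform. Here the vector framing enters. \(R\) is supported in boundary collars, and it is isotopic to the identity through maps that are not required to preserve \(M^\circ\). So \(R^*\xi\) is homotopic to \(\xi\) as a vector field, and \(\xi\) trivializes \(\Z_\eta\). Choosing this homotopy fixes a graded lift \(\widetilde R^{-1}\) of \(R^{-1}\), and with it an action on the trivialized orientation system. The induced grading from Proposition~\ref{prop:geometric-serre-functor} differs from \(\widetilde R^{-1}\) by a deck translation \([k]\), where \(k\) is determined by the local collar rotation. This \(k\) is the same for every arc endpoint because the construction is local and identical in every collar.

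A cleaner route, which I would try first, is functorial. Note that \(\mathsf S[-1]\) and the graded mapping class \(\widetilde R^{-1}\) agree on all arc objects up to the single shift \([k]\). By Theorem~\ref{thm:opper-geometrization} and Lemma~\ref{lem:geometrization-charge-action}, the action on \(K(\mathbb S)\) factors through \(\MCGgr\) and is the natural action on twisted homology. So it suffices to show that \(\Psi_{\mathbb S}(\mathsf S[-1])=\widetilde R^{-1}[k]\) in \(\MCGgr\). Then
\[
(\mathsf S[-1])_*=(-1)^kR^{-1}_*,
\]
and \(\varepsilon=(-1)^k\).

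The main obstacle is this uniformity. A ±1 discrepancy could a priori vary from arc to arc, for instance between arcs whose endpoints lie on boundaries of different winding, or that pass through different collars. To rule this out I would check the equality on a generating dissection using the explicit local grading of the endpoint rotation. I would then conclude either by linearity, since the classes span the lattice and both sides are linear, or via the geometrisation identification just described.

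When geometrisation hypotheses fail (for example \(\chi(S)\ge 0\)), I would argue directly. The classes of the dissection arcs form a basis of \(K(\mathbb S)\), because their number is \(|M|-\chi(S)\) by \eqref{eq:charge-rank} and they span. It is therefore enough to compare the two maps on this basis, where the uniform sign has already been established.
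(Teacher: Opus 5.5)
You set up the comparison correctly: the dissection arcs span \(K(\mathbb S)\), Proposition~\ref{prop:geometric-serre-functor} sends each arc object to the rotated arc, and so each arc class goes to \(\pm R^{-1}_*\) of itself. You also rightly name uniformity of that sign as the crux. However, you establish it only in a restricted case.

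\begin{itemize}
\item The reason you give in general is that the deck shift \(k\) ``is the same for every arc endpoint because the construction is local and identical in every collar''. That is not an argument. Collars around boundaries with different winding numbers and stop counts are not isomorphic as graded collars. Nothing local compares the sign in one collar with the sign in another, and that is exactly the scenario you worried about.
\item Your geometrisation route does work when \(\chi(S)<0\). You need to note that \(\Psi_{\mathbb S}(\mathsf S[-1])\) lies over \(R^{-1}\), so it equals \(\widetilde R^{-1}[k]\) by the central extension \eqref{eq:grading-extension}. But Theorem~\ref{thm:opper-geometrization} excludes discs and annuli, and the lemma must cover them: Lemma~\ref{lem:serre-recovers-boundary-number} uses it explicitly in the disc case.
\item Your fallback for \(\chi(S)\geq0\) appeals to a ``uniform sign already established''. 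That is circular there.
\end{itemize}

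The missing observation is global rather than local. Proposition~\ref{prop:geometric-serre-functor} realizes \(\mathsf S[-1]\) on all arc objects by a single graded lift of \(R^{-1}\); this is what the ``induced grading'' means. By naturality of Theorem~\ref{thm:charge-lattice}, its action on \(H_1(S,M^\circ;\Z_\eta)\) is \(R^{-1}_*\) combined with the induced isomorphism \((R^{-1})^*\Z_\eta\simeq\Z_\eta\). After trivializing \(\Z_\eta\) by \(\xi\), this isomorphism is a continuous \(\{1,-1\}\)-valued function on \(S\). It is therefore a single sign \(\varepsilon\), because \(S\) is connected.

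This treats arcs ending on boundaries of different winding all at once and needs no Euler-characteristic restriction. Spanning by the arc classes then finishes the proof. This is precisely the paper's argument. With it, neither your collar-by-collar check nor the detour through \(\Psi_{\mathbb S}\) is needed.
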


\begin{proof}
By Proposition~\ref{prop:geometric-serre-functor}, the functor
\(\mathsf S[-1]\) acts on the generating graded arcs by a single
graded lift of \(R^{-1}\). The charge-lattice identification is
natural for graded diffeomorphisms, including the induced
isomorphism of orientation local systems. The vector field \(\xi\)
trivializes \(\Z_\eta\), and in this trivialization the induced
isomorphism of rank-one integral local systems is multiplication
by a single sign \(\varepsilon\), since \(S\) is connected. Thus its
action on relative homology is \(\varepsilon R_*^{-1}\). The classes
of the graded arcs generate \(K(\mathbb S)\), which proves
\eqref{eq:serre-rotation-action}.
\end{proof}

% --------------------------------------------------------------------------
\section{Ungraded reconstruction}\label{sec:ungraded}
In this section, we identify the boundary-rotation subgroup and the capped
mapping class group from the ungraded mapping class group.
The resulting central extension recovers the number of stops on each
boundary component.
% --------------------------------------------------------------------------

\begin{theorem}\label{thm:ungraded-reconstruction}
Let \((S,M)\) and \((S',M')\) be compact connected oriented
surfaces with stops and non-empty boundary, at least one stop on every boundary
component, and genera at least two.  If
\[
 \Mod(S,M)\simeq\Mod(S',M')
\]
as abstract groups, then there is an orientation-preserving diffeomorphism
\((S,M)\simeq(S',M')\).  In particular, the group determines the genus,
the number of boundary components, and the distribution of stops among
them.
\end{theorem}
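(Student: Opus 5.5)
The plan follows the outline given in the introduction. We pass through the capped surface, identify the rotation lattice intrinsically, and read off the stop numbers from a central extension.

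\emph{Step 1: capping.} Let $X=\widehat S$ with $b$ marked points $Q$, one per capped boundary component. Forgetting the stops and capping gives a homomorphism $\Mod(S,M)\to\Mod(X,Q)$. Its kernel is the free abelian group generated by the rotations $\rho_1,\ldots,\rho_b$; this follows from the capping sequence together with the argument of Lemma~\ref{lem:stopped-stabilizer-root-presentation} applied with trivial line-field condition. Its image has finite index, since it consists of the mapping classes whose permutation of $Q$ preserves the stop counts.

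\emph{Step 2: intrinsic recognition.} Let $Z$ be the kernel from Step~1, a free abelian group of rank $b$. We characterise it as the unique maximal normal abelian subgroup of $\Mod(S,M)$; equivalently, it is the largest abelian subgroup whose centraliser has finite index. This works because for $g\ge2$ a finite-index subgroup of $\Mod(X,Q)$ has no nontrivial normal abelian subgroups: it has trivial virtual centre and is acylindrically hyperbolic. Any isomorphism $\Phi\colon\Mod(S,M)\to\Mod(S',M')$ therefore carries $Z$ to $Z'$ and induces an isomorphism between the finite-index subgroups $\Gamma\subset\Mod(X,Q)$ and $\Gamma'\subset\Mod(X',Q')$. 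By the finite-index rigidity theorem of \cite{BehrstockMargalit}, this isomorphism is conjugation by an element of $\Mod^*(X,Q)$, up to the exceptional closed-genus-two hyperelliptic issues, which do not arise since $b\ge1$. Hence $X\cong X'$ with $|Q|=|Q'|$, so $g=g'$ and $b=b'$, and the bijection $Q\to Q'$ is induced by a homeomorphism $h$.

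\emph{Step 3: stop numbers from the extension.} Pass to the pure subgroups $P=\PMod$ on both sides, so that we have a central extension $1\to Z\to\widetilde P\to\PMod(X,Q)\to1$. Boundary twists $T_i=\rho_i^{m_i}$ map to the trivial class in $\PMod(X,Q)$, and the class of this extension is $\sum_i m_i^{-1}$ times the Euler class $e_i$ associated to the puncture $q_i$; this is the extension obtained from the boundary-fixing group by adjoining an $m_i$-th root of $T_i$. Concretely, we use the lattice $Z\supset Z_0=\langle T_1,\ldots,T_b\rangle$. We can identify $Z_0$ intrinsically as the subgroup of $z\in Z$ that lie in the commutator subgroup of $\widetilde P$ restricted to a suitable finite-index subgroup, or better: $z\in Z_0$ iff the class in $H^2$ pulled back through the evaluation is trivial. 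Then $Z/Z_0\cong\prod\Z/m_i$, with each factor attached to the eigen-line of the puncture $q_i$. The conjugation action of $\Mod$ permutes the lines $\Z\rho_i$. These lines can be recognised as the primitive elements of $Z$ whose image class in $H^2(\PMod(X,Q);\Z)$ is a multiple of a single Euler class $e_i$; the Euler classes are linearly independent for $g\ge2$ by Harer's computation of $H^2$. Then $m_i$ equals the index $[\,\Z\rho_i:\Z\rho_i\cap Z_0\,]$, and $\Phi$ respects this matching compatibly with the bijection $Q\to Q'$. Finally, a diffeomorphism $X\to X'$ realizing the bijection of punctures with matching counts lifts to $(S,M)\to(S',M')$, possibly after composing with an orientation reversal, which is harmless because reversing orientation does not change the stop counts.

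\emph{Main obstacle.} I expect the hardest point to be the rigorous intrinsic identification of $Z_0$ and of the individual lines $\Z\rho_i$ inside $Z$, so that $\Phi$ preserves the decomposition $Z=\bigoplus\Z\rho_i$ up to permutation and sign. As a first attempt, I would use the centraliser of a separating-curve twist cutting off a single boundary component (a torus with one boundary component): within that centraliser, $\rho_i$ behaves differently from $\rho_j$ with $j\ne i$. I would also recover $T_i$ as the elements of $Z$ lying in a lantern- or chain-relation product of Dehn twists, which are recognisable via rigidity.
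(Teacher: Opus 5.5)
Your architecture (capping, recognizing the rotation lattice $Z$ as the unique maximal normal abelian subgroup, Behrstock--Margalit rigidity for the capped quotient, and cotangent-line classes) matches the paper's. However, Step~3 never actually extracts the $m_i$.

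First, the extension class is misstated. Pushing out the boundary-pointwise capping sequence, whose class is $\sum_i\psi_i\otimes T_{B_i}$, along $T_{B_i}\mapsto m_i\rho_i$ gives $e_M=\sum_i m_i\psi_i\otimes\rho_i\in H^2(\PMod(\widehat S,Q);Z)$. So the $\rho$-coefficients are $m_i$, not $m_i^{-1}$.

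Second, your formula $m_i=[\Z\rho_i:\Z\rho_i\cap Z_0]$ presupposes that $\Phi$ carries $Z_0=\langle T_{B_1},\ldots,T_{B_b}\rangle$ onto $Z_0'$ and each line $\Z\rho_i$ onto some $\Z\rho'_j$. None of your suggestions proves this:
\begin{itemize}
\item ``The class in $H^2$ pulled back through the evaluation'' is undefined. Elements of $Z$ do not have classes in $H^2(\PMod(X,Q);\Z)$; functionals $Z\to\Z$ do.
\item The commutator description does give $Z\cap[G^0,G^0]=Z_0$ for $g\ge3$, because $\Mod(S,\partial S)$ is perfect there. You have not checked genus two. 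Even granting it, $Z/Z_0\cong\prod_i\Z/m_i$ does not determine the multiset $\{m_i\}$ (compare $(2,3)$ with $(6,1)$), so you still need the lines.
\item Conjugation cannot locate the lines. If the $m_i$ are pairwise distinct, then $G=G^0$ centralizes $Z$.
\item The lantern/chain idea needs $\Phi$ to send Dehn twists to Dehn twists exactly. Rigidity of the quotient only controls them modulo $Z'$.
\end{itemize}

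The missing idea is that you need no intrinsic description of $Z_0$ or of the lines; compare the extension classes directly. Let $\alpha=\Phi|_Z$ and let $\overline\Phi$ be the induced isomorphism of capped pure groups. Naturality gives $(\overline\Phi^*\otimes\alpha^{-1})(e_{M'})=e_M$. Since $\overline\Phi$ is induced by an extended diffeomorphism with $q_i\mapsto q'_{\pi(i)}$, one has $\overline\Phi^*\psi'_{\pi(i)}=\varepsilon\psi_i$ for a single sign $\varepsilon$. Rational independence of the $\psi_i$ then lets you compare coefficients, which yields $m_i\rho_i=\varepsilon\, m'_{\pi(i)}\,\alpha^{-1}(\rho'_{\pi(i)})$. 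Both $\rho_i$ and $\alpha^{-1}(\rho'_{\pi(i)})$ are primitive in $Z$, so $m_i=m'_{\pi(i)}$. As a by-product, $\alpha$ matches the lines and carries $Z_0$ onto $Z_0'$, so your ``main obstacle'' becomes a corollary rather than an input.

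There is also a smaller gap in Step~2. Behrstock--Margalit concerns injections of a finite-index subgroup of $\Mod^*(X,Q)$ into the same group $\Mod^*(X,Q)$, so it cannot be what proves $X\cong X'$ and $|Q|=|Q'|$. You must first obtain $g=g'$ and $b=b'$ from commensurability invariants. The paper uses $\operatorname{vcd}\PMod(X,Q)=4g-4+b$ and maximal free abelian rank $3g-3+b$. Only then can you identify the two punctured surfaces and invoke rigidity.
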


For a punctured surface
\((X,Q)\), write \(\Mod^*(X,Q)\) for the extended mapping class group,
which allows orientation-reversing mapping classes and permutations of
the points of \(Q\).

\begin{theorem}[{\cite[Theorem~3]{BehrstockMargalit}}]
\label{thm:mapping-class-rigidity}
Let \(X\) be a closed oriented surface of genus at least two, and let
\(Q\neq\varnothing\) be finite.  If \(\Gamma\) is a finite-index subgroup
of \(\Mod^*(X,Q)\), then every injective homomorphism
\[
 \Gamma\longrightarrow\Mod^*(X,Q)
\]
is the restriction of conjugation by an element of
\(\Mod^*(X,Q)\).
\end{theorem}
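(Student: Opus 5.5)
The statement is quoted from \cite[Theorem~3]{BehrstockMargalit}, and in the paper it is used as a black box. If I had to prove it, I would follow the Ivanov strategy for commensurator rigidity, adapted to injective homomorphisms. Throughout, let \(\phi\colon\Gamma\to\Mod^*(X,Q)\) be injective, where \(\Gamma\) has finite index.

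\textbf{Step 1: pass to a pure, torsion-free subgroup.} First replace \(\Gamma\) by a finite-index subgroup \(\Gamma_0\) that is contained in \(\PMod(X,Q)\). I would also take it inside the level-\(3\) congruence subgroup, so that it is torsion-free and every element is pure in the sense of Ivanov.

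\textbf{Step 2: characterize powers of Dehn twists algebraically.} The aim is to show that \(\phi\) carries powers of twists in \(\Gamma_0\) to powers of twists. I would use maximal-rank free abelian subgroups: their rank is \(3g-3+|Q|\), and by Birman--Lubotzky--McCarthy they are virtually generated by twists about a pants decomposition. An element \(t\in\Gamma_0\) is then a power of a twist exactly when its centralizer has the right rank properties. Concretely, the centre of \(C_{\Gamma_0}(t)\) should be virtually cyclic, and \(t\) should lie in a maximal-rank abelian subgroup.

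Injectivity preserves ranks of abelian subgroups. Since the ambient group is the same, the maximal rank is preserved as well. The delicate point is that an injective map need not preserve centralizers exactly, only send them into centralizers. I would therefore follow Behrstock--Margalit and Irmak, and phrase the characterization purely through containment and rank conditions, which an injection respects. This step also needs care when \(|Q|>0\): twists about curves cutting off two punctures must be separated from other twists, and I would detect them through the rank of the centralizer.

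\textbf{Step 3: obtain a map of the curve complex and show it is geometric.} Step 2 gives a map \(\phi_*\) on vertices of the curve complex \(\mathcal C(X,Q)\). Commuting twists go to commuting twists, so disjointness is preserved. Non-commuting twists go to non-commuting twists by injectivity, so intersection is preserved too. Hence \(\phi_*\) is superinjective. By the theorem of Irmak and Behrstock--Margalit, a superinjective self-map of \(\mathcal C(X,Q)\) is induced by an element \(h\in\Mod^*(X,Q)\) when \(g\ge2\) and \(Q\neq\varnothing\). I expect this step, together with handling the low-complexity configurations (genus two with few punctures), to be the main obstacle. This is exactly where the hypotheses are used.

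\textbf{Step 4: identify \(\phi\) with conjugation.} Put \(\psi=c_h^{-1}\circ\phi\). Then \(\psi(T_c^k)\) is a power of \(T_c\), and comparing degrees via braid relations shows that it is \(T_c^{k}\) itself. Hence \(\psi\) fixes a finite-index subgroup generated by twist powers. For arbitrary \(x\in\Gamma\), the relation \(\psi(x)T_c^k\psi(x)^{-1}=T_{\psi_*(x c)}^{k}\) shows that \(\psi(x)x^{-1}\) fixes every curve. With \(g\ge2\) and \(Q\neq\varnothing\), only the identity fixes every curve, because the hyperelliptic involution of genus two cannot fix every puncture-dependent curve. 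Therefore \(\psi=\mathrm{id}\) on \(\Gamma\), so \(\phi\) is conjugation by \(h\).
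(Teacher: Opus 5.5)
Your outline is correct and essentially matches the paper, which gives no argument of its own and simply quotes Behrstock--Margalit. Their proof, building on Ivanov and Irmak, runs through your Steps 1--4: the centralizer/maximal-rank characterization of twist powers is transported through the injection, the resulting superinjective map of the curve complex is shown to be geometric, and one finishes by faithfulness of the action on curves.

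One correction to Step 4. Braid relations do not hold between proper powers of twists, and the subgroup generated by twist powers need not have finite index. Neither claim is needed: knowing only that \(\psi(T_c^k)\) is a non-zero power of \(T_c\) already gives \(\psi(x)(c)=x(c)\) for every curve \(c\) in your curve-fixing argument, and hence \(\psi=\mathrm{id}\) on \(\Gamma\).
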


Let \(S=\Sigma_{g,b}\) with \(g\geq2\), let
\[
 \partial S=B_1\sqcup\cdots\sqcup B_b,
 \qquad
 M_i=M\cap B_i,
 \qquad
 m_i=|M_i|,
\]
and write \(G=\Mod(S,M)\). For each \(i\), glue a disc \(D_i\) to
\(B_i\), choose a point \(q_i\in\operatorname{int}(D_i)\), and denote the
resulting closed surface by \(\widehat S\). Set
\(
 Q=\{q_1,\ldots,q_b\}\). The points of \(Q\) record the capped boundary components. For each
boundary component \(B_i\), let \(\rho_i\in G\) be the positive one-step
rotation of the stops on \(B_i\), supported in a boundary collar.
Set
\[
 A_M=\langle\rho_1,\ldots,\rho_b\rangle
 \qquad\text{and}\qquad
 G^0=\{f\in G\mid f(B_i)=B_i\text{ for every }i\}.
\]

\begin{lemma}\label{lem:intrinsic-capping-extension}
The subgroup \(A_M\) is isomorphic to \(\Z^b\). It is the unique maximal
normal abelian subgroup of \(G\), and hence is characteristic. Moreover,
\[
 G^0=C_G(A_M).
\]
Capping the boundary components induces a central extension
\[
 1\longrightarrow A_M\longrightarrow G^0
 \longrightarrow\PMod(\widehat S,Q)\longrightarrow1.
\]
Its kernel is precisely the subgroup generated by the one-step boundary
rotations. In particular,
\(
 A_M=Z(G^0)\).
\end{lemma}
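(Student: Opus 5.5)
I would begin by building the capping homomorphism and identifying its kernel, then prove the group-theoretic characterisations of \(A_M\). Capping each \(B_i\) by a disc \(D_i\) with marked point \(q_i\) extends every \(f\in G\) to a mapping class of \((\widehat S,Q)\). On \(G^0\) this extension fixes each \(q_i\), so it gives a homomorphism \(\kappa\colon G^0\to\PMod(\widehat S,Q)\). For surjectivity, note that every pure mapping class of \((\widehat S,Q)\) can be isotoped to fix small discs around the \(q_i\) pointwise, and hence restricts to an element of \(\Mod(S,\partial S)\subset G^0\).

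For the kernel, I would use the standard capping sequence. Since \(g\ge2\), the kernel of \(\Mod(S,\partial S)\to\PMod(\widehat S,Q)\) is the free abelian group \(\langle T_1,\dots,T_b\rangle\cong\Z^b\). By the argument of Lemma~\ref{lem:stopped-stabilizer-root-presentation} with \(\eta\) ignored, \(G^0\) is generated by \(\Mod(S,\partial S)\) and the \(\rho_i\), subject to \(\rho_i^{m_i}=T_i\) and the \(\rho_i\) being central. Each \(\rho_i\) is supported in a collar and so caps to the identity. Hence \(\ker\kappa\) is generated by the \(\rho_i\). It remains to see that \(A_M\cong\Z^b\). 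Its finite-index subgroup \(\langle T_i\rangle\) is \(\Z^b\), and \(A_M\) is abelian, so \(A_M\) is torsion-free abelian of rank \(b\) and finitely generated. It is free on the \(\rho_i\) because \(\rho_1^{a_1}\cdots\rho_b^{a_b}=1\) forces \(a_i\equiv0\bmod m_i\) (the action on the stops), after which it reduces to the relation \(T_1^{a_1/m_1}\cdots T_b^{a_b/m_b}=1\), which is trivial. The \(\rho_i\) are collar-supported and commute with everything fixing the \(B_i\), so the extension is central.

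Next, I would show \(G^0=C_G(A_M)\) and \(A_M=Z(G^0)\). For \(f\in G\) we have \(f\rho_if^{-1}=\rho_{\sigma(i)}\), where \(\sigma\) is the induced permutation of boundary components; orientation is preserved, so the direction of rotation is preserved. So if \(f\) centralises \(A_M\), then \(\sigma=\mathrm{id}\) because the \(\rho_i\) are distinct elements, and therefore \(f\in G^0\). The converse holds by centrality. For \(Z(G^0)\supseteq A_M\) nothing more is needed. Conversely, a central element of \(G^0\) maps to \(Z(\PMod(\widehat S,Q))\), which is trivial for \(g\ge2\); hence it lies in \(A_M\). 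This also shows that \(A_M\) is normal in \(G\).

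The main obstacle is maximality and uniqueness among normal abelian subgroups. Let \(N\trianglelefteq G\) be abelian. Then \(N\cap G^0\) has finite index in \(N\), and its image in \(\PMod(\widehat S,Q)\) is an abelian subgroup normal in \(\kappa(G^0)\), which is the full pure group. The pure mapping class group of a surface of genus \(\ge2\) has no nontrivial normal abelian subgroups: a nontrivial normal subgroup contains pseudo-Anosovs, or has an infinite orbit of reduction systems, and so is not abelian. Hence \(N\cap G^0\subseteq A_M\). To handle \(N\not\subseteq G^0\), take \(n\in N\) with \(\sigma(n)\ne\mathrm{id}\). Some power \(n^k\) lies in \(N\cap G^0\subseteq A_M\). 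For any \(x\in G^0\), the commutator \([n,x]\) lies in \(N\). Choosing \(x\) to be a pseudo-Anosov in \(\Mod(S,\partial S)\), one gets \(n x n^{-1}x^{-1}\in N\cap G^0\subseteq A_M\), so \(nxn^{-1}\) and \(x\) have the same image in \(\PMod(\widehat S,Q)\). Varying \(x\) over a generating set forces \(n\) to act trivially by conjugation on \(\PMod\), and hence to be trivial in \(\Mod^*(\widehat S,Q)\), whose centraliser of the pure group is trivial. This contradicts \(\sigma(n)\ne\mathrm{id}\). So every normal abelian subgroup lies in \(A_M\), which gives uniqueness and maximality, and hence \(A_M\) is characteristic.
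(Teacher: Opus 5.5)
Your proof is correct and follows essentially the same route as the paper: the capping map with kernel generated by the $\rho_i$, independence of the boundary twists for $A_M\cong\Z^b$, the permutation action of $G$ on the $\rho_i$ for $G^0=C_G(A_M)$, centre-freeness of $\PMod(\widehat S,Q)$ for $A_M=Z(G^0)$, and the two assertions of Lemma~\ref{lem:no-normal-abelian-subgroup} for maximality. The only difference is that you exclude boundary-permuting elements of a normal abelian subgroup $N$ by the commutator argument $[n,x]\in N\cap G^0\subseteq A_M$, whereas the paper shows that $\operatorname{im}\kappa$ has no non-trivial normal abelian subgroup.

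Two small remarks. First, your parenthetical justification that $\PMod(\widehat S,Q)$ has no non-trivial normal abelian subgroup does not cover finite normal subgroups, which contain no pseudo-Anosovs. These are, however, eliminated by the same trivial-centralizer fact you use at the end. Second, choosing $x$ to be pseudo-Anosov is unnecessary, since your argument works for every $x\in G^0$.
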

\begin{proof}

The rotations commute and satisfy
\begin{equation}\label{eq:boundary-root-relation}
 \rho_i^{m_i}=T_{B_i},
\end{equation}
where \(T_{B_i}\) is the positive boundary twist. For
\(g\geq2\), the boundary twists \(T_{B_i}\) are independent in
\(\Mod(S,\partial S)\), and \(\Mod(S,\partial S)\) embeds in
\(\Mod(S,M)\). Suppose that \(\prod_i\rho_i^{a_i}=1\), and choose a
positive common multiple \(m\) of \(m_1,\ldots,m_b\). Then
\[
  1=\left(\prod_i\rho_i^{a_i}\right)^m
   =\prod_i(\rho_i^{m_i})^{a_i m/m_i}
   =\prod_i T_{B_i}^{a_i m/m_i}.
\]
Independence gives \(a_i m/m_i=0\) for every \(i\), hence \(a_i=0\).
Thus \(A_M\cong\Z^b\).

We next show that \(A_M\) can be recognized from the abstract group
\(G\). The capping construction defines a homomorphism
\begin{equation}\label{eq:coloured-capping-map}
 \kappa:G\longrightarrow \Mod(\widehat S,Q).
\end{equation}
Its image consists of those mapping classes whose induced permutation of
\(Q\) preserves the function \(q_i\mapsto m_i\), and its kernel is
\(A_M\).  Indeed, when all boundary points are required to be fixed, this is the usual capping exact sequence, whose kernel is generated by the
boundary twists. Allowing cyclic permutations of the \(m_i\) stops on
\(B_i\) replaces the generator \(T_{B_i}\) by its distinguished
\(m_i\)-th root \(\rho_i\), and introduces no further element in the
kernel.

We claim that the image of \(\kappa\) has no nontrivial normal abelian
subgroup. Let \(L\) be such a subgroup and put
\(P=\PMod(\widehat S,Q)\). The group \(P\) is normal in
\(\operatorname{im}\kappa\) and has finite index there.
Lemma~\ref{lem:no-normal-abelian-subgroup} gives \(L\cap P=1\), so
\(L\) is finite. The kernel of the conjugation action of \(P\) on \(L\)
has finite index in \(P\). Since this kernel is centralized by \(L\),
the second assertion of
Lemma~\ref{lem:no-normal-abelian-subgroup} implies \(L=1\).

It follows that every normal abelian subgroup of \(G\) is contained in
\(\ker\kappa=A_M\). Hence \(A_M\) is the unique maximal normal abelian
subgroup of \(G\), and therefore is characteristic. Note that this
argument also covers the case \((g,b)=(2,1)\). In that case the puncture
removes the hyperelliptic centre that occurs for the closed genus-two
mapping class group.

The conjugation action of \(G\) on \(A_M\) agrees with its permutation
action on the boundary components. Indeed, an orientation-preserving mapping
class sends \(\rho_i\) to \(\rho_j\) precisely when it sends \(B_i\) to
\(B_j\). The boundary orientation rules out \(\rho_j^{-1}\). It follows that
\(
 G^0=C_G(A_M)
\)
is characteristic. Its elements are precisely the mapping classes that
preserve every boundary component. Restricting
\eqref{eq:coloured-capping-map} therefore gives the central extension
\begin{equation}\label{eq:marked-capping-extension}
 1\longrightarrow A_M
 \longrightarrow G^0
 \longrightarrow\PMod(\widehat S,Q)
 \longrightarrow1.
\end{equation}
The last arrow is onto.  Indeed, a pure mapping class of \((\widehat S,Q)\) may be
represented by a diffeomorphism which is the identity on small discs about
the points of \(Q\), and whose restriction to the complement fixes all
stops.  The quotient
\(\PMod(\widehat S,Q)\) is centre-free for \(g\geq2\) and \(b\geq1\)
\cite{ParisRolfsen}. Thus \(A_M=Z(G^0)\), which gives a second
description of both the kernel and the quotient in
\eqref{eq:marked-capping-extension}.
\end{proof}

\begin{proposition}[{\cite{Harer,BirmanLubotzkyMcCarthy}}]\label{prop:pure-mapping-class-ranks}
Let \(X\) be a closed surface of genus \(g\geq2\), and let \(Q\) be a
non-empty set of \(b\) punctures.  Then the virtual cohomological dimension of $\operatorname{PMod}(X,Q)$ is
\[
\operatorname{vcd}\PMod(X,Q)=4g-4+b,
\]
and the maximal rank of a free abelian
subgroup of $\operatorname{PMod}(X,Q)$ is
\[\max\{n:\Z^n\hookrightarrow\PMod(X,Q)\}=3g-3+b.
\]
\end{proposition}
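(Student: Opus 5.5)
The plan is to prove the two formulas separately: the virtual cohomological dimension through Harer's duality argument, and the rank through the Birman--Lubotzky--McCarthy analysis of abelian subgroups via canonical reduction systems. Throughout, put \(\Gamma=\PMod(X,Q)\). Since \(g\geq2\) and \(b\geq1\), the complement \(X\setminus Q\) is hyperbolic. Its Teichm\"uller space \(\mathcal T\) is a contractible manifold of dimension \(6g-6+2b\). The group \(\Gamma\) acts on \(\mathcal T\) properly with finite stabilizers, and a torsion-free finite-index subgroup (for instance a level-\(3\) congruence subgroup) acts freely.

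For the virtual cohomological dimension, I would follow Harer. First, \(\Gamma\) is a virtual duality group. The bordification of \(\mathcal T\) is a contractible manifold with corners, its boundary is homotopy equivalent to the curve complex \(\mathcal C(X\setminus Q)\), and Bieri--Eckmann duality gives
\[
\operatorname{vcd}\Gamma=\dim\mathcal T-1-n,
\]
where \(\mathcal C\) is homotopy equivalent to a wedge of \(n\)-spheres. Second, for at least one puncture, \(\mathcal C\) is homotopy equivalent to a wedge of spheres of dimension \(2g-3+b\). I would prove this by comparing \(\mathcal C\) with the arc complex based at a puncture, which is contractible, and counting cells in Harer's ribbon-graph (Penner) cell decomposition. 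Substituting gives \(6g-6+2b-1-(2g-3+b)=4g-4+b\). Alternatively, the upper bound follows directly from Harer's equivariant spine of \(\mathcal T\) of dimension \(4g-4+b\), built from filling arc systems with at least one puncture, and the duality argument gives sharpness. The main obstacle is this connectivity and dimension computation for \(\mathcal C\), and I would quote Harer for it.

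For the rank, the lower bound is immediate. A pants decomposition of \(X\setminus Q\) has \(3g-3+b\) curves, and the Dehn twists about these curves generate a free abelian subgroup of \(\Gamma\) of exactly that rank. For the upper bound, let \(A\cong\Z^n\le\Gamma\), and pass to a finite-index subgroup of \(A\) that is pure in Ivanov's sense. This subgroup has a common canonical reduction system \(\sigma\), a multicurve with \(k=|\sigma|\) components. On each component \(Y\) of \((X\setminus Q)\setminus\sigma\), the restriction of \(A\) is either trivial or contains a pseudo-Anosov element. In the latter case its image lies in the centralizer of that pseudo-Anosov, which is virtually cyclic, so the restriction has rank at most one.

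Now use the exact sequence
\[
1\to\langle T_c : c\in\sigma\rangle\cap A\to A\to\prod_Y A|_Y .
\]
It gives \(n\le k+p\), where \(p\) is the number of components on which \(A\) acts pseudo-Anosov. Each such component \(Y\) is not a pair of pants, since pants admit no pseudo-Anosov maps, so it contains an essential non-peripheral curve. Adding one such curve for each of these components extends \(\sigma\) to a multicurve with \(k+p\) components. Since a multicurve has at most \(3g-3+b\) components, we get \(n\le3g-3+b\). The delicate points in this half are the existence of the canonical reduction system for a pure abelian subgroup and the virtual cyclicity of pseudo-Anosov centralizers, both standard results.
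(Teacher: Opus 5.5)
Your proposal is correct and follows the same route as the sources the paper relies on. The paper gives no argument of its own. It cites Harer for the virtual cohomological dimension, which comes from duality with the curve complex, a wedge of $(2g-3+b)$-spheres, giving $(6g-6+2b)-1-(2g-3+b)=4g-4+b$. It cites Birman--Lubotzky--McCarthy for the rank, whose proof is the argument you sketch: pants-decomposition twists for the lower bound, and a canonical reduction system together with virtually cyclic pseudo-Anosov centralizers for the upper bound.
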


\begin{lemma}\label{lem:pure-mapping-class-rigidity}
Let \(\widehat S\) and \(\widehat S'\) be closed oriented surfaces of
genera \(g,g'\geq2\), and let \(Q,Q'\) be non-empty finite sets of
cardinalities \(b,b'\).  Every abstract isomorphism
\[
 \PMod(\widehat S,Q)\xrightarrow{\sim}
 \PMod(\widehat S',Q')
\]
is induced by an extended diffeomorphism
\((\widehat S,Q)\to(\widehat S',Q')\).  In particular,
\(g=g'\) and \(b=b'\).
\end{lemma}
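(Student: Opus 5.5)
The plan is to reduce to the finite-index rigidity theorem, Theorem~\ref{thm:mapping-class-rigidity}, applied to the extended group \(\Mod^*(\widehat S',Q')\). Let \(\phi\colon\PMod(\widehat S,Q)\to\PMod(\widehat S',Q')\) be an abstract isomorphism.

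The first step is to recover \(g\) and \(b\) numerically. Proposition~\ref{prop:pure-mapping-class-ranks} shows that \(\operatorname{vcd}\) and the maximal rank of a free abelian subgroup are invariants of the abstract group. Hence
\[
4g-4+b=4g'-4+b',\qquad 3g-3+b=3g'-3+b'.
\]
Subtracting gives \(g-1=g'-1\), so \(g=g'\), and then \(b=b'\). Since \(g=g'\) and \(|Q|=|Q'|\), we may fix a diffeomorphism \(\psi\colon(\widehat S,Q)\to(\widehat S',Q')\). Conjugation by \(\psi\) identifies \(\PMod(\widehat S,Q)\) with \(\PMod(\widehat S',Q')\), and this reduces the problem to the case \((\widehat S',Q')=(\widehat S,Q)\).

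Next, set \(\Gamma=\PMod(\widehat S,Q)\). This is the kernel of the map \(\Mod^*(\widehat S,Q)\to\operatorname{Sym}(Q)\times\Z/2\), recording the permutation of punctures and orientation, so it has finite index in \(\Mod^*(\widehat S,Q)\). The composite
\[
\Gamma\xrightarrow{\ \psi^{-1}\phi\ }\Gamma\hookrightarrow\Mod^*(\widehat S,Q)
\]
is injective. Theorem~\ref{thm:mapping-class-rigidity} applies because \(g\ge2\) and \(Q\neq\varnothing\). It shows the composite is conjugation by some \(h\in\Mod^*(\widehat S,Q)\). Then \(\phi\) is conjugation by \(\psi h\), which is an extended diffeomorphism \((\widehat S,Q)\to(\widehat S',Q')\). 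Since \(\Gamma\) is normal in the extended group, conjugation by \(h\) preserves \(\Gamma\), consistent with \(\phi\) being onto.

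The main subtlety is conceptual rather than technical. The inducing element \(h\) may reverse orientation, which is why the statement says \emph{extended} diffeomorphism. It may also permute \(Q\), which is harmless since \(\Gamma\) is normal. One must also check that the cited rigidity theorem covers \(b\ge1\) punctures, including the genus-two single-puncture case, and that its hypotheses ask only for finite index, not for normality or purity. I expect no further obstacle, since the numerical step is just linear algebra on the two quoted invariants.
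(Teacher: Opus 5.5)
Your proposal is correct and follows the paper's proof essentially step for step. You recover \(g=g'\) and \(b=b'\) from the vcd and maximal free abelian rank in Proposition~\ref{prop:pure-mapping-class-ranks}, identify the two punctured surfaces by a chosen diffeomorphism, and apply Theorem~\ref{thm:mapping-class-rigidity} to the finite-index subgroup \(\PMod(\widehat S,Q)\le\Mod^*(\widehat S,Q)\); your composite (conjugation by \(\psi h\)) and your remark that \(Q\neq\varnothing\) excludes the closed genus-two exception match the paper's reasoning.
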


\begin{proof}
The two numbers in Proposition~\ref{prop:pure-mapping-class-ranks}
are invariants of the abstract group. Their difference is \(g-1\),
and either formula then recovers \(b\). Hence an isomorphism forces
\(g=g'\) and \(b=b'\).

Choose an extended diffeomorphism between the two punctured surfaces
and use it to identify them. The given isomorphism becomes an injective
homomorphism from the finite-index subgroup \(\PMod(\widehat S,Q)\)
of \(\Mod^*(\widehat S,Q)\) into \(\Mod^*(\widehat S,Q)\).
Since \(b>0\), the exceptional closed surface of genus two does not
occur. Theorem~\ref{thm:mapping-class-rigidity} shows that this
homomorphism is induced by conjugation by an extended mapping class.
Composing with the preliminary identification proves the assertion.

\end{proof}

\begin{proposition}\label{prop:extension-recovers-stops}
Let \(G=\Mod(S,M)\) and \(G'=\Mod(S',M')\), where both surfaces have genus
at least two.  Every abstract isomorphism \(G\simeq G'\) identifies the
genera and the numbers of boundary components, and carries the stop
distribution of \((S,M)\) to that of \((S',M')\).
\end{proposition}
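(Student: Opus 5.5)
Given an abstract isomorphism \(\phi\colon G\to G'\), Lemma~\ref{lem:intrinsic-capping-extension} describes \(A_M\) and \(G^0=C_G(A_M)\) intrinsically: \(A_M\) is the unique maximal normal abelian subgroup and \(G^0\) is its centralizer. Hence \(\phi(A_M)=A_{M'}\) and \(\phi(G^0)=G'^0\). Since \(A_M=Z(G^0)\), the map \(\phi\) induces an isomorphism of quotients
\[
 \bar\phi\colon\PMod(\widehat S,Q)\xrightarrow{\sim}\PMod(\widehat S',Q').
\]
Lemma~\ref{lem:pure-mapping-class-rigidity} then gives \(g=g'\) and \(b=b'\), and shows that \(\bar\phi\) is induced by an extended diffeomorphism \(F\colon(\widehat S,Q)\to(\widehat S',Q')\). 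This \(F\) determines a bijection \(\sigma\colon Q\to Q'\), that is, a matching of boundary components \(B_i\leftrightarrow B'_{\sigma(i)}\). The remaining task is to show \(m_i=m'_{\sigma(i)}\).

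To get at the stop numbers, I would use the extension class of the central extension \eqref{eq:marked-capping-extension} in \(H^2(\PMod(\widehat S,Q);A_M)\). Using the relation \(\rho_i^{m_i}=T_{B_i}\), this class is the image of the class of the boundary-pointwise capping extension \(1\to\Z^b\langle T_{B_i}\rangle\to\PMod(S,\partial S)\to\PMod(\widehat S,Q)\to1\) under the map \(\Z^b\to A_M\), \(e_i\mapsto m_i\rho_i\). The boundary-pointwise extension splits as a sum over \(i\) of the Euler classes \(e_i\), which are the point-pushing or tangent Euler classes at \(q_i\). So the class of \(G^0\) is \(\sum_i m_i e_i\otimes\rho_i\).

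For a concrete handle, I would restrict to the point-pushing subgroup \(\pi_1(\widehat S\setminus(Q\setminus\{q_i\}),q_i)\subset\PMod(\widehat S,Q)\). Alternatively, and more robustly, I would work with lifts: take the separating curve \(c\) in \(\widehat S\) cutting off a disc containing only \(q_i\) and one other puncture \(q_j\). Its preimage in \(G^0\) is the twist about the corresponding curve in \(S\), whose lifts satisfy a lantern-type relation. It seems simpler to compute, for each \(i\), the divisibility of the \(i\)-th component. The element \(\rho_i\) is a distinguished generator of \(A_M\) up to sign, because the conjugation action by \(G\) permutes the \(\rho_i\) and the \(\rho_i\) are the unique rank-one summands compatible with the permutation structure coming from \(Q\). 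Then \(m_i\) equals the largest \(n\) such that the extension class restricted to the \(\rho_i\)-coordinate is \(n\) times a primitive class. The reason is that \(e_i\) is primitive in \(H^2(\PMod(\widehat S,Q);\Z)\): its restriction to a point-pushing surface subgroup is the fundamental class, or it evaluates to \(\pm1\) on a torus given by a commuting pair in the genus \(\geq2\) setting.

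Since \(\bar\phi\) is induced by \(F\), it carries \(e_i\) to \(\pm e'_{\sigma(i)}\), with the sign fixed by whether \(F\) preserves orientation. The isomorphism \(\phi|_{A_M}\) must then carry \(m_i e_i\otimes\rho_i\) to the class \(\sum_j m'_j e'_j\otimes\rho'_j\), with compatibility forced by the functoriality of extension classes. Matching divisibilities gives \(m_i=m'_{\sigma(i)}\).

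The main obstacle is making this step rigorous. I must show that \(\phi|_{A_M}\colon\Z^b\to\Z^b\) is a signed permutation matrix compatible with \(\sigma\), and not an arbitrary element of \(GL_b(\Z)\). I would do this by noting that \(\phi\) intertwines the conjugation actions of \(G/G^0\) and of the point-pushing subgroups. Alternatively, I would determine the \(\rho_i\)-line as the set of elements of \(A_M\) whose preimage-torsor behaves as a multiple of the Euler class supported at a single puncture. The \(m_i\) are then read off as the index of the lattice spanned by \(T_{B_i}\)-images, namely the images of lifts of \(\bar\phi\)-matched twists about curves surrounding single punctures, inside the \(\rho_i\)-line.
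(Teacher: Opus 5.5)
Up to the formula $\sum_i m_ie_i\otimes\rho_i$ for the class of $G^0$, your argument matches the paper's. Namely, $\phi$ preserves $A_M$ and $G^0$, induces $\bar\phi$ on $\PMod(\widehat S,Q)$, and Lemma~\ref{lem:pure-mapping-class-rigidity} gives $g=g'$, $b=b'$ and the matching $\sigma$. The final step, however, is not proved, and the routes you sketch for it do not work.

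Reading $m_i$ off as the divisibility of the $\rho_i$-component of the class in integral $H^2$ needs two things you do not have.
\begin{enumerate}
\item \emph{Primitivity of $e_i$ in integral $H^2$.} Your justification fails. For $b\geq2$ the point-pushing subgroup at $q_i$ is the fundamental group of a punctured surface, hence free, so $e_i$ restricts to zero on it. For $b=1$ the restriction is the Euler class of the unit tangent bundle, namely $\pm(2-2g)$ times the fundamental class. The commuting-pair alternative is unsupported.
\item \emph{Intrinsic $\rho_i$-coordinates}, i.e.\ that $\phi|_{A_M}$ is a signed permutation. Point-pushing elements act trivially on $A_M$ by conjugation, since $A_M$ is central in $G^0$. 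Also, $G/G^0$ is trivial whenever the $m_i$ are pairwise distinct. So neither conjugation action constrains $\phi|_{A_M}$ in the cases that matter.
\end{enumerate}

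The missing idea is to measure divisibility in the coefficient lattice $A_M$ rather than in cohomology. For this you need that your classes $e_1,\dots,e_b$ (the paper's $\psi_i$) are linearly independent in $H^2(\PMod(\widehat S,Q);\mathbb Q)$. The paper proves this in Lemma~\ref{lem:psi-classes} via the Arbarello--Cornalba description of $H^2$ of the compactified moduli space. You never establish this independence, and without it the coefficients you want to compare are not even well defined.

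With independence in hand, the $\psi_i$-coefficient of the extension class $e_M$ in $H^2(\PMod(\widehat S,Q);\mathbb Q)\otimes A_M$ is the lattice vector $m_i\rho_i$. Put $\alpha=\phi|_{A_M}$. Naturality gives $(\bar\phi^*\otimes\alpha^{-1})(e_{M'})=e_M$, and Lemma~\ref{lem:psi-classes} gives $\bar\phi^*\psi'_{\sigma(i)}=\varepsilon\psi_i$ for one common sign $\varepsilon$. Comparing $\psi_i$-coefficients yields
\[
 m_i\rho_i=\varepsilon\, m'_{\sigma(i)}\,\alpha^{-1}(\rho'_{\sigma(i)})
\]
in the torsion-free group $A_M$. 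Both $\rho_i$ and $\alpha^{-1}(\rho'_{\sigma(i)})$ are primitive, the latter simply because $\alpha$ is a group isomorphism. Hence $m_i=m'_{\sigma(i)}$.

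In this form no integral primitivity of cohomology classes is needed. Moreover, the signed-permutation property $\alpha(\rho_i)=\varepsilon\rho'_{\sigma(i)}$, which you identified as the main obstacle, comes out of the argument rather than being required as input.
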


\begin{proof}
By Lemma~\ref{lem:intrinsic-capping-extension}, an isomorphism
\(\Phi:G\to G'\) maps \(A_M\) onto \(A_{M'}\), maps \(G^0\) onto
\((G')^0\), and induces
\begin{equation}\label{eq:pure-capped-isomorphism}
 \overline\Phi:
 \PMod(\widehat S,Q)\xrightarrow{\sim}
 \PMod(\widehat S',Q').
\end{equation}
Lemma~\ref{lem:pure-mapping-class-rigidity} shows directly that
\eqref{eq:pure-capped-isomorphism} is induced by an extended
diffeomorphism \((\widehat S,Q)\to(\widehat S',Q')\).  In particular, \(g(S)=g(S'), b=b'\), and, after reindexing, the induced map sends \(q_i\) to \(q_i'\).

It remains to recover the integers \(m_i\). Let
\(\psi_i\in H^2(\PMod(\widehat S,Q);\Z)\) be the cotangent-line
class at \(q_i\). We use the convention in which the central
extension obtained by replacing the punctures with parametrized
boundary circles has \(\psi_i\) as its \(i\)-th component. Put
\(E_\partial=\Mod(S,\partial S)\).  The pointwise-boundary capping sequence
has class
\(
 e_\partial=\sum_{i=1}^b\psi_i\otimes T_{B_i}
\)
and fits into the upper row of the pushout diagram
\[
\begin{tikzcd}
1 \ar[r] &
\displaystyle \bigoplus_{i=1}^{b} \mathbb Z\langle T_{B_i}\rangle
  \ar[r] \ar[d, hook] &
E_{\partial} \ar[r] \ar[d, hook] &
\PMod(\widehat S,Q) \ar[r] \ar[d, equal] &
1 \\[2pt]
1 \ar[r] &
A_M \ar[r] &
G^0 \ar[r] &
\PMod(\widehat S,Q) \ar[r] &
1
\end{tikzcd}
\]
The left vertical map is
\(
 T_{B_i}\longmapsto m_i\rho_i\)
by \eqref{eq:boundary-root-relation}.  The lower row is the pushout of the
upper row along this map.  Naturality of extension classes therefore gives
\begin{equation*}
 e_M=\sum_{i=1}^b m_i\psi_i\otimes\rho_i
 \in H^2\bigl(\PMod(\widehat S,Q);A_M\bigr).
\end{equation*}
Changing the convention for positive twists changes every sign
simultaneously and does not affect the positive integers recovered below.

By Lemma~\ref{lem:psi-classes} in
Section~\ref{appsubsec:capping-rigidity}, the classes
\(\psi_1,\ldots,\psi_b\) are linearly independent over \(\mathbb Q\).
We may therefore compare extension classes after tensoring the
coefficient lattice with \(\mathbb Q\). Any resulting equality of
lattice vectors holds integrally because \(A_M\) is torsion free.
Let \(\alpha:A_M\to A_{M'}\) be the restriction of \(\Phi\).  Naturality
of extension classes gives
\begin{equation}\label{eq:extension-naturality}
 (\overline\Phi^*\otimes\alpha^{-1})(e_{M'})=e_M.
\end{equation}
The geometric description of \(\overline\Phi\) gives a permutation
\(\pi\in\mathfrak S_b\) and one sign
\(\varepsilon\in\{1,-1\}\) such that
\(
 \overline\Phi^*\psi_{\pi(i)}'=\varepsilon\psi_i\). The sign is common to all \(i\), since it records whether the inducing
diffeomorphism preserves or reverses the orientation of the surface.
Comparing the independent \(\psi_i\)-coefficients in
\eqref{eq:extension-naturality} yields
\begin{equation}\label{eq:stop-number-coefficients}
 m_i\rho_i
 =\varepsilon m_{\pi(i)}'\alpha^{-1}(\rho_{\pi(i)}')
 \qquad\text{ for }1\leq i\leq b.
\end{equation}
Each \(\rho_i\) and each \(\alpha^{-1}(\rho_i')\) is primitive in the
free abelian group \(A_M\).  Equation~\eqref{eq:stop-number-coefficients} therefore implies
\(m_i=m_{\pi(i)}'\) for every \(i\).  This recovers the number of stops on each boundary component.
\end{proof}

\begin{proof}[Proof of Theorem~\ref{thm:ungraded-reconstruction}]
By Proposition~\ref{prop:extension-recovers-stops}, the surfaces have the same
genus, the same number of boundary components, and the same stop
distribution.  The classification of compact connected oriented surfaces
with stops therefore gives an orientation-preserving
diffeomorphism \((S,M)\simeq(S',M')\).
\end{proof}

\begin{example}\label{ex:low-genus-ungraded-failure}
The genus assumption in Theorem~\ref{thm:ungraded-reconstruction} cannot be
removed.  For genus zero, let \(P=\Sigma_{0,3}\) be a pair of pants and
choose stop distributions \((1,2,3)\) and \((1,2,4)\).  In either case the
three boundary components are distinguished by their stop numbers.  The
mapping class group is therefore freely generated by the three one-step
boundary rotations and is isomorphic to \(\Z^3\).  The two surfaces with stops
are not diffeomorphic because their stop distributions are different.

For genus one, let \(M_m\) be a set of \(m\) stops on the boundary of
\(\Sigma_{1,1}\), and write \(G_m=\Mod(\Sigma_{1,1},M_m)\).  Let \(X_m\) and \(Y_m\)
be the twists about two nonseparating curves meeting once, and let
\(\rho_m\) be the positive one-step boundary rotation.  The braid relation and the
two-chain relation give the presentation
\[
 G_m=
 \left\langle X_m,Y_m,\rho_m\ \middle|\
 \begin{aligned}
 X_mY_mX_m&=Y_mX_mY_m,\\
 [X_m,\rho_m]&=[Y_m,\rho_m]=1,\\
 (X_mY_m)^6&=\rho_m^m
 \end{aligned}
 \right\rangle
\]
(see \cite{FarbMargalit}).  There is an isomorphism 
\(G_m\longrightarrow G_{m+12}\) defined by
\[
 \rho_m\longmapsto\rho_{m+12},
 \qquad
 X_m\longmapsto X_{m+12}\rho_{m+12}^{-1},
 \qquad
 Y_m\longmapsto Y_{m+12}\rho_{m+12}^{-1}.
\]
Indeed, the central factors contribute \(\rho_{m+12}^{-12}\) to the
sixth power of \(X_mY_m\).  Taking \(m=1\) gives one-holed tori
with one and thirteen stops whose mapping class groups are isomorphic.
The surfaces with stops themselves are not diffeomorphic.
\end{example}

\begin{remark}
Theorem~\ref{thm:ungraded-reconstruction} concerns the full ungraded
mapping class group.  The quotient supplied by the geometrisation sequence
in Theorem~\ref{thm:opper-geometrization} is instead
\(\MCGgr(\mathbb S)\), and after
dividing by the shift it is the line-field stabilizer \(H_\eta\), not
\(\Mod(S,M)\).  Thus the theorem cannot be applied directly to an abstract
derived Picard group.
\end{remark}

% --------------------------------------------------------------------------
\section{Reconstruction from the derived Picard group}
\label{sec:derived-reconstruction}
In this section, we first identify the geometrisation kernel as a characteristic
subgroup of the derived Picard group. Two group-theoretic ranks
then recover the genus and the number of boundary components.
When the shift is distinguished, we use the action on the torsion
of the abelianized kernel to recover the line field up to an
orientation-preserving diffeomorphism.
% --------------------------------------------------------------------------

\begin{theorem}\label{thm:derived-reconstruction}
Let \(A\) and \(A'\) be connected homologically smooth and proper graded
gentle algebras over \(\kk\).  Write their surface models as
\[
 \mathbb S_A=(S_A,M_A,\eta_A),
 \qquad
 \mathbb S_{A'}=(S_{A'},M_{A'},\eta_{A'}).
\]
Assume that every boundary component contains a stop.  We further assume
that both \(S_A\) and \(S_{A'}\) have genus at least two.
\begin{enumerate}[label=\textup{(\roman*)}]
\item An abstract isomorphism \(\DPic(A)\simeq\DPic(A')\) implies
\[
 g(S_A)=g(S_{A'}),\qquad
 |\pi_0(\partial S_A)|=|\pi_0(\partial S_{A'})|.
\]
\item If the isomorphism \(\Phi\colon\DPic(A)\to\DPic(A')\) satisfies
\(\Phi([1])=[1]\), then there is an orientation-preserving diffeomorphism
\[
 f\colon S_A\longrightarrow S_{A'}
 \qquad\text{with}\qquad f^*\eta_{A'}\simeq\eta_A.
\]
\end{enumerate}
Thus the derived Picard group, together with the shift, determines
the underlying graded surface after the stops are forgotten.
\end{theorem}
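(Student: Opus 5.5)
The plan follows the outline sketched in the introduction: first show that the geometrisation kernel is characteristic, then read off the genus and boundary count from two rank invariants, and finally recover the line field from the quotient together with the shift.

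\emph{Step 1: the kernel is characteristic.} Let $N=N(\mathbb S)$. We aim to show that $N$ is the subgroup of $D(\mathbb S)$ generated by all divisible abelian subgroups. By Theorem~\ref{thm:opper-geometrization}, $N\simeq\mathbb G_a^{\phi_A(1,1)}\rtimes H^1(S;\kk^\times)$. Since $\kk$ is algebraically closed of characteristic zero, both $\kk$ and $\kk^\times$ are divisible. Hence $N$ is generated by divisible abelian subgroups, namely the torus and the root subgroups.

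Conversely, we show that $\MCGgr(\mathbb S)$ has no nontrivial divisible abelian subgroup. It is a central extension of $H_\eta\subset\Mod(S,M)$ by $\Z$. Abelian subgroups of mapping class groups are finitely generated, so they contain no nontrivial divisible elements. The same holds for the extension, since a divisible abelian group has no nontrivial finitely generated quotient other than $0$. Therefore any divisible abelian subgroup of $D$ maps trivially to $\MCGgr$ and lies in $N$. Hence $N$ is characteristic, and so is $\overline D=D/N\simeq\MCGgr(\mathbb S)$.

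\emph{Step 2: genus and boundary count (part (i)).} Consider $N^{\mathrm{ab}}$, or rather its torsion, which has to be described carefully because of the unipotent part. The $\mathbb G_a$ factors are torsion-free and uniquely divisible. The torsion of $N^{\mathrm{ab}}$ comes from $H^1(S;\kk^\times)_{\rm tors}\simeq(\mathbb Q/\Z)^{2g+b-1}$. It can be quotiented by the image of the commutators with $\mathbb G_a$, but $\mathbb G_a$ is uniquely divisible, so this does not disturb the torsion.

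The $\ell$-Prüfer rank, defined as the $\ell$-rank of the $\ell$-torsion of $N^{\mathrm{ab}}$, is therefore $2g+b-1$. I would define it intrinsically as the maximal $r$ with $(\Z/\ell)^r$ embedded in the abelianization of the canonical subgroup $N$. Some care is needed so that this is defined from $D$ alone, which holds by Step 1.

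Next, on $\overline D\simeq\MCGgr$, define the reduced virtual-centre rank as the maximal rank of the centre of a finite-index subgroup. Consider the pure finite-index subgroup $\widetilde{H^0_\eta}$ (the preimage in $\MCGgr$ of the pure subgroup $H^0_\eta$ of Lemma~\ref{lem:stopped-stabilizer-root-presentation}). Its centre is generated by $[1]$ and the rotations $\rho_i$, giving rank $b+1$. The point to check is that no finite-index subgroup of $H_\eta^0/\langle\rho_i\rangle$ has infinite centre. This image is a finite-index subgroup of $\PMod(\widehat S,Q)$, which is centre-free virtually by Lemma~\ref{lem:no-normal-abelian-subgroup}. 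To use it, one needs $H_\eta$ to have finite image of finite index. This holds because $\Mod$ acts on the $H^1(S;\Z)$-torsor of line fields with finite orbits modulo Theorem~\ref{thm:line-field-classification}: finitely many invariants, so the stabilizer has finite index. These two numbers determine $b$, and then $g$.

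\emph{Step 3: the line field (part (ii)).} This is the main obstacle. With $[1]$ fixed, we get an isomorphism
\[
H_\eta\simeq\overline D/\langle[1]\rangle\;\to\;H_{\eta'}.
\]
Quotienting further by the characteristic rotation lattice, namely the centre of the canonical finite-index pure subgroup, yields finite-index subgroups $\Gamma\subset\PMod(\widehat S,Q)$ and $\Gamma'\subset\PMod(\widehat S',Q')$ together with an isomorphism $\Gamma\simeq\Gamma'$.

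I would first try the rigidity of Theorem~\ref{thm:mapping-class-rigidity}, applied to this finite-index subgroup after matching the surfaces via Step 2. Its conclusion is that the isomorphism is conjugation by an extended mapping class $f$. Orientation should then be fixed by the conjugation action on $N$: the action on the torsion $H^1(S;\mu_{\ell^\infty})$ is equivariant, and the intersection form detects orientation reversal.

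Then $f$ carries $H_\eta$ to $H_{\eta'}$. It remains to show that the stabilizer determines the line field up to $\Mod$. The idea is that for a bounding pair $(a,b)$, the weighted product $T_a^uT_b^{-v}$ lies in the stabilizer exactly when $u\omega(a)=v\omega(b)$, by \eqref{eq:weighted-bounding-pair}. Hence the stabilizer determines the ratios of windings of bounding-pair curves. Combining this with \eqref{eq:bounding-pair-winding-difference} gives the windings themselves, and these determine $\eta$.

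For the genus-two, one-boundary case, I expect to need a separate argument using the action on $H^1(S;\Z/2)$. The hardest technical point is that rigidity requires $\Gamma$ to be identified inside $\PMod$, which is why the paper instead uses the torsion of $N^{\mathrm{ab}}$ and Kida's separating-curve complex rigidity to recognize separating twists.
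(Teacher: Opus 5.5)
Your Step~1 is the paper's argument, and your variant of the $\ell$-rank (the $\ell$-rank of the torsion of $N^{\mathrm{ab}}$, intrinsic because $N$ is characteristic) is a harmless substitute for the Pr\"ufer rank. The gap is your claim that $H_\eta$ has finite index in $\Mod(S,M)$, which is false. Theorem~\ref{thm:line-field-classification} gives finitely many \emph{orbits} of line fields with fixed boundary windings, but it does not make those orbits finite. Let $c$ be a nonseparating curve with $\omega_\eta(c)\neq0$; by \eqref{eq:bounding-pair-winding-difference}, at least one curve of any oriented bounding pair qualifies. Then $(T_c^n)^*\eta-\eta=n\,\omega_\eta(c)\,c^\vee\neq0$ for every $n\neq0$. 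So no non-trivial power of $T_c$ lies in $H_\eta$, and since $A_M\subseteq H_\eta$, no non-trivial power of $T_c$ lies in the image of $H_\eta$ in $\PMod(\widehat S,Q)$ either. That image therefore has infinite index. Consequently neither Lemma~\ref{lem:no-normal-abelian-subgroup} (used for your upper bound $\rvz\le b+1$) nor the finite-index rigidity of Theorem~\ref{thm:mapping-class-rigidity} (your Step~3) applies. Step~3 also presupposes that a pure subgroup and its rotation lattice are characteristic in the abstract group $H_\eta$. The argument of Lemma~\ref{lem:intrinsic-capping-extension} gives this only for $\Mod(S,M)$ and relies on finite index, so it does not transfer. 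Both parts are therefore left unproved.

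The missing idea is to work with subgroups that every line-field stabilizer must contain. For the rank bound, separating twists preserve every line field, so any finite-index $L\le\MCGgr(\mathbb S)$ contains graded lifts of powers of all separating twists. A central element of $L$ then fixes every separating curve, and Lemma~\ref{lem:separating-fixer} confines the image of $Z(L)$ to a finite extension of $A_M$. For part~(ii), the paper uses conjugation on $\operatorname{Tor}_{\ell^\infty}(N^{\mathrm{ab}})\simeq H^1(S;\mu_{\ell^\infty})$ to single out $J_\eta=H_\eta\cap\ker\bigl(\Mod(S,M)\to\operatorname{Aut}H_1(S;\Z)\bigr)$. Its quotient by its centre, the rotation kernel, is $\mathcal C_0(S)$, which is independent of $\eta$. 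Separating-twist axes are recognised in $\mathcal C_0(S)$ by centraliser and abelian-rank conditions (Lemma~\ref{lem:twists-in-chillingworth-kernel}); bounding-pair maps are excluded because they move $\eta$ by $-4h\,a^\vee$. Kida's rigidity (Theorem~\ref{thm:kida-rigidity}) then yields $f$ with $q_{M'}(\phi(y))=fq_M(y)f^{-1}$, first on $J_\eta$ and then, by normality, on all of $H_\eta$. Only at that point does your bounding-pair argument apply, to the equal stopless pure stabilizers (Lemma~\ref{lem:stabilizer-determines-line-field}).

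Finally, your plan to detect orientation through the intersection form is unjustified, since the induced map on $H^1(S;\mu_{\ell^\infty})$ is only known to be equivariant. It is also unnecessary: an orientation-reversing $f$ can be corrected by composing with an orientation-reversing symmetry of $\eta$ (Lemma~\ref{lem:orientation-correction}).
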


\subsection{The kernel and intrinsic ranks}

Let \(C_{\ell^\infty}=\varinjlim_n\Z/\ell^n\Z\) be the Pr\"ufer
\(\ell\)-group.  For an abstract group \(G\), define
\begin{equation*}
 \prk_\ell(G)
 \coloneq\sup\{r\in\Z_{\geq0}:(C_{\ell^\infty})^r\hookrightarrow G\}.
\end{equation*}
For a group \(G\), let
\[
 \mathcal F(G)=
 \{\Gamma\leq G\mid [G:\Gamma]<\infty\}
\]
denote the family of all finite-index subgroups of \(G\).  For
\(\Gamma\in\mathcal F(G)\), write \(Z(\Gamma)\) for the centre of
\(\Gamma\), and let \(\Div(Z(\Gamma))\) be its maximal divisible subgroup.
The \emph{reduced virtual-centre rank} of \(G\) is given by
\begin{equation*}
 \rvz(G)\coloneq
 \sup_{\Gamma\in\mathcal F(G)}
 \dim_{\mathbb Q}\left(
   \bigl(Z(\Gamma)/\Div(Z(\Gamma))\bigr)
   \otimes_{\Z}\mathbb Q
 \right).
\end{equation*}

For an abstract group \(G\), let
\begin{equation*}
 \operatorname{DA}(G)
 \coloneq\left\langle A\leq G\,\middle|\,
 A\text{ is divisible and abelian}\right\rangle .
\end{equation*}
Automorphisms of \(G\) permute the subgroups occurring in this definition,
so \(\operatorname{DA}(G)\) is characteristic.  For a prime \(\ell\), write
\(\operatorname{Tor}_{\ell^\infty}(B)\) for the subgroup of an abelian
group \(B\) consisting of elements of \(\ell\)-power order.

\begin{lemma}\label{lem:characteristic-geometrization-kernel}
Assume that \(\mathbb S=(S,M,\eta)\) has genus at least two.  Then
\[
 \operatorname{DA}\bigl(D(\mathbb S)\bigr)=N(\mathbb S).
\]
Moreover, for every prime \(\ell\), there is a natural identification
\begin{equation}\label{eq:ell-local-systems}
 \operatorname{Tor}_{\ell^\infty}
 \bigl(N(\mathbb S)^{\mathrm{ab}}\bigr)
 \simeq H^1(S;\mu_{\ell^\infty}),
\end{equation}
and conjugation by \(D(\mathbb S)\) induces the action on rank-one local systems given by
\(f\cdot L=(f^{-1})^*L\).
\end{lemma}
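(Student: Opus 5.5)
We prove the two inclusions \(N(\mathbb S)\subseteq\operatorname{DA}(D(\mathbb S))\) and \(\operatorname{DA}(D(\mathbb S))\subseteq N(\mathbb S)\) separately, using the split exact sequence of Theorem~\ref{thm:opper-geometrization}, and then compute the torsion of the abelianized kernel directly.

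\emph{First inclusion.} Since \(\kk\) is algebraically closed of characteristic zero, \(\kk^\times\) is divisible. Hence \(H^1(S;\kk^\times)\simeq(\kk^\times)^{2g+b-1}\) is a divisible abelian subgroup. Each additive factor \(\mathbb G_a\simeq(\kk,+)\) is a \(\mathbb Q\)-vector space, so it is also divisible. These subgroups generate \(N(\mathbb S)\simeq\mathbb G_a^{\phi_A(1,1)}\rtimes H^1(S;\kk^\times)\), which gives \(N(\mathbb S)\subseteq\operatorname{DA}(D(\mathbb S))\).

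\emph{Second inclusion.} It suffices to show that every divisible abelian subgroup of \(D(\mathbb S)\) has trivial image under \(\Psi_{\mathbb S}\). The image of a divisible abelian group is again divisible and abelian. So we must show that \(\MCGgr(\mathbb S)\) contains no nontrivial divisible abelian subgroup.

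\begin{enumerate}[label=\textup{(\arabic*)}]
\item Push the subgroup down along the central extension \eqref{eq:grading-extension} to \(H_\eta\subseteq\Mod(S,M)\). Then push it further through the capping map \(\kappa\) into \(\Mod(\widehat S,Q)\).
\item Mapping class groups of finite-type surfaces contain a torsion-free subgroup of finite index, and abelian subgroups of them are finitely generated (they are virtually \(\Z^n\) with \(n\le 3g-3+b\), by Proposition~\ref{prop:pure-mapping-class-ranks}). A divisible group has no proper finite-index subgroups. So its image in a finitely generated virtually free abelian group is a divisible finitely generated abelian group, hence trivial.
\item The image therefore lies in \(\ker\kappa=A_M\simeq\Z^b\), which is finitely generated free abelian. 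The same argument kills it there.
\item Finally, the subgroup lands in \(\Z\langle[1]\rangle\), which is reduced, so it is trivial.
\end{enumerate}

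The main point needing care is justifying that abelian subgroups of \(\Mod(\widehat S,Q)\) are finitely generated. This follows from the Birman--Lubotzky--McCarthy theorem, the result underlying Proposition~\ref{prop:pure-mapping-class-ranks}.

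\emph{Torsion of the abelianization.} Write \(U=\mathbb G_a^{\phi_A(1,1)}\) and \(T=H^1(S;\kk^\times)\), so that \(N=U\rtimes T\). Then
\[
N^{\mathrm{ab}}=U/[T,U]\times T .
\]
The quotient \(U/[T,U]\) is a \(\kk\)-vector space quotient, hence torsion-free. Therefore \(\operatorname{Tor}_{\ell^\infty}(N^{\mathrm{ab}})=\operatorname{Tor}_{\ell^\infty}(T)\).

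Since \(H_1(S;\Z)\) is free, \(T=\operatorname{Hom}(H_1(S;\Z),\kk^\times)\). Its \(\ell\)-primary torsion is \(\operatorname{Hom}(H_1(S;\Z),\mu_{\ell^\infty}(\kk))=H^1(S;\mu_{\ell^\infty})\). This map is natural because \(T\) embeds in \(N^{\mathrm{ab}}\) as a direct factor.

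\emph{Conjugation action.} Equivariance is inherited from Opper's description: \(D(\mathbb S)\) acts on the rank-one local systems \(H^1(S;\kk^\times)\) through \(\Psi_{\mathbb S}\) by transport of structure, namely \(f\cdot L=(f^{-1})^*L\), following the semidirect product convention. Elements of \(N\) act trivially on \(N^{\mathrm{ab}}\) by conjugation, so the action factors through \(\MCGgr(\mathbb S)\) and is the stated one.
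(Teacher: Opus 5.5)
Your proposal is correct and follows the paper's proof: the two inclusions, the decomposition \(N^{\mathrm{ab}}=U/[T,U]\times T\) with torsion-free first factor, and the appeal to Opper for the conjugation action all match. The only difference is cosmetic. You kill divisible abelian subgroups of \(\MCGgr(\mathbb S)\) by pushing them through the shift extension and the capping map \(\kappa\), so finite generation of abelian subgroups is needed only in \(\Mod(\widehat S,Q)\) and the intermediate kernels \(A_M\simeq\Z^b\) and \(\Z\langle[1]\rangle\) are explicit; the paper instead applies Birman--Lubotzky--McCarthy directly to \(H_\eta\) and lifts finite generation through the cyclic central extension.
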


\begin{proof}
Set \(N=N(\mathbb S)\). By Theorem~\ref{thm:opper-geometrization},
\[
 N=U\rtimes T,
 \qquad
 U\simeq\mathbb G_a^{\phi_A(1,1)},
 \qquad
 T=H^1(S;\kk^\times).
\]
Both \(U\) and \(T\) are divisible abelian groups.  For \(T\), this uses
the fact that \(\kk\) is algebraically closed.  Since they generate \(N\),
we have \(N\subseteq\operatorname{DA}(D(\mathbb S))\).

Conversely, let \(A\le D(\mathbb S)\) be a divisible abelian subgroup.
Its image in \(\MCGgr(\mathbb S)\) is divisible and abelian. It is known that abelian subgroups of a
mapping class group are finitely generated \cite{BirmanLubotzkyMcCarthy}.
The same holds in the graded central extension because its kernel is
cyclic.  Thus the image of \(A\) is a
finitely generated divisible abelian group, hence trivial.  Therefore
\(
 A\subseteq N(\mathbb S)\). Therefore, every divisible abelian subgroup of \(D(\mathbb S)\) lies in \(N\),
and hence
\(
 \operatorname{DA}\bigl(D(\mathbb S)\bigr)\subseteq N(\mathbb S)\), which proves the first assertion.

Since \(N/U\simeq T\) is abelian, we have \([N,N]\subseteq U\) and
\(T\cap[N,N]=1\).  Consequently,
\[
 N^{\mathrm{ab}}\simeq U/[T,U]\times T.
\]
The subspace \([T,U]\) is the sum of the non-trivial weight spaces
for the torus action on \(U\). Hence \(U/[T,U]\) is a
\(\kk\)-vector space with torsion-free additive group. Therefore
\[
 \operatorname{Tor}_{\ell^\infty}(N^{\mathrm{ab}})
 =T[\ell^\infty]
 =H^1(S;\mu_{\ell^\infty}).
\]
Inner automorphisms of \(N\) become trivial after abelianization.  The induced action
on the remaining torus is the local-system action described in \cite[Sections~7.1 and~10.1]{Opper2025}.
\end{proof}

\begin{lemma}\label{lem:pruefer-rank-DPic}
Let \(\mathbb S=(S,M,\eta)\) be a graded surface with stops whose
partially wrapped Fukaya category is smooth and proper.
Suppose that \(S\) has genus \(g\geq2\) and \(b\) boundary
components. For every prime \(\ell\),
\begin{equation}\label{eq:pruefer-rank-DPic}
 \prk_\ell D(\mathbb S)=2g+b-1.
\end{equation}
\end{lemma}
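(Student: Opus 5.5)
We prove the two inequalities separately, using the split exact sequence of Theorem~\ref{thm:opper-geometrization}
\[
 1\to N(\mathbb S)\to D(\mathbb S)\xrightarrow{\Psi_{\mathbb S}}\MCGgr(\mathbb S)\to1,
 \qquad N(\mathbb S)\simeq U\rtimes T,
\]
where \(U\simeq\mathbb G_a^{\phi_A(1,1)}\) and \(T=H^1(S;\kk^\times)\simeq(\kk^\times)^{2g+b-1}\). The genus hypothesis \(g\ge2\) gives \(\chi(S)<0\), so the theorem applies.

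\emph{Lower bound.} Since \(\kk\) is algebraically closed, \(\kk^\times\) contains the full group \(\mu_{\ell^\infty}\simeq C_{\ell^\infty}\) of \(\ell\)-power roots of unity. Because \(H_1(S;\Z)\) is free of rank \(2g+b-1\), we get
\[
 H^1(S;\mu_{\ell^\infty})\simeq(C_{\ell^\infty})^{2g+b-1}\hookrightarrow T\le N(\mathbb S)\le D(\mathbb S).
\]
Hence \(\prk_\ell D(\mathbb S)\ge 2g+b-1\).

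\emph{Upper bound.} Let \(P\simeq(C_{\ell^\infty})^r\) embed in \(D(\mathbb S)\). This \(P\) is divisible abelian, so by Lemma~\ref{lem:characteristic-geometrization-kernel} it lies in \(\operatorname{DA}(D(\mathbb S))=N(\mathbb S)\). Its image in \(N/U\simeq T\) is again a divisible \(\ell\)-torsion abelian group. The key point is that \(P\cap U=1\): in characteristic zero the additive group \(U\) is a \(\mathbb Q\)-vector space, hence torsion free, while \(P\) is torsion. So \(P\) embeds in \(T[\ell^\infty]=(\mu_{\ell^\infty})^{2g+b-1}\).

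To conclude \(r\le 2g+b-1\), compare \(\ell\)-torsion. The subgroup \(P[\ell]\simeq(\Z/\ell)^r\) embeds into \(T[\ell]\simeq(\Z/\ell)^{2g+b-1}\), so \(r\le 2g+b-1\). The same argument bounds any \(r\) for which an embedding exists, so the supremum equals \(2g+b-1\).

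The only delicate step is the reduction \(P\subseteq N(\mathbb S)\). It relies on the finite generation of abelian subgroups of \(\MCGgr(\mathbb S)\), which is already packaged in Lemma~\ref{lem:characteristic-geometrization-kernel}. It also uses characteristic zero, to rule out \(\ell\)-torsion in the unipotent factor \(U\). Everything else is elementary counting of \(\ell\)-torsion.
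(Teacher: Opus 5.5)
Your proposal is correct and follows the paper's proof essentially step for step: the lower bound comes from $H^1(S;\mu_{\ell^\infty})\subseteq T$, and the upper bound from $P\subseteq N(\mathbb S)$, $P\cap U=1$, and comparing $P[\ell]$ with $T[\ell]$. The only minor difference is how you obtain $P\subseteq N(\mathbb S)$. You invoke Lemma~\ref{lem:characteristic-geometrization-kernel}, which rests on finite generation of abelian subgroups. The paper instead argues that $D(\mathbb S)/N(\mathbb S)\simeq\MCGgr(\mathbb S)$ is virtually torsion free, so the torsion divisible image of $P$ is finite and divisible, hence trivial. Both routes are valid.
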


\begin{proof}
Set \(D=D(\mathbb S)\) and \(N=N(\mathbb S)\). We first note that
\(D/N\simeq\MCGgr(\mathbb S)\) is virtually torsion free.
Indeed, the
ordinary mapping class group is virtually torsion free, and the kernel of
\eqref{eq:grading-extension} is infinite cyclic.  After replacing a
torsion-free finite-index subgroup downstairs by its full inverse image,
the same is true for the graded extension.

Let \(P\leq D\) be isomorphic to \((C_{\ell^\infty})^r\).  The image of
\(P\) in $D/N$ is torsion and divisible. Since \(D/N\) is virtually torsion
free, every torsion subgroup of \(D/N\) is finite. More precisely,  after passing to
a normal torsion-free subgroup of finite index, this image injects into the finite
quotient. Note that a finite divisible group is trivial. Therefore the image of \(P\) in \(D/N\) is trivial, so
\(P\le N\).

Write \(N=U\rtimes T\), where
\(U\simeq\mathbb G_a^{\phi_A(1,1)}\) and
\(T=H^1(S;\kk^\times)\). Since \(U\) is torsion free,
\(P\cap U=1\), so projection to \(T\) is injective on
\(P\). In particular,
\[
 P[\ell]\simeq(\Z/\ell)^r
 \hookrightarrow T[\ell]
 \simeq(\Z/\ell)^{2g+b-1}.
\]
Thus \(r\leq2g+b-1\).
Conversely, the local-system torus
\(
 H^1(S;\kk^\times)\simeq(\kk^\times)^{2g+b-1}
\)
contains \((C_{\ell^\infty})^{2g+b-1}\), since \(\kk\) is algebraically
closed of characteristic zero. Hence
\(\prk_\ell D\geq2g+b-1\). Together with the upper bound,
this proves \eqref{eq:pruefer-rank-DPic}.
\end{proof}

\begin{lemma}\label{lem:virtual-centre-DPic}
Under the assumptions of Lemma~\ref{lem:pruefer-rank-DPic}, put
\[
 \overline D(\mathbb S)
 =D(\mathbb S)/\operatorname{DA}\bigl(D(\mathbb S)\bigr).
\]
Then
\begin{equation}\label{eq:virtual-centre-DPic}
 \rvz\overline D(\mathbb S)=b+1.
\end{equation}
\end{lemma}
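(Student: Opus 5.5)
By Lemma~\ref{lem:characteristic-geometrization-kernel}, \(\operatorname{DA}(D(\mathbb S))=N(\mathbb S)\). Hence \(\overline D(\mathbb S)\simeq\MCGgr(\mathbb S)\) via \(\Psi_{\mathbb S}\), and the problem reduces to computing \(\rvz\MCGgr(\mathbb S)\). The plan is to work with the graded extension \eqref{eq:grading-extension} of \(H_\eta\) by \(\Z\langle[1]\rangle\). I will show two things: some finite-index subgroup has centre of rank at least \(b+1\), and every finite-index subgroup has centre of rank at most \(b+1\). Throughout, \(\Div\) causes no trouble. Abelian subgroups of \(\MCGgr(\mathbb S)\) are finitely generated, as noted in the proof of Lemma~\ref{lem:characteristic-geometrization-kernel}, so every centre \(Z(\Gamma)\) is finitely generated abelian. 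Its maximal divisible subgroup is therefore trivial, and the quantity to compute is just \(\rk Z(\Gamma)\).

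For the lower bound, take \(\Gamma\) to be the full preimage in \(\MCGgr(\mathbb S)\) of \(H_\eta^0\). This has finite index, since \(H_\eta^0\) has finite index in \(H_\eta\). By Lemma~\ref{lem:stopped-stabilizer-root-presentation}, the rotations \(\rho_i\) lie in \(H_\eta^0\) and are central there. Boundary twists preserve every line field, so some power \(\rho_i^{m_i}=T_i\) lies in the stabilizer.

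\begin{itemize}
\item Each \(\rho_i\) admits a graded lift \(\tilde\rho_i\).
\item The commutator of \(\tilde\rho_i\) with any element of \(\Gamma\) lies in \(\Z\langle[1]\rangle\), because \(\rho_i\) is central downstairs.
\item This commutator defines a homomorphism \(\Gamma\to\Z\) that vanishes on the lifts of \(F\) and of the other rotations; it vanishes because collar-supported lifts commute on the nose.
\item If any residual homomorphism \(\Gamma\to\Z\) survives, pass to a power \(\tilde\rho_i^{k}\) or to a smaller finite-index subgroup. This is the point to double check.
\end{itemize}

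With this in hand, \(\langle [1],\tilde\rho_1,\ldots,\tilde\rho_b\rangle\) is central in \(\Gamma\). It is free abelian of rank \(b+1\): the \(\rho_i\) are independent by Lemma~\ref{lem:intrinsic-capping-extension}, and \([1]\) has infinite order and maps trivially to \(H_\eta\).

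For the upper bound, let \(\Gamma\) have finite index and let \(Z=Z(\Gamma)\). Its image \(\bar Z\) in \(H_\eta\subset\Mod(S,M)\) centralizes the finite-index subgroup \(\bar\Gamma\). Apply the capping map \(\kappa\) of \eqref{eq:coloured-capping-map}. The group \(\kappa(\bar\Gamma)\cap\PMod(\widehat S,Q)\) has finite index in \(\kappa(H_\eta^0)\). I would check that \(\kappa(H_\eta)\) has finite index in \(\PMod(\widehat S,Q)\), using that line-field stabilizers have finite index in \(\Mod(S,\partial S)\) (finitely many orbits of line fields with fixed boundary data under Theorem~\ref{thm:line-field-classification}; the \(\Z/2\)- and Arf-data are finite). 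Granting that, \(\kappa(\bar Z)\) centralizes a finite-index subgroup of \(\PMod(\widehat S,Q)\). Lemma~\ref{lem:no-normal-abelian-subgroup} (second assertion), together with centre-freeness, then gives \(\kappa(\bar Z)=1\), so \(\bar Z\subseteq A_M\cong\Z^b\). Since \(\ker(\MCGgr\to H_\eta)\cong\Z\), we get \(\rk Z\le b+1\).

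The main obstacle is the finite-index claim for \(H_\eta\) in \(\Mod(S,M)\), or at least for its capped image. I would establish it via the orbit classification of Theorem~\ref{thm:line-field-classification}: the pure mapping class group acts on line fields with prescribed boundary windings with finitely many orbits, and one orbit-stabilizer computation should then bound the index. A second point needing care is the centrality of graded lifts of the \(\rho_i\) in the lower bound; there the collar-supported lift makes commutation hold on the nose.
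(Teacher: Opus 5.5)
Your upper bound rests on a false claim: \(H_\eta\) does not have finite index in \(\Mod(S,M)\), and \(\kappa(H_\eta)\) does not have finite index in \(\PMod(\widehat S,Q)\). Having finitely many orbits of line fields with fixed boundary data is not the same as having finite orbits. By orbit--stabilizer, the index of \(H_\eta\) equals the size of the orbit of \(\eta\), and that orbit is infinite. Concretely, in genus at least two, \eqref{eq:bounding-pair-winding-difference} forces some nonseparating curve \(c\) to have \(\omega_\eta(c)\neq0\). The classes \((T_c^n)^*\eta=\eta+n\,\omega_\eta(c)\,c^\vee\) are then pairwise distinct, so no non-zero power of \(T_c\) lies in \(H_\eta\). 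Since \(\ker\kappa=A_M\subseteq H_\eta\), the index of \(\kappa(H_\eta)\) in \(\operatorname{im}\kappa\) is also infinite. Hence \(\kappa(\bar\Gamma)\) is not a finite-index subgroup of \(\PMod(\widehat S,Q)\), and the second assertion of Lemma~\ref{lem:no-normal-abelian-subgroup} does not apply. Its proof needs a power of the twist about \emph{every} essential curve, which is exactly what \(H_\eta\) lacks.

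The missing idea is to use only curves whose twists preserve every line field, namely separating curves, since \(c^\vee=0\). A power of a graded lift of each separating twist lies in \(\Gamma\). A central element \(z\) therefore satisfies \(T_{\bar z(c)}^{n}=T_c^{n}\) and fixes every separating curve. Lemma~\ref{lem:separating-fixer} (Kida's rigidity of the separating-curve complex, plus the Alexander-method argument for \(\Sigma_{2,1}\)) then puts the image of \(Z(\Gamma)\) in a finite extension of \(A_M\cong\Z^b\), which gives rank at most \(b+1\). This is the paper's route.

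Your reduction to \(\MCGgr(\mathbb S)\), the triviality of \(\Div\), and the lower bound are fine. In the lower bound, the commutator homomorphism \(\Gamma\to\Z\) attached to \(\tilde\rho_i\) vanishes on a generating set of \(\Gamma\). That set consists of lifts of \(F\), the collar-supported \(\tilde\rho_j\), and \([1]\), by Lemma~\ref{lem:stopped-stabilizer-root-presentation}, so the homomorphism is zero. Your proposed fallback would not have helped anyway, since a non-zero homomorphism to \(\Z\) stays non-zero on every finite-index subgroup and on every power. The paper instead splits the extension over the stop-fixing subgroup by normalizing at a stop and uses boundary twists; your collar-lift variant with the \(\rho_i\) works equally well.
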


\begin{proof}
By Lemma~\ref{lem:characteristic-geometrization-kernel} and
Theorem~\ref{thm:opper-geometrization}, we may identify
\(\overline D(\mathbb S)\) with \(\MCGgr(\mathbb S)\).  Let \(L\le\MCGgr(\mathbb S)\) be a
finite-index subgroup and let \(z\in Z(L)\). 
Abelian subgroups of \(H_\eta\) are finitely generated
\cite{BirmanLubotzkyMcCarthy}.  Since the kernel of the graded extension is
cyclic, the same holds for abelian subgroups of \(\MCGgr(\mathbb S)\).
In particular, \(Z(L)\) is finitely generated and has no non-trivial
divisible subgroup.

Let \(\bar z\in H_\eta\) denote the ungraded image of $z$. Every separating
essential curve \(c\subset S\) satisfies \(c^\vee=0\), so
\eqref{eq:twist-line-field} gives \(T_c\in H_\eta\).
Choose any graded
lift \(\widetilde T_c\) of \(T_c\). Since \(L\) has finite index in
\(\MCGgr(\mathbb S)\), there exists \(n(c)>0\) such that
\(\widetilde T_c^{\,n(c)}\in L\). Its ungraded image is
\(T_c^{n(c)}\). By centrality of \(z\), we obtain
\(
 T_{\bar z(c)}^{n(c)}=T_c^{n(c)}\).
Nonzero powers of twists determine their underlying curves, so
\(\bar z\) fixes every separating curve.

Lemma~\ref{lem:separating-fixer} then implies that the image of \(Z(L)\) in
\(H_\eta\) is contained in a finite extension of \(A_M\simeq\Z^b\). On the other hand, 
the kernel of \(\MCGgr(\mathbb S)\to H_\eta\) is generated by the grading
shift. Hence,
\[
 \dim_{\mathbb Q}\bigl(Z(L)\otimes_{\Z}\mathbb Q\bigr)\leq b+1.
\]
This gives the upper bound for the reduced centre.

For the reverse inequality, let \(H_\eta^p\leq H_\eta\)
be the finite-index subgroup fixing every stop individually.
Choose a stop \(p\) and a lift of the tangent line
\(T_p\partial S\) to the grading cover. Each mapping class
in \(H_\eta^p\) has a unique graded lift fixing this chosen
point of the cover. This normalization is preserved under
composition and isotopy, and therefore splits the graded
extension over \(H_\eta^p\).

Boundary twists are central in \(H_\eta^p\) and generate
a free abelian subgroup of rank \(b\). In the normalized
splitting, their lifts, together with the deck translation
\([1]\), generate a central subgroup isomorphic to
\(\Z^{b+1}\). Since the centre is finitely generated, this
lattice survives in the reduced centre. This proves
\eqref{eq:virtual-centre-DPic}.

\end{proof}

\begin{proof}[Proof of Theorem~\ref{thm:derived-reconstruction} \textup{(i)}]
Let
\[
 p=\prk_\ell\DPic(A),
 \qquad
 z=\rvz\!\left(
   \DPic(A)/\operatorname{DA}(\DPic(A))
 \right),
\]
where \(\ell\) is any prime.  Lemmas~\ref{lem:pruefer-rank-DPic} and
\ref{lem:virtual-centre-DPic} give
\begin{equation*}
 b=z-1,
 \qquad
 g=\frac{p-b+1}{2}=\frac{p-z+2}{2}.
\end{equation*}
The same formulas hold for \(A'\).  An abstract group isomorphism preserves
both \(p\) and \(z\), so \(g(S_A)=g(S_{A'})\) and
\(|\pi_0(\partial S_A)|=|\pi_0(\partial S_{A'})|\).  The classification of
compact connected oriented surfaces thus gives an orientation-preserving
diffeomorphism of the underlying surfaces.
\end{proof}

\subsection{Rigidity of line-field stabilizers}

For part~\textup{(ii)} of Theorem \ref{thm:derived-reconstruction}, we use the action on the torsion of the
abelianized geometrisation kernel. We first recall the required
rigidity theorem and establish the properties of line-field
stabilizers.

For a compact oriented surface \(S\), let \(\mathcal C_s(S)\) be the
simplicial complex whose vertices are isotopy classes of essential
separating simple closed curves and whose simplices are represented by
pairwise disjoint curves. Here essential means neither null-homotopic
nor isotopic to a boundary component. Let \(\Mod^*(S)\) denote the
extended mapping class group.

\begin{theorem}[{\cite[Theorem~1.2]{Kida}}]
\label{thm:kida-rigidity}
Suppose that either \(g(S)\geq3\), or that \(g(S)=2\) and \(S\) has at
least two boundary components.  The natural homomorphism
\[
 \Mod^*(S)\longrightarrow\operatorname{Aut}\bigl(\mathcal C_s(S)\bigr)
\]
is an isomorphism.
\end{theorem}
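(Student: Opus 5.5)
Although Theorem~\ref{thm:kida-rigidity} is quoted from \cite{Kida}, this is how I would prove it. The approach is to reduce it to the classical rigidity of the full curve complex \(\mathcal C(S)\) due to Ivanov, Korkmaz and Luo \cite{Ivanov1997,Korkmaz,Luo}. That theorem says \(\Mod^*(S)\to\operatorname{Aut}(\mathcal C(S))\) is an isomorphism outside a short list of low-complexity exceptions, none of which occurs here.

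\textbf{Injectivity.} Let \(f\in\Mod^*(S)\) fix every vertex of \(\mathcal C_s(S)\).
\begin{enumerate}[label=(\alph*)]
\item Every nonseparating curve \(a\) is the intersection, up to isotopy, of the boundaries of suitable genus-one subsurfaces containing it. Such subsurfaces exist when \(g\geq3\), or when \(g=2\) and \(b\geq2\).
\item Since \(f\) fixes these separating curves, it fixes every nonseparating curve as well, hence acts trivially on \(\mathcal C(S)\).
\item Curve-complex rigidity then forces \(f\) to be trivial in \(\Mod^*(S)\).
\end{enumerate}
The hyperelliptic involution of a closed genus-two surface is exactly the element excluded by the hypotheses. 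It fixes all separating curves, which is why \(b\geq2\) is needed when \(g=2\).

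\textbf{Surjectivity.} Given \(\phi\in\operatorname{Aut}(\mathcal C_s(S))\), I would proceed as follows.
\begin{enumerate}[label=(\alph*)]
\item \emph{Recover topological types.} For a separating curve \(c\), the link of \(c\) is the join of the separating-curve complexes of the two complementary pieces. This join may be degenerate, for instance when a piece has genus zero or genus one with one boundary component. I would read off the genus and boundary count of each side from simplicial invariants of the link: maximal simplex dimension, join decomposability, and connectivity of the factors. These are preserved by \(\phi\). In particular \(\phi\) preserves the set of curves cutting off a one-holed torus, and the set cutting off a pair of pants containing two boundary components.
\item \emph{Encode nonseparating curves.} Define a \emph{sharing pair} to be two curves \(c_1,c_2\) each bounding a one-holed torus, with the two tori intersecting in an annulus around a single nonseparating curve \(a\). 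Identify \(a\) with the equivalence class of all sharing pairs determining it. Sharing pairs should be characterized inside \(\mathcal C_s(S)\) by the existence of prescribed disjoint separating curves, for example a curve bounding the genus-two subsurface filled by \(c_1\cup c_2\), together with combinatorial conditions on their links.
\item \emph{Build an automorphism of \(\mathcal C(S)\).} Once sharing pairs are characterized, \(\phi\) acts on nonseparating curves. Combined with its given action on separating curves, this yields a map on all vertices of \(\mathcal C(S)\). Disjointness of two nonseparating curves should be detected by the existence of sharing pairs representing them that are disjoint from, or nested with, each other. Mixed adjacency is simply whether \(a\) lies on one side of \(c\). The result is a simplicial automorphism of \(\mathcal C(S)\).
\item \emph{Conclude.} Ivanov--Korkmaz--Luo rigidity shows this automorphism is induced by some \(h\in\Mod^*(S)\), and \(h\) then induces \(\phi\) on \(\mathcal C_s(S)\).
\end{enumerate}

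\textbf{Main obstacle.} The hard step is the intrinsic characterization of sharing pairs and the proof that the induced map on nonseparating curves is well defined and adjacency-preserving. This is most delicate in genus two, where there is little room for auxiliary separating curves. There I would use the two boundary components to produce extra separating curves, cutting off pants containing both boundaries. I would first check the characterization by explicit case analysis on the complementary pieces of \(c_1\cup c_2\) in \(\Sigma_{2,b}\), and handle genus at least three uniformly by induction on the complexity of the complement.
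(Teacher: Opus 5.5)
\textbf{Verdict: genuine gap.} The paper gives no proof of this statement; it quotes it from Kida and uses it as a black box. Your outline is a plausible reconstruction of the kind of argument used there: Brendle--Margalit's sharing-pair reduction to Ivanov's theorem, extended to surfaces with boundary. Your injectivity half is essentially sound. Read ``intersection of the tori'' rather than ``of their boundaries'', and note that the argument uses that the one-holed-torus side of each curve is recognizable. The surjectivity half is the actual content of the theorem, and it is only announced. The concrete mechanisms you propose for its steps do not work.

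\emph{Step (a).} Your invariants cannot separate the two types you single out. If $c$ cuts off a one-holed torus of $\Sigma_{g,b}$, its link is $\mathcal C_s(\Sigma_{g-1,b+1})$. If $c$ cuts off a pair of pants containing two boundary components, its link is $\mathcal C_s(\Sigma_{g,b-1})$. Neither link is a join. Both have maximal simplices with $2g+b-4$ vertices, since a maximal family of disjoint separating curves on $\Sigma_{h,k}$ has $2h+k-3$ members. In large genus both are connected. Sharing pairs consist of handle curves, so this distinction is indispensable and needs a finer argument.

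\emph{Step (b).} The suggested anchor does not exist. If $T_{c_1}\cap T_{c_2}$ is an annulus, then $\chi(T_{c_1}\cup T_{c_2})=-2$, so the union is a two-holed torus. Consequently $c_1\cup c_2$ fills a four-holed sphere, two of whose boundary curves are parallel to $a$; it fills no genus-two subsurface.

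\emph{Step (c).} The disjointness criterion fails for bounding pairs. Let $a,a'$ be disjoint with $a\cup a'$ separating.
\begin{itemize}
\item A curve meeting $a$ once crosses the separating multicurve $a\cup a'$ an even number of times, so it has odd mod-two intersection with $a'$. Hence every one-holed torus containing $a$ meets $a'$.
\item Two distinct disjoint handle curves bound disjoint tori.
\item A handle curve common to sharing pairs of $a$ and $a'$ would place both curves in one one-holed torus, forcing $a\simeq a'$.
\end{itemize}
So no curve of a sharing pair for $a$ is disjoint from, or equal to, a curve of a sharing pair for $a'$, and your criterion declares every bounding pair non-adjacent. Well-definedness of the induced map on nonseparating curves is also left open.

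\emph{The hypothesis $b\geq2$.} The hyperelliptic involution explains only the closed case, where $\mathcal C_s$ has no edges at all. On $\Sigma_{2,1}$ your injectivity argument goes through, so there the hypothesis $b\geq2$ concerns surjectivity, not faithfulness.
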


Note that the diffeomorphism supplied by this theorem may reverse orientation.
Lemma~\ref{lem:orientation-correction} shows that its orientation can be
corrected without changing the homotopy class of the line field. The intrinsic recognition of separating-twist axes in the relevant
homological line-field kernel is proved in
Lemma~\ref{lem:twists-in-chillingworth-kernel}. Theorem~\ref{thm:kida-rigidity} does not include
\(\Sigma_{2,1}\) and in this case the two-torsion representation
distinguishes the line-field orbits by
Lemma~\ref{lem:S21-two-torsion}. These three lemmas are proved in
Section~\ref{appsubsec:line-field-stabilizers}.

\begin{lemma}\label{lem:stabilizer-determines-line-field}
Let \(S\) be a compact connected oriented surface of genus
at least two with non-empty boundary.
If two line fields \(\eta,\theta\) satisfy
\[
 \Stab_{\PMod(S)}(\eta)=\Stab_{\PMod(S)}(\theta),
\]
then \(\eta\simeq\theta\).
\end{lemma}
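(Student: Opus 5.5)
We set \(\alpha=\theta-\eta\in H^1(S;\Z)\) and aim to show \(\alpha=0\). By \eqref{eq:winding-difference}, \(\omega_\theta(c)=\omega_\eta(c)+\langle\alpha,[c]\rangle\) for every oriented closed curve \(c\). Since \(H^1(S;\Z)\) is torsion free, it suffices to show \(\langle\alpha,x\rangle=0\) for \(x\) ranging over a generating set of \(H_1(S;\Z)\). We use the standard generating set: the classes of nonseparating simple closed curves, together with the boundary classes \([B_1],\dots,[B_{b-1}]\). Boundary twists are separating-type elements; their winding numbers are fixed by the stabilizers rather than appearing as hypotheses, so they will need separate treatment below.

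\textbf{Nonseparating curves.} The plan is to extract winding numbers from weighted bounding-pair products, using \eqref{eq:weighted-bounding-pair}. Let \(a\) be a nonseparating simple closed curve. Since \(g\geq2\), we can choose \(b'\) so that \(a,b'\) form an oriented bounding pair bounding a genus-one subsurface \(R\) containing no boundary component of \(S\); this is possible by placing all boundary components on the other side. The product \(T_a^uT_{b'}^{-v}\) acts trivially on \(\partial S\), so it lies in \(\PMod(S)\). It belongs to \(\Stab(\eta)\) exactly when \(u\omega_\eta(a)=v\omega_\eta(b')\).

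By \eqref{eq:bounding-pair-winding-difference}, \(\omega_\eta(b')=\omega_\eta(a)+4\). The set of pairs \((u,v)\) with \(T_a^uT_{b'}^{-v}\in\Stab(\eta)\) is therefore the line \(\{(u,v):u\,\omega_\eta(a)=v(\omega_\eta(a)+4)\}\) in \(\Z^2\). Because \(T_a\) and \(T_{b'}\) generate a free abelian group of rank two, this line is read off from the stabilizer. Since the two coefficients differ by \(4\neq0\), the line determines the ratio \(\omega_\eta(a):(\omega_\eta(a)+4)\), and hence \(\omega_\eta(a)\) itself.

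Equality of stabilizers thus gives \(\omega_\eta(a)=\omega_\theta(a)\) for every nonseparating \(a\). Applying \eqref{eq:winding-difference}, we get \(\langle\alpha,[a]\rangle=0\) for all such \(a\).

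\textbf{Boundary classes.} Nonseparating curves span only a subgroup of \(H_1(S;\Z)\) that may miss the boundary directions. Indeed, they do span: for \(b\geq2\), a curve \(\gamma\) such that \(\gamma\) and a boundary-parallel modification \(\gamma'\) are both nonseparating has \([\gamma]-[\gamma']=[B_i]\). Concretely, take a nonseparating \(\gamma\), and let \(\gamma'\) be the band sum of \(\gamma\) with \(B_i\). Then \(\gamma'\) is still nonseparating, and \([\gamma']=[\gamma]\pm[B_i]\).

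Hence nonseparating simple closed curves generate \(H_1(S;\Z)\), and \(\alpha=0\). We will double-check this generation claim using the standard basis \(a_j,b_j\) (\(j\le g\)) together with the curves \(a_1\#B_i\).

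\textbf{Main obstacle.} The delicate point is justifying that the bounding pair can always be chosen with \(R\) free of boundary and of genus one, for an arbitrary nonseparating \(a\). This follows by the change-of-coordinates principle, since \(S\setminus a\) is connected of genus \(g-1\geq1\). The reading-off of \(\omega_\eta(a)\) also requires \(\omega_\eta(a)\) and \(\omega_\eta(a)+4\) to be determined from their ratio. If both are zero this fails, but that cannot happen. If \(\omega_\eta(a)=0\), the stabilizer line is \(v=0\), which is still distinguishable from other ratios. This direction is clean, so we expect the proof to be short.
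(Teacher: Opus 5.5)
Your proposal is correct and follows essentially the paper's own argument. The weighted bounding-pair products $T_a^uT_{b'}^{-v}$ determine $\omega(a)$ because $\omega(b')-\omega(a)=4\neq0$, every nonseparating curve lies in such a genus-one pair, and nonseparating classes (together with their band sums with boundary components) generate $H_1(S;\Z)$, which forces $\alpha=0$. Your remark that $T_a,T_{b'}$ generate a copy of $\Z^2$ is not needed, since the set of admissible $(u,v)$ is defined directly by the membership condition, and the hedging in your boundary-class paragraph can be deleted because the band-sum argument already settles it.
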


\begin{proof}
We compare the two stabilizers using weighted products of Dehn
twists about genus-one bounding pairs, to which
\eqref{eq:weighted-bounding-pair} applies.

Let \(a,b\) be an oriented bounding pair which bounds a genus-one
subsurface, and set
\[
 A=\omega_\eta(a),\quad B=\omega_\eta(b),
 \qquad
 A'=\omega_\theta(a),\quad B'=\omega_\theta(b).
\]
By \eqref{eq:weighted-bounding-pair} and the assumed
equality of the two pure stabilizers, we have
\begin{equation*}
 \{(u,v)\in\Z^2:uA-vB=0\}
 =\{(u,v)\in\Z^2:uA'-vB'=0\}.
\end{equation*}
Hence the integer vectors \((A,B)\) and \((A',B')\) are proportional over \(\mathbb Q\).  Neither is zero, because
\eqref{eq:bounding-pair-winding-difference} gives
\[
 A-B=-4,
 \qquad A'-B'=-4.
\]
The proportionality factor is consequently one, and
\begin{equation}\label{eq:winding-agrees-on-bp}
 \omega_\eta(a)=\omega_\theta(a),
 \qquad
 \omega_\eta(b)=\omega_\theta(b).
\end{equation}

Every nonseparating simple closed curve on a surface of genus at least two
belongs to a genus-one bounding pair.
Indeed, after cutting along the curve, the remaining surface
has genus at least one. One can choose a genus-one subsurface
with two boundary components adjacent to one of the new
boundary circles and disjoint from the original boundary.
Hence
\eqref{eq:winding-agrees-on-bp} holds for every nonseparating curve.  Write
\(\theta-\eta=\alpha\in H^1(S;\Z)\).  By
\eqref{eq:winding-difference}, \(\alpha\) vanishes on the homology class
of every nonseparating curve.  These classes span \(H_1(S;\Z)\).  Indeed,
a symplectic system gives the handle classes, while the class of each
boundary component is the difference between a nonseparating curve and
its band sum with that boundary component.  Thus \(\alpha=0\), and hence
\(\theta\simeq\eta\).
\end{proof}

\subsection{Reconstruction from the shift-pointed group}

We now combine the intrinsic group constructions with the rigidity results
above. We treat the one-holed genus-two surface first, since it lies
outside the range of Theorem~\ref{thm:kida-rigidity}.

\begin{proof}[Proof of Theorem~\ref{thm:derived-reconstruction} \textup{(ii)}]
Set
\[
 D=D(\mathbb S_A),\qquad D'=D(\mathbb S_{A'}),
 \qquad N=N(\mathbb S_A),\qquad N'=N(\mathbb S_{A'}).
\]
By Lemma~\ref{lem:characteristic-geometrization-kernel}, we have
\(\Phi(N)=N'\).  Thus \(\Phi\) induces an isomorphism of graded mapping
class groups.  Since \(\Phi([1])=[1]\), quotienting by the central grading
shift in \eqref{eq:grading-extension} gives an isomorphism
\(
 \phi:H_{\eta_A}\xrightarrow{\sim}H_{\eta_{A'}}\). Part~\textup{(i)} implies that \(S_A\) and \(S_{A'}\) have the same genus
and the same number of boundary components. We may therefore
choose an orientation-preserving diffeomorphism between them and
temporarily identify both with a fixed surface \(S=\Sigma_{g,b}\).
Denote by \(M,M'\) and \(\eta,\eta'\) the stop sets and line fields
transported to \(S\). 

Suppose first that \(S=\Sigma_{2,1}\).  By
\eqref{eq:ell-local-systems}, the characteristic group
\(
 V(\mathbb S)=N(\mathbb S)^{\mathrm{ab}}[2]
\)
is naturally \(H^1(S;\mathbb F_2)\).  Conjugation on it is the mod-two
homological representation of \(H_\eta\). After the stops are forgotten,
\(H_\eta\) maps onto \(\Stab_{\Mod(S)}(\eta)\), while its
boundary-rotation kernel acts trivially on cohomology. Its image is
therefore the group \(R_\eta\) of
Lemma~\ref{lem:S21-two-torsion}. The isomorphism \(\Phi\)
conjugates \(R_{\eta_A}\) to \(R_{\eta_{A'}}\), so the two
images have the same order.  Lemma~\ref{lem:S21-two-torsion} shows that the
two line fields have the same \(\sigma\)-invariant and, when defined, the
same Arf invariant.  Their boundary winding numbers are both \(-6\).
Theorem~\ref{thm:line-field-classification} now gives an
orientation-preserving diffeomorphism \(f:S_A\to S_{A'}\) with
\(f^*\eta_{A'}\simeq\eta_A\).  This proves the exceptional case.

We henceforth assume that either \(g\geq3\), or \(g=2\) and \(b\geq2\). Fix a prime \(\ell\), and set
\(
 L_\ell(\mathbb S)
 =\operatorname{Tor}_{\ell^\infty}
 \bigl(N(\mathbb S)^{\mathrm{ab}}\bigr)\).
Both \(N\) and \(L_\ell(\mathbb S)\) are characteristic in the data under
consideration.  Hence \(\phi\) identifies the kernels of the two
conjugation representations on \(L_\ell\).  By
\eqref{eq:ell-local-systems}, a mapping class acts trivially on
\(L_\ell\) precisely when it acts trivially on \(H_1(S;\Z)\).  An integral
matrix that is the identity on all \(\ell\)-power characters is congruent
to the identity modulo \(\ell^n\) for every \(n\), and hence is the
identity.  Thus \(\phi\) restricts to an isomorphism
\begin{equation}\label{eq:framed-torelli-isomorphism}
 J_\eta\xrightarrow{\sim}J_{\eta'},
 \qquad
 J_\eta
 =H_\eta\cap
 \ker\bigl(\Mod(S,M)\to\operatorname{Aut}H_1(S;\Z)\bigr).
\end{equation}

We next forget the stops in \eqref{eq:framed-torelli-isomorphism}.
Every element of \(J_\eta\) preserves each boundary component.
For \(b\geq2\), the subgroup of \(H_1(S;\Z)\)
generated by the oriented boundary classes is
\(\Z^b/\Z(1,\ldots,1)\), and the natural
permutation action on this lattice is faithful. Trivial action
on absolute homology excludes a non-trivial boundary
permutation. For \(b=1\), the assertion is automatic.  Recall that isotopies in \(\Mod^*(S)\) may move
points of \(\partial S\), and let
\(\PMod(S)\leq\Mod(S)\) be the subgroup preserving every boundary
component setwise.  The forgetful homomorphism
\[
 q_M:\Mod(S,M)\longrightarrow\Mod(S)\subset\Mod^*(S)
\]
forgets the stops.  Denote by \(\mathcal I(S)\) the Torelli group and by
\(\mathcal K(S)\) the Johnson kernel, generated by twists about separating
curves, in the conventions of \cite{Kida}.  We distinguish these from the
absolute homological kernel
\[
 \mathcal I_0(S)
 =\ker\bigl(\PMod(S)\to\operatorname{Aut}H_1(S;\Z)\bigr).
\]
If \(\theta-\eta=\alpha\in H^1(S;\Z)\) and
\(x\in\mathcal I_0(S)\), then
\begin{equation}\label{eq:torelli-displacement-independence}
 x^*\theta-\theta
 =(x^*\eta-\eta)+(x^*\alpha-\alpha)
 =x^*\eta-\eta.
\end{equation}
Every element of \(\mathcal I_0(S)\) fixes each boundary component and has a
representative preserving the finite stop set after an isotopy in boundary
collars.  Moreover, the kernel of \(q_M\)
preserves every line field.  Hence \(q_M(J_\eta)\) is exactly the subgroup
of \(\mathcal I_0(S)\) preserving \(\eta\).  Equation~\eqref{eq:torelli-displacement-independence} therefore shows that the
subgroup
\(
 \mathcal C_0(S)\coloneqq q_M(J_\eta)
\)
is independent of \(\eta\).  It is the kernel of the Chillingworth class
restricted to the absolute homological kernel \cite{Chillingworth}.
Separating twists preserve every line field, and the capping argument in
\cite[Section~2]{Kida} gives
\(\mathcal I_0(S)\subseteq\mathcal I(S)\). Consequently
\begin{equation*}
 \mathcal K(S)\subseteq\mathcal C_0(S)\subseteq\mathcal I(S).
\end{equation*}

The kernel of \(q_M:J_\eta\to\mathcal C_0(S)\) consists of boundary
rotations and is central in \(J_\eta\).  Conversely, if
\(z\in Z(J_\eta)\), then \(q_M(z)\) centralizes \(\mathcal C_0(S)\), hence
centralizes every separating twist.  Conjugating a twist shows that it
fixes every vertex of the separating-curve complex.  By
Theorem~\ref{thm:kida-rigidity}, the action on that complex is faithful in
the present range.  Hence \(q_M(z)=1\).  Thus
\begin{equation}\label{eq:center-pure-torelli}
 Z(J_\eta)=\ker(q_M|_{J_\eta}),
 \qquad J_\eta/Z(J_\eta)\simeq\mathcal C_0(S).
\end{equation}
The same statement holds for \(J_{\eta'}\).  Since \(\phi\) preserves
centres, it induces an isomorphism
\(
 \overline\phi:\mathcal C_0(S)\xrightarrow{\sim}\mathcal C_0(S)\).

Lemma~\ref{lem:twists-in-chillingworth-kernel} shows that
\(\overline\phi\) permutes the cyclic axes of separating twists.  Distinct
such axes commute exactly when their curves are disjoint, so
\(\overline\phi\) induces an automorphism of the separating-curve complex.
By Theorem~\ref{thm:kida-rigidity}, there is an element
\(f\in\Mod^*(S)\) inducing this automorphism. To show that
\(\overline\phi\) is conjugation by \(f\), take
\(x\in\mathcal C_0(S)\), a separating curve \(c\), and an integer
\(n\neq0\). In the identity
\[
 \overline\phi(xT_c^nx^{-1})
 =\overline\phi(x)\,\overline\phi(T_c^n)\,
  \overline\phi(x)^{-1},
\]
the left-hand side lies on the twist axis associated with
\(f(x(c))\), while the right-hand side lies on the twist axis
associated with \(\overline\phi(x)(f(c))\). Hence
\[
 f(x(c))=\overline\phi(x)(f(c))
\]
for every separating curve \(c\).  By
Theorem~\ref{thm:kida-rigidity}, the action of \(\Mod^*(S)\) on the
separating-curve complex is faithful in the present range.  It follows that
\(\overline\phi(x)=fxf^{-1}\) for all \(x\in\mathcal C_0(S)\).  Returning to
the central extensions \eqref{eq:center-pure-torelli}, we obtain
\begin{equation}\label{eq:kida-geometricity-on-J}
 q_{M'}\bigl(\phi(x)\bigr)=f q_M(x)f^{-1}
 \qquad\text{ for }x\in J_\eta.
\end{equation}
Undoing the preliminary identification of the two surfaces, \(f\) is a
diffeomorphism \(S_A\to S_{A'}\), not yet known to preserve orientation.

Equation~\eqref{eq:kida-geometricity-on-J} extends from \(J_\eta\) to the
whole line-field stabilizer.  Indeed, take \(y\in H_\eta\).  Since
\(J_\eta\triangleleft H_\eta\), comparison of
\eqref{eq:kida-geometricity-on-J} for \(x\) and \(yxy^{-1}\) shows that
\[
 q_{M'}\bigl(\phi(y)\bigr)^{-1}f q_M(y)f^{-1}
\]
 centralizes \(\mathcal C_0(S)\).  It is trivial because an element
centralizing \(\mathcal C_0(S)\) fixes every separating curve, and the same
faithfulness statement applies.  Therefore
\begin{equation}\label{eq:geometricity-on-stabilizer}
 q_{M'}\bigl(\phi(y)\bigr)=f q_M(y)f^{-1}
 \qquad\text{ for }y\in H_\eta.
\end{equation}

For a line field \(\lambda\) and any finite stop set \(P\) meeting every
boundary component, put
\(H_{\lambda,P}=\Stab_{\Mod(S,P)}(\lambda)\).  Then
\[
 q_P(H_{\lambda,P})\cap\PMod(S)=\Stab_{\PMod(S)}(\lambda).
\]
Indeed, every class in \(\PMod(S)\) has a representative which is the
identity on boundary collars and therefore preserves \(P\).  The kernel of
\(q_P\) preserves every line field.  Thus the displayed group is independent
of the chosen stop set.

Since \(\phi\) is surjective, equation
\eqref{eq:geometricity-on-stabilizer} gives
\[
 q_{M'}(H_{\eta'})=f q_M(H_\eta)f^{-1}.
\]
Intersecting with the pure mapping class groups of the surfaces without stops and putting
\(\theta=f^*\eta'\), we obtain
\begin{equation}\label{eq:equal-pure-coarse-stabilizers}
 \Stab_{\PMod(S)}(\eta)
 =\Stab_{\PMod(S)}(\theta).
\end{equation}

Lemma~\ref{lem:stabilizer-determines-line-field}, applied to
\eqref{eq:equal-pure-coarse-stabilizers}, gives
\(f^*\eta_{A'}\simeq\eta_A\).
It remains only to correct the orientation of \(f\).  If \(f\) preserves
orientation, there is nothing to prove.  Otherwise,
Lemma~\ref{lem:orientation-correction} gives an orientation-reversing
diffeomorphism \(\rho:S_A\to S_A\) satisfying
\(\rho^*\eta_A\simeq\eta_A\).  Replacing \(f\) by \(f\circ\rho\), we obtain
an orientation-preserving diffeomorphism and
\[
 (f\circ\rho)^*\eta_{A'}
 =\rho^*f^*\eta_{A'}
 \simeq\rho^*\eta_A
 \simeq\eta_A.
\]
This proves the theorem.
\end{proof}

\begin{corollary}\label{cor:geometric-reduction}
Let \(\mathbb S=(S,M,\eta)\) and
\(\mathbb S'=(S',M',\eta')\) satisfy the hypotheses of
Theorem~\ref{thm:derived-reconstruction}, and suppose that both
surfaces have at least two boundary components. A shift-preserving
isomorphism
\[
 \Phi:D(\mathbb S)\xrightarrow{\sim}D(\mathbb S')
\]
identifies the geometrisation kernels and induces an isomorphism
\(\phi:H_\eta\to H_{\eta'}\). It also identifies
\[
 J_\eta=H_\eta\cap
 \ker\bigl(\Mod(S,M)\to\operatorname{Aut}H_1(S;\Z)\bigr)
\]
with \(J_{\eta'}\), and carries \(C=Z(J_\eta)\) onto
\(C'=Z(J_{\eta'})\). There is a diffeomorphism
\(f:S\to S'\), possibly orientation reversing, such that, for every $y\in H_\eta$, we have
\[
 f^*\eta'\simeq\eta,
 \qquad
 q_{M'}\bigl(\phi(y)\bigr)=f q_M(y)f^{-1},
\]
where \(q_M\) and \(q_{M'}\) forget the stops.
\end{corollary}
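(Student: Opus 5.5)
This corollary is essentially a repackaging of the proof of Theorem~\ref{thm:derived-reconstruction}\textup{(ii)}, so the plan is to extract the intermediate statements proved there rather than to give a new argument. The hypothesis \(b\geq2\) excludes the exceptional surface \(\Sigma_{2,1}\). Together with genus at least two, it places us in the range of Theorem~\ref{thm:kida-rigidity}, which is exactly where the main branch of that proof applies.

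The first assertions follow from the characteristic subgroups already identified. Lemma~\ref{lem:characteristic-geometrization-kernel} shows that \(N(\mathbb S)=\operatorname{DA}(D(\mathbb S))\) is characteristic, so \(\Phi(N)=N'\) and \(\Phi\) induces an isomorphism \(\MCGgr(\mathbb S)\to\MCGgr(\mathbb S')\). Since \(\Phi([1])=[1]\), quotienting by the central extension \eqref{eq:grading-extension} gives \(\phi\colon H_\eta\to H_{\eta'}\). Next, fix a prime \(\ell\). The subgroup \(\operatorname{Tor}_{\ell^\infty}(N^{\mathrm{ab}})\) is characteristic and is carried by \(\Phi\) equivariantly. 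Hence \(\phi\) matches the kernels of the conjugation actions on \(H^1(S;\mu_{\ell^\infty})\). As in the proof, an integral matrix that is trivial modulo all \(\ell^n\) is the identity, so these kernels are exactly \(J_\eta\) and \(J_{\eta'}\). Since centres are preserved by isomorphisms, \(\phi(C)=C'\). By \eqref{eq:center-pure-torelli}, \(C\) is also the boundary-rotation kernel of \(q_M|_{J_\eta}\), which makes the identification geometrically meaningful.

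The diffeomorphism \(f\) comes from the rest of that proof. The isomorphism \(\overline\phi\) on \(\mathcal C_0(S)\) preserves the separating-twist axes by Lemma~\ref{lem:twists-in-chillingworth-kernel}. It therefore induces an automorphism of \(\mathcal C_s(S)\), which Theorem~\ref{thm:kida-rigidity} realizes by some \(f\in\Mod^*(S)\). Faithfulness then upgrades this to \eqref{eq:kida-geometricity-on-J}, and the normality argument extends it to \eqref{eq:geometricity-on-stabilizer} for every \(y\in H_\eta\). Intersecting with \(\PMod(S)\) and applying Lemma~\ref{lem:stabilizer-determines-line-field} gives \(f^*\eta'\simeq\eta\).

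The one point needing care is bookkeeping. In the theorem we replaced \(f\) by \(f\circ\rho\) to make it orientation preserving. The corollary deliberately keeps the original, possibly orientation-reversing, \(f\), because that is the \(f\) for which the conjugation formula \(q_{M'}(\phi(y))=fq_M(y)f^{-1}\) was established. Precomposing with \(\rho\) would change the conjugation formula unless \(\rho\) centralized \(q_M(H_\eta)\). So the proof should record the pair (the conjugation identity and \(f^*\eta'\simeq\eta\)) before the orientation correction is made.

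A second point is that we must undo the temporary identification of \(S\) and \(S'\) with \(\Sigma_{g,b}\). This identification was made by an orientation-preserving diffeomorphism, so transporting back yields a genuine diffeomorphism \(S\to S'\) with both properties. I expect no step to be a real obstacle. The only subtlety is confirming that the orientation correction is not needed, and is not performed, for the stated conjugation identity.
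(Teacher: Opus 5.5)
Your proposal is correct and follows the paper's proof essentially step for step. The characteristic kernel \(\operatorname{DA}\) and the quotient by the shift give \(\phi\); the conjugation action on the \(\ell\)-primary torsion of \(N^{\mathrm{ab}}\) identifies \(J_\eta\) with \(J_{\eta'}\), and hence their centres; the Kida rigidity argument produces \(f\) and the conjugation formula on all of \(H_\eta\); and Lemma~\ref{lem:stabilizer-determines-line-field} gives \(f^*\eta'\simeq\eta\). Your remark that the orientation correction by \(\rho\) must not be made here, since it would spoil the conjugation identity, correctly explains why the corollary allows \(f\) to reverse orientation.
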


\begin{proof}
By Lemma~\ref{lem:characteristic-geometrization-kernel}, the
geometrisation kernels are identified, and quotienting by the shift gives \(\phi\).
The conjugation action on the torsion subgroup of the abelianized kernel
yields an identification \(J_\eta\simeq J_{\eta'}\). The centre calculation
\eqref{eq:center-pure-torelli} then identifies \(C\) with \(C'\).
The rigidity argument leading to
\eqref{eq:geometricity-on-stabilizer} produces the diffeomorphism \(f\) and the displayed
conjugation formula. Finally,
Lemma~\ref{lem:stabilizer-determines-line-field} gives
\(f^*\eta'\simeq\eta\).
\end{proof}

Thus the shift-pointed derived Picard group determines the
underlying graded surface, without any additional restriction on
the boundary winding numbers or parity of the line field.
The next section shows that the stop data need not be determined,
even when the Serre element is also distinguished.

\section{Fibres of the pointed derived Picard invariant}
\label{sec:fibres}

In this section, we construct graded surfaces with different stop distributions whose
derived Picard groups are isomorphic by maps preserving both the shift and
the Serre functor. The construction gives a countably infinite fibre of
\(\mathfrak D\), together with arbitrarily large finite families in one
fibre having the same Grothendieck rank. We finish with examples in genera
zero and one for which even the underlying graded surfaces differ.
For a graded gentle algebra \(A\) with surface model $\mathbb S$, we also write
\[
 \mathfrak D(A)=(\DPic(A),[1],\mathsf S_A),
\]
where \(\mathsf S_A\) is the class of its Serre functor. Morita equivalence
identifies this triple with \(\mathfrak D(\mathbb S)\). The result is stated as follows. 

\begin{theorem}\label{thm:countable-fibres}
For every integer \(g\geq2\), there is a family
\(\{A_k\}_{k\in\Z_{\geq1}}\) of connected graded gentle algebras which are
finite-dimensional, homologically smooth, and proper. After the stops are
forgotten, their surface models have the same underlying graded surface
\((\Sigma_{g,3},\eta)\). Their stop distributions are
\[
 \bigl(k(2g-1),k(2g-1)+1,1\bigr).
\]
They satisfy
\[
 \mathfrak D(A_k)\simeq\mathfrak D(A_l)
 \qquad\text{for all }k,l\geq1,
\]
whereas the categories \(\Perf(A_k)\) are pairwise non-equivalent. The
line field \(\eta\) is induced by a vector framing, and the fibre containing
this family is countably infinite.

Moreover, for every \(R\geq1\), there is a fibre of \(\mathfrak D\)
containing more than \(R\) pairwise non-equivalent categories of genus
\(g\) with the same rank of the Grothendieck group.
\end{theorem}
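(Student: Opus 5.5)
Fix \(g\geq2\) and let \(S=\Sigma_{g,3}\) with boundary components \(B_1,B_2,B_3\). By \eqref{eq:framing-coherence} there is a vector framing \(\xi\) with \(\omega_\xi(B_i)=(1,-1,-(2g+1))\), since these sum to \(-2g-1=\chi(S)\). Put \(\eta=\mathbb R\xi\). By \eqref{eq:winding-doubling} the boundary windings of \(\eta\) are \((2,-2,-2(2g+1))\). They are pairwise distinct and nonzero, so \(\phi_A(1,1)=0\), every element of \(H_\eta\) fixes each boundary component, and \eqref{eq:derived-picard-formula} applies. For \(k\geq1\) let \(M_k\) have \(m_1=k(2g-1)\), \(m_2=k(2g-1)+1\), \(m_3=1\), and set \(\mathbb S_k=(S,M_k,\eta)\). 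I will check smoothness and properness with the criteria of \cite{HKK,LekiliPolishchuk}; stops on every boundary component suffice, and \(A_k\) is then obtained from an admissible \(\circ\)-dissection. The categories are pairwise non-equivalent by Theorem~\ref{thm:complete-surface-invariant}, because the stop distributions differ. Hence any fibre containing the family is infinite, and it is countable because there are only countably many graded surfaces with stops.

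The core step is to construct isomorphisms \(\mathfrak D(A_k)\simeq\mathfrak D(A_l)\). By Lemma~\ref{lem:stopped-stabilizer-root-presentation}, \(H_{\eta,k}\) is generated by \(F=\Stab_{\Mod(S,\partial S)}(\eta)\) and central roots \(\rho_i\) with \(\rho_i^{m_i}=T_i\). In \(\MCGgr\) I will normalise the lifts by fixing a graded point near a stop, as in the proof of Lemma~\ref{lem:virtual-centre-DPic}. Each graded lift \(\tilde\rho_i\) then satisfies \(\tilde\rho_i^{m_i}=\tilde T_i[c_i]\), with a shift correction \(c_i\) determined by the winding of \(B_i\) (the relative displacement of Section~\ref{appsubsec:relative-gradings-charge}). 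The central subgroup \(Z_k=\langle\tilde\rho_1,\tilde\rho_2,\tilde\rho_3,[1]\rangle\) sits over the common \(\widetilde F=\langle \tilde F,[1]\rangle\). I expect that, in suitable coordinates, \(D(\mathbb S_k)\) is an amalgam of \(H^1(S;\kk^\times)\rtimes\widetilde F\) with a central lattice along a sublattice, and that the only \(k\)-dependent datum is the embedding of \(\langle\tilde T_1,\tilde T_2,\tilde T_3,[1]\rangle\) into the lattice. The boundary twist about \(B_3\) (\(m_3=1\)) and the shift are unchanged. The choice of winding numbers should make the relevant \(2\times2\) block, built from \(\tilde\rho_1,\tilde\rho_2\) and the combinations \(c_i\), unimodularly equivalent for all \(k\). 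Concretely, since \(m_2-m_1=1\), the element \(\tilde\rho_2\tilde\rho_1^{-1}\) together with another element should give a basis independent of \(k\) of a common rank-two central "rotation factor", as described after Theorem~C. Local systems are acted on trivially by boundary rotations, since these are collar supported. So the identity on \(H^1(S;\kk^\times)\rtimes\widetilde F\), extended by a suitable \(\mathrm{GL}_2(\Z)\) change of basis on the rotation factor, gives a group isomorphism fixing \([1]\).

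Next comes the Serre element. By Proposition~\ref{prop:geometric-serre-functor}, \(\mathsf S_{\mathbb S_k}\) is the graded simultaneous one-step rotation \(\tilde\rho_1^{-1}\tilde\rho_2^{-1}\tilde\rho_3^{-1}[1]\), up to the normalisation convention, with trivial local system component. The computation I expect to be the main obstacle, and would carry out first, is writing the Serre element in the rank-two factor coordinates (modulo the common \(\tilde T_3\), \([1]\) parts). The claim is that it is a primitive vector, with the gcd equal to \(1\) thanks to \(m_3=1\) and the specific windings \(-2(2g+1)\), \(\pm2\). Primitive vectors form a single \(\mathrm{GL}_2(\Z)\)-orbit, and the stabiliser is large enough to also respect the image of the common sublattice. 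The change of basis can then be chosen to send \(\mathsf S_{\mathbb S_k}\) to \(\mathsf S_{\mathbb S_l}\). If the Serre vector sits in a slightly different position, I would instead adjust by the freedom of multiplying generators of \(\tilde F\) by central elements, as in Example~\ref{ex:low-genus-ungraded-failure}.

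For the final assertion, I keep \(|M|\) fixed, which fixes \(\rk K\) by \eqref{eq:charge-rank}, and vary the allocation. I will use the same construction with stop distributions \((a,\,N-a,\,1)\), or \((m_1,m_1+1,1)\)-type families adjusted so the total is constant, chosen so that the relevant block is still primitive. Taking \(N\) large and letting \(a\) range over more than \(R\) admissible values with pairwise distinct unordered distributions, the above argument (whose primitivity step depends only on a gcd condition) places them in one fibre. Theorem~\ref{thm:complete-surface-invariant} again gives pairwise non-equivalence.
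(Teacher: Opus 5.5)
Your outline matches the paper's strategy: the same framing with windings \((1,-1,-(2g+1))\), a common rank-two rotation factor, primitive Serre vectors, and a \(\mathrm{GL}_2(\Z)\) change of basis. The gap is in the central step, which fails as you set it up. You propose a map that is the identity on \(H^1(S;\kk^\times)\rtimes\widetilde F\) and a change of basis on \(\langle\rho_1,\rho_2\rangle\). But \(\widetilde F\) contains \(T_1=\rho_1^{m_1}\) and \(T_2=\rho_2^{m_2}\). Being the identity on \(F\) therefore forces \(m_1e_1\mapsto m_1'e_1\) and \(m_2e_2\mapsto m_2'e_2\) on the rotation lattice, which is not integrally invertible for \(k\neq l\). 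Nor can the embeddings \(\operatorname{diag}(m_1,m_2)\) be matched unimodularly, since their Smith forms \(\operatorname{diag}(1,m_1m_2)\) differ.

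The missing idea is a complement to \(\langle T_1,T_2\rangle\) inside \(F\). The relative displacement of the framing (Lemma~\ref{lem:relative-displacement}) gives a homomorphism \(\chi\colon F\to\Z^2\) with \(\chi(T_1)=(1,0)\), \(\chi(T_2)=(0,1)\), \(\chi(T_3)=(q,-q)\), where \(q=2g+1\). This is exactly where the windings \(\pm1\) of \(\xi\) on \(B_1,B_2\) are used. Hence \(F=F_0\times\langle T_1\rangle\times\langle T_2\rangle\) with \(F_0=\ker\chi\), and \(T_3=aT_1^{q}T_2^{-q}\) with \(a\in Z(F_0)\). Adjoining the roots gives \(H_{r,s}\cong F_0\times\langle\rho_1\rangle\times\langle\rho_2\rangle\) for all \((r,s)\), with \(\rho_3=T_3=a\rho_1^{qr}\rho_2^{-qs}\) (Lemma~\ref{lem:framed-stabilizer-presentation}). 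Normalizing graded lifts at the unique stop on \(B_3\), which every element of \(H_{r,s}\) fixes, splits the grading extension; this, not the gcd, is the role of \(m_3=1\). So \(D(\mathbb S_{r,s})\cong(N\rtimes F_0)\times\Z^2\times\Z\langle[1]\rangle\). The isomorphism is the identity only on \(N\rtimes F_0\) and on \([1]\), and in general it does not preserve \(F\).

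The Serre element must also be computed, not assumed. Its mapping class is \((\rho_1\rho_2\rho_3)^{-1}=a^{-1}\rho_1^{-(qr+1)}\rho_2^{qs-1}\). Geometrisation, however, leaves an unknown local-system factor \(n_{r,s}\in N\) and an unknown shift exponent \(e_{r,s}\), so your claim of a trivial local-system component and a single \([1]\) is unproved. The paper settles both by applying \(\mathsf S[-1]\) to rank-one band objects on zero-winding nonseparating curves in the core. These are fixed by Auslander--Reiten translation, and they span \(H_1(S;\Z)\) because \(\xi\) has odd boundary windings (Lemma~\ref{lem:gradable-curves-span}). This gives \(n_{r,s}=1\) and \(e_{r,s}\) independent of \((r,s)\).

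Only then is the rotation coordinate known to be \(-(qr+1,1-qs)\). Primitivity for \((r,s)=(m_k,m_k+1)\) reduces to \(\gcd(qm_k+1,q-2)=1\), which uses both \(m_2=m_1+1\) and \((2g-1)\mid m_1\). Your heuristic via \(\rho_2\rho_1^{-1}\) and \(m_3=1\) does not supply this.

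For the fixed-rank statement you must actually exhibit more than \(R\) admissible allocations. For example, take \((r,p-r,1)\) with \(p>R+2\) prime and \(p\nmid q\). Then \(\gcd(qr+1,q(p-r)-1)=\gcd(qr+1,qp)=1\) except for the single residue \(qr\equiv-1\pmod p\), leaving \(p-2>R\) values. Since the windings are pairwise distinct, distinct ordered allocations are already non-isomorphic, so you do not need distinct unordered distributions.
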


\subsection{A family with three boundary components}
Fix \(g\geq2\), put \(q=2g+1\), and let
\(S=\Sigma_{g,3}\), with
\(\partial S=B_1\sqcup B_2\sqcup B_3\).
Orient the boundary with \(S\) on its left.
Since \(\chi(S)=-q\), the realization statement following
\eqref{eq:framing-coherence} allows us to choose a vector framing
\(\xi\) with
\[
 \bigl(\omega_\xi(B_1),\omega_\xi(B_2),\omega_\xi(B_3)\bigr)
 =(1,-1,-q).
\]
Set \(\eta=\mathbb R\xi\). By \eqref{eq:winding-doubling},
\begin{equation}\label{eq:chosen-line-windings}
 \bigl(\omega_\eta(B_1),\omega_\eta(B_2),\omega_\eta(B_3)\bigr)
 =(2,-2,-2q).
\end{equation}

Let \(T_i=T_{B_i}\) be the positive boundary twists, and write
\[
 F_0=
 \bigl\{
 f\in\Mod(S,\partial S)
 \,\bigm|\,
 f^*\xi\simeq\xi\text{ relative to }\partial S
 \bigr\},
 \qquad
 a=T_3T_1^{-q}T_2^q.
\]
In the definition of \(F_0\), mapping classes are represented by
diffeomorphisms equal to the identity on boundary collars.
Thus \(F_0\) preserves the relative homotopy class of the framing.
Lemma~\ref{lem:framed-stabilizer-presentation} shows that
\(a\in Z(F_0)\) and gives the stabilizer decompositions used below.
Both \(F_0\) and \(a\) are fixed before the stops are chosen.

For \(r,s\geq1\), let \(M_{r,s}\) have \(r,s,1\) stops on
\(B_1,B_2,B_3\), respectively, and put
\[
 \mathbb S_{r,s}=(S,M_{r,s},\eta).
\]
Let \(\rho_i\) denote the positive one-step rotation on \(B_i\),
supported in a boundary collar. These satisfy
\[
 \rho_1^r=T_1,\qquad
 \rho_2^s=T_2,\qquad
 \rho_3=T_3.
\]
The stabilizer and Serre calculations needed for the following
lemma are proved in Appendix~\ref{app:serre-family}.

\begin{lemma}\label{lem:common-pointed-group-family}
For every \(r,s\geq1\), the graded surface \(\mathbb S_{r,s}\)
admits a connected finite-dimensional, homologically smooth and
proper graded gentle algebra model \(A_{r,s}\).
Put \(N=H^1(S;\kk^\times)\), with the natural action of \(F_0\)
induced by mapping classes.
There are shift-preserving identifications
\[
 \DPic(A_{r,s})\simeq
 G=(N\rtimes F_0)\times
   \Z\langle\rho_1\rangle\times
   \Z\langle\rho_2\rangle\times
   \Z\langle[1]\rangle
\]
for all \(r,s\geq1\), where the two rotation factors are identified
with fixed copies of \(\Z\) as \(r,s\) vary.
We use the same symbols for \(a,\rho_1,\rho_2\) and their
corresponding elements in \(G\).
Let \(\mathsf S_{r,s}\in\DPic(A_{r,s})\) be the element represented
by the Serre functor of \(\Perf(A_{r,s})\). Under these identifications,
\begin{equation}\label{eq:normalized-serre-family}
 \mathsf S_{r,s}
 =a^{-1}\rho_1^{-(qr+1)}\rho_2^{qs-1}[1]^{e_g},
\end{equation}
where \(e_g\in\Z\) is independent of \(r,s\).
\end{lemma}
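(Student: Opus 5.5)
We split the proof into three parts: the algebra model, the group identification, and the Serre element.

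\emph{Algebra model.} Since every $m_i\geq1$ and $\chi(S)=-q<0$, we choose an admissible $\circ$-dissection of $(S,M_{r,s})$. Its graded endomorphism algebra $A_{r,s}$ is connected, finite-dimensional and graded gentle. Smoothness and properness then follow from the criteria recalled from \cite{HKK,LekiliPolishchuk,Opper2025}, because every boundary component carries a stop.

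\emph{Group identification.} Every boundary winding in \eqref{eq:chosen-line-windings} is nonzero, so $\phi_A(1,1)=0$. Hence \eqref{eq:derived-picard-formula} gives $D(\mathbb S_{r,s})\simeq N\rtimes\MCGgr(\mathbb S_{r,s})$, and it remains to describe $\MCGgr(\mathbb S_{r,s})$.
\begin{itemize}
\item The three boundary windings $2,-2,-2q$ are distinct, so every element of $H_\eta$ fixes each boundary component. Thus $H_\eta=H_\eta^0$.
\item Lemma~\ref{lem:stopped-stabilizer-root-presentation} presents $H_\eta^0$ as generated by $F=\Stab_{\Mod(S,\partial S)}(\eta)$ together with central roots $\rho_i$ of the boundary twists. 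Since $B_3$ carries one stop, $\rho_3=T_3$.
\item Lemma~\ref{lem:framed-stabilizer-presentation} from the appendix gives $F=F_0\times\langle T_1\rangle\times\langle T_2\rangle$, with $T_3\in F_0$. The point is that $T_1$ and $T_2$ shift the relative framing class independently of $F_0$. Adjoining the roots $\rho_1,\rho_2$ then replaces these factors by $\Z\langle\rho_1\rangle\times\Z\langle\rho_2\rangle$, independently of $r,s$.
\item Graded lifts are normalized at a stop on $B_3$, as in the proof of Lemma~\ref{lem:virtual-centre-DPic}. This splits the central extension \eqref{eq:grading-extension} over $F_0\times\langle\rho_1,\rho_2\rangle$, which yields the factor $\Z\langle[1]\rangle$.
\item The local-system torus $N$ is acted on only through $F_0$, since the collar rotations act trivially on $H^1(S;\kk^\times)$. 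The splitting of Theorem~\ref{thm:opper-geometrization} can be chosen compatibly, so the semidirect product reduces to $N\rtimes F_0$ and we obtain $G$.
\end{itemize}

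\emph{Serre element.} By Proposition~\ref{prop:geometric-serre-functor}, $\mathsf S_{r,s}$ lies over the mapping class $R^{-1}=\rho_1^{-1}\rho_2^{-1}\rho_3^{-1}$, graded by the induced grading and followed by $[1]$. Its $N$-component is trivial, because the Serre bimodule is geometric. To place it in $G$ we rewrite the relevant twists in the normalized coordinates:
\[
T_3=a\,T_1^{q}T_2^{-q}=a\,\rho_1^{qr}\rho_2^{-qs}.
\]
Substituting gives $\rho_3^{-1}=a^{-1}\rho_1^{-qr}\rho_2^{qs}$, and hence
\[
R^{-1}=a^{-1}\rho_1^{-(qr+1)}\rho_2^{qs-1}.
\]
Comparing the induced grading of this lift with the normalized lift leaves a shift $[1]^{e}$, where $e$ is computed from the relative displacement of Section~\ref{appsubsec:relative-gradings-charge}.

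The main obstacle is showing that $e=e_g$ does not depend on $r,s$. The one-step rotations on $B_1,B_2$ have graded normalization depending only on the local winding near the chosen stop on $B_3$. The grading of $\rho_3=T_3$ comes from the winding $-2q$ and the genus. I would compute this directly with the boundary-twist displacement formulas of Section~\ref{appsubsec:relative-gradings-charge} applied to the factorization of $a$, checking that the $r$- and $s$-dependent contributions cancel.
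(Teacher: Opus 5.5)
Your algebra model and your identification of \(\DPic(A_{r,s})\) with \(G\) follow the paper's argument. There is one slip: \(T_3\notin F_0\), because its relative displacement is \(\chi(T_3)=(q,-q)\neq0\) in the notation of Lemma~\ref{lem:framed-stabilizer-presentation}. It is \(a=T_3T_1^{-q}T_2^{q}\) that lies in \(F_0\), as your later formula correctly uses.

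The genuine gap is the sentence ``its \(N\)-component is trivial, because the Serre bimodule is geometric.'' Proposition~\ref{prop:geometric-serre-functor} describes \(\mathsf S[-1]\) only on arc objects. Elements of \(N=\ker\Psi\) fix every arc object up to isomorphism and are visible only through the monodromy of band objects. So that proposition pins down only the image of \(\mathsf S_{r,s}\) in \(\MCGgr(\mathbb S_{r,s})\). Relative to the chosen section you therefore only know \(\mathsf S_{r,s}=n_{r,s}\,a^{-1}\rho_1^{-(qr+1)}\rho_2^{qs-1}[1]^{e_{r,s}}\) for some unknown \(n_{r,s}\in N\). Proving \(n_{r,s}=1\) is exactly the content of \eqref{eq:normalized-serre-family} beyond geometrisation. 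Lemma~\ref{lem:primitive-serre-vectors} depends on it, since its automorphism is the identity on \(N\rtimes F_0\) and cannot absorb an \((r,s)\)-dependent factor.

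The paper supplies this step by testing \(\mathsf S[-1]\) on rank-one band objects \(X_{c,\lambda}\) along nonseparating curves \(c\) in the core \(S^\circ\) with \(\omega_\xi(c)=0\). It uses four facts:
\begin{enumerate}
\item band objects are fixed by Auslander--Reiten translation;
\item \(n\in N\) sends \(X_{c,\lambda}\) to \(X_{c,n(c)\lambda}\);
\item the lifts of \(\rho_1,\rho_2\) normalized at the stop on \(B_3\) are the identity on the grading cover over \(S^\circ\);
\item the normalized lift of \(a\) acts over \(S^\circ\) by a deck translation \(\kappa_g\) independent of \(r,s\).
\end{enumerate}
Hence \(X_{c,1}\simeq X_{c,n_{r,s}(c)}[e_{r,s}-1-\kappa_g]\). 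The classification of band objects together with properness then gives \(n_{r,s}(c)=1\) and \(e_{r,s}=1+\kappa_g\).

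To conclude \(n_{r,s}=1\) you also need Lemma~\ref{lem:gradable-curves-span}: such zero-winding curves generate \(H_1(S;\Z)\). This is where the odd framing windings \((1,-1,-q)\) enter. An alternative route also works: centrality of \(\mathsf S_{r,s}\) forces \(n_{r,s}\) to be \(F_0\)-invariant, and the twists about these zero-winding curves lie in \(F_0\) and act on \(N\) through transvections of \(H_1(S;\Z)\). This again gives \(n_{r,s}=1\) via the same lemma.

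The same core argument removes what you call the main obstacle. Over \(S^\circ\) the rotations contribute nothing, so there are no \(r,s\)-dependent terms to cancel. Your proposal instead leaves the shift as an unexecuted displacement computation, and it misdescribes the normalization of the rotations on \(B_1,B_2\), which is simply the identity away from their collars.
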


\begin{proof}
Put
\[
 H_{r,s}=\Stab_{\Mod(S,M_{r,s})}(\eta).
\]
Lemma~\ref{lem:framed-stabilizer-presentation}, whose subgroup
\(F_0\) is the relative framing stabilizer defined above, gives
the decompositions
\eqref{eq:F0-splitting}--\eqref{eq:Hrs-splitting}.

There is also a canonical splitting of the grading extension for this
family.  Let \(p\) be the unique stop on \(B_3\) and choose a lift of the
tangent line \(T_pB_3\) to the grading cover.  Every element of
\(H_{r,s}\) fixes this tangent line and has a unique graded lift fixing the
chosen point above it.  The product of two normalized lifts is again
normalized, so
\begin{equation}\label{eq:graded-extension-splitting-family}
 \MCGgr(\mathbb S_{r,s})
 \simeq H_{r,s}\times\Z\langle[1]\rangle.
\end{equation}

Choose a minimal full formal system of graded arcs and let \(A_{r,s}\)
be the basic total graded endomorphism algebra of their direct sum.  The existence and the smoothness and
properness criteria are given in \cite[Proposition~2.5(1),(3),(4)]{Opper2025}.
The ribbon graph dual to the chosen full system of arcs is connected because
\(S\) is connected and the complementary regions are discs.  Equivalently,
the underlying graph of the gentle quiver of \(A_{r,s}\) is connected.
Hence \(A_{r,s}\) is a connected, finite-dimensional, homologically smooth,
and proper graded gentle algebra and
\[
 \Perf(A_{r,s})\simeq\W(\mathbb S_{r,s}).
\]

Put \(N=H^1(S;\kk^\times)\).  Since all three windings in
\eqref{eq:chosen-line-windings} are non-zero, the additive factor in
Theorem~\ref{thm:opper-geometrization} is absent. Thus
\(N=N(\mathbb S_{r,s})\) for every \(r,s\). Equations~\eqref{eq:derived-picard-formula},
\eqref{eq:Hrs-splitting}, and
\eqref{eq:graded-extension-splitting-family} identify every derived
Picard group with
\begin{equation}\label{eq:common-derived-picard-group}
 G=(N\rtimes F_0)\times\Z^2\times\Z\langle[1]\rangle.
\end{equation}
Only \(F_0\) acts non-trivially on \(N\).  The collar rotations and the
shift act trivially and the former are isotopic to the identity after the
stops are forgotten.  Moreover, \(a=T_3T_1^{-q}T_2^q\) is a product of
boundary twists and acts trivially on \(H_1(S;\Z)\), hence on \(N\).
Since \(a\in Z(F_0)\), it is central in \(N\rtimes F_0\).  In particular,
the group in
\eqref{eq:common-derived-picard-group}, with its distinguished shift, is
independent of \((r,s)\).

It remains to identify the actual Serre element, since geometrisation
alone leaves a possible local-system factor and a shift. The argument
in Lemma~\ref{lem:framed-family-serre-normalization} tests that factor
on rank-one band objects supported on zero-winding curves. By
Lemma~\ref{lem:gradable-curves-span}, their classes generate
\(H_1(S;\Z)\), so triviality of these holonomies eliminates the
entire local-system factor. The same band calculation determines the
shift exponent from the normalized action over the fixed core. Hence
it is independent of \(r,s\). This gives
\eqref{eq:normalized-serre-family} and completes the proof.
\end{proof}

\subsection{Matching the Serre elements}

The common group presentation reduces the comparison of Serre elements
to a change of basis in the central rotation factor. For \(r,s\geq1\), set
\[
 v_{r,s}=(qr+1,1-qs)\in\Z^2.
\]
By \eqref{eq:normalized-serre-family}, the rotation coordinate of
\(\mathsf S_{r,s}\) is \(-v_{r,s}\). In particular, it is primitive
if and only if \(v_{r,s}\) is primitive.

\begin{lemma}\label{lem:primitive-serre-vectors}
If \(v_{r,s}\) and \(v_{r',s'}\) are primitive, then
\[
 \mathfrak D(\mathbb S_{r,s})\simeq
 \mathfrak D(\mathbb S_{r',s'}).
\]
\end{lemma}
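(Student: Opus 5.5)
By Lemma~\ref{lem:common-pointed-group-family}, both triples can be written on the same group
\[
 G=(N\rtimes F_0)\times\Z\langle\rho_1\rangle\times\Z\langle\rho_2\rangle\times\Z\langle[1]\rangle ,
\]
with the same shift \([1]\). The two Serre elements differ only in their \(\Z^2\)-coordinate: it is \(-v_{r,s}\) for one and \(-v_{r',s'}\) for the other. The common factor \(a^{-1}[1]^{e_g}\) is the same in both, since \(e_g\) does not depend on \(r,s\). So it is enough to build one automorphism \(\Phi\) of \(G\) with two properties. First, \(\Phi\) is the identity on \(N\rtimes F_0\) and on \(\Z\langle[1]\rangle\). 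Second, \(\Phi\) acts on the factor \(\Z^2=\Z\langle\rho_1\rangle\times\Z\langle\rho_2\rangle\) by a matrix \(P\in\mathrm{GL}_2(\Z)\) with \(Pv_{r,s}=v_{r',s'}\). Composing the identification \(\DPic(A_{r,s})\simeq G\), then \(\Phi\), then the inverse of \(\DPic(A_{r',s'})\simeq G\) gives the isomorphism \(\mathfrak D(\mathbb S_{r,s})\simeq\mathfrak D(\mathbb S_{r',s'})\).

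The key step is to see that such a \(\Phi\) really is an automorphism of \(G\). In the lemma, \(\Z^2\) appears as a direct factor. It commutes with \(N\rtimes F_0\) and acts trivially on \(N\), because the collar rotations become isotopic to the identity once the stops are forgotten. Hence any automorphism of \(\Z^2\), extended by the identity on the other factors, is an automorphism of \(G\). It fixes \([1]\). It also fixes \(a\), which is an element of \(F_0\).

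It remains to find \(P\). Every primitive vector of \(\Z^2\) can be completed to a basis, so \(\mathrm{GL}_2(\Z)\), and in fact \(\mathrm{SL}_2(\Z)\), acts transitively on primitive vectors. Explicitly, if \(v=(x,y)\) is primitive, choose \(u,w\) with \(xw-yu=1\). The matrix with columns \(v\) and \((u,w)\) then sends \(e_1\) to \(v\). Set
\[
 P=B_{r',s'}B_{r,s}^{-1},
\]
where \(B_{r,s}\) is this matrix for \(v_{r,s}\) and \(B_{r',s'}\) is the one for \(v_{r',s'}\). Then \(Pv_{r,s}=v_{r',s'}\), and consequently \(P(-v_{r,s})=-v_{r',s'}\).

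The only point needing care is the claim that the identifications in Lemma~\ref{lem:common-pointed-group-family} are compatible with one fixed copy of \(N\rtimes F_0\), with the same element \(a\), for all \((r,s)\). The lemma states exactly this: \(F_0\) and \(a\) are fixed before the stops are chosen, and the rotation factors are fixed copies of \(\Z\). Granting that, the argument is just the transitivity of \(\mathrm{GL}_2(\Z)\) on primitive vectors.
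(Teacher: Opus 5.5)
Your proposal is correct and follows essentially the same route as the paper. The paper also works in the common presentation $(N\rtimes F_0)\times\Z^2\times\Z\langle[1]\rangle$, takes the identity on $N\rtimes F_0$ and on the shift factor, and uses an explicit $\mathrm{SL}_2(\Z)$ matrix $P$ with $Pv_{r,s}=v_{r',s'}$. The only difference is notational: the paper's $B(v)$ sends $v$ to $e_1$, so it is the inverse of your $B$, and accordingly the paper writes $P=B(v_{r',s'})^{-1}B(v_{r,s})$.
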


\begin{proof}

Use the common presentation in
Lemma~\ref{lem:common-pointed-group-family}. For a primitive vector
\(v=(a,b)^T\), choose integers \(u,w\) with \(ua+wb=1\), and put
\[
 B(v)=\begin{pmatrix}u&w\\-b&a\end{pmatrix}\in\operatorname{SL}_2(\Z).
\]
Then \(B(v)v=(1,0)^T\). Consequently the explicit integral matrix
\[
 P=B(v_{r',s'})^{-1}B(v_{r,s})
\]
satisfies \(Pv_{r,s}=v_{r',s'}\), and hence
\(P(-v_{r,s})=-v_{r',s'}\). The automorphism of
\[
 (N\rtimes F_0)\times\Z^2\times\Z\langle[1]\rangle
\]
which is the identity on \(N\rtimes F_0\) and on the shift factor,
and is given by \(P\) on \(\Z^2\), preserves \([1]\). Formula~\eqref{eq:normalized-serre-family} shows that it sends
\(\mathsf S_{r,s}\) to \(\mathsf S_{r',s'}\).
\end{proof}

\begin{proof}[Proof of Theorem~\ref{thm:countable-fibres}]
Fix \(g\geq2\), put \(q=2g+1\), and use the line field and algebras
from Lemma~\ref{lem:common-pointed-group-family}. Set
\[
 d=q-2=2g-1,\qquad m_k=kd\quad\text{ for } k\geq1,
\]
and take \((r,s)=(m_k,m_k+1)\). Put
\(A_k=A_{m_k,m_k+1}\). Then
\[
 \gcd\bigl(qm_k+1,q(m_k+1)-1\bigr)
 =\gcd(qm_k+1,q-2)=1.
\]
Indeed, \(d\mid m_k\), and hence \(qm_k+1\equiv1\pmod d\).
Thus every vector \(v_{m_k,m_k+1}\) is primitive.
Lemma~\ref{lem:primitive-serre-vectors} identifies all the triples
\(\mathfrak D(\mathbb S_{m_k,m_k+1})\).

The total number of stops is \(2m_k+2\), so these marked surfaces
are pairwise non-isomorphic. Theorem~\ref{thm:complete-surface-invariant}
therefore gives
\[
 \Perf(A_{m_k,m_k+1})\not\simeq
 \Perf(A_{m_l,m_l+1})
 \qquad\text{for }k\neq l.
\]
The fibre is countably infinite. It contains the displayed infinite
family, while the set of isomorphism classes of graded surfaces with
stops is countable. To see the latter assertion, note that there are
countably many topological types of compact marked surfaces. On a fixed
surface the homotopy classes of line fields form a torsor over the
finitely generated group \(H^1(S;\Z)\).

For the fixed-rank assertion, let \(R\geq1\) and choose a prime
\(p>R+2\) which does not divide \(q\). Consider
\[
 (r,s,1)=(r,p-r,1),
 \qquad 1\leq r\leq p-1,
\]
and omit the unique residue satisfying \(qr+1\equiv0\pmod p\).
There remain \(p-2>R\) values. For each of them,
\[
 \gcd(qr+1,q(p-r)-1)
 =\gcd(qr+1,qp)=1.
\]
The corresponding vectors \(v_{r,p-r}\) are primitive, so all these
surfaces lie in one fibre of \(\mathfrak D\). Their total number of stops is
\(p+1\), and \eqref{eq:charge-rank} gives the common rank
\[
 \rk K(\mathbb S_{r,p-r})=2g+p+2.
\]
The boundary winding numbers \(2,-2,-2q\) are pairwise distinct.
Every graded diffeomorphism therefore preserves each boundary
component. It follows that two of these stopped graded surfaces are
isomorphic only when their values of \(r\) agree. The complete surface
invariant now shows that the corresponding categories are pairwise
non-equivalent.
\end{proof}
\begin{example}
For \(g=2\), the first three stop distributions in the infinite family
are \((3,4,1)\), \((6,7,1)\), and \((9,10,1)\). Their Serre elements
have rotation coordinates \((-16,19)\), \((-31,34)\), and
\((-46,49)\), respectively. Each is primitive, so
Lemma~\ref{lem:primitive-serre-vectors} identifies the three shift-
and Serre-pointed derived Picard groups. Their Grothendieck ranks
are \(13\), \(19\), and \(25\), and hence the categories are pairwise
non-equivalent.
\end{example}

\subsection{Examples in genera zero and one}

\begin{example}\label{ex:low-genus-failure}
For genus zero, consider a pair of pants with one stop on each boundary
component. Choose line fields \(\eta\) and \(\eta'\) with respective
boundary winding numbers \((2,4,-8)\) and \((2,6,-10)\). These two
triples of boundary winding numbers satisfy
\eqref{eq:line-field-boundary-coherence}, and hence are realized by line
fields. In each triple, the three entries are non-zero and pairwise
distinct. The corresponding line-field stabilizer therefore fixes every
boundary component. The pure mapping class group of a pair of pants with
one stop on each boundary component is freely generated by the three
boundary twists. Consequently,
\[
 H_\eta\simeq H_{\eta'}\simeq\Z^3.
\]
Choose one of the three stops and a lift of its tangent line to the grading
cover. The unique graded lift fixing this point splits the grading extension
\eqref{eq:grading-extension}. Since boundary twists act trivially on
absolute homology, Theorem~\ref{thm:opper-geometrization} gives, for both
line fields,
\begin{equation}\label{eq:genus-zero-common-group}
 D(\mathbb S)\simeq
 H^1(\Sigma_{0,3};\kk^\times)\times\Z^3
 \times\Z\langle[1]\rangle
 \simeq(\kk^\times)^2\times\Z^4.
\end{equation}
Let \(R_\partial=T_1T_2T_3\). The underlying ungraded mapping class
of the Serre functor is \(R_\partial^{-1}\). Thus, in the coordinates
of \eqref{eq:genus-zero-common-group}, the two Serre elements have
the form
\[
 \mathsf S=nR_\partial^{-1}[1]^e,
 \qquad
 \mathsf S'=n'R_\partial^{-1}[1]^{e'},
\]
where \(n,n'\in(\kk^\times)^2\) and \(e,e'\in\Z\). The vector
\((-1,-1,-1)\) is primitive in \(\Z^3\).
Choose homomorphisms \(\mu,\lambda\colon\Z^3\to\Z\) such that
\[
 \mu(R_\partial^{-1})=1,
 \qquad
 \lambda(R_\partial^{-1})=e'-e.
\] 
Writing \(c=n'n^{-1}\), the automorphism
\[
 (u,x,j)\longmapsto
 \bigl(u c^{\mu(x)},x,j+\lambda(x)\bigr)
\]
fixes \([1]\) and sends \(\mathsf S\) to \(\mathsf S'\). The two
shift- and Serre-pointed derived Picard groups are therefore isomorphic.

For genus one, realize \(\Sigma_{1,1}\) by removing an
open disc from the flat torus \(\mathbb R^2/\Z^2\),
and place one stop on its boundary. Restrict a constant
nowhere-vanishing vector field \(\xi_0\) from the torus
and put \(\eta_0=\mathbb R\xi_0\). Standard horizontal
and vertical nonseparating curves have winding number zero.
Their Dehn twists preserve \(\eta_0\) by
\eqref{eq:twist-line-field} and these twists generate the
mapping class group of the one-holed torus. Thus \(\eta_0\)
is invariant under this group. Since the group acts
transitively on nonseparating curves, every such curve has
zero winding. Boundary coherence gives
\(\omega_{\eta_0}(\partial\Sigma_{1,1})=-2\).

For a primitive class
\(\alpha\in H^1(\Sigma_{1,1};\Z)\), set
\(
 \eta_1=\eta_0+\alpha\), and
 \(\eta_2=\eta_0+2\alpha\).  It follows that, for
\(d\geq1\),
\[
 f^*(\eta_0+d\alpha)\simeq\eta_0+d\alpha
 \quad\Longleftrightarrow\quad
 f^*\alpha=\alpha.
\]
Since \(H^1(\Sigma_{1,1};\Z)\) is torsion free, the stabilizers of
\(\eta_1\) and \(\eta_2\) coincide. Denote their common value by
\(H_\alpha\).

The stabilizer of a primitive class in \(\operatorname{SL}_2(\Z)\) is
infinite cyclic. Its inverse image in the mapping class group of the
one-holed torus fits into a central extension
\[
 1\longrightarrow\Z\langle T_{\partial}\rangle
 \longrightarrow H_\alpha
 \longrightarrow\Z
 \longrightarrow1.
\]
This extension splits because \(H^2(\Z;\Z)=0\). Choosing a lift \(U\) of
the positive generator gives
\[
 H_\alpha\simeq
 \Z\langle U\rangle\times\Z\langle T_{\partial}\rangle.
\]
In particular, \(T_{\partial}\) is primitive in \(H_\alpha\). Normalizing
graded lifts at the unique stop again splits
\eqref{eq:grading-extension}. The two derived Picard groups are therefore
identified with the same semidirect product
\begin{equation}\label{eq:genus-one-common-group}
 \bigl(H^1(\Sigma_{1,1};\kk^\times)\rtimes H_\alpha\bigr)
 \times\Z\langle[1]\rangle.
\end{equation}
The underlying ungraded mapping class of each Serre functor is
\(T_\partial^{-1}\). Hence their Serre elements have the form
\[
 \mathsf S_d=n_dT_\partial^{-1}[1]^{e_d},
 \qquad d=1,2,
\]
where
\[
 n_d\in H^1(\Sigma_{1,1};\kk^\times)^{H_\alpha}.
\]
The invariance of \(n_d\) follows from the centrality of the Serre functor.
Since \(T_\partial^{-1}\) is primitive, the same shear construction
as in genus zero gives an automorphism of
\eqref{eq:genus-one-common-group} that fixes \([1]\) and sends
\(\mathsf S_1\) to \(\mathsf S_2\). Hence the two shift- and
Serre-pointed derived Picard groups are isomorphic.

The corresponding underlying graded surfaces are nevertheless different. In
genus zero, their unordered collections of boundary winding numbers are
distinct. In genus one, \eqref{eq:genus-one-line-field-invariant} gives
\(
 \widetilde{\mathcal A}(\eta_1)=1,
 \widetilde{\mathcal A}(\eta_2)=2
\). Theorem~\ref{thm:line-field-classification} shows that neither pair of
underlying graded surfaces is isomorphic. A minimal full formal system of graded arcs on each
surface produces a connected graded gentle algebra that is
finite-dimensional, homologically smooth, and proper, as in the proof of
Lemma~\ref{lem:common-pointed-group-family}.
Theorem~\ref{thm:complete-surface-invariant} also shows that the two categories in
each pair are non-equivalent.
\end{example}

% --------------------------------------------------------------------------
\section{Grothendieck-group representations}
\label{sec:grothendieck-actions}

In this section, we first study the Serre representation. For a line field induced by a
vector field, a unipotent power recovers the number of boundary components
and the characteristic polynomial recovers their stop numbers. We then
consider the representation of the whole derived Picard group. For arbitrary
line fields in genus at least two, its fibres have cardinality at most four.
We determine when this invariant gives complete reconstruction and give
examples for each exceptional number of boundary components.

The relation between the AG-invariant and the Coxeter polynomial for
ordinary gentle algebras is described in
\cite[Theorem~6.17]{JimenezGonzalezMroz}. See also
\cite[Section~3]{ChangElaginSchroll} for the Coxeter transformation and
Serre entropy. The argument below uses the nilpotent part of boundary
rotation in addition to its characteristic polynomial. The Serre
reconstruction uses only rational linear data.

\subsection{The Serre representation}
The main theorem of this subsection is as follows, and the proof will be separated into two steps.
\begin{theorem}\label{thm:serre-reconstruction}
Let \(\mathbb S=(S,M,\eta)\) and
\(\mathbb S'=(S',M',\eta')\) be compact graded surfaces with stops, with at
least one stop on every boundary component, and suppose that their
partially wrapped Fukaya categories are smooth and proper.  Assume that
\(\eta=\mathbb R\xi\) and \(\eta'=\mathbb R\xi'\) are induced by
nowhere-vanishing vector fields.  If a \(\mathbb Q\)-linear isomorphism
\[
 K(\mathbb S)\otimes_{\Z}\mathbb Q
 \xrightarrow{\sim}
 K(\mathbb S')\otimes_{\Z}\mathbb Q
\]
intertwines the actions of the two Serre functors, then there is an
orientation-preserving diffeomorphism of surfaces with stops
\[
 (S,M)\simeq(S',M').
\]
In particular, the conjugacy class over \(\mathbb Q\) of the Serre action determines the
genus, the number of boundary components, and the unordered collection of
numbers of stops on those components, in every genus.
\end{theorem}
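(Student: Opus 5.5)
Since $\eta=\mathbb R\xi$ is induced by a vector framing, the identification \eqref{eq:serre-rotation-action} gives $\mathsf C=(\mathsf S_{\mathbb S}[-1])_*=\varepsilon R_*^{-1}$ on $H_1(S,M^\circ;\mathbb Q)$. Here $R$ is simultaneous one-step rotation of all boundary components. The plan is to compute the rational conjugacy class of $R_*$, and hence of $\mathsf C$, explicitly in terms of $(g,b,m_1,\dots,m_b)$. We then read off $b$, the stop distribution, and $g$ from rational invariants of $\mathsf C$ that are insensitive to the sign $\varepsilon$. Any $\mathbb Q$-linear intertwiner preserves these invariants, so the two surfaces with stops agree in genus, number of boundary components and stop distribution. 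The classification of surfaces then gives $(S,M)\simeq(S',M')$.

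\textbf{Step 1: structure of $R_*$.} Use the exact sequence
\[
0\to H_1(S;\mathbb Q)\to H_1(S,M^\circ;\mathbb Q)\to \ker\bigl(H_0(M^\circ;\mathbb Q)\to H_0(S;\mathbb Q)\bigr)\to0.
\]
The rotation $R$ is isotopic to the identity after forgetting the stops, so it acts trivially on $H_1(S;\mathbb Q)$. On $H_0(M^\circ)=\bigoplus_i\mathbb Q^{m_i}$ it acts by the cyclic permutation on each block, and on the quotient by the restriction to the augmentation kernel. Hence the characteristic polynomial of $R_*$ is
\[
(t-1)^{2g+b-1}\cdot\frac{\prod_i(t^{m_i}-1)}{t-1}.
\]
Moreover, $R_*^{m}$ with $m=\operatorname{lcm}(m_i)$ is unipotent. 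Concretely, $R^{m_i}$ on block $i$ lifts to the boundary twist $T_i$. For an arc class $\gamma$ starting at an endpoint on $B_i$ we get $R_*^{m}\gamma=\gamma+\tfrac{m}{m_i}[B_i]+\dots$, with one contribution for each endpoint. So $R_*^m-I$ maps the relative classes onto the span of the boundary classes $[B_i]$ in $H_1(S;\mathbb Q)$, which has dimension $b-1$, and it squares to zero. This yields $\rk(R_*^m-I)=b-1$, which is the formula for $b$ stated in the introduction. The same holds for $\mathsf C^{n_{\mathsf C}}$: with $\varepsilon=-1$ the least unipotent power may double, but the eigenvalues of $\mathsf C$ are $\varepsilon$ times roots of unity, and any power that is unipotent has the same image rank. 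I expect the main obstacle to be checking carefully that the images of $T_i-I$ are exactly $\mathbb Q[B_i]$ with the correct nonzero coefficients, so that the rank is $b-1$ and not smaller. This is the one place where the relative-homology bookkeeping is delicate, and I would verify it with an explicit arc system (the endpoint/boundary formula for twists).

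\textbf{Step 2: recovering the stops and the genus.} Having $b$, consider the characteristic polynomial $\chi_{\mathsf C}(t)$. It equals $\pm\chi_{R_*^{-1}}(\varepsilon t)$, and $\chi_{R_*^{-1}}$ has the same form as above because the set of roots of unity is inversion-invariant. Determine $\varepsilon$ as follows. If $\varepsilon=-1$, the eigenvalue $-1$ has multiplicity at least $2g+b-1\ge b$. We must separate this from the multiplicity of $+1$ in the $\varepsilon=1$ case. Compare the generalized eigenspace carrying the nontrivial Jordan blocks found in Step 1: the nilpotent part lives on the eigenvalue $\varepsilon$, so $\varepsilon$ is determined by where $\mathsf C^{n}-I$ is nonzero. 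If $b=1$, there is no nilpotent part, and we instead use the fact that the eigenvalue $\varepsilon$ has multiplicity $2g+b-1+(\#\{i\})-1$, together with a parity argument on the total dimension, to pin it down. For the main case this is decided by the unipotent data. Normalizing to $\varepsilon=1$, the multiset $\{m_i\}$ is recovered from the factor $\prod_i(t^{m_i}-1)$, with $b$ known. We peel off cyclotomic factors from the largest order: the multiplicity of $\Phi_d$ equals $\#\{i: d\mid m_i\}$, so the $m_i$ are determined recursively. Finally, $\rk K=2g+b+|M|-2$ from \eqref{eq:charge-rank} recovers $g$.
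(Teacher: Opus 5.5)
Your outline is the paper's proof: $\mathsf C=\varepsilon R_*^{-1}$; then $b=1+\rk(\mathsf C^{n_{\mathsf C}}-I)$; then the characteristic polynomial $(t-1)^{2g+b-2}\prod_i(t^{m_i}-1)$ with cyclotomic peeling for $d\geq2$, the count of $m_i=1$ coming from $b$; and finally $g$ from $\rk K=2g+b+|M|-2$.

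The genuinely different step is how you remove $\varepsilon$. The paper compares the multiplicities $p_+=2g+2b-2$ and $p_-=\#\{i:2\mid m_i\}$ of $\pm1$. This forces a case split: positive genus, genus zero with $b\geq3$, the disc, and the annulus with two even stop numbers. In the last case the paper does not determine the sign and only observes that the polynomial is invariant under $t\mapsto-t$. You instead read $\varepsilon$ off as the scalar by which $\mathsf C$ acts on the non-zero subspace $\operatorname{im}(\mathsf C^{n_{\mathsf C}}-I)$. This subspace lies in $H_1(S;\mathbb Q)$, where $R$ acts trivially. The argument is valid and uniform for every $b\geq2$ in every genus, and it fixes the sign even for the annulus, so it is cleaner than the paper's.

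Two steps are asserted rather than proved.

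First, the rank $b-1$. The delicate point is not whether each $T_i-I$ has image $\mathbb Q c_i$. It is that the image of a unipotent power $\mathsf C^n-I$ consists of the vectors $\sum_i (n/m_i)\lambda_i(x)c_i$, subject to both relations $\sum_i\lambda_i=0$ and $\sum_i c_i=0$. These two relations can cancel when the weights have mixed signs: weights $1,-1$ with $b=2$ give rank $0$. What saves you is positivity of the weights. If $(n/m_i)u_i=c$ for all $i$ and $\sum_i u_i=0$, then $c\sum_i m_i=0$, so $c=0$. Together with surjectivity of $x\mapsto(\lambda_i(x))_i$ onto the sum-zero hyperplane (arcs between different boundaries), this gives rank $b-1$. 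The same weighted formula applies directly to $\mathsf C^{n_{\mathsf C}}$, because unipotence forces $\varepsilon^{n_{\mathsf C}}=1$ and $m_i\mid n_{\mathsf C}$ outside the disc. Your remark about eigenvalues does not by itself give this.

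Second, the case $b=1$, where that image is zero, so your Jordan-block argument gives nothing. Your ``parity argument on the total dimension'' is not spelled out. Your multiplicity $2g+2b-2$ is $0$ for the disc, and there a larger-multiplicity rule would actually select $-\varepsilon$ when $m_1$ is even. The paper's fix suffices:
\begin{itemize}
\item For $g\geq1$, the eigenvalue $\varepsilon$ has multiplicity $2g\geq2$, while $-\varepsilon$ has multiplicity at most one.
\item The disc is recognized by both multiplicities being at most one.
\item For the disc, $m_1=\rk K+1$ requires no sign at all.
\end{itemize}
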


\subsubsection{Recovering the number of boundary components}

\begin{lemma}\label{lem:serre-recovers-boundary-number}
Let \(\mathbb S=(S,M,\eta)\) satisfy the assumptions of
Theorem~\ref{thm:serre-reconstruction}, put
\(K=K(\mathbb S)\), and set
\[
 \mathsf C=(\mathsf S[-1])_*\in\operatorname{Aut}(K).
\]
The automorphism \(\mathsf C\) has a unipotent positive power.
Let \(n_{\mathsf C}\) be the least positive integer such that
\(\mathsf C^{n_{\mathsf C}}\) is unipotent over \(\mathbb Q\).
Then the number \(b\) of boundary components is
\[
 b=1+\rk_{\mathbb Q}
 \bigl(\mathsf C^{n_{\mathsf C}}-I\bigr).
\]
\end{lemma}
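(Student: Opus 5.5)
By \eqref{eq:serre-rotation-action}, \(\mathsf C=\varepsilon R_*^{-1}\) on \(K\simeq H_1(S,M^\circ;\Z)\) (Theorem~\ref{thm:charge-lattice}), where \(R\) is simultaneous one-step boundary rotation supported in collars. The plan is to compute everything from the homology sequence of the pair
\[
 0\longrightarrow H_1(S;\Z)\longrightarrow K\longrightarrow
 \ker\bigl(H_0(M^\circ;\Z)\to H_0(S;\Z)\bigr)\longrightarrow0,
\]
which is \(R_*\)-equivariant.

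\emph{Step 1: the action on each piece.} Since \(R\) is supported in collars and isotopic to the identity once the stops are forgotten, \(R_*\) acts trivially on \(H_1(S;\Z)\). On \(H_0(M^\circ)=\bigoplus_i\Z^{m_i}\), \(R\) acts as a cyclic permutation of order \(m_i\) on the \(i\)-th block. Put \(m=\operatorname{lcm}(m_1,\dots,m_b)\). Then \(R^m\) acts trivially on both the sub and the quotient, so \(R_*^m\) is unipotent. The same holds for \(\mathsf C^{2m}\) after absorbing \(\varepsilon\), which gives the existence of \(n_{\mathsf C}\).

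\emph{Step 2: identify \(n_{\mathsf C}\).} The eigenvalues of \(\mathsf C\) are \(\varepsilon\) (from \(H_1\)) together with \(\varepsilon\zeta^{-1}\) for \(\zeta\) ranging over the \(m_i\)-th roots of unity, with one eigenvalue \(1\) removed for the kernel condition. Hence \(\mathsf C^n\) is unipotent exactly when all these values are \(1\), which happens precisely when \(R_*^n\) is unipotent and \(\varepsilon^n=1\). The rank of \(\mathsf C^n-I\) equals the rank of \(R_*^{\,n}-I\) whenever \(\varepsilon^n=1\). So it suffices to prove
\[
 \rk\bigl(R_*^n-I\bigr)=b-1\qquad\text{for every } n \text{ with } R_*^n \text{ unipotent}.
\]
Any such \(n\) is a multiple of every \(m_i\) (when \(H_1\neq0\), or from the endpoint eigenvalues), so \(R^n=\prod_iT_{B_i}^{\,n/m_i}\) is a product of boundary twists with positive exponents \(k_i=n/m_i\).

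\emph{Step 3: compute the twist action on relative homology, which is the main obstacle.} For a relative class represented by an arc \(\gamma\), I expect
\[
 \prod_i T_{B_i}^{k_i}(\gamma)-\gamma=\sum_i k_i\,(\gamma\cdot B_i)\,[B_i],
\]
where \(\gamma\cdot B_i\in\{0,\pm1,\pm2\}\) counts signed endpoints of \(\gamma\) on \(B_i\), and \([B_i]\) is the oriented boundary class. The precise sign bookkeeping should be absorbed into a global sign. Thus \(R_*^n-I\) factors as
\[
 K\longrightarrow\ker\bigl(\Z^b\to\Z\bigr)\xrightarrow{\ \operatorname{diag}(k_i)\ }\Z^b\longrightarrow H_1(S;\Z),
\]
with the last map \(e_i\mapsto[B_i]\). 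The first map is surjective rationally, since arcs joining distinct boundary components realize every \(e_i-e_j\). The diagonal map is injective. The image of \(\ker(\Z^b\to\Z)\) under \(\operatorname{diag}(k_i)\) meets the relation line \(\Z(1,\dots,1)\), which is the kernel of \(\Z^b\to H_1(S)\), only in zero: an element \((k_ix_i)\) with \(\sum x_i=0\) that is constant forces \(c\sum_i 1/k_i=0\), hence \(c=0\), because all \(k_i>0\). This positivity is exactly where the argument needs care. Since \(\ker(\Z^b\to\Z)\) has rank \(b-1\), the rank is \(b-1\).

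Finally, take \(n=n_{\mathsf C}\) to obtain \(b=1+\rk_{\mathbb Q}(\mathsf C^{n_{\mathsf C}}-I)\). The delicate points to verify are the twist formula on relative arcs with a consistent orientation convention, and that no unexpected sign in \(\varepsilon\) disturbs the relation between \(\mathsf C^n\) and \(R_*^n\). For the latter, note that \((\varepsilon R^{-1}_*)^n\) unipotent forces \(\varepsilon^n=1\) whenever \(H_1\neq0\), or by the endpoint eigenvalue \(1\) in general.
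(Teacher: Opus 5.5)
Your argument is essentially the paper's proof. You write $\mathsf C=\varepsilon R_*^{-1}$ and show that a unipotent power forces $\varepsilon^{n}=1$ and $m_i\mid n$. Then $\mathsf C^{n}$ is the product of boundary twists $\prod_i T_{B_i}^{-n/m_i}$, and $(T_i)_*x=x+\lambda_i(x)c_i$. The rank $b-1$ comes from surjectivity of $x\mapsto(\lambda_i(x))$ onto the sum-zero space, together with injectivity of the diagonal scaling modulo the line $\mathbb Q(1,\ldots,1)$.

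For surfaces other than the disc this is complete. The twist formula you flag as delicate is exactly the collar computation the paper uses. One small slip: a single arc has endpoint total in $\{0,\pm1\}$ on each $B_i$, not $\pm2$. This is harmless, since you then work with the endpoint-total map.

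The step that fails as stated is the disc. You justify $\varepsilon^{n}=1$ either by $H_1(S)\neq0$ or by an eigenvalue $1$ of $R_*$ on the endpoint quotient. The second source exists only when $b\geq2$: each cycle contributes one eigenvalue $1$, and the augmentation removes one. For a disc neither source is available.

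Your claimed equivalence ``$\mathsf C^n$ unipotent iff $R_*^n$ unipotent and $\varepsilon^n=1$'' is then not justified by the data $\mathsf C=\varepsilon R_*^{-1}$ with unknown sign. For instance, take $m_1=2$ and $\varepsilon=-1$. The lattice has rank one, $R_*=-I$, and $\mathsf C=I$, so $n_{\mathsf C}=1$ while $R_*$ is not unipotent.

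The conclusion still holds, but you need the separate argument the paper gives. When $S$ is a disc, $K=\widetilde H_0(M^\circ)$, so $R_*$ and hence $\mathsf C$ has finite order. Therefore $\mathsf C$ is semisimple over $\mathbb Q$, its least unipotent power is $I$, and the rank is $0=b-1$.
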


\begin{proof}
Write
\[
 \partial S=B_1\sqcup\cdots\sqcup B_b,
 \qquad m_i=|M\cap B_i|.
\]
Since \(\eta=\mathbb R\xi\),
Theorem~\ref{thm:charge-lattice} gives
\[
 K\simeq H_1(S,M^\circ;\Z).
\]
Let \(\mathsf S\) be the Serre functor and set
\[
 \mathsf C=(\mathsf S[-1])_*=-\mathsf S_*
 \in\operatorname{Aut}(K).
\]
By \eqref{eq:serre-rotation-action}, there is a sign
\(\varepsilon\in\{1,-1\}\) such that
\begin{equation}\label{eq:reduced-serre-versus-rotation}
 \mathsf C=\varepsilon R_*^{-1},
\end{equation}
where \(R\) is simultaneous positive one-step boundary rotation.
We first show that \(\mathsf C\) has a unipotent power. Set
\(L=\operatorname{lcm}(2,m_1,\ldots,m_b)\). Then
\(\varepsilon^L=1\), and \(\mathsf C^L\)
is induced by a product of powers of boundary twists. For
each boundary twist, its difference from the identity has
image in the span of the absolute boundary classes, on which
all boundary twists act trivially. Consequently
\((\mathsf C^L-I)^2=0\), so \(\mathsf C^L\) is unipotent.
We may therefore define
\begin{equation*}
 n_{\mathsf C}=
 \min\{n\geq1:\mathsf C^n
 \text{ is unipotent over }\mathbb Q\}.
\end{equation*}
We claim that
\begin{equation}\label{eq:boundary-rank-from-serre}
 b=1+\rk_{\mathbb Q}
 \bigl(\mathsf C^{n_{\mathsf C}}-I\bigr).
\end{equation}

Suppose first that \(S\) is not a disc.  Then
\(H_1(S;\mathbb Q)\neq0\), and \(R\) acts trivially on absolute homology.
Thus \(\mathsf C\) acts there by the scalar \(\varepsilon\), and
unipotence of \(\mathsf C^{n_{\mathsf C}}\) implies
\(\varepsilon^{n_{\mathsf C}}=1\). On
\(\widetilde H_0(M^\circ;\mathbb Q)\), the rotation is a product of cycles of
lengths \(m_1,\ldots,m_b\).  Every non-trivial eigenvalue of each cycle
survives in reduced homology, so \(m_i\mid n_{\mathsf C}\) for every \(i\). It
follows that
\begin{equation}\label{eq:power-as-boundary-twists}
 \mathsf C^{n_{\mathsf C}}=R_*^{-n_{\mathsf C}}
 =\prod_{i=1}^b(T_i)_*^{-n_{\mathsf C}/m_i},
\end{equation}
where \(T_i\) is the positive boundary twist about \(B_i\).

Orient each \(B_i\) by the boundary orientation and put
\(c_i=[B_i]\in H_1(S;\mathbb Q)\).  These classes span a vector space of
dimension \(b-1\), with the single relation
\[
 c_1+\cdots+c_b=0.
\]
For \(x\in H_1(S,M^\circ;\mathbb Q)\), write
\[
 \partial x=\sum_{p\in M^\circ}a_p[p],
 \qquad
 \lambda_i(x)=\sum_{p\in M^\circ\cap B_i}a_p.
\]
Then \(\sum_i\lambda_i(x)=0\).  Put
\[
 W_\partial=
 \{(u_1,\ldots,u_b)\in\mathbb Q^b:\textstyle\sum_i u_i=0\}.
\]
Moreover, the map
\begin{equation}\label{eq:boundary-charge-map}
 H_1(S,M^\circ;\mathbb Q)\longrightarrow W_\partial,
 \qquad x\longmapsto(\lambda_1(x),\ldots,\lambda_b(x))
\end{equation}
is surjective since relative arcs between points of \(M^\circ\) on distinct boundary
components realize the vectors \(e_i-e_j\).

With the boundary orientation fixed above, a collar representative
of \(T_i\) adds \(c_i\) to an oriented relative arc ending on \(B_i\)
and \(-c_i\) to one starting there. Hence
\begin{equation}\label{eq:boundary-twist-relative-homology}
 (T_i)_*x=x+\lambda_i(x)c_i.
\end{equation}
Combining \eqref{eq:power-as-boundary-twists} and
\eqref{eq:boundary-twist-relative-homology}, we obtain
\[
 \bigl(\mathsf C^{n_{\mathsf C}}-I\bigr)x
 =-\sum_{i=1}^b
 \frac{n_{\mathsf C}}{m_i}\lambda_i(x)c_i.
\]
Identify the span of the \(c_i\) with
\(\mathbb Q^b/\mathbb Q(1,\ldots,1)\). Since multiplication by
\(-1\) does not change the rank, it suffices to consider the map
\begin{equation*}
 W_\partial\longrightarrow\mathbb Q^b/\mathbb Q(1,\ldots,1),
 \qquad
 (u_i)\longmapsto
 \left(\frac{n_{\mathsf C}}{m_i}u_i\right).
\end{equation*}
This map is injective.  Indeed, if its value is zero, then
\((n_{\mathsf C}/m_i)u_i=c\) for one constant \(c\). Hence
\[
 0=\sum_i u_i=\frac{c}{n_{\mathsf C}}\sum_i m_i,
\]
which forces \(c=0\), and then \(u_i=0\) for every \(i\).  By the
surjectivity of \eqref{eq:boundary-charge-map}, the image of
\(\mathsf C^{n_{\mathsf C}}-I\) therefore has dimension \(b-1\),
proving \eqref{eq:boundary-rank-from-serre} when \(S\) is not a disc.

If \(S\) is a disc, then \(b=1\) and
\(H_1(S,M^\circ;\mathbb Q)=\widetilde H_0(M^\circ;\mathbb Q)\). The
operator \(\mathsf C\) is semisimple of finite order. Its least unipotent
power is therefore the identity, so
\(\rk(\mathsf C^{n_{\mathsf C}}-I)=0=b-1\). Thus
\eqref{eq:boundary-rank-from-serre} holds in every case.
\end{proof}

\subsubsection{Recovering the stop distribution}

\begin{lemma}\label{lem:serre-recovers-stops}
Under the assumptions of
Lemma~\ref{lem:serre-recovers-boundary-number}, the rational conjugacy class
of \(\mathsf C=(\mathsf S[-1])_*\) determines the unordered stop distribution
\(\{m_1,\ldots,m_b\}\).
\end{lemma}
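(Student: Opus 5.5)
We compute the characteristic polynomial of \(\mathsf C=\varepsilon R_*^{-1}\) from the exact sequence
\[
 0\to H_1(S;\mathbb Q)\to K\otimes\mathbb Q\to \widetilde H_0(M^\circ;\mathbb Q)\to0,
\]
which is \(R\)-equivariant. The characteristic polynomial is multiplicative in short exact sequences, so we treat the two ends separately.

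On \(H_1(S;\mathbb Q)\), of dimension \(2g+b-1\), the rotation acts trivially. Hence \(\mathsf C\) contributes \((t-\varepsilon)^{2g+b-1}\).

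On \(H_0(M^\circ;\mathbb Q)\), the map \(R^{-1}\) is a product of disjoint cyclic permutations of lengths \(m_1,\ldots,m_b\). Its characteristic polynomial is therefore \(\prod_i(t^{m_i}-1)\). Passing to the reduced part \(\widetilde H_0\) removes one factor \((t-1)\), because the augmentation to \(H_0(S)\) is \(R\)-invariant. Multiplying by \(\varepsilon\) replaces \(t\) by \(\varepsilon t\) up to sign. Altogether,
\[
 \chi_{\mathsf C}(t)=\pm\,(t-\varepsilon)^{2g+b-1}\,(t-\varepsilon)^{-1}\prod_{i=1}^b\bigl((\varepsilon t)^{m_i}-1\bigr).
\]
The disc case is the same formula with \(g=0\), \(b=1\); the factor \((t-\varepsilon)^{2g+b-1}\) is then trivial.

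The next step is to remove the unknowns \(\varepsilon\), \(g\) and the powers of \(t-\varepsilon\). We first determine \(\varepsilon\) and then pass to the polynomial \(P(t)=\chi_{\mathsf C}(\varepsilon t)\), which up to sign is
\[
 (t-1)^{2g+b-2}\prod_i(t^{m_i}-1).
\]
For each \(d\ge2\), the multiplicity of the cyclotomic factor \(\Phi_d\) in \(P\) equals \(\#\{i: d\mid m_i\}\). We do not yet know \(g\), so the exponent of \(t-1\) carries no information at this point. By Möbius-type inversion over divisibility, the numbers \(N_d=\#\{i:d\mid m_i\}\) for \(d\ge2\), together with \(N_1=b\) from Lemma~\ref{lem:serre-recovers-boundary-number}, determine \(\#\{i: m_i=d\}=\sum_{k\ge1}\mu(k)N_{kd}\) for every \(d\). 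This recovers the multiset \(\{m_i\}\). All of these quantities are invariants of the rational conjugacy class: \(b\) by the preceding lemma, and the multiplicities of \(\Phi_d\) from \(\chi_{\mathsf C}\).

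The main obstacle is recovering \(\varepsilon\) intrinsically. If \(\varepsilon=-1\), then \(\Phi_d(-t)\) is \(\Phi_{2d}\), \(\Phi_{d/2}\), or \(\Phi_d\) depending on the parity of \(d\), and this scrambles the count. We try to read \(\varepsilon\) from the top Jordan structure. We compute \(\varepsilon\) as the eigenvalue of \(\mathsf C\) with the largest generalized eigenspace, or more robustly as the eigenvalue on \(\ker(\mathsf C^{n_{\mathsf C}}-I)\cap\operatorname{im}(\mathsf C^{n_{\mathsf C}}-I)\). That subspace is nonzero when \(b\ge2\): by the preceding proof, the image is the span of the boundary classes \(c_i\), which lie in absolute homology where \(\mathsf C=\varepsilon\). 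When \(b=1\) and \(S\) is not a disc, \(\varepsilon\) is the eigenvalue on \(H_1\), of multiplicity \(2g\ge2\). If this is ambiguous, the fallback is to show directly that both choices of \(\varepsilon\) lead to the same multiset. The eigenvalue \(\pm1\) with multiplicity \(2g+b-1\) dominates unless \(g=0\), and in the disc and genus-zero cases we check separately, for instance by recovering \(m\) from the order of \(\mathsf C\) in the disc case.
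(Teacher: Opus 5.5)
Your argument is correct for \(b\ge2\), and there it differs from the paper. You read off \(\varepsilon\) as the scalar by which \(\mathsf C\) acts on \(\operatorname{im}(\mathsf C^{n_{\mathsf C}}-I)\). By the proof of Lemma~\ref{lem:serre-recovers-boundary-number}, this image is the non-zero span of the boundary classes inside \(H_1(S;\mathbb Q)\), and \(\mathsf C=\varepsilon\) there. This works uniformly in every genus, including the annulus.

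The paper determines the sign from the characteristic polynomial alone. It compares the multiplicities \(p_+=2g+2b-2\) and \(p_-=\#\{i:2\mid m_i\}\) of \(\pm1\). The one tie, an annulus with two even stop counts, it handles by noting that the polynomial is invariant under \(t\mapsto-t\). Your M\"obius inversion is equivalent to the paper's descending recursion on the cyclotomic multiplicities.

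The gap is the case \(b=1\). Your rule ``\(\varepsilon\) is the eigenvalue on \(H_1\), of multiplicity \(2g\ge2\)'' presupposes that \(S\) is not a disc. You never give a way to detect the disc from the conjugacy class of \(\mathsf C\), and knowing \(b=1\) does not decide it.

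Your disc fallback, recovering \(m\) from the order of \(\mathsf C\), fails because of the unknown sign:
\begin{itemize}
\item A disc with \(m\) odd stops and \(\varepsilon=-1\) gives \(\mathsf C\) of order \(2m\). A disc with \(2m\) stops and \(\varepsilon=1\) gives the same order.
\item For \(m=2\) and \(\varepsilon=-1\), \(\mathsf C\) is the identity on a line.
\end{itemize}
Moreover, for a disc with \(m\) odd neither \(1\) nor \(-1\) is an eigenvalue of \(\mathsf C\). So \(\varepsilon\) cannot be read from the \(\pm1\)-eigenspaces there, and your step of passing to \(\chi_{\mathsf C}(\varepsilon t)\) cannot be carried out as stated.

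The missing step is the one the paper uses. When \(b=1\), a surface of positive genus has \(\varepsilon\) as an eigenvalue of algebraic multiplicity \(2g\ge2\). For a disc, the eigenvalues \(\varepsilon\zeta\) (\(\zeta^m=1\), \(\zeta\ne1\)) are pairwise distinct. Hence the disc is characterized by
\(\max\{\operatorname{mult}_{\mathsf C}(1),\operatorname{mult}_{\mathsf C}(-1)\}\le1\), equivalently by a squarefree characteristic polynomial. In that case \(m_1=1+\rk K\) directly, with no need for \(\varepsilon\). For a positive-genus surface with one boundary component, \(\varepsilon\) is the unique eigenvalue of multiplicity at least two, and your cyclotomic argument then applies. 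With this insertion the proof is complete. Your separate ``genus-zero'' check and the ``both choices of \(\varepsilon\)'' fallback become unnecessary, since the image argument already covers every case with \(b\ge2\).
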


\begin{proof}
Write
\[
 \partial S=B_1\sqcup\cdots\sqcup B_b,
 \qquad m_i=|M\cap B_i|,
\]
and put \(K=K(\mathbb S)\). By
Theorem~\ref{thm:charge-lattice}, we identify
\(K\simeq H_1(S,M^\circ;\Z)\). By
\eqref{eq:serre-rotation-action}, there is a sign
\(\varepsilon\in\{1,-1\}\) such that
\(\mathsf C=\varepsilon R_*^{-1}\), where \(R\) is simultaneous
positive one-step boundary rotation. Consider the exact sequence
\begin{equation}\label{eq:relative-homology-sequence}
 0\longrightarrow H_1(S;\Z)
 \longrightarrow H_1(S,M^\circ;\Z)
 \longrightarrow\widetilde H_0(M^\circ;\Z)
 \longrightarrow0,
\end{equation}
which is equivariant for \(R\). The rotation acts trivially on
\(H_1(S;\Z)\). On \(H_0(M^\circ;\Z)\), it is a product of disjoint
cycles of lengths \(m_1,\ldots,m_b\). Since
\(\rk H_1(S;\Z)=2g+b-1\), multiplicativity of characteristic
polynomials in \eqref{eq:relative-homology-sequence} gives, when
\(S\) is not a disc,
\begin{equation}\label{eq:rotation-polynomial}
 \det(tI-R_*)=(t-1)^{2g+b-2}\prod_{i=1}^b(t^{m_i}-1).
\end{equation}
For a disc, the right-hand side is understood as
\((t^{m_1}-1)/(t-1)\). The inverse rotation has the same cycle
lengths and is also the identity on absolute homology, so
\(R_*^{-1}\) has the same characteristic polynomial as \(R_*\).

For ordinary gentle algebras of finite global dimension, the related
Coxeter-polynomial formula in terms of boundary winding numbers is
given in \cite[Proposition~3.2]{ChangElaginSchroll}, following
\cite[Theorem~6.17]{JimenezGonzalezMroz}.

It remains to remove the sign \(\varepsilon\) in
\eqref{eq:reduced-serre-versus-rotation}. All eigenvalue multiplicities
in this proof are algebraic multiplicities.
For \(R_*^{-1}\), the multiplicities of the eigenvalues
\(1\) and \(-1\) are
\begin{equation}\label{eq:sign-multiplicities}
 p_+=2g+2b-2,
 \qquad
 p_- =\#\{i:2\mid m_i\}.
\end{equation}
Multiplying the operator by \(-1\) interchanges these multiplicities.
If \(g\geq1\), then \(p_+\geq2b>p_-\). If \(g=0\) and \(b\geq3\),
then \(p_+=2b-2>b\geq p_-\). Among \(\mathsf C\) and
\(-\mathsf C\), the condition that the multiplicity of \(1\) is
larger therefore selects \(R_*^{-1}\) uniquely in these cases.

The remaining low-complexity possibilities are read from the same linear
data.  Once \(b=1\) is known, a disc is characterized by
\[
 \max\{\operatorname{mult}_{\mathsf C}(1),
        \operatorname{mult}_{\mathsf C}(-1)\}\leq1.
\]
Indeed, a positive-genus surface with one boundary component has
\(p_+=2g\geq2\), whereas a disc has at most one of the eigenvalues
\(\pm1\) on reduced zero-homology.  For a disc the stop number is recovered
directly as
\(
 m_1=1+\rk K\). If \(b=2\), equality of the two multiplicities in
\eqref{eq:sign-multiplicities} can occur only for an annulus whose two
stop numbers are even.  In that case,
 $(t^{m_1}-1)(t^{m_2}-1)$ is invariant under \(t\mapsto-t\), so either sign gives the same
characteristic polynomial.  Thus, outside the already reconstructed disc,
the conjugacy class of \(\mathsf C\) determines the polynomial on the right of
\eqref{eq:rotation-polynomial}.

For \(d\geq2\), let \(e_d\) be the multiplicity of the cyclotomic
polynomial \(\Phi_d(t)\) in \eqref{eq:rotation-polynomial}, and set
\[
 a_n=\#\{i:m_i=n\}.
\]
Since \(t^n-1=\prod_{d\mid n}\Phi_d(t)\), we have
\begin{equation}\label{eq:cyclotomic-counts}
 e_d=\sum_{d\mid n}a_n
 \qquad\text{ for }d\geq2.
\end{equation}
Let \(D=\max\{d\geq2:e_d\neq0\}\), with \(D=1\) if the set is empty.
Descending from \(D\) to \(2\), equation
\eqref{eq:cyclotomic-counts} recovers
\begin{equation*}
 a_n=e_n-\sum_{\substack{k\geq2\\kn\leq D}}a_{kn}
 \qquad\text{ for }n\geq2,
\end{equation*}
and then
\[
 a_1=b-\sum_{n=2}^D a_n.
\]
Hence the unordered stop distribution \(\{m_1,\ldots,m_b\}\) is determined.
In particular, the total number of stops
\(|M|=\sum_n n a_n\) is determined.
\end{proof}

\begin{remark}\label{rem:rotation-polynomial-insufficient}
The nilpotent information is essential. Put one stop on each
boundary component of \(\Sigma_{1,2}\) and of \(\Sigma_{0,3}\).
For both surfaces, relative homology has rank four and the
boundary-rotation polynomial is \((t-1)^4\). However,
the ranks of \(R_*-I\) are one and two, respectively.
Thus the polynomial alone does not determine the number of
boundary components, whereas the rank used in
Lemma~\ref{lem:serre-recovers-boundary-number} distinguishes
these examples.
\end{remark}

\begin{proof}[Proof of Theorem~\ref{thm:serre-reconstruction}]
Let \(K=K(\mathbb S)\) and
\(\mathsf C=(\mathsf S[-1])_*\). Lemmas~\ref{lem:serre-recovers-boundary-number} and
\ref{lem:serre-recovers-stops} then recover \(b\), the unordered stop
distribution, and hence \(|M|\), from the rational conjugacy class of
\(\mathsf C\). The ranks in \eqref{eq:relative-homology-sequence} give
\begin{equation*}
 \rk K=2g+b+|M|-2,
 \qquad
 g=\frac{\rk K-b-|M|+2}{2}.
\end{equation*}
We have reconstructed the genus, the number of boundary components, and
the number of stops on every boundary component, up to permutation, which yields an orientation-preserving diffeomorphism
\((S,M)\simeq(S',M')\).  Every step used only the rational conjugacy class
of the Serre action, so an intertwining \(\mathbb Q\)-linear isomorphism gives the same
data on the two sides. 
\end{proof}

\subsection{The full derived Picard representation}

We now retain the action of the whole derived Picard group on the
integral Grothendieck group, but do not distinguish the Serre element.
For a graded surface with stops \(\mathbb S\), let
\[
 \varrho_{\mathbb S}\colon D(\mathbb S)\longrightarrow
 \operatorname{Aut}\bigl(K(\mathbb S)\bigr)
\]
be the natural representation and put
\[
 \mathfrak R(\mathbb S)=
 \bigl(D(\mathbb S),[1],K(\mathbb S),\varrho_{\mathbb S}\bigr).
\]
An isomorphism from \(\mathfrak R(\mathbb S)\) to
\(\mathfrak R(\mathbb S')\) is a pair \((\Phi,\Theta)\). Here
\(\Phi\) is a group isomorphism preserving the shift, while
\(\Theta\) is an integral isomorphism satisfying
\[
 \Theta\varrho_{\mathbb S}(x)
 =\varrho_{\mathbb S'}\bigl(\Phi(x)\bigr)\Theta
 \qquad\text{for every }x\in D(\mathbb S).
\]

\Needspace{14\baselineskip}
\begin{theorem}
\label{thm:general-boundary-reconstruction}
Let \(\mathbb S=(S,M,\eta)\) be a compact connected graded surface
of genus \(g\geq2\), with non-empty boundary and at least one stop on
each boundary component. Write \(b=|\pi_0(\partial S)|\), and suppose
that \(\W(\mathbb S)\) is smooth and proper. Let \(t\) be the number of
boundary components with odd winding. Then
\(\mathfrak R(\mathbb S)\) determines \((S,\eta)\) and \(|M|\). On
isomorphism classes of graded surfaces with stops subject to these
hypotheses, its fibre satisfies
\[
 \#\mathfrak R^{-1}([\mathfrak R(\mathbb S)])\leq
 \begin{cases}
 4,&b=4,\ t=4,\\
 2,&b=2,\ \text{or }(b=3,\ t=2),\\
 1,&\text{otherwise}.
 \end{cases}
\]
In particular, \(\mathfrak R\) determines the complete graded surface
with stops when \(b=1\) or \(b\geq5\).
\end{theorem}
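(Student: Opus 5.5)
The plan has three stages: recover the underlying graded surface, recover $|M|$, and then bound how the stops can be allocated to the boundary components.

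\emph{Underlying graded surface.} An isomorphism $(\Phi,\Theta)$ of $\mathfrak R(\mathbb S)$ with $\mathfrak R(\mathbb S')$ includes a shift-preserving isomorphism $\Phi$. Theorem~\ref{thm:derived-reconstruction}(ii) therefore gives $(S,\eta)\simeq(S',\eta')$. When $b\geq2$, Corollary~\ref{cor:geometric-reduction} also supplies a diffeomorphism $f$ with $f^*\eta'\simeq\eta$ and $q_{M'}(\phi(y))=fq_M(y)f^{-1}$ for every $y\in H_\eta$; this is the rigidity we will use for the boundary matching. For the total number of stops, $\Theta$ is an integral isomorphism, so $\rk K(\mathbb S)=\rk K(\mathbb S')$. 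Since $g$ and $b$ already agree, \eqref{eq:charge-rank} gives $|M|=|M'|$. When $b=1$ all stops lie on the single boundary component, so the complete graded surface with stops is determined.

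\emph{Boundary blocks.} For $b\geq2$ we must decide which boundary component carries which $m_i$. Use the exact sequence $0\to H_1(S;\Z_\eta)\to K\to\ker(H_0(M^\circ;\Z_\eta)\to H_0(S;\Z_\eta))\to0$. The absolute part is canonically characterised as the subspace on which the central subgroup $Z(J_\eta)$, made up of boundary rotations, acts trivially; it is also characterised as the image of the operators $\varrho(z)-I$ for $z$ in that subgroup. Hence $\Theta$ preserves it and induces a map on the endpoint quotients. Each rotation $\rho_i$ acts on the endpoint quotient as an $m_i$-cycle on the points of $M^\circ\cap B_i$, twisted by the sign coming from the orientation local system. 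The generalized eigenspaces of the images of the $\rho_i$, which Corollary~\ref{cor:geometric-reduction} lets us track through $\Phi$ up to the permutation induced by $f$, split the endpoint quotient into blocks $E_i$ of rank $m_i$ (up to the global summation relation). Then $\Theta$ carries $E_i$ to $E'_{\pi(i)}$, where $\pi$ is the boundary permutation of $f$.

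If the permutation of boundary components were realised by $f$, that would give $m_i=m'_{\pi(i)}$ outright, but $f$ may carry a boundary with winding $w$ to one with the same winding while the block ranks differ. The real obstruction is that $\Phi$ may differ from a geometric map by a change of basis in the central rotation lattice, as in Section~\ref{sec:fibres}. So we cannot simply read off $m_i$. We have to show that $\Theta$, being compatible on the absolute lattice $H_1(S;\Z_\eta)$, where separating twists pin it to $\pm f_*$ by a Schur-type argument, glues integrally to the endpoint blocks only in restricted ways.

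\emph{Main obstacle: integral gluing and the fibre bounds.} Boundary classes $c_i$ in absolute homology are acted on by $f_*$. The extension class of the sequence above pairs each $E_i$ with $c_i$ through the boundary-twist formula \eqref{eq:boundary-twist-relative-homology}. This should force $\Theta(E_i)$ to be glued to $c_{\pi(i)}$, except modulo $2$ on boundaries of odd winding. There the local system $\Z_\eta$ is nontrivial around $B_i$, so the boundary class is $2$-torsion-like and the gluing datum degenerates to a parity. I expect this step to be the main obstacle, and I would first carry it out with explicit $\Z/2$-coefficients. Concretely, one shows that the ambiguity in the allocation of stops is a permutation group of the odd-winding boundaries, constrained by integrality of the gluing over $\Z$. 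Then one counts the orbits that survive: only transpositions or double transpositions of components with equal absolute behaviour. This gives at most $2$ possibilities for $b=2$ or $(b,t)=(3,2)$, at most $4$ for $b=t=4$, and exactly one otherwise; in particular exactly one for $b\geq5$, where the parity constraints force $\pi$ to be compatible with the block ranks.
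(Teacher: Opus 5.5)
Your first stage is correct and matches the paper: Theorem~\ref{thm:derived-reconstruction}(ii) recovers $(S,\eta)$, \eqref{eq:charge-rank} recovers $|M|$, and the case $b=1$ follows.

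The gap is in the allocation step, which is the real content of the theorem. You claim that $\Theta(E_i)=E'_{\pi(i)}$, where $\pi$ is the boundary permutation of $f$. This claim has no support. Corollary~\ref{cor:geometric-reduction} only controls $q_{M'}\circ\phi$. But $C=Z(J_\eta)=\ker(q_M|_{J_\eta})$ is exactly what $q_M$ kills, so the corollary says nothing about $\phi|_C$. The claim is also false. In Example~\ref{ex:two-boundary-represented-failure}, $f=\mathrm{id}$, yet $\Theta$ exchanges the two endpoint blocks. Moreover, if the claim held, comparing ranks would always give $m_i=m'_{\pi(i)}$, which rules out the very examples your next paragraph tries to accommodate.

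What is missing is a way to attach rotation axes to geometric boundaries. The paper does this in four steps.
\begin{enumerate}
\item It normalizes the action of lifts of $C$. These lifts are defined only up to the shift, which acts by $-I$.
\item It forms the logarithm $\mathcal L(x)=d^{-1}(R(dx)-I)=\varepsilon_\partial\sum_i (x_i/m_i)\,c_i\otimes\lambda_i$.
\item For trivial $L$ and $b\geq3$, it proves $\{x:\operatorname{im}\mathcal L(x)\subseteq\mathbb Q c_i\}=\mathbb Q r_i$. For non-trivial $L$, the rank-one locus of $\mathcal L$ on $C_{\mathbb Q}/C_-$ picks out the positive axes.
\item It uses $\Theta|_{H_1(S;L_\Z)}=\pm f_*$ to send $\mathbb Q c_i$ to $\mathbb Q c'_{f(i)}$. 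Only then does $m_i=\rk(R_i-I)$ on the primitive generator match stop numbers (Proposition~\ref{prop:geometric-boundary-matching}).
\end{enumerate}
This also explains why $b=2$ is exceptional even for trivial $L$: there $c_1=-c_2$.

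Two further points are wrong as stated. First, the absolute part is not the image of the operators $\varrho(z)-I$. For a full twist with trivial $L$ that image is only $\mathbb Q c_i$, and for a one-step rotation it has endpoint components; only the fixed-space description, for the normalized action, is correct. Second, separating twists pin $\Theta|_A$ to $\pm f_*$ only for non-trivial $L$, via $[\gamma]_L=\pm2[b]_L$. For trivial $L$ they act trivially on $H_1(S;\Z)$. Lemma~\ref{lem:absolute-intertwiner-scalar} instead uses the Calder\'on--Salter finite-index image of the relative framing stabilizer and a Zariski-closure commutant argument.

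For the odd-winding boundaries, your orbit count is asserted rather than proved, and the parity mechanism does not cover all cases. The paper's parity invariant is the class $\delta_{ij}\in A_\Z/2A_\Z$, obtained from an integral class with odd endpoint totals on two negative blocks. Under the universal-coefficient injection $A_\Z/2A_\Z\hookrightarrow H_1(S;\mathbb F_2)$ it maps to $[B_i]+[B_j]$. Since the only mod-two relation among boundary classes is their total sum, this fixes every negative block when $t\geq4$, unless $b=t=4$, where the Klein four group remains.

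When $t=2$, however, this label is invariant under the exchange and gives no information. So your statement that for $b\geq5$ parity constraints force $\pi$ fails in that case. Excluding the exchange for $t=2$, $b\geq4$ needs the separate integral argument of Lemma~\ref{lem:integral-negative-boundary-matching}(ii):
\begin{enumerate}
\item show that $\Theta$ fixes the positive endpoint totals;
\item reduce $\Theta|_{V_0}$ to a diagonal correction $I+\sum_i a_i\,d_i\otimes\lambda_i$, using negative-holonomy separating curves that enclose $B^-_1$ and one positive boundary;
\item use the gluing condition $a-\tfrac{n^-_2}{2}d\in A_\Z$. An exchange would force a coefficient $\tfrac12 \bmod \Z$ along $d_i$ for a second positive boundary $B^+_i$, where that coefficient is actually $0$.
\end{enumerate}
This is exactly what separates $(b,t)=(3,2)$, where the bound $2$ is attained, from the case $b\geq4$, $t=2$. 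Your sketch does not distinguish these cases.
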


The boundary-rotation lattice is the common thread through the
reconstruction results. Its central extension records the stops in
Section~\ref{sec:ungraded}, while changes of basis in the same lattice
produce the fibres in Section~\ref{sec:fibres}. The Serre representation
detects its diagonal element, whereas the full representation considered here
retains the action of every rotation on relative homology. Separating
twists provide a second link to
Section~\ref{sec:derived-reconstruction}. There, separating twists are
used to obtain coarse geometricity, while here their
Picard--Lefschetz actions identify the absolute homological
intertwiner.

The proof has three steps. First, we recover the saturated absolute
charge lattice and the unordered endpoint data from the
boundary-rotation lattice. Second, we use the line-field stabilizer to
attach these data to the geometric boundary components. Finally, we
use the integral gluing of the negative endpoint blocks to control the
remaining permutations. The local collar calculations and the required
integral lattice arguments are proved in
Sections~\ref{appsubsec:collar-twisted-homology}
and~\ref{appsubsec:scalar-integral-matching}.

For this section and Appendix~\ref{app:twisted-homology}, fix
\(\mathbb S=(S,M,\eta)\) satisfying the hypotheses of the theorem and
write \(\partial S=B_1\sqcup\cdots\sqcup B_b\). Set
\[
 L_{\Z}=\Z_\eta,\qquad L=L_{\Z}\otimes_{\Z}\mathbb Q,
 \qquad
 K=H_1(S,M^\circ;L_{\Z}),
\]
and
\[
 \nu_i=\operatorname{hol}_L(B_i)=(-1)^{\omega_\eta(B_i)}.
\]
We call \(B_i\) positive when \(\nu_i=1\) and negative when
\(\nu_i=-1\). Thus the negative boundaries are precisely those with
odd winding. The boundary relation implies that their number \(t\) is
even. In addition, we also set
\[
 A_{\Z}=H_1(S;L_{\Z}),\qquad
 A_{\mathbb Q}=A_{\Z}\otimes\mathbb Q,
 \qquad K_{\mathbb Q}=K\otimes\mathbb Q.
\]
For \(b\geq2\), Corollary~\ref{cor:geometric-reduction} identifies the
intrinsic boundary-rotation lattice \(C\cong\Z^b\). We write \(r_i\)
for its primitive direction at \(B_i\), and put
\(C_{\mathbb Q}=C\otimes\mathbb Q\).

\subsubsection{Intrinsic boundary data}

\begin{proposition}\label{prop:represented-boundary-data}
Let \(g\geq2\) and \(b\geq2\). The represented invariant determines a
normalized action \(R:C\to\operatorname{Aut}(K)\), the absolute rational homology
space and its saturated integral lattice
\[
 A_{\mathbb Q}=K_{\mathbb Q}^{C}=H_1(S;L),
 \qquad
 A_{\Z}=K\cap A_{\mathbb Q}=H_1(S;L_{\Z}),
\]
and the unordered collection
\[
 \{(m_i,\nu_i):1\leq i\leq b\},
 \qquad m_i=|M\cap B_i|.
\]
If \(L\) is non-trivial, it also determines the rational endpoint
blocks, up to permutation, and the boundary-pointwise stabilizer
\[
 F=\Stab_{\Mod(S,\partial S)}(\eta).
\]
For a specified primitive boundary-rotation direction \(r_i\), its
normalized action \(R_i\) satisfies
\[
 m_i=\rk_{\mathbb Q}(R_i-I).
\]
\end{proposition}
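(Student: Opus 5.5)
Given a shift-preserving isomorphism $(\Phi,\Theta)$ of represented invariants, Corollary~\ref{cor:geometric-reduction} already identifies the geometrisation kernels, the stabilizers $H_\eta\cong H_{\eta'}$, the groups $J_\eta\cong J_{\eta'}$, and their centres $C=Z(J_\eta)\cong\Z^b$. This centre is the collar-rotation lattice: it is the kernel of $q_M$ on $J_\eta$, and it is generated by the boundary rotations lying in $J_\eta$. The plan is to extract everything from the restriction of $\varrho_{\mathbb S}$ to preimages of $C$ in $D(\mathbb S)$.

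\textbf{Normalized action.} A lift of $c\in C$ to $D(\mathbb S)$ is determined only up to the kernel $N(\mathbb S)$, which acts trivially on $K$ by Lemma~\ref{lem:geometrization-charge-action}, and up to a power of $[1]$, which acts by $-I$. So $\varrho$ gives a well-defined map $C\to\operatorname{Aut}(K)/\{\pm I\}$. To fix the sign, I normalize each lift so that it acts as the identity on the absolute part. Concretely, $R(c)$ is the unique sign choice for which $R(c)-I$ is nilpotent, or equivalently for which $R(c)$ has eigenvalue $1$ with the larger multiplicity; this is the counting argument of \eqref{eq:sign-multiplicities}. Since $g\ge2$, that multiplicity count is unambiguous. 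The result is a homomorphism $R\colon C\to\operatorname{Aut}(K)$, and $\Theta$ intertwines $R$ with $R'\circ\Phi|_C$.

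\textbf{Invariant data.} Geometrically, a collar rotation acts trivially on absolute twisted homology and permutes endpoints. The twists therefore act by the twisted analogue of \eqref{eq:boundary-twist-relative-homology}, and on the endpoint block $H_0(M^\circ\cap B_i;L)$ the rotation acts as an $m_i$-cycle twisted by the holonomy $\nu_i$. From this I expect to show $K_{\mathbb Q}^C=A_{\mathbb Q}$. That $A_{\mathbb Q}$ is fixed is clear. For the converse, a vector with nonzero image in the endpoint quotient is moved by some power of a rotation, because the twist part adds a nonzero multiple of $c_i$; this is the injectivity argument in the proof of Lemma~\ref{lem:serre-recovers-boundary-number}. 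Intersecting with $K$ then recovers the saturated lattice $A_{\Z}$, using the exact sequence following Theorem~\ref{thm:charge-lattice}. The primitive directions $r_i$ are intrinsic up to permutation, namely the extremal rays on which the action $R_i$ moves only one endpoint block. This step needs care. For a direction $r_i$, the map $R_i-I$ kills $A_{\mathbb Q}$ and everything outside block $i$, and on block $i$ it has rank $m_i$, coming from cycle-minus-identity plus the twist correction into $c_i$. This gives $m_i=\rk_{\mathbb Q}(R_i-I)$. The holonomy is read off from $\nu_i=\pm1$: it is the sign for which $\det(tI-R_i)$ contains the factor $t^{m_i}-\nu_i$. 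Equivalently, $R_i^{m_i}$ is $\nu_i$ times a unipotent operator on the block.

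\textbf{Nontrivial $L$.} When $L$ is nontrivial, the blocks $\operatorname{im}(R_i-I)+A_{\mathbb Q}$ modulo $A_{\mathbb Q}$ give the rational endpoint blocks up to permutation. To recover $F$, I take the subgroup of $H_\eta^0$ acting trivially on every endpoint block modulo the rotation action, and intersect it with the rotation-free part. Lemma~\ref{lem:stopped-stabilizer-root-presentation} identifies $F$ as the kernel of $H_\eta^0\to\prod\Z/m_i$, and this kernel is detected by the endpoint-block action. Here $H_\eta^0$ is the centralizer of $C$ in $H_\eta$.

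\textbf{Main obstacle.} The hard step is showing that the primitive directions $r_i$, and the sign normalization, are intrinsic without assuming the Serre element. The twisted local-system case with negative boundaries ($\nu_i=-1$) is the delicate part, since there $R_i$ has no eigenvalue $1$ on its block and the unipotence criterion has to be applied to $R_i^{2}$ instead. I would handle this through the collar computations promised in Section~\ref{appsubsec:collar-twisted-homology}.
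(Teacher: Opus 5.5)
Your plan depends on the claim that the primitive rotation directions \(r_i\) are intrinsic up to permutation (``extremal rays on which \(R_i\) moves only one endpoint block''). You then read off \(m_i\), \(\nu_i\) and the endpoint blocks from \(\rk_{\mathbb Q}(R_i-I)\), \(\det(tI-R_i)\) and \(\operatorname{im}(R_i-I)\). That claim is false, and the proposition is worded to avoid it: the rank formula is asserted only for a \emph{specified} \(r_i\).

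\textbf{The axes are not intrinsic.}
\begin{enumerate}
\item For a vector-framed \(\Sigma_{g,2}\) with one stop on each boundary, \(c_2=-c_1\) and \(\lambda_2=-\lambda_1\). Hence \(R(r_1)=R(r_2)=I+U\), so \(R\) is not even faithful on \(C\); this is the remark following the proposition.
\item In Example~\ref{ex:two-boundary-represented-failure}, the isomorphism of represented invariants sends \(\rho_2\) to \((\rho_1')^{-q^2}T_2^2\), which does not lie on a rotation axis. So axes are not invariants of \(\mathfrak R\) at all.
\item ``Acts on a single block'' does not single out rays. At a negative boundary \(B_j\) with one stop, \(R(2r_j)=I\). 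If \(B_i\) is positive with \(m_i=3\), the primitive element \(3r_i+2r_j\) acts only on block \(i\), yet \(\rk_{\mathbb Q}(R(3r_i+2r_j)-I)=1\neq m_i\).
\end{enumerate}

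\textbf{Missing idea.} You need to extract the boundary data from the abelian action as a whole, not from individual axes.
\begin{itemize}
\item Take the non-trivial joint characters of \(R\). Their maximal cyclic subgroups \(X_i\) are intrinsic and have order \(m_i\) (positive boundary) or \(2m_i\) (negative boundary). The number \(b\) supplies the positive one-stop blocks, which contribute no non-trivial character.
\item Use the logarithm \(\mathcal L(x)=d^{-1}(R(dx)-I)=\varepsilon_\partial\sum_{\nu_i=1}(x_i/m_i)\,c_i\otimes\lambda_i\). Its kernel \(C_-\) is the span of the negative axes. A group \(X_i\) is negative exactly when it is non-trivial on \(C\cap C_-\). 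The rank-one locus of \(\mathcal L\) on \(C_{\mathbb Q}/C_-\) recovers the positive axes only modulo \(C_-\).
\item The rational endpoint blocks are then assembled as Galois-stable sums of character spaces in \(K_{\mathbb Q}/A_{\mathbb Q}\), together with the constant lines cut out by the covectors \(\lambda_i\). Your recovery of \(F\) also depends on these blocks.
\end{itemize}

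\textbf{The sign normalization also fails as stated.}
\begin{itemize}
\item Neither sign makes \(R(r_i)-I\) nilpotent when \(m_i\geq2\), because the cyclic part contributes non-trivial roots of unity.
\item The elementwise rule ``choose the sign making eigenvalue \(1\) more frequent'' breaks down when \(L\) is non-trivial. At a negative boundary, \(R(m_ir_i)\) is \(-I\) on the \(m_i\)-dimensional block \(Q_i\) and \(I\) on a complement of dimension \(2g+b-2+|M|-m_i\). For example, \(g=2\) with two negative boundaries carrying \(10\) and \(1\) stops gives the wrong sign.
\item So an elementwise choice need not be multiplicative, and your identification \(K_{\mathbb Q}^C=A_{\mathbb Q}\), which rests on it, collapses.
\end{itemize}
For trivial \(L\), your count does work elementwise; it is essentially the paper's positive-trace criterion. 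In the twisted case the normalization has to be made jointly. The charge image of the full preimage \(\widetilde C\le D(\mathbb S)\) is abelian. The core character \(\chi_0\) occurs there with multiplicity \(\dim A_{\mathbb Q}+p\geq2\), while every other joint character occurs once. One then sets \(R(c)=\chi_0(\tilde c)^{-1}\varrho_{\mathbb S}(\tilde c)\).
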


\begin{proof}
By Theorem~\ref{thm:derived-reconstruction}, the represented invariant determines whether \(L\)
is trivial, and by Corollary~\ref{cor:geometric-reduction}, it determines
\(C\). By Lemma~\ref{lem:geometrization-charge-action}, its lifts act through
twisted relative homology and differ only by an overall sign. 

Suppose first that \(L\)
is trivial. Then the ordinary action of \(x\in C\) satisfies
\[
 \operatorname{tr}R(x)=2g+b-2+
 \#\operatorname{Fix}(x|_{M^\circ})>0.
\]
Positive trace therefore selects the ordinary action, and the selected
actions are multiplicative.

In this case, let \(x\in K_{\mathbb Q}\) be fixed by every
boundary rotation. On each boundary component, the endpoint
coefficients of \(x\) are constant. Invariance under the full twist
\(T_i\), together with \eqref{eq:boundary-twist-relative-homology},
gives \(\lambda_i(x)c_i=0\). Since \(b\geq2\), every boundary
class \(c_i\) is non-zero, so \(\lambda_i(x)=0\). Over
\(\mathbb Q\), the constant endpoint coefficients must therefore
all vanish. The relative homology sequence now gives
\(K_{\mathbb Q}^{C}=H_1(S;\mathbb Q)\). Conversely, boundary
rotations act trivially on absolute homology. The same exact sequence
over \(\Z\) shows that \(H_1(S;\Z)\) is saturated in \(K\).

We next construct the normalized action when \(L\) is non-trivial.
Let \(\widetilde C\) be the
full inverse image of \(C\) under \(D(\mathbb S)\to H_\eta\).
Its charge image is abelian: the collar-normalized rotations commute,
and all other lifts differ in their charge action only by an overall
sign. In the complex semisimplification of this representation of
\(\widetilde C\), let \(\chi_0\) denote the character of the action over
the core. Its multiplicity is \(\dim A_{\mathbb Q}+p\), where \(p\) is the number
of positive boundaries. Every other joint character occurs once, and its
ratio with \(\chi_0\) is supported on a single boundary coordinate. Thus
\(\chi_0\) is the unique joint character of multiplicity greater than
one. It takes values in \(\{1,-1\}\), is trivial on the geometrisation
kernel, and sends the shift to \(-1\).

For \(c\in C\), choose a lift
\(\widetilde c\in\widetilde C\) and set
\[
 R(c)=\chi_0(\widetilde c)^{-1}
       \varrho_{\mathbb S}(\widetilde c).
\]
This is independent of the lift and defines an intrinsic integral action
of \(C\) which is the identity on absolute homology. These assertions
follow from the signed-cycle calculation in
Lemma~\ref{lem:normalized-twisted-collar} and the normal forms in
Lemma~\ref{lem:twisted-homology-normal-forms}. The fixed space of the normalized action is \(A_{\mathbb Q}\). Indeed, a
negative endpoint block has no fixed vector. On a positive block, a
fixed endpoint vector has constant coefficients, and the full twist
forces its total to vanish. The exact sequence in
Lemma~\ref{lem:geometrization-charge-action} then identifies the
integral intersection with \(A_{\Z}\) and proves saturation.

We next record the endpoint data. For
\(x=\sum_i x_ir_i\in C_{\mathbb Q}\), choose \(d>0\) such that
\(dx_i\in m_i\Z\) on positive boundaries and
\(dx_i\in2m_i\Z\) on negative boundaries. For a positive boundary
\(B_i\), choose a local generator and put \(c_i=[B_i]_L\). Let
\(\lambda_i\) be the total endpoint coefficient on \(B_i\), and let
\(\varepsilon_\partial\in\{1,-1\}\) be the common collar sign. Then
\[
 \mathcal L(x)=d^{-1}\bigl(R(dx)-I\bigr)
 =\varepsilon_\partial
   \sum_{\nu_i=1}\frac{x_i}{m_i}c_i\otimes\lambda_i.
\]
The formula is independent of the sufficiently divisible choice of
\(d\) and defines a \(\mathbb Q\)-linear map on \(C_{\mathbb Q}\).
The positive boundary classes \(c_i\) are independent by
Lemma~\ref{lem:twisted-homology-normal-forms}. The endpoint totals
\(\lambda_i\) are independent because \(H_0(S;L)=0\), so the endpoint
map onto \(H_0(M^\circ;L)\) is surjective. Factoring through their coordinate
spaces therefore gives
\[
 \operatorname{rk}\mathcal L\!\left(\sum_i x_i r_i\right)
 =\#\{i:\nu_i=1,\ x_i\neq0\}.
\]
Thus \(C_-:=\ker\mathcal L\) is the rational span of the negative
axes, and \(C\cap C_-\) is their primitive integral span. The
non-zero rank-one locus of \(\mathcal L\) on
\(C_{\mathbb Q}/C_-\) is precisely the union of the punctured
positive quotient axes.

Let \(\mathcal S\) be the set of non-trivial joint characters of \(R\).
On a positive block with \(m_i>1\), they form the non-trivial part of
a cyclic group \(X_i\) of order \(m_i\). On a negative block, they are the odd powers
of a character generating a group \(X_i\) of order \(2m_i\). The groups
\(X_i\) are exactly the maximal members of
\[
 \{\langle\chi\rangle:\chi\in\mathcal S\}.
\]
Indeed, a generator of each \(X_i\) occurs in \(\mathcal S\), every
occurring character is supported on a single boundary coordinate, and
the groups attached to different coordinates have trivial intersection.
Such a group comes from a negative boundary exactly when its restriction
to \(C\cap C_-\) is non-trivial. Its order therefore recovers \(2m_i\)
in the negative case and \(m_i\) in the positive case. The number of
boundaries supplies the positive one-point blocks which do not occur in
\(\mathcal S\). This recovers the pairs \((m_i,\nu_i)\). When \(L\) is
trivial, the same maximal-character argument recovers all \(m_i>1\),
and \(b\) supplies the remaining entries equal to one.

For non-trivial \(L\), put \(E=K_{\mathbb Q}/A_{\mathbb Q}\).
The action on \(E\) has finite image and is semisimple over
\(\mathbb Q\). A negative endpoint block is the sum of the character
spaces belonging to the odd powers in \(X_i\). This sum is
Galois-stable and hence is defined over \(\mathbb Q\).

Each positive quotient axis gives a rank-one component of
\(\mathcal L\), whose kernel determines the corresponding endpoint
total \(\lambda_i\) up to a non-zero scalar. The map
\[
 E^C\longrightarrow\mathbb Q^p,
 \qquad v\longmapsto(\lambda_i(v))_{\nu_i=1},
\]
is an isomorphism: the fixed endpoint vectors have constant
coefficients on each positive boundary, and their totals may be
prescribed independently. The constant line associated with the
\(i\)-th positive quotient axis is therefore cut out by
\(\lambda_j=0\) for all other positive axes.

Each positive character group \(X_i\) is trivial on \(C\cap C_-\)
and hence descends to \(C/(C\cap C_-)\). Its restriction is
non-trivial on the primitive integral lattice of exactly one positive
quotient axis. This attaches its non-trivial character spaces to the
constant line associated with that axis. A positive axis on which no
such character occurs gives a one-point block. Thus every endpoint
block is recovered as a rational subspace, up to permutation.

Finally, the subgroup preserving every boundary component is
the centralizer of \(C\) in \(H_\eta\). An element of this subgroup lies
in \(F\) exactly when its projective action on every endpoint block is
trivial. Indeed, on the endpoint block at \(B_i\), the quotient action
is a scalar times a cyclic or signed cyclic permutation of the stops.
It is scalar precisely when the rotation exponent is divisible by
\(m_i\). This is the condition of fixing every stop, equivalently of
admitting a boundary-pointwise representative. It characterizes
\(F\) intrinsically, including the case \(m_i=1\). The last formula in
the statement is the rank calculation in
Lemma~\ref{lem:normalized-twisted-collar}.
\end{proof}

\begin{remark}
The last formula applies to a specified primitive rotation. It does not
identify the primitive axes from the rotation representation alone. On
a vector-framed surface with two boundary components and one stop on
each, the two twists act as \(I+U\), where \(U\neq0\) and \(U^2=0\).
Thus
\[
 (x,y)\longmapsto I+(x+y)U
\]
is preserved by changes of basis which do not permute the two axes.
The action of the line-field stabilizer and the integral charge
lattice impose further restrictions on this ambiguity. These are
precisely the restrictions used in
Theorem~\ref{thm:general-boundary-reconstruction}.
\end{remark}

\subsubsection{Matching the boundary blocks}
\begin{proposition}\label{prop:geometric-boundary-matching}
Assume that \(b\geq2\), and let \(\mathbb S'=(S',M',\eta')\) satisfy
the same hypotheses as \(\mathbb S\). Let
\((\Phi,\Theta):\mathfrak R(\mathbb S)\to\mathfrak R(\mathbb S')\) be
an isomorphism, and let \(f\) and \(\phi\) be supplied by
Corollary~\ref{cor:geometric-reduction}. After
choosing an integral identification of the orientation local systems,
\[
 \Theta|_{H_1(S;L_{\Z})}=\varepsilon f_*,
 \qquad \varepsilon\in\{1,-1\}.
\]
If \(L\) is trivial and \(b\geq3\), then \(f\) matches every boundary
component with a boundary having the same number of stops. If \(L\) is
non-trivial, the same conclusion holds for every positive boundary.
\end{proposition}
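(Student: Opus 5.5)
We will first pin down \(\Theta\) on absolute homology, then use the twist formula \eqref{eq:boundary-twist-relative-homology} to attach the endpoint blocks to the geometric boundaries.

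\emph{Absolute intertwiner.} By Proposition~\ref{prop:represented-boundary-data}, \(\Theta\) maps \(A_{\Z}=H_1(S;L_{\Z})\) onto \(A'_{\Z}\), since both are intrinsic as fixed lattices of the normalized boundary-rotation actions. Fix integral identifications of the orientation local systems and put \(\Psi=(\varepsilon_0 f_*)^{-1}\circ\Theta|_{A_\Z}\), an automorphism of \(A_\Z\). Here \(\varepsilon_0\) absorbs the sign ambiguity of lifts from Lemma~\ref{lem:geometrization-charge-action}. For a separating curve \(c\), the twist \(T_c\) lies in \(H_\eta\), and its charge action on absolute twisted homology is a Picard--Lefschetz transvection supported on the subsurface cut off by \(c\). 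By Corollary~\ref{cor:geometric-reduction}, \(\Phi\) sends a lift of \(T_c\) to a lift of \(T_{f(c)}\), possibly multiplied by an element of the central group \(C\). The group \(C\) acts trivially on absolute homology, and the lifts differ only by a sign and the shift. Hence \(\Psi\) commutes, up to a global sign, with \((T_c)_*\) for every separating \(c\).

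In genus at least two these transvections generate a subgroup acting irreducibly on the symplectic part of \(A_{\mathbb Q}\). I expect this to hold even with twisted coefficients, since separating curves bounding genus-one subsurfaces give the handle classes. By Schur's lemma, \(\Psi\) acts there by a scalar, and integrality forces the scalar to be \(\pm1\). On the boundary part, spanned by the classes \(c_i\) of positive boundaries, I would use separating curves that cut off a single boundary together with a handle. The twists along them separate the individual \(c_i\), so \(\Psi\) is diagonal with entries \(\pm1\) compatible with the global scalar. This gives \(\Theta|_{A_\Z}=\varepsilon f_*\). The main obstacle is exactly this step: showing that the centralizer of all separating-twist actions on \(H_1(S;L)\) is \(\{\pm1\}\), uniformly in \(b\) and in the holonomy of \(L\).

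\emph{Matching boundaries.} Let \(r_i\in C\) be the rotation at \(B_i\). Then \(\Phi(r_i)=r'_{\pi(i)}\) for some permutation \(\pi\). For \(L\) trivial this is the central-extension argument of Proposition~\ref{prop:extension-recovers-stops}; in general the primitive-axis structure of Proposition~\ref{prop:represented-boundary-data} identifies the positive axes. By \eqref{eq:boundary-twist-relative-homology}, the image of \(R(r_i^{m_i})-I\) is the line spanned by \(c_i\). Intertwining by \(\Theta\) and using \(\Theta|_{A}=\varepsilon f_*\), we get \(f_*c_i=\pm c'_{\pi(i)}\). When \(L\) is trivial and \(b\geq3\), the classes \(c'_j\) are pairwise non-proportional, so \(f(B_i)=B'_{\pi(i)}\). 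This is exactly where \(b=2\) fails, because then \(c_1=-c_2\). When \(L\) is non-trivial, the positive classes \(c_i\) are linearly independent by Lemma~\ref{lem:twisted-homology-normal-forms}, which gives the same conclusion for positive boundaries.

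\emph{Stop numbers.} The rank formula \(m_i=\rk_{\mathbb Q}(R_i-I)\) of Proposition~\ref{prop:represented-boundary-data}, transported by \(\Theta\), gives \(m_i=m'_{\pi(i)}\). Hence \(f\) matches each such boundary with a boundary carrying the same number of stops.
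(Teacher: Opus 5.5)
There is a genuine gap in your first step when \(L\) is trivial. With ordinary coefficients, the class of a separating curve is a combination of boundary classes. It therefore lies in the radical of the intersection form on \(H_1(S;\Z)\), so the Picard--Lefschetz formula \(x\mapsto x+\langle x,[c]\rangle[c]\) is the identity on absolute homology. In other words, separating twists lie in \(\mathcal I_0(S)\), which is exactly how they are used in Section~\ref{sec:derived-reconstruction}. Commuting with them imposes no condition on \(\Psi\), and the same holds for twists about curves cutting off a boundary together with a handle. Schur's lemma therefore has nothing to work with, and your claimed centralizer \(\{\pm1\}\) is false in this case: the centralizer is all of \(\operatorname{Aut}(A_{\Z})\).

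The paper (Lemma~\ref{lem:absolute-intertwiner-scalar}) uses the whole stabilizer instead. The intertwiner relates \(y_*\) and \(\phi(y)_*\) only up to a sign character \(e\) of \(H_\eta\). By Calderon--Salter, the relative framing stabilizer has finite-index homological image in the pure homological automorphism group, whose Zariski closure is the connected pure parabolic group. Hence \(\ker e\) has the same closure, and its commutant consists of scalars. Your separating-twist idea is the paper's route only for non-trivial \(L\). There, a one-holed-torus curve \(\gamma\) around a negative-holonomy curve and a positive-holonomy curve \(b\) gives \(T_\gamma-I=\pm4N_b\neq0\) (Lemma~\ref{lem:separating-twisted-scalar-commutant}), and unipotence of \(T_\gamma\) removes the sign.

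The matching step has a second gap: you assume \(\Phi(r_i)=r'_{\pi(i)}\). Proposition~\ref{prop:extension-recovers-stops} concerns \(\Mod(S,M)\). Its \(\psi\)-class argument does not pass to \(H_\eta\), whose capped image is not \(\PMod(\widehat S,Q)\). In Section~\ref{sec:fibres}, the rotations \(\rho_1,\rho_2\) even span a direct factor of \(D(\mathbb S_{r,s})\), on which any matrix in \(\operatorname{GL}_2(\Z)\) gives a shift-preserving automorphism. So the axes must be located through the representation. For trivial \(L\) and \(b=3\), the rank of \(\mathcal L\) does not do this: with one stop on each boundary, \(\mathcal L(2r_1+2r_2-r_3)=\pm(c_1-c_2)\otimes(\lambda_1-\lambda_2)\) has rank one but lies on no axis.

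The missing idea is to run your argument in the opposite direction. First obtain \(\Theta|_{A}=\varepsilon f_*\), so that \(\Theta(\mathbb Qc_i)=\mathbb Qc'_{f(i)}\). Then prove \(\{x\in C_{\mathbb Q}:\operatorname{im}\mathcal L(x)\subseteq\mathbb Qc_i\}=\mathbb Qr_i\) for \(b\geq3\), using that the only relations are \(\sum_ic_i=0\) and \(\sum_i\lambda_i=0\). Equivariance of \(\mathcal L\) then forces \(\phi(\mathbb Qr_i)=\mathbb Qr'_{f(i)}\). Only at this point does primitivity give \(\Phi(r_i)=\pm r'_{f(i)}\), so that the rank formula transports.

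For non-trivial \(L\), your rank-one argument does identify the positive axes, but only modulo \(C_-\). Thus \(\Phi(r_i)\) may acquire a component in \(C'\cap C'_-\), and \(\rk_{\mathbb Q}(R_i-I)\) need not transport. The stop number should instead be read off from the cyclic character groups \(X_i\), which are trivial on \(C\cap C_-\).
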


\begin{proof}
The scalar assertion follows from Lemma~\ref{lem:absolute-intertwiner-scalar}.
Suppose first that \(L\) is trivial. For a sufficiently divisible
positive integer \(d\), the normalized action of \(dx\), with
\(x\in C_{\mathbb Q}\), defines
\[
 \mathcal L(x)=d^{-1}(R(dx)-I).
\]
For a common collar sign
\(\varepsilon_\partial\in\{1,-1\}\), the collar formula gives
\[
 \mathcal L\!\left(\sum_i x_ir_i\right)
 =\varepsilon_\partial
  \sum_i\frac{x_i}{m_i}c_i\otimes\lambda_i,
\]
where \(c_i=[B_i]\) and \(\lambda_i\) is the endpoint total on
\(B_i\). The only relations in these two families are
\(\sum_i c_i=0\) and \(\sum_i\lambda_i=0\). If \(b\geq3\), these
relations show that
\begin{equation}\label{eq:boundary-axis-from-absolute-class}
 \{x\in C_{\mathbb Q}:\operatorname{im}\mathcal L(x)
       \subseteq\mathbb Qc_i\}=\mathbb Qr_i.
\end{equation}
To verify this identity, write
\(\mathcal L(x)=\sum_j a_jc_j\otimes\lambda_j\) and reduce modulo
\(\mathbb Qc_i\). The functionals \(\lambda_j\) with \(j\neq i\)
are independent coordinates, since the only relation among all
endpoint totals is their sum. Each \(c_j\), for \(j\neq i\),
remains non-zero modulo \(\mathbb Qc_i\) when \(b\geq3\). Thus
\(a_j=0\) for every \(j\neq i\) and the converse is immediate from the
collar formula.

The absolute intertwiner carries \(\mathbb Qc_i\) to
\(\mathbb Qf_*c_i\). Equation~\eqref{eq:boundary-axis-from-absolute-class}
and equivariance of \(\mathcal L\) therefore show that \(\phi\) carries
\(\mathbb Qr_i\) to the rotation axis at \(f(B_i)\).
Proposition~\ref{prop:represented-boundary-data} then recovers \(m_i\)
from the primitive integral generator.

Now suppose that \(L\) is non-trivial. For each positive boundary, put
\(c_i=[B_i]_L\), let \(\lambda_i\) be its endpoint-total covector, and
choose the common collar sign
\(\varepsilon_\partial\in\{1,-1\}\). The same logarithm is
\[
 \mathcal L\!\left(\sum_i x_ir_i\right)
 =\varepsilon_\partial
  \sum_{\nu_i=1}\frac{x_i}{m_i}c_i\otimes\lambda_i.
\]
The positive boundary classes and the endpoint totals are independent.
Put
\[
 C_-=\operatorname{span}_{\mathbb Q}\{r_i:\nu_i=-1\}.
\]
It follows that the kernel is \(C_-\) and
that the non-zero rank-one locus in \(C_{\mathbb Q}/C_-\) is the union of
the punctured positive axes. The scalar assertion matches each positive boundary
line with its geometric counterpart. Equivariance of \(\mathcal L\)
therefore matches the corresponding positive axis. The cyclic
characters in Proposition~\ref{prop:represented-boundary-data} recover
its stop number, including the case of one stop.

Once the relevant stop numbers have been matched, an orientation
correction from Lemma~\ref{lem:orientation-correction} may be applied
before the final boundary-collar isotopies. It preserves every boundary
component and the line-field homotopy class.
\end{proof}

Rational rotation data alone do not determine the negative boundary
matching. After labelling the target boundaries by \(f\), let \(\pi\)
be the permutation of the negative endpoint blocks. The integral
charge lattice carries pair labels which imply
\[
 [B_i]+[B_j]=[B_{\pi(i)}]+[B_{\pi(j)}]
 \quad\text{in }H_1(S;\mathbb F_2)
\]
for every pair of distinct negative boundaries. Together with the
single relation \(\sum_i[B_i]=0\), these identities fix all negative
blocks when there are at least four of them, except when all four
boundaries are negative. In that case a pair can be replaced by its
complement, and the permitted permutations form the Klein four group
of double transpositions. When there are exactly two negative
boundaries, a separate integral argument excludes their exchange if
at least two positive boundaries are present. These assertions are
proved in Lemma~\ref{lem:integral-negative-boundary-matching}, and the
second uses the integral gluing, not just the rational rotation
representation.

\subsubsection{Completion of the proof}

The preceding results contain all the geometric input. The proof now
reduces to a case distinction according to the orientation local system
and the number of negative boundary components.

\begin{proof}[Proof of Theorem~\ref{thm:general-boundary-reconstruction}]
The shift-pointed part of \(\mathfrak R\) and
Theorem~\ref{thm:derived-reconstruction} recover \((S,\eta)\), as well
as \(g\) and \(b\). The rank formula gives
\[
 |M|=\rk K(\mathbb S)-2g-b+2.
\]
For \(b=1\), this is the only stop number. For \(b=2\),
Proposition~\ref{prop:represented-boundary-data} recovers the unordered
pair of stop numbers, leaving at most two assignments.

Suppose \(b\geq3\). If \(L\) is trivial,
Proposition~\ref{prop:geometric-boundary-matching} matches every
boundary. If \(L\) is non-trivial, the same proposition matches all
positive boundaries. When \(t=0\), this completes the reconstruction.
When \(t=2\) and \(b\geq4\), there are at least two positive
boundaries, and Lemma~\ref{lem:integral-negative-boundary-matching} \textup{(ii)}
excludes the exchange of the two negative blocks. For \(b=3\), only
that exchange remains, giving at most two assignments.

If \(t\geq4\), the mod-two pair labels in
Lemma~\ref{lem:integral-negative-boundary-matching} \textup{(i)} fix every negative
block unless \(b=t=4\). In the latter case the remaining permutations
belong to the Klein four group and give at most four assignments.
Repeated stop counts and graded automorphisms can only reduce these
bounds. Whenever all counts match, an orientation correction followed
by collar isotopies identifies the graded surfaces with stops.
\end{proof}

\subsubsection{Exceptional fibres and low-genus limitations}

The following examples show that non-singleton fibres occur for
\(b=2,3,4\) in every genus \(g\geq2\). The integral calculations and the extension to the full derived Picard
group are given in
Lemmas~\ref{lem:normalized-representation-extension} and
\ref{lem:exceptional-representation-calculations} in
Section~\ref{appsubsec:exceptional-isomorphisms}.

\begin{example}\label{ex:two-boundary-represented-failure}
For every \(g\geq2\), let \(S=\Sigma_{g,2}\), and put \(m=2g+1\) and
\(q=4g+1\). Consider a line field \(\eta\) on \(S\) with boundary
windings \((1,-(4g+1))\). Such a line field can be constructed by
projectivizing a vector framing with boundary windings \((0,-2g)\) and
then adding an integral cohomology class with boundary evaluations
\((1,-1)\) and zero handle evaluations. Since the two boundary windings
are distinct, every element of the line-field stabilizer \(H_\eta\)
preserves each boundary component. Hence \(H_\eta=H_\eta^0\).

We now consider the two assignments of stop counts to the ordered boundary
components \((B_1,B_2)\): $(1,m)$ and $(m,1)$. The two resulting graded surfaces with stops have isomorphic represented
invariants, but their partially wrapped Fukaya categories are not
equivalent.
Let \(F\) be the boundary-pointwise stabilizer. By
Lemma~\ref{lem:relative-displacement}, relative displacement
gives a character
\[
 \beta:F\longrightarrow\Z,
 \qquad \beta(T_1)=1,\quad\beta(T_2)=q.
\]
The central element \(k=T_1^{-q}T_2\) lies in \(\ker\beta\), and
\(\alpha(y)=yk^{\beta(y)}\) is an automorphism of \(F\). By
Lemma~\ref{lem:stopped-stabilizer-root-presentation}, the stopped
stabilizers are obtained by adjoining \(\rho_2^m=T_2\) and
\((\rho'_1)^m=T_1\). The formula
\[
 \phi|_F=\alpha,
 \qquad
 \phi(\rho_2)=(\rho'_1)^{-q^2}T_2^2
\]
extends to an isomorphism and admits an integral intertwiner of the
charge representations by
Lemma~\ref{lem:exceptional-representation-calculations} \textup{(i)}. Hence the
two invariants \(\mathfrak R\) are isomorphic.

On the other hand, the boundary windings are distinct, so no graded
diffeomorphism can exchange the two boundary components.
Theorem~\ref{thm:complete-surface-invariant} therefore implies that the
two partially wrapped Fukaya categories are not equivalent. Their common
Grothendieck rank is \(4g+2\).
\end{example}

\begin{example}\label{ex:three-boundary-represented-failure}
Let \(S=\Sigma_{g,3}\), where \(g\geq2\), and let \(m>1\). There is a
line field \(\eta\) on \(S\) with boundary windings
\(
 (1,-1,-(4g+2))
\)
for which the ordered stop counts \((1,m,1)\) and \((m,1,1)\)
on \((B_1,B_2,B_3)\) give graded surfaces with stops having isomorphic
represented invariants and non-equivalent categories.

Begin with a vector-framed line field of boundary windings
\((0,0,-(4g+2))\), and add a class with boundary evaluations
\((1,-1,0)\) and zero handle evaluations. The three resulting boundary
windings are distinct, so \(H_\eta=H_\eta^0\). Let \(F\) be the
boundary-pointwise stabilizer, and set
\[
 L_\partial=
 \ker\bigl(H^1(S,\partial S;\Z)\longrightarrow H^1(S;\Z)\bigr).
\]
Let \(\beta_\eta:F\to L_\partial\) be the relative-displacement map of
Lemma~\ref{lem:relative-displacement}, and let \(e_1-e_2\) denote the
coordinate-difference homomorphism on \(L_\partial\). With the common
sign \(\varepsilon_0\) from that lemma, put
\[
 h=\varepsilon_0(e_1-e_2)\circ\beta_\eta:F\longrightarrow\Z.
\]
Then
\[
 h(T_1)=h(T_2)=1,
 \qquad h(T_3)=0.
\]
Set \(k=T_1^{-1}T_2\) and \(q=2m-1\). The element \(k\) is central and
lies in \(\ker h\). The map \(\alpha(y)=yk^{qh(y)}\) is an automorphism
of \(F\). The additional
roots are \(\rho_2^m=T_2\) and \((\rho'_1)^m=T_1\), and
\[
 \phi|_F=\alpha,
 \qquad
 \phi(\rho_2)=(\rho'_1)^{-q}T_2^2.
\]
Lemma~\ref{lem:exceptional-representation-calculations} \textup{(ii)} gives the
integral intertwiner and hence the isomorphism of \(\mathfrak R\).
The distinct windings exclude a graded boundary exchange, so the
categories are not equivalent by
Theorem~\ref{thm:complete-surface-invariant}. Their Grothendieck rank
is \(2g+m+3\). In particular, \(g=3\) and \(m=2\) give rank \(11\).
\end{example}

\begin{example}\label{ex:four-boundary-represented-failure}
Let \(S=\Sigma_{g,4}\), where \(g\geq2\), and choose \(m,n>1\) with
\(m+n=2g+2\). There is a line field \(\eta\) on \(S\) with boundary windings
\[
 (1,-(2m-1),-1,-(2n+1))
\]
for which \((1,m,1,n)\) and \((m,1,n,1)\) have isomorphic represented
invariants but non-equivalent categories.

The four winding numbers are pairwise distinct. Indeed, the only
possible remaining equality would force \(m=n+1\), making \(m+n\)
odd and contradicting \(m+n=2g+2\). Hence \(H_\eta=H_\eta^0\).

Start with a line field induced by a vector framing and having boundary
winding numbers \((0,-2m,0,-2n)\). Add a class with boundary evaluations
\((1,1,-1,-1)\) and zero handle evaluations. Put \(q_m=2m-1\) and
\(q_n=2n+1\). For the boundary-pointwise stabilizer \(F\),
represent the relative displacement of \(y\in F\) by
\[
 (e_1(y),e_2(y),e_3(y),e_4(y))
 \in\Z^4/\Z(1,1,1,1).
\]
Let \(\varepsilon_0\) be the common sign in
Lemma~\ref{lem:relative-displacement}, and set
\[
 h_{12}=\varepsilon_0(e_1-e_2),
 \qquad h_{34}=\varepsilon_0(e_3-e_4).
\]
These characters have values
\[
 \begin{array}{c|rrrr}
  &T_1&T_2&T_3&T_4\\\hline
  h_{12}&1&q_m&0&0\\
  h_{34}&0&0&-1&q_n.
 \end{array}
\]
Set
\[
 k_{12}=T_1^{-q_m}T_2,
 \qquad k_{34}=T_3^{-q_n}T_4^{-1},
\]
and
\[
 \alpha(y)=y k_{12}^{h_{12}(y)}k_{34}^{h_{34}(y)}.
\]
The elements \(k_{12}\) and \(k_{34}\) are central, and both characters
vanish on them. Hence \(\alpha\) is an automorphism of \(F\). Let
\(\rho_2,\rho_4\) be the source rotations and
\(\rho'_1,\rho'_3\) the target rotations. Their root relations are
\[
 \rho_2^m=T_2,\qquad \rho_4^n=T_4,
 \qquad (\rho'_1)^m=T_1,\qquad (\rho'_3)^n=T_3.
\]
The stopped stabilizers are related by
\[
 \phi|_F=\alpha,
 \qquad
 \phi(\rho_2)=(\rho'_1)^{-q_m^2}T_2^2,
 \qquad
 \phi(\rho_4)=(\rho'_3)^{-q_n^2}T_4^{-2}.
\]
Lemma~\ref{lem:exceptional-representation-calculations} \textup{(iii)} proves that
this isomorphism admits an integral charge intertwiner. Thus the two
represented invariants agree. The distinct winding numbers prevent a
graded boundary exchange. The categories are therefore not equivalent
by Theorem~\ref{thm:complete-surface-invariant} and their common
Grothendieck rank is \(4g+6\).
\end{example}

\begin{example}\label{ex:represented-low-genus-failure}
The genus hypothesis in
Theorem~\ref{thm:general-boundary-reconstruction} cannot be removed.
For genus zero, use the pair of pants in
Example~\ref{ex:low-genus-failure}. Its two line fields have even
boundary winding numbers and hence are induced by vector fields.
Normalize graded lifts at the same stop in both models. In the
trivializations supplied by these vector fields, the induced
local-system isomorphism has sign \(+1\) at that stop and therefore
everywhere. Thus corresponding boundary twists act identically on
\(H_1(\Sigma_{0,3},M^\circ;\Z)\). In the common presentation
\eqref{eq:genus-zero-common-group}, take the identity on the
local-system torus, the boundary-twist lattice, and the shift factor.
The torus acts trivially on the charge lattice and the shift acts by
\(-I\). This group identification, together with the identity on
ordinary relative homology, intertwines the full integral
representations.

For genus one, use the notation of the same example and compare
\(\eta_0+2\alpha\) with \(\eta_0+4\alpha\). Both line fields are induced
by vector fields, and their stabilizer is the same group \(H_\alpha\).
Normalize the graded lifts at the common unique stop. In the
trivializations supplied by the vector fields, their induced maps on
the orientation local systems have sign \(+1\) at this stop and
therefore everywhere. Thus the normalized lifts in both models act
on ordinary relative homology by the same underlying diffeomorphisms.
The shift acts by \(-I\) and the local-system torus acts trivially.
\enlargethispage*{2\baselineskip}
The common group identification consequently intertwines the two
integral representations. Thus their invariants \(\mathfrak R\) are
isomorphic. Nevertheless,
\[
 \widetilde{\mathcal A}(\eta_0+2\alpha)=2,
 \qquad
 \widetilde{\mathcal A}(\eta_0+4\alpha)=4.
\]
The line-field classification distinguishes both pairs of graded surfaces.
\end{example}

\clearpage
\appendix
\section{Mapping class groups and relative gradings}
\label{app:mapping-class-gradings}

This appendix establishes the mapping-class-group and grading input
for the reconstruction arguments.
Section~\ref{appsubsec:capping-rigidity} contains the group-theoretic
and cohomological calculations used in the capping argument of
Section~\ref{sec:ungraded} and in the virtual-centre computation of
Section~\ref{sec:derived-reconstruction}. Section~\ref{appsubsec:line-field-stabilizers}
contains the mapping-class-group calculations needed to
recognize line-field stabilizers and to treat the exceptional surface
\(\Sigma_{2,1}\). We retain the conventions of Section~\ref{sec:preliminaries} and introduce additional notation where it is first needed.

\subsection{Relative gradings and the charge lattice}
\label{appsubsec:relative-gradings-charge}
Fix a grading $\eta$ on $S$. Let \(T_i=T_{B_i}\) be the positive boundary twist and put
\[
 L_\partial=
 \ker\bigl(H^1(S,\partial S;\Z)\longrightarrow H^1(S;\Z)\bigr),
\]
and let \(u_i\in L_\partial\) be the image of the \(i\)-th standard
basis vector under the boundary connecting map
\(H^0(\partial S;\Z)\to H^1(S,\partial S;\Z)\).
Represent elements of \(\Stab_{\Mod(S,\partial S)}(\eta)\) by diffeomorphisms that are the identity on
boundary collars. For \(y\in \Stab_{\Mod(S,\partial S)}(\eta)\), set
\[
 \beta_\eta(y)=[y^*\eta]-[\eta]\in L_\partial,
\]
where the difference is taken relative to the boundary. For a graded
lift of \(y\), let
\(\delta_i(y)\in\{1,-1\}\) be its sign on the orientation local system
over \(B_i\). At the level of the charge lattice, the \(B_i\)-normalized
lift is the one for which \(\delta_i(y)=1\).

\begin{lemma}\label{lem:relative-displacement}
The map \(\beta_\eta\) is a homomorphism
\[
 \beta_\eta:\Stab_{\Mod(S,\partial S)}(\eta)\longrightarrow L_\partial
 \simeq\Z^b/\Z(1,\ldots,1).
\]
There is a sign \(\varepsilon_0\in\{1,-1\}\), independent of \(i\), such
that every vector framing \(\xi\), viewed relative to its boundary
restriction, satisfies
\begin{equation}\label{eq:relative-boundary-twist-framing}
 [T_i^*\xi]-[\xi]
 =\varepsilon_0\omega_\xi(B_i)u_i.
\end{equation}
Moreover,
\[
 \beta_\eta(T_i)=\varepsilon_0\omega_\eta(B_i)u_i.
\]
If \(\beta_\eta(y)\) is represented by \((d_1,\ldots,d_b)\), then
\[
 \delta_i(y)\delta_j(y)=(-1)^{d_i-d_j}.
\]
Thus the \(B_i\)- and \(B_j\)-normalized charge actions differ by
\((-1)^{d_i-d_j}\).
\end{lemma}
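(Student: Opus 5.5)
The plan is to prove the three assertions of Lemma~\ref{lem:relative-displacement} in turn: the homomorphism property, the twist formula, and the sign comparison. Each reduces to local computations in boundary collars together with the torsor structure of relative line fields.

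\emph{Homomorphism property.} Relative to the fixed boundary restriction, the homotopy classes of line fields, and of framings, form a torsor over $H^1(S,\partial S;\Z)$. Pullback by a diffeomorphism $y$ that is the identity on collars is compatible with this torsor, in the sense that $y^*(\eta+\alpha)=y^*\eta+y^*\alpha$. This gives the cocycle identity
\[
 \beta_\eta(yz)=z^*\beta_\eta(y)+\beta_\eta(z).
\]
Since $y\in\Stab_{\Mod(S,\partial S)}(\eta)$, the absolute class of $y^*\eta-\eta$ vanishes, so $\beta_\eta(y)$ lies in $L_\partial$. The group $L_\partial$ is the image of $H^0(\partial S)$, and mapping classes fixing the boundary act trivially on it. Hence the cocycle is a homomorphism. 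The identification $L_\partial\simeq\Z^b/\Z(1,\ldots,1)$ comes from the exact sequence $H^0(S)\to H^0(\partial S)\to H^1(S,\partial S)$.

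\emph{Twist formula.} I will compute $T_i^*\xi$ in a collar $B_i\times[0,1]$. The twist rotates the framing relative to the collar coordinates. The difference class is supported in the collar, and its value on a radial arc crossing the collar counts how many times $\xi$ turns along $B_i$, namely $\omega_\xi(B_i)$. The universal orientation conventions contribute a sign $\varepsilon_0$, which is the same for every $i$ because the local model is identical at every boundary component. This gives $\varepsilon_0\omega_\xi(B_i)u_i$.

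For line fields I argue in the same way. The same collar computation counts half-turns of lines, and by \eqref{eq:winding-doubling} this is consistent with the framing case. Alternatively, I can write $\eta$ locally as the projectivization of a framing and use the fact that winding numbers of lines double, which yields $\beta_\eta(T_i)=\varepsilon_0\omega_\eta(B_i)u_i$.

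\emph{Sign comparison.} This is the main obstacle. A graded lift of $y$ is a path of line fields from $y^*\eta$ to $\eta$. Its restriction near $B_i$ is a loop of lines at the collar, and this loop carries an integer rotation count $d_i$ representing $\beta_\eta(y)$. Changing the lift by $[1]$ shifts every $d_i$ by the same amount, which explains the quotient by $\Z(1,\ldots,1)$.

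The induced map on the orientation local system $\Z_\eta$ over $B_i$ is $(-1)^{d_i}$ times a fixed reference sign. This holds because a full rotation of a line by $\pi$ reverses its orientation. Taking ratios removes the reference sign and gives $\delta_i\delta_j=(-1)^{d_i-d_j}$.

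The care is in fixing conventions so that the same normalization is used at every boundary. I will handle this by connecting $B_i$ to $B_j$ by an arc and comparing the signs by parallel transport along that arc. Since $y$ preserves $\eta$ absolutely, the transported signs agree up to the relative rotation counts. The final statement about normalized charge actions then follows, because two lifts differ by a global sign on the charge lattice.
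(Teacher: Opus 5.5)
Your proposal is correct and follows the paper's argument essentially step by step. The paper uses the same three ingredients: the torsor cocycle, with trivial action on $L_\partial$, for the homomorphism property; a collar count for the twist formula, which it phrases through the Calderon--Salter twist-linearity formula on transverse arcs (the same computation); and the fact that a degree-one loop in $\mathbb P^1(\mathbb R)$ reverses the oriented-line lift, for the sign comparison. One correction: drop your alternative of writing $\eta$ near $B_i$ as a projectivized framing. A line field on a collar lifts to a vector field only when $\omega_\eta(B_i)$ is even, so this fails at boundaries with odd winding (the negative boundaries used later), and you should rely on the direct half-turn count there.
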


\begin{proof}
Relative homotopy classes of framings with prescribed boundary value
form a torsor over \(H^1(S,\partial S;\Z)\). Since a boundary twist is
isotopic to the identity after the boundary condition is forgotten,
the difference \([T_i^*\xi]-[\xi]\) lies in \(L_\partial\).

Choose basepoints on the boundary and transverse arcs from \(B_1\)
to the other boundary components, with fixed tangent directions at
their endpoints. Together with an absolute homology basis, these arcs
detect a relative framing \cite[Lemma~2.2]{CalderonSalter}. Changing
the endpoint conventions adds constants that cancel in the difference
before and after a boundary twist. The twist-linearity formula
\cite[Lemma~2.4]{CalderonSalter} gives
\[
 \omega_\xi(T_i(a))-\omega_\xi(a)
 =\langle a,B_i\rangle\omega_\xi(B_i)
\]
for every such oriented arc \(a\). The functional
\(a\mapsto\langle a,B_i\rangle\) is, up to the orientation convention
common to all boundary components, the evaluation of \(u_i\). This proves
\eqref{eq:relative-boundary-twist-framing}.

Relative homotopy classes of line fields with prescribed boundary
value form the analogous torsor. Since an element of \(\Stab_{\Mod(S,\partial S)}(\eta)\) preserves
the absolute homotopy class of \(\eta\), its relative displacement lies
in \(L_\partial\). The cohomology sequence of the pair identifies this
group with \(\Z^b/\Z(1,\ldots,1)\). The boundary-pointwise mapping class
group acts trivially on this quotient, so the displacement crossed
homomorphism is a homomorphism. The preceding arc calculation applies
to tangent lines and proves the stated twist formula.

At a fixed point of \(B_i\), an absolute homotopy from \(y^*\eta\) to
\(\eta\) is a loop in the projective tangent line. Differences of its
degrees on two boundary components represent the coordinate differences
of \(\beta_\eta(y)\). A loop of degree one in
\(\mathbb P^1(\mathbb R)\) reverses its oriented-line lift. This proves
the sign formula and the final assertion.
\end{proof}

For the next lemma, let \(A\) and its surface model
\(\mathbb S=(S,M,\eta)\) be as in
Theorem~\ref{thm:opper-geometrization}. Let \(M^\circ\) be the auxiliary
endpoint set fixed in Section~\ref{sec:preliminaries}.

\begin{lemma}\label{lem:geometrization-charge-action}
The geometrisation kernel \(N(\mathbb S)\) acts trivially on
\(K(\mathbb S)\). Under the identification of
Theorem~\ref{thm:charge-lattice}, a graded mapping class acts through
the induced map on twisted relative homology. Changing its graded lift
by \([n]\) multiplies this action by \((-1)^n\).

There is also a natural exact sequence
\[
 0\longrightarrow H_1(S;\Z_\eta)
 \longrightarrow K(\mathbb S)
 \longrightarrow
 \ker\bigl(H_0(M^\circ;\Z_\eta)\longrightarrow
                 H_0(S;\Z_\eta)\bigr)
 \longrightarrow0.
\]
In particular, \(H_1(S;\Z_\eta)\) is saturated in
\(K(\mathbb S)\).
\end{lemma}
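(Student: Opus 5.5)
The plan is to reduce all three assertions to the structure of $D(\mathbb S)$ given by Theorem~\ref{thm:opper-geometrization}, namely $D(\mathbb S)\simeq N(\mathbb S)\rtimes\MCGgr(\mathbb S)$ with $N(\mathbb S)=U\rtimes T$, $U\simeq\mathbb G_a^{\phi_A(1,1)}$, $T=H^1(S;\kk^\times)$. We also use the Morita-invariant identification $K(\mathbb S)\simeq H_1(S,M^\circ;\Z_\eta)$ of Theorem~\ref{thm:charge-lattice}.

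\textbf{The exact sequence.} We take the long exact sequence of the pair $(S,M^\circ)$ with coefficients in the rank-one local system $\Z_\eta$. Since $M^\circ$ is a finite set of points, $H_1(M^\circ;\Z_\eta)=0$, which gives injectivity on the left. The cokernel of $H_1(S;\Z_\eta)\to H_1(S,M^\circ;\Z_\eta)$ is the kernel of $H_0(M^\circ;\Z_\eta)\to H_0(S;\Z_\eta)$. This kernel is a subgroup of the free group $H_0(M^\circ;\Z_\eta)\simeq\Z^{|M^\circ|}$, so it is torsion-free, and saturation of the image of $H_1(S;\Z_\eta)$ follows. Naturality holds because every graded diffeomorphism preserves $M^\circ$ up to the canonical isotopy fixed in Section~\ref{sec:preliminaries}. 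This part is routine.

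\textbf{Action of graded mapping classes and of the shift.} HKK's isomorphism sends the class of a graded arc (or of a graded closed curve with local system) to the class of the underlying curve, with the orientation-local-system coefficient determined by the grading. A graded diffeomorphism $(f,[h])$ transports graded arcs to graded arcs. By naturality of the HKK construction, the induced map on $K_0$ is $f_*$ on twisted relative homology, twisted by the isomorphism $f^*\Z_\eta\simeq\Z_\eta$ induced by $[h]$. Changing the lift by the deck generator $[1]$ changes $[h]$ by one positive loop in $\mathbb P(T_xS)$. Such a loop reverses the oriented-line lift, so the coefficient isomorphism changes by $-1$. This matches $[1]_*=-I$ on $K_0$, and iterating gives the factor $(-1)^n$. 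To be safe, I would check the action on generators: the objects of a full formal arc system generate $K(\mathbb S)$, and both the categorical action and the homological action are determined by these arcs.

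\textbf{Triviality of the kernel.} I expect this to be the main obstacle. The torus $T$ acts by twisting objects by rank-one local systems, with no change of underlying curve or grading. Arc objects have contractible support, so their images are isomorphic to the original arcs. Because the arc classes generate $K_0$, the torus acts trivially. For $U$, I would argue in either of two ways. First, $U$ is a connected unipotent algebraic group acting on the discrete group $K_0$, so it acts trivially; concretely, each element of $U$ is divisible, and its image in $\operatorname{Aut}K(\mathbb S)\subset \mathrm{GL}_n(\Z)$ must be trivial, since $\mathrm{GL}_n(\Z)$ has no nontrivial divisible abelian subgroups (they are virtually torsion-free with finitely generated abelian subgroups). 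Second, and more directly, the divisibility argument applies to $N(\mathbb S)$ as a whole: by Lemma~\ref{lem:characteristic-geometrization-kernel}, $N(\mathbb S)$ is generated by divisible abelian subgroups. Each such subgroup maps to a divisible abelian subgroup of $\mathrm{GL}_n(\Z)$, which is trivial. Hence $\varrho_{\mathbb S}$ kills $N(\mathbb S)$. I would present this divisibility argument as the main proof, and keep the geometric local-system description as a consistency check.
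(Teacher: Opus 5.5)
Your proposal is correct and follows the paper's proof in all three parts. The exact sequence and saturation come from the homology sequence of \((S,M^\circ)\). The action comes from naturality of the identification in Theorem~\ref{thm:charge-lattice} for geometrically induced autoequivalences; the paper pins this down via the geometric section of \cite[Theorem~7.1]{Opper2025}, which is the compatibility your ``graded diffeomorphisms transport graded arcs'' step implicitly uses. The shift acts by \(-I\). Finally, \(N(\mathbb S)\) acts trivially because it is generated by the divisible abelian groups \(U\) and \(T\). This generation statement follows directly from Theorem~\ref{thm:opper-geometrization}, so you should not route it through Lemma~\ref{lem:characteristic-geometrization-kernel}, whose genus-two hypothesis the present lemma does not assume.

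The only real variation is how divisible images are killed. You use that abelian subgroups of \(\mathrm{GL}_n(\Z)\) are finitely generated. The paper uses only that a divisible group maps trivially to each finite group \(\operatorname{Aut}(K/\ell^nK)\), together with \(\bigcap_n\ell^nK=0\), which is slightly more elementary.
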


\begin{proof}
Put \(N=N(\mathbb S)\) and \(K=K(\mathbb S)\). By
Theorem~\ref{thm:opper-geometrization}, we may write
\[
 N=U\rtimes T,\qquad
 U\simeq(\kk,+)^{\phi_A(1,1)},\qquad
 T=H^1(S;\kk^\times).
\]
Both \(U\) and \(T\) are divisible abelian groups. This uses
characteristic zero for \(U\), and algebraic closedness together with
the freeness of \(H_1(S;\Z)\) for \(T\). A homomorphism from a
divisible group to a finite group \(G\) is trivial, since every image
element is a \(|G|\)-th power in \(G\). Fix a prime \(\ell\).
For every \(n\geq1\), the induced homomorphism
\[
 N\longrightarrow\operatorname{Aut}(K/\ell^nK)
\]
has finite target. Its restrictions to \(U\) and \(T\) are therefore
trivial. These subgroups generate \(N\), so the entire homomorphism is
trivial. Thus, for \(z\in N\) and \(x\in K\),
\[
 (\varrho_{\mathbb S}(z)-I)x
 \in\bigcap_{n\geq1}\ell^nK=0,
\]
because \(K\) is free abelian of finite rank. This proves that \(N\)
acts trivially on \(K\).

The geometric section of the geometrisation homomorphism
\cite[Theorem~7.1]{Opper2025}, together with the naturality of
Theorem~\ref{thm:charge-lattice}, identifies the resulting quotient
action with the induced action on twisted relative homology. A
generator of the deck group represents the shift and acts by \(-I\)
on the Grothendieck group. Changing a graded lift by \([n]\) therefore
multiplies its action by \((-1)^n\).

The exact sequence in the statement is the homology sequence of the
pair \((S,M^\circ)\). Its quotient is a subgroup of the free group
\(H_0(M^\circ;\Z_\eta)\), and is therefore free. This also proves the
saturation assertion.
\end{proof}

\subsection{Capping and mapping class groups}
\label{appsubsec:capping-rigidity}
\begin{lemma}\label{lem:no-normal-abelian-subgroup}
Let \(X\) be a closed oriented surface of genus at least two and let
\(Q\neq\varnothing\) be finite.  A finite-index subgroup of
\(\PMod(X,Q)\) has no non-trivial normal abelian subgroup.  Moreover, the
centralizer in \(\Mod(X,Q)\) of a finite-index subgroup of
\(\PMod(X,Q)\) is trivial.
\end{lemma}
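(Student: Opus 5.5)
The plan is to argue through the action of \(\PMod(X,Q)\) on the curve complex of the punctured surface \(X\setminus Q\), using the standard fact that a finite-index subgroup \(\Gamma\leq\PMod(X,Q)\) contains a power \(T_c^{n}\), \(n\neq0\), of the Dehn twist about every essential simple closed curve \(c\). Since \(\Gamma\) has finite index and there are only finitely many cosets, some nonzero power of each twist lies in \(\Gamma\). Hypotheses \(g\geq2\) and \(Q\neq\varnothing\) guarantee that \(X\setminus Q\) has complexity large enough for the curve complex to be connected and for the pure mapping class group to act faithfully on it. In particular the hyperelliptic involution, which is the only obstruction for the closed genus-two surface, does not lie in \(\PMod(X,Q)\).

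For the second assertion, let \(z\in\Mod(X,Q)\) centralize \(\Gamma\). For every essential curve \(c\) choose \(n(c)\neq0\) with \(T_c^{n(c)}\in\Gamma\). Then
\[
 T_{z(c)}^{n(c)}=zT_c^{n(c)}z^{-1}=T_c^{n(c)}.
\]
Nonzero powers of twists determine their curves, so \(z(c)=c\) for every vertex of the curve complex. I would then invoke faithfulness of the action of \(\Mod(X,Q)\) on the curve complex of \(X\setminus Q\). By Ivanov, Korkmaz and Luo, this holds outside the sporadic low-complexity cases, and every such case has genus at most one or is closed of genus two. Faithfulness gives \(z=1\). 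Checking that no sporadic kernel survives when \(g\geq2\) and \(Q\neq\varnothing\) is the main delicate point. If needed, I would argue directly: a mapping class fixing every curve is isotopic to the identity, by the Alexander method applied to a filling collection of curves together with arcs to the punctures.

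For the first assertion, let \(L\) be a normal abelian subgroup of a finite-index subgroup \(\Gamma\leq\PMod(X,Q)\). I would use the Ivanov / Birman--Lubotzky--McCarthy classification of normal subgroups. Pass to a torsion-free finite-index subgroup \(\Gamma'\leq\Gamma\); torsion-free subgroups of this kind consist of pure elements. Then \(L\cap\Gamma'\) is a normal abelian subgroup of \(\Gamma'\). By the canonical reduction system theory, a nontrivial element of an abelian normal subgroup has a canonical reduction system \(\sigma\). Normality forces \(\sigma\) to be invariant under all of \(\Gamma'\). However, \(\Gamma'\) contains twist powers that move every nonempty multicurve, so \(\sigma=\varnothing\) and each nontrivial element is pseudo-Anosov. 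The centralizer of a pseudo-Anosov element is virtually cyclic, so \(L\cap\Gamma'\) is cyclic, generated by some pseudo-Anosov \(\psi\). Normality then makes \(\Gamma'\) normalize \(\langle\psi\rangle\). Consequently \(\Gamma'\) preserves the pair of fixed points of \(\psi\) in \(\mathcal{PMF}\), which is impossible because \(\Gamma'\) contains independent pseudo-Anosov elements with disjoint fixed-point pairs. Hence \(L\cap\Gamma'=1\).

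It follows that \(L\) injects into \(\Gamma/\Gamma'\), so \(L\) is finite. The conjugation kernel \(\Gamma''=C_\Gamma(L)\) has finite index in \(\Gamma\), because \(\operatorname{Aut}(L)\) is finite. Thus \(L\) centralizes a finite-index subgroup of \(\PMod(X,Q)\), and the second assertion gives \(L=1\).
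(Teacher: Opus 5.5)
Your proposal is correct and follows essentially the same route as the paper. Canonical reduction systems, together with the virtually cyclic stabilizer of a pseudo-Anosov's foliation pair, make the normal abelian subgroup meet a torsion-free pure subgroup trivially, so it is finite and centralizes a finite-index subgroup; the faithful curve-complex action, detected via twist powers, then kills it. One small repair is needed: the canonical reduction system of a single element of \(L\cap\Gamma'\) need not be \(\Gamma'\)-invariant, since conjugation carries it to that of a conjugate element, so you should use, as the paper does, the canonical reduction system of the subgroup \(L\cap\Gamma'\) itself (equivalently, the union of the systems of its commuting pure elements, which is a \(\Gamma'\)-invariant multicurve).
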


\begin{proof}
Let \(\Gamma\leq\PMod(X,Q)\) have finite index and let
\(A\triangleleft\Gamma\) be abelian. Choose a finite-index normal
subgroup \(\Gamma_0\triangleleft\Gamma\) contained in a torsion-free
pure finite-index subgroup of the mapping class group, and put
\(B=A\cap\Gamma_0\).

Suppose that \(B\neq1\). Its canonical reduction system is preserved
by \(\Gamma\), since \(B\) is normal in \(\Gamma\). This system
cannot be non-empty. Indeed, a power of a Dehn twist about a curve
intersecting it belongs to \(\Gamma\) and does not preserve it. A non-trivial pure
abelian subgroup with empty reduction system contains a pseudo-Anosov
element. In this case \(\Gamma\) preserves the pair of its invariant
measured foliations up to scale. The stabilizer of this pair is virtually
cyclic, which is impossible for a finite-index subgroup of
\(\PMod(X,Q)\). See \cite{BirmanLubotzkyMcCarthy,FarbMargalit}.
Thus \(B=1\), and \(A\) is finite.

The kernel \(\Gamma_1\) of the conjugation action of \(\Gamma\) on \(A\)
has finite index in \(\PMod(X,Q)\).  It contains a non-zero power of the
twist about every essential curve.  Every element of \(A\) centralizes
\(\Gamma_1\), and hence fixes every vertex of the curve complex.  The
action on that complex is faithful in the present punctured range
\cite{Ivanov1997,Luo}.  Thus \(A=1\), proving the first assertion.  The
same curve-complex argument applies directly to any element centralizing a
finite-index subgroup of \(\PMod(X,Q)\), and proves the second assertion.
\end{proof}

Let \(X\) and \(X'\) be closed oriented surfaces of genus \(g\geq2\).
Let their non-empty labelled point sets be
\[
 Q=\{q_1,\ldots,q_b\}\subset X,
 \qquad
 Q'=\{q'_1,\ldots,q'_b\}\subset X'
\]
and denote the corresponding cotangent-line classes by
\[
 \psi_1,\ldots,\psi_b\in H^2(\PMod(X,Q);\mathbb Q)
\]
and \(\psi'_1,\ldots,\psi'_b\), respectively. If an extended
diffeomorphism \(f:(X,Q)\to(X',Q')\) satisfies
\(f(q_i)=q'_{\pi(i)}\), set
\[
 c_f:\PMod(X,Q)\longrightarrow\PMod(X',Q'),
 \qquad c_f(h)=fhf^{-1}.
\]

\begin{lemma}\label{lem:psi-classes}
The classes \(\psi_1,\ldots,\psi_b\) are linearly independent. Moreover,
\[
 c_f^*\psi'_{\pi(i)}=\varepsilon\psi_i,
 \qquad \varepsilon\in\{1,-1\},
\]
where \(\varepsilon=1\) if \(f\) preserves orientation and
\(\varepsilon=-1\) otherwise.
\end{lemma}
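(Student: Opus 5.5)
The plan is to make both statements concrete through the standard geometric description of \(\psi_i\): it is the Euler class of the central extension obtained by replacing the puncture \(q_i\) with a parametrized boundary circle. Equivalently, it is the Euler class of the circle bundle of unit tangent directions at \(q_i\), pulled back to \(\PMod(X,Q)\). With the sign convention fixed in the proof of Proposition~\ref{prop:extension-recovers-stops}, \(\psi_i\) is the class of the extension
\[
 1\to\Z\langle T_{\partial_i}\rangle\to\PMod(X_i)\to\PMod(X,Q)\to1 ,
\]
where \(X_i\) is \(X\) with \(q_i\) blown up to a boundary circle and the other points kept as punctures.

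\textbf{Naturality.} Conjugation by \(f\) carries this extension for \(q_i\) to the one for \(q'_{\pi(i)}\), since \(f\) extends to the blown-up surfaces. On the kernels it sends \(T_{\partial_i}\) to \(T_{\partial'_{\pi(i)}}^{\pm1}\). The sign is \(+1\) exactly when \(f\) preserves orientation, because an orientation-reversing map conjugates a positive twist to a negative twist. Naturality of extension classes then gives \(c_f^*\psi'_{\pi(i)}=\varepsilon\psi_i\), with a single \(\varepsilon\) for all \(i\).

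\textbf{Independence.} I will detect independence by pulling back to abelian subgroups on which the \(\psi_i\) evaluate as an invertible matrix.

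For each \(i\), choose a subsurface \(Y_i\subset X\setminus Q\) of positive genus, disjoint from a small disc around \(q_i\). The aim is a torus \(\Z^2\hookrightarrow\PMod(X,Q)\) generated by two commuting mapping classes, one of which is a point-push of \(q_i\) around a curve. Pulling back the extension for \(q_j\) to this subgroup gives a central extension of \(\Z^2\). Its class in \(H^2(\Z^2;\Z)=\Z\) is the commutator of the chosen lifts, measured in twists.

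A cleaner alternative, which I would try first, is a surface subgroup. Take a closed curve and consider the point-pushing subgroup \(\pi_1(X\setminus(Q\setminus q_i),q_i)\), which lies in \(\PMod(X,Q)\) by the Birman exact sequence. Restrict it to \(\pi_1\) of a genus-one subsurface \(\Sigma\) with one boundary circle, avoiding the other punctures. Pushing \(q_i\) along a closed subsurface is not possible here, so instead I use the Birman fibration over the closed surface \(X\setminus(Q\setminus\{q_i\})\) filled in. Concretely, \(\psi_i\) restricted to the point-pushing subgroup is \(\pm\) the Euler class of the unit tangent bundle along the pushed point. On a surface group \(\pi_1(\Sigma_h)\) embedded via point-pushing, this evaluates to \(\chi(\Sigma_h)\neq0\) on the fundamental class. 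Meanwhile \(\psi_j\) for \(j\neq i\) restricts to zero, since pushing \(q_i\) is supported away from \(q_j\) and the \(q_j\)-extension splits there via the twist-free lifts.

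For the point-pushing subgroup to carry a surface fundamental class, I will work with \(\pi_1\) of the closed surface obtained by forgetting the other points. More precisely, I use the pullback to the finite-index torsion-free subgroup of the fibre of \(\PMod(X,Q)\to\PMod(X,Q\setminus q_i)\). This fibre is \(\pi_1(X\setminus(Q\setminus q_i))\), a free group when \(b\ge2\), which has no \(H^2\). This is where the argument needs care: I expect the real obstacle to be producing a \(2\)-cycle that separates the \(\psi_i\).

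My fallback is the classical fact (Harer; Looijenga; see Farb--Margalit for the Euler class of the Birman sequence) that \(H^2(\PMod(X,Q);\mathbb Q)\) is freely spanned by \(\kappa_1\) (or \(\lambda\)) and \(\psi_1,\ldots,\psi_b\) for \(g\ge3\), and contains the independent \(\psi_i\) for \(g=2\). I would cite this, and as an explicit check I would exhibit, for each \(i\), a genus-two surface bundle with a section through \(q_i\) only, giving a \(2\)-cycle on which \(\psi_i\) is nonzero and the other \(\psi_j\) vanish.
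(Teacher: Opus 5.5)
Your naturality argument is correct and amounts to the paper's. You phrase it through the boundary-twist extension: an orientation-reversing \(f\) conjugates \(T_{\partial_i}\) to \(T_{\partial'_{\pi(i)}}^{-1}\). The paper phrases it through complex cotangent lines and \(c_1(\overline{L_i})=-c_1(L_i)\).

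The gap is in the independence of \(\psi_1,\ldots,\psi_b\). For \(b\geq2\), neither of your direct constructions produces a \(2\)-cycle that separates the classes. The push subgroup of \(q_i\) is free, as you observed. The torus idea cannot work at all. The push of \(q_i\) along \(\gamma\) lifts to \(T_{\gamma_1}T_{\gamma_2}^{-1}\), and every class commuting with the push fixes \(\gamma_1\) and \(\gamma_2\), so its lifts commute with this one. Hence every \(\psi_j\) vanishes on your tori. More generally, after passing to a pure finite-index subgroup, commuting classes always have commuting lifts, so the \(\psi_j\) vanish rationally on all abelian subgroups.

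Everything therefore rests on your fallback citation, and Harer's computation of \(H^2\) only covers \(g\geq3\). Genus two is needed here, since Theorem~\ref{thm:ungraded-reconstruction} includes it, but you only assert it, and no stability theorem applies there. The proposed ``explicit check'' is not a construction either. A class in \(H_2(\PMod(X,Q);\mathbb Q)\) coming from a surface bundle requires \(b\) pairwise disjoint sections, not ``a section through \(q_i\) only''. The class \(\psi_j\) evaluates to the normal Euler number of the \(j\)-th section, up to a global sign. The whole difficulty is to make all but one of these vanish. In a product bundle this cannot be done with constant sections against a section of non-zero degree, because a map of non-zero degree is surjective.

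The paper settles this by moduli theory, uniformly for \(g\geq2\). The localization sequence shows that the kernel of \(H^2(\overline{\mathcal M}_{g,b};\mathbb Q)\to H^2(\mathcal M_{g,b};\mathbb Q)\) is spanned by boundary divisors. By Arbarello--Cornalba, \(\kappa_1\), the \(\psi_i\) and the boundary divisors satisfy no relation for \(g\geq3\). For \(g=2\) they satisfy a single relation, whose \(\kappa_1\)-coefficient is non-zero. Contractibility of Teichm\"uller space then identifies the cohomology of the stack with that of \(\PMod(X,Q)\).

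If you want to stay topological, here is a workable repair in every genus \(g\geq2\). Take the product bundle \(X\times X\to X\), with the diagonal as the \(i\)-th section. For \(j\neq i\), use the graphs of \(f_j=(\alpha|_{c_j})^{-1}\circ R_\theta\circ\alpha\), where:
\begin{enumerate}
\item the \(c_j\) are disjoint parallel copies of a nonseparating curve;
\item \(\alpha\colon X\to S^1\) restricts to a homeomorphism on each \(c_j\), which is possible because the curve is nonseparating;
\item \(R_\theta\) is a non-trivial rotation.
\end{enumerate}
These \(f_j\) are fixed-point free, have pairwise disjoint images, and have degree zero, so all the sections are disjoint. Writing \(f_i=\mathrm{id}\), the resulting class \(Z_i\) satisfies \(\langle\psi_j,Z_i\rangle=\pm\deg(f_j)\chi(X)=\pm\chi(X)\delta_{ij}\), which gives independence.
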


\begin{proof}
Let \(\mathcal M_{g,b}\subset\overline{\mathcal M}_{g,b}\) be the moduli
stack of smooth genus-\(g\) curves with \(b\) ordered marked points and
its stable compactification. The degree-two localization sequence contains
the exact segment
\[
 \bigoplus_{\Delta\subset
  \overline{\mathcal M}_{g,b}\setminus\mathcal M_{g,b}}
 \mathbb Q[\Delta]
 \longrightarrow H^2(\overline{\mathcal M}_{g,b};\mathbb Q)
 \longrightarrow H^2(\mathcal M_{g,b};\mathbb Q).
\]
Thus the kernel of restriction from the compactification is spanned by
boundary-divisor classes.
Hence a relation among the \(\psi_i\) on \(\mathcal M_{g,b}\) would give
a relation among those classes and the boundary divisors on
\(\overline{\mathcal M}_{g,b}\).

For \(g>2\), \cite[Theorem~2.2]{ArbarelloCornalba} gives no such
relation, apart from the symmetry identifying two names for the same
boundary divisor. In genus two, the additional relation in
\cite[Theorem~2.2]{ArbarelloCornalba} has non-zero coefficient of the
first Mumford class. It therefore gives no non-zero
relation involving only the \(\psi_i\) and boundary-divisor classes.
This proves independence in every genus \(g\geq2\). Indeed, Teichm\"uller
space \(\mathcal T_{g,b}\) is contractible and
\(\mathcal M_{g,b}^{\mathrm{an}}=[\mathcal T_{g,b}/\PMod(X,Q)]\).
Thus the analytic stack is a classifying orbifold for
\(\PMod(X,Q)\).

The final assertion follows from the naturality of the cotangent-line
classes. If \(L_i\) and \(L'_{\pi(i)}\) denote the corresponding complex
cotangent lines, then an orientation-preserving diffeomorphism identifies
\(f^*L'_{\pi(i)}\) with \(L_i\). An orientation-reversing diffeomorphism
identifies it with \(\overline{L_i}\). Since
\(c_1(\overline{L_i})=-c_1(L_i)\), all cotangent-line classes acquire the
same sign.
\end{proof}

\begin{lemma}\label{lem:separating-fixer}
Let \(S=\Sigma_{g,b}\) with \(g\geq2\), and let \(M\) meet every boundary
component. If \(\mathcal K_{\mathrm{sep}}\leq\Mod(S,M)\) is the subgroup fixing the
isotopy class of every separating essential curve, then the
boundary-rotation lattice \(A_M\) has finite index in
\(\mathcal K_{\mathrm{sep}}\).
\end{lemma}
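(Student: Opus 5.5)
The plan is to push the problem down to a punctured closed surface by capping, and there use the fact that only the identity fixes every curve.

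First, note that $A_M\le\mathcal K_{\mathrm{sep}}$. Each $\rho_i$ is supported in a boundary collar, so it fixes every curve in the interior up to isotopy. Next, compose with the capping homomorphism $\kappa\colon\Mod(S,M)\to\Mod(\widehat S,Q)$ from the proof of Lemma~\ref{lem:intrinsic-capping-extension}. Its kernel is exactly $A_M$, so it suffices to show that $\kappa(\mathcal K_{\mathrm{sep}})$ is finite.

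Let $f\in\mathcal K_{\mathrm{sep}}$.

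\textbf{Step 1: $f$ fixes every boundary component.} If $b\geq2$, then for each $i$ take a separating curve $c_i$ cutting off a pair of pants containing $B_i$ and one other boundary component. Choose two such curves which pin down $B_i$, for instance one cutting off $\{B_i,B_j\}$ and another cutting off $\{B_i,B_k\}$. When $b=2$, use instead a genus-one subsurface containing $B_i$ only. The sides of these curves are distinguished by genus and boundary content, so since $f$ fixes each curve it preserves each side, and hence fixes every $B_i$. This gives $\kappa(f)\in\PMod(\widehat S,Q)$.

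\textbf{Step 2: $\kappa(f)$ fixes every separating curve of $(\widehat S,Q)$.} Here "separating essential" means separating curves which, viewed in $S$, are not boundary-parallel. Every curve in $\widehat S\setminus Q$ that is separating and essential (not bounding a once-punctured disc) comes from such a curve in $S$, so $\kappa(f)$ fixes it.

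\textbf{Step 3: conclude that $\kappa(f)$ lies in a finite set.} I would argue that an element of $\PMod(\widehat S,Q)$ fixing all separating curves lies in the Torelli-type kernel, and in fact centralizes the Johnson kernel $\mathcal K$. To see the centralizing, conjugate twists: $fT_cf^{-1}=T_{f(c)}=T_c$. So it suffices to show that the centralizer in $\PMod(\widehat S,Q)$ of the subgroup generated by all separating twists is trivial, or at least finite.

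The first approach is the curve-complex argument of Lemma~\ref{lem:no-normal-abelian-subgroup}. Any nonseparating curve $\gamma$ is determined by the separating curves disjoint from it, or equivalently is the intersection of suitable separating subsurfaces. For example, $\gamma$ is the unique nonseparating curve, up to isotopy, contained in two genus-one subsurfaces bounded by separating curves whose intersection is an annulus around $\gamma$. This needs $g\geq2$, since a genus-one subsurface with one boundary curve then exists.

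So $\kappa(f)$ fixes all curves and is trivial by faithfulness of the action on the curve complex \cite{Ivanov1997,Luo}. The exception is the case $g=2$, $b=1$ in the punctured range, where a hyperelliptic-type finite ambiguity might remain; this is harmless, since finite index is all that is claimed. In the worst case, $\kappa(\mathcal K_{\mathrm{sep}})$ is a subgroup whose elements fix all curves, and this subgroup is finite (the kernel of the action on the curve complex is finite). That gives the lemma.

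The main obstacle is Step 3 in low complexity, namely genus two. There one has to check that separating curves really pin down every nonseparating curve via intersections of genus-one subsurfaces. If the annulus-intersection argument fails, I would fall back on the weaker statement: $\kappa(f)$ commutes with a finite-index subgroup generated by separating twists together with their conjugates, and is then controlled by the centralizer part of Lemma~\ref{lem:no-normal-abelian-subgroup}, applied after showing that the normal closure of separating twists has trivial centralizer.
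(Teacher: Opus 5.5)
Your argument is correct in outline, but it takes a different route from the paper.

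\textbf{How the two routes differ.} The paper never recovers nonseparating curves. For \(g\geq3\), or \(g=2\) and \(b\geq2\), it quotes Kida's theorem that \(\Mod^*\) acts faithfully on the separating-curve complex of the capped surface \((\widehat S,Q)\), so \(\kappa(\mathcal K_{\mathrm{sep}})\) is trivial. For \((g,b)=(2,1)\), which is outside Kida's range, it takes a genus-one separating curve \(c\) and a pseudo-Anosov \(\psi\) such that \(c\) and \(\psi^n(c)\) fill. The Alexander method then makes the stabilizer of this pair finite. You instead show that fixing every separating curve forces fixing every nonseparating curve, and then use the finite kernel of the action on the full curve complex. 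This is uniform in \((g,b)\) and uses only classical input.

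\textbf{The configuration you assert.} Your key configuration is stated rather than proved. It does exist, also in genus two, so the obstacle you anticipate is not real. For \(\gamma=a_1\), take \(T_1=N(a_1\cup b_1)\) and \(T_2=N(a_1\cup\beta)\), where \(\beta\) meets \(a_1\) once and is disjoint from \(b_1\); for instance, a band sum of a parallel copy of \(b_1\) with \(a_2\). Then \(\partial T_1\) and \(\partial T_2\) can be placed to meet in four points with no bigons, and \(T_1\cap T_2\) is an annulus with core \(a_1\). Because \(b\geq1\), the non-torus side of each \(\partial T_i\) contains boundary components. Hence any \(f\) fixing \(\partial T_i\) preserves the torus side. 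So \(f(\gamma)\) is disjoint from both curves and lies in the annular region once everything is in minimal position (e.g.\ geodesic representatives). Therefore \(f(\gamma)\simeq\gamma\).

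\textbf{Minor points.} Your Step~1 fails for \((g,b)=(2,2)\): there, both sides of a genus-one curve separating \(B_1\) from \(B_2\) are homeomorphic, so \(f\) could swap them. The step is unnecessary, however, because nothing later in your argument uses purity. The Torelli remark and the fallback can also be dropped.

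Finally, the paper's filling-pair argument for \(\Sigma_{2,1}\) works verbatim in every case. That would make both Kida's theorem and your configuration lemma dispensable.
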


\begin{proof}
Use the capping map \(\kappa\) from
\eqref{eq:coloured-capping-map}.  Its kernel is \(A_M\), and
\(\kappa(\mathcal K_{\mathrm{sep}})\) fixes every vertex of the separating-curve
complex of the capped surface \((\widehat S,Q)\).  If \(g\geq3\), or if
\(g=2\) and \(b\geq2\), the natural extended mapping-class-group action
on this complex is faithful by \cite[Theorem~1.2]{Kida}. Thus the image is
trivial in these cases.

It remains to consider \((g,b)=(2,1)\). Choose a genus-one
separating curve \(c\) on the once-punctured genus-two
surface and a pseudo-Anosov mapping class \(f\). For
sufficiently large \(n\), the curves \(c\) and \(f^n(c)\)
fill the surface because their distance in the curve graph eventually
exceeds two. Both are genus-one separating curves.
By the Alexander method, the subgroup fixing the isotopy
class of every curve in this filling collection is finite (see \cite{FarbMargalit}).
Since
\(\kappa(\mathcal K_{\mathrm{sep}})\) fixes every separating curve, it is contained in
this finite stabilizer.  Hence
\(\mathcal K_{\mathrm{sep}}/A_M
\simeq\kappa(\mathcal K_{\mathrm{sep}})\) is finite in all cases.
\end{proof}

\subsection{Line-field stabilizers}
\label{appsubsec:line-field-stabilizers}

\begin{lemma}\label{lem:orientation-correction}
Let \(S\) be a compact connected oriented surface with non-empty boundary
and \(g(S)\geq2\). Every line field \(\eta\) on \(S\) is preserved up to homotopy
by an orientation-reversing diffeomorphism that preserves every boundary
component setwise.
\end{lemma}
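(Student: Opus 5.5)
The plan is to start from a convenient orientation-reversing diffeomorphism and correct its action on line fields by composing with orientation-preserving mapping classes; the line-field classification (Theorem~\ref{thm:line-field-classification}) tells us when this is possible. First I fix a model of \(S=\Sigma_{g,b}\) symmetric under a reflection. Embed \(S\) in \(\mathbb R^3\) symmetrically with respect to a plane, with every boundary component meeting that plane in two points. Let \(r\) be the reflection in the plane. Then \(r\) is orientation reversing and preserves each \(B_i\) setwise, reversing its orientation. Since \(r\) also reverses the orientation of \(S\), the induced boundary orientation is carried to itself. Put \(\theta=r^*\eta\).

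Next I compare the invariants of \(\theta\) and \(\eta\). Winding numbers satisfy \(\omega_{r^*\eta}(c)=-\omega_\eta(r\circ c)\) for a curve \(c\), because reflection reverses the sense of rotation of tangent lines. For the boundary, \(r\circ B_i\) is \(B_i\) with the opposite orientation. Combining the two sign changes with \eqref{eq:winding-difference} gives \(\omega_\theta(B_i)=\omega_\eta(B_i)\), so the boundary windings agree componentwise. The parity class is preserved because \(r_*\) is an isomorphism and signs do not matter mod two; hence \(\sigma(\theta)=\sigma(\eta)\). When the Arf invariant is defined, the form \(\overline q_\theta\) equals \(\overline q_\eta\circ r_*\), again since signs vanish mod two and mod four the value \(-\omega\) agrees with \(\omega\) up to adding multiples of four only when \(\omega\) is even. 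I expect this Arf check to be the main obstacle. The intended argument is that on classes where \(\overline q\) is defined, \(\omega+2\in 2\Z\), and \((-\omega+2)/2\equiv(\omega+2)/2 \pmod 2\) exactly when \(\omega\equiv 0\pmod 2\), which holds since \(\sigma=0\). The map \(r_*\) is an anti-symplectic automorphism of \(H_1(\widehat S;\Z/2)\), but over \(\Z/2\) it is symplectic, so the Arf invariant is unchanged.

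By Theorem~\ref{thm:line-field-classification}(iii), there is an orientation-preserving \(h\) fixing \(\partial S\) pointwise with \(h^*\theta\simeq\eta\). Then \(\rho=r\circ h\) is orientation reversing, preserves every boundary component, and satisfies \(\rho^*\eta=h^*r^*\eta\simeq\eta\).

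One caveat concerns the pullback convention on winding numbers under orientation reversal. If the sign comes out the other way, the boundary windings become \(-\omega-4\). In that case I would instead choose \(r\) to preserve the boundary orientation, that is, to exchange the two sides, and recheck the formula via Poincaré--Hopf \eqref{eq:line-field-boundary-coherence}.
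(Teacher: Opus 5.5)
Your proposal is correct and follows the paper's proof step by step. You pull back by an orientation-reversing $r$ preserving each $B_i$ and use $\omega_{r^*\eta}(c)=-\omega_\eta(r(c))$ with $r(B_i)=-B_i$ to match boundary windings. You then check $\sigma$ and the Arf invariant (values in $2\Z/4$ are fixed by negation, and $r_*$ preserves the mod-two intersection form), and compose with the $h$ supplied by Theorem~\ref{thm:line-field-classification}.

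Your closing caveat is unnecessary, because Poincar\'e--Hopf for $\theta$ forces the sign you used. The alternative $\omega_\theta(B_i)=-\omega_\eta(B_i)$ would give $\sum_i(\omega_\theta(B_i)+2)=4g-4+4b\neq 4-4g$. Moreover, an orientation-reversing map preserving $B_i$ setwise cannot preserve the orientation of $B_i$, so the fallback choice of $r$ you describe is not available.
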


\begin{proof}
By the classification of compact surfaces, there is an
orientation-reversing diffeomorphism \(r:S\to S\) which preserves every
boundary component setwise.  Put
\(\theta=r^*\eta\).  The orientation of \(S\) orients the fibres of
\(\mathbb P(TS)\).  Since \(r\) reverses this fibre orientation, for every
oriented immersed closed curve \(c\) one has
\begin{equation}\label{eq:orientation-reversal-winding}
 \omega_\theta(c)=-\omega_\eta(r(c)).
\end{equation}
If \(B\subset\partial S\) is given its boundary orientation, then
\(r(B)=-B\) as oriented curves.  Consequently
\[
 \omega_\theta(B)=\omega_\eta(B).
\]

We compare the remaining invariants in the classification of mapping-class
orbits of line fields.  Reduction of
\eqref{eq:orientation-reversal-winding} modulo \(2\) shows that the mod-two
winding class of \(\theta\) is obtained from that of \(\eta\) by the
isomorphism induced by \(r\).  It is therefore zero if and only if the
mod-two winding class of \(\eta\) is zero.  In the case in which the Arf
invariant is defined, let \(q_\eta\) and \(q_\theta\) be the associated
\(\Z/4\)-valued quadratic functions.  Formula~\eqref{eq:orientation-reversal-winding} gives
\[
 q_\theta(x)=-q_\eta(r_*x)
 \qquad \text{ for }x\in H_1(S;\Z/4).
\]
Both functions then take values in \(2\Z/4\), on which multiplication
by \(-1\) is the identity.  After division by \(2\), they descend to
quadratic forms on the homology of the capped surface.  The map \(r_*\)
preserves the mod-two intersection form, because the distinction between
symplectic and anti-symplectic disappears over \(\Z/2\).  The two
quadratic forms therefore have the same Arf invariant.

Thus \(\eta\) and \(\theta\) have the same boundary winding numbers, the same
mod-two invariant, and, when required, the same Arf invariant.  By
Theorem~\ref{thm:line-field-classification}, whose mapping class group fixes
the boundary pointwise, there is an orientation-preserving diffeomorphism
\(h:S\to S\), equal to the identity on \(\partial S\), such that
\(h^*\theta\simeq\eta\).  The composition
\(
 \rho=r\circ h
\)
is orientation reversing, preserves every boundary component setwise, and
satisfies
\[
 \rho^*\eta=h^*r^*\eta=h^*\theta\simeq\eta.
\]
\end{proof}

For a group \(E\) and an element \(x\in E\), write \(C_E(x)\)
for the centralizer of \(x\) and \(Z(C_E(x))\) for its centre.
Let \(S=\Sigma_{g,b}\), let \(\eta\) be a line field, and put
\[
 \mathcal I_0(S)
 =\ker\bigl(\PMod(S)\to\operatorname{Aut}H_1(S;\Z)\bigr),
 \qquad
 \mathcal C_0(S)
 =\{x\in\mathcal I_0(S):x^*\eta\simeq\eta\}.
\]
We write \(\mathcal I(S)\) and \(\mathcal K(S)\) for the Torelli group
and the Johnson kernel, as in \cite{Kida}. Here a bounding pair is
understood in the sense of \cite{Kida}: it consists of two disjoint,
non-isotopic, nonseparating curves whose union separates \(S\).
In contrast to the oriented bounding pairs used in
\eqref{eq:weighted-bounding-pair}, these curves need not be homologous
in \(H_1(S;\Z)\).

\begin{lemma}\label{lem:twists-in-chillingworth-kernel}
Assume that \(g\geq3\), or that \(g=2\) and \(b\geq2\), and put
\(r=2g+b-3\). For \(1\neq x\in\mathcal C_0(S)\), the following are
equivalent.
\begin{enumerate}[label=\textup{(\alph*)}]
\item \(Z(C_{\mathcal C_0(S)}(x))\simeq\Z\), and
\(x\in A\leq\mathcal C_0(S)\) for some \(A\simeq\Z^r\).
\item \(x\) is a non-zero power of a Dehn twist about a separating curve.
\end{enumerate}
Hence every isomorphism \(\mathcal C_0(S)\to\mathcal C_0(S')\) in this
range preserves the cyclic axes \(\langle T_c\rangle\) of separating
twists.
\end{lemma}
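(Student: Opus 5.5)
The plan is to follow the strategy Kida uses for the Johnson kernel and Torelli group, adapting it to the intermediate group \(\mathcal K(S)\subseteq\mathcal C_0(S)\subseteq\mathcal I(S)\). The integer \(r=2g+b-3\) is the maximal number of pairwise disjoint, pairwise non-isotopic essential separating curves on \(\Sigma_{g,b}\). Equivalently, it is the rank of a maximal abelian subgroup generated by separating twists, as in \cite{Kida}.

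For (b)\(\Rightarrow\)(a), let \(x=T_c^n\) with \(c\) separating. First extend \(c\) to a maximal collection of \(r\) disjoint separating curves; the twists about these curves generate \(A\simeq\Z^r\) inside \(\mathcal K(S)\subseteq\mathcal C_0(S)\). Next, the centralizer \(C_{\mathcal C_0(S)}(T_c^n)\) is the stabilizer of \(c\) in \(\mathcal C_0(S)\). Cutting along \(c\) gives two pieces, and since \(\mathcal K(S)\subseteq\mathcal C_0(S)\), each piece contains separating twists acting with pseudo-Anosov-type behaviour on its side whenever that side is not too small. Any central element must therefore have trivial image in both pieces, and must be a power of \(T_c\). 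I would check the low-complexity side pieces (genus one with one boundary, or pieces containing boundary components of \(S\)) directly, using that \(g\geq2\) excludes the degenerate cases. Kida's corresponding lemma for \(\mathcal K(S)\) and \(\mathcal I(S)\) should carry over verbatim, since the argument only needs separating twists and bounding-pair maps in the relevant subsurfaces. Any element of \(\mathcal I(S)\) commuting with them is then forced into \(\langle T_c\rangle\).

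For (a)\(\Rightarrow\)(b), let \(x\neq1\) and take its canonical reduction system \(\sigma\), using that \(\mathcal C_0(S)\subseteq\mathcal I(S)\) is torsion free and pure. If \(x\) has a pseudo-Anosov component on some complexity-positive piece, then rank considerations bound the abelian subgroups containing \(x\) by fewer than \(r\), contradicting the existence of \(A\). I would make this precise via Birman--Lubotzky--McCarthy: rank of \(A\leq\) number of reduction curves plus pseudo-Anosov pieces, and in Torelli every reduction curve is separating or part of a bounding pair. Hence \(x\) is a multitwist, \(x=\prod T_{c_j}^{n_j}\prod(T_aT_b^{-1})^{k}\). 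The centre of its centralizer contains every component twist lying in \(\mathcal C_0(S)\). These are the separating twists, together with suitably weighted bounding-pair products, of which the products \(T_a^uT_b^{-v}\) with vanishing winding combination are in \(\mathcal C_0\) by \eqref{eq:weighted-bounding-pair} and \eqref{eq:torelli-displacement-independence}. So \(Z\simeq\Z\) forces \(x\) to be supported on a single separating curve or a single bounding pair. Finally, the bounding-pair case is excluded by the rank condition: a bounding-pair element lies in no \(\Z^r\), since a bounding pair uses up two curves that cannot be completed to \(r\) independent Torelli directions. This matches Kida's count distinguishing bounding pair maps from separating twists.

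The last assertion follows at once: an isomorphism preserves condition (a), hence the set of nontrivial powers of separating twists, and the maximal cyclic subgroups containing them are exactly the axes \(\langle T_c\rangle\). I expect the main obstacle to be the rank bound in (a)\(\Rightarrow\)(b). Specifically, one must show that abelian subgroups of \(\mathcal C_0(S)\) containing a pseudo-Anosov-on-a-piece element or a bounding-pair element have rank \(<r\). The difficulty is that \(\mathcal C_0\) is a proper subgroup of Torelli, so Kida's ranks for \(\mathcal I(S)\) only give upper bounds. I would handle this by comparing with Kida's maximal rank computations and noting that passing to the finite-index-in-each-multitwist-lattice subgroup \(\mathcal C_0\) does not change ranks.
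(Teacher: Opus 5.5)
The direction (a)$\Rightarrow$(b) has a genuine gap at the step where you exclude bounding pairs. Your claim that a bounding-pair element lies in no $\Z^r$ is false in the Torelli group. Read in $\mathcal I(S)$, the conditions in (a) are satisfied by non-zero powers of bounding-pair maps as well as by powers of separating twists. For instance, a bounding-pair map together with separating twists filling the complementary pieces spans a $\Z^r$. This is exactly why \cite[Lemma~6.6]{Kida} ends with two alternatives, and no rank count can separate them.

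What excludes bounding pairs is the line field, and it removes them from $\mathcal C_0(S)$ altogether. Suppose $(T_aT_b^{-1})^n\in\mathcal C_0(S)$ with $n\neq0$. The homological action of $T_aT_b^{-1}$ is integral and unipotent, so $T_aT_b^{-1}$ itself acts trivially on $H_1(S;\Z)$. Hence $[a]=[b]$ after orientation, and one side of $a\cup b$ contains no component of $\partial S$ and has genus $h\geq1$. Then \eqref{eq:twist-line-field} and \eqref{eq:bounding-pair-winding-difference} give $((T_aT_b^{-1})^n)^*\eta-\eta=-4hn\,a^\vee\neq0$.

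Your assertion that weighted products $T_a^uT_b^{-v}$ with $u\omega_\eta(a)=v\omega_\eta(b)$ lie in $\mathcal C_0(S)$ conflates $H_\eta$ with $\mathcal C_0(S)\subseteq\mathcal I_0(S)$. For $u\neq v$ such a product acts non-trivially on integral homology. For $u=v$ the winding condition fails by the same $-4h$.

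Since no twist about a nonseparating curve and no bounding-pair power lies in $\mathcal C_0(S)$, two further steps also break. First, the inference that $Z\simeq\Z$ forces support on a single separating curve or a single bounding pair cannot be read off from component twists. For $g\geq3$, take disjoint homologous curves $a_1,a_2,a_3$ whose consecutive pairs cobound boundary-free subsurfaces of genera $h_1,h_2$. Then $T_{a_1}^{h_2}T_{a_2}^{-(h_1+h_2)}T_{a_3}^{h_1}\in\mathcal C_0(S)$, yet none of its component twists or pair maps is. Second, your resolution of the rank obstacle, that $\mathcal C_0(S)$ has finite index in every Torelli multitwist lattice, is false: it meets bounding-pair lattices trivially.

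The paper avoids a reduction-system analysis inside $\mathcal C_0(S)$ by transferring along $\mathcal K(S)\subseteq\mathcal C_0(S)\subseteq\mathcal I(S)$. By \cite[Proposition~6.3]{Kida} both outer groups, and hence $\mathcal C_0(S)$, have maximal abelian rank $r$. Then \cite[Lemma~6.8]{Kida} gives $\rk Z(C_{\mathcal I(S)}(x))\leq\rk Z(C_{\mathcal C_0(S)}(x))=1$. So \cite[Lemma~6.6]{Kida} applies in $\mathcal I(S)$, and the multitwist above is ruled out at this stage because its centralizer-centre there contains $T_{a_1}T_{a_2}^{-1}$ and $T_{a_2}T_{a_3}^{-1}$. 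The line-field computation then removes the bounding-pair alternative. For (b)$\Rightarrow$(a), the paper likewise combines \cite[Lemma~6.5]{Kida} for $\mathcal K(S)$ with \cite[Lemma~6.8]{Kida} for $\mathcal K(S)\hookrightarrow\mathcal C_0(S)$, plus finite generation and torsion-freeness of the centre, rather than checking the complementary pieces by hand.
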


\begin{proof}
For any other line field \(\theta\) on \(S\), write
\(\theta-\eta=\alpha\in H^1(S;\Z)\). If
\(x\in\mathcal I_0(S)\), then
\[
 x^*\theta-\theta=(x^*\eta-\eta)+(x^*\alpha-\alpha)
 =x^*\eta-\eta.
\]
This proves the independence of \(\eta\). Separating twists act
trivially on absolute homology and preserve every line field, so
\(\mathcal K(S)\subseteq\mathcal C_0(S)\). To prove the other
inclusion, cap one boundary component and denote the resulting surface
by \(R\), with capping homomorphism
\(q:\PMod(S)\to\PMod(R)\). By the Birman exact sequence,
\(\ker q\) is generated by point-pushing maps. Pushing the capped
point around a handle generator gives a bounding-pair map, while
pushing it around another boundary component gives a product of
separating twists. Thus \(\ker q\subseteq\mathcal I(S)\);
see \cite[Section~4.2]{FarbMargalit} and the conventions in
\cite[Section~2]{Kida}. The separating twists and bounding-pair
maps generating \(\mathcal I(R)\) admit lifts of the same type,
and capping sends every generator of \(\mathcal I(S)\) into
\(\mathcal I(R)\). Hence
\[
 q(\mathcal I(S))=\mathcal I(R),
 \qquad
 \mathcal I(S)=q^{-1}(\mathcal I(R)).
\]
Iterating and using the homological description of the Torelli group
of the closed surface shows that \(\mathcal I_0(S)\subseteq
\mathcal I(S)\). Consequently,
\[
 \mathcal K(S)\subseteq\mathcal C_0(S)\subseteq\mathcal I(S).
\]

By \cite[Proposition~6.3]{Kida}, the maximal rank of an abelian subgroup of both
\(\mathcal K(S)\) and \(\mathcal I(S)\) is \(r\).  The inclusions above therefore show that the
same is true for \(\mathcal C_0(S)\).

Suppose first that \(x=T_c^n\), where \(c\) is separating
and \(n\neq0\). Apply \cite[Lemma~6.5]{Kida} with
\(\Gamma=\mathcal K(S)\). It follows that
\(
 Z(C_{\mathcal K(S)}(x))\simeq\Z
\)
and that \(x\) lies in a free abelian subgroup of
\(\mathcal K(S)\) of rank \(r\). The latter subgroup is
also contained in \(\mathcal C_0(S)\).

The groups \(\mathcal K(S)\) and \(\mathcal C_0(S)\)
have the same maximal abelian rank \(r\), and every abelian
subgroup of \(\mathcal C_0(S)\) is finitely generated
\cite{BirmanLubotzkyMcCarthy}.
The rank-\(r\) subgroup just constructed contains \(x\), so the
remaining hypothesis of \cite[Lemma~6.8]{Kida} is also satisfied.
Applying that lemma to
\(\mathcal K(S)\hookrightarrow\mathcal C_0(S)\), we obtain
\[
 \rk Z(C_{\mathcal C_0(S)}(x))
 \leq \rk Z(C_{\mathcal K(S)}(x))=1.
\]
The left-hand centre contains the infinite-order element \(x\).
It is finitely generated by \cite{BirmanLubotzkyMcCarthy} and
is torsion free by \cite[Theorem~A.1]{Kida}. Hence it is isomorphic
to \(\Z\). This proves
\textup{(b)}\(\Rightarrow\)\textup{(a)}.

Conversely, assume \textup{(a)}.  Apply \cite[Lemma~6.8]{Kida} to the
inclusion
\(
 \mathcal C_0(S)\hookrightarrow\mathcal I(S)\). The domain and codomain have the same finite maximal abelian rank \(r\),
and abelian subgroups of the codomain are finitely generated
\cite{BirmanLubotzkyMcCarthy}.
Condition~\textup{(a)} supplies a rank-\(r\) free abelian subgroup
of the domain containing \(x\), as required by the lemma. It gives
\[
 \rk Z(C_{\mathcal I(S)}(x))
 \leq \rk Z(C_{\mathcal C_0(S)}(x))=1.
\]
The left-hand centre contains \(x\), and \(\mathcal I(S)\) is torsion free
by \cite[Theorem~A.1]{Kida}.
Thus it is infinite cyclic.  The rank-\(r\) subgroup of
\(\mathcal C_0(S)\) containing \(x\) is also a rank-\(r\) subgroup of
\(\mathcal I(S)\). \cite[Lemma~6.6]{Kida} now implies that \(x\) is a non-zero power either
of a separating twist or of a bounding-pair twist.

The second possibility is excluded by the two defining conditions of
\(\mathcal C_0(S)\).  Suppose that
\(x=(T_aT_b^{-1})^n\in\mathcal C_0(S)\), where \(n\neq0\).  The homological
action of a bounding-pair map is integral and unipotent.  If its non-zero
power acts trivially, then the map itself acts trivially.  Thus
\(T_aT_b^{-1}\in\mathcal I_0(S)\), and \(a,b\) may be oriented so that
\([a]=[b]\).  The relation obtained
from a component of \(S\setminus(a\cup b)\) then shows that one of the two
components contains no component of \(\partial S\).  Indeed, among the
boundary classes, the only relation is the sum of all of them.  Choose that
component and denote its genus by \(h\).  Since \(a\) and \(b\) are not
isotopic, \(h\geq1\).  Formulae
\eqref{eq:twist-line-field} and
\eqref{eq:bounding-pair-winding-difference} give
\[
 \bigl((T_aT_b^{-1})^n\bigr)^*\eta-\eta
 =n\bigl(\omega_\eta(a)-\omega_\eta(b)\bigr)a^\vee
 =-4hn\,a^\vee\neq0
\]
for \(n\neq0\).  Hence no non-zero power of a bounding-pair twist belongs
to \(\mathcal C_0(S)\), and \textup{(a)} implies \textup{(b)}.
\end{proof}

For a line field \(\eta\) on \(S=\Sigma_{2,1}\), put
\[
 R_\eta=\operatorname{im}\bigl(
 \Stab_{\Mod(S)}(\eta)
 \longrightarrow\operatorname{Aut}H^1(S;\mathbb F_2)
 \bigr).
\]

\begin{lemma}\label{lem:S21-two-torsion}
\begin{equation}\label{eq:S21-image-orders}
 |R_\eta|=
 \begin{cases}
 48,&\sigma(\eta)=1,\\
 72,&\sigma(\eta)=0\text{ and }\operatorname{Arf}(\eta)=0,\\
 120,&\sigma(\eta)=0\text{ and }\operatorname{Arf}(\eta)=1.
 \end{cases}
\end{equation}
Consequently, \(|R_\eta|\) determines the mapping-class orbit of \(\eta\).
\end{lemma}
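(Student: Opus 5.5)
The approach is to compute the image of the stabilizer in the finite symplectic group. Since \(S=\Sigma_{2,1}\) has one boundary component, \(H^1(S;\mathbb F_2)\) is the four-dimensional symplectic space \(V=H_1(\widehat S;\mathbb F_2)^*\). The homological representation \(\Mod(S)\to\operatorname{Sp}_4(\mathbb F_2)\simeq\mathfrak S_6\) is surjective, of order \(720\). The plan is to show that \(R_\eta\) is precisely the stabilizer in \(\operatorname{Sp}_4(\mathbb F_2)\) of the mod-two datum attached to \(\eta\), and then to count orbits.

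The datum is chosen by cases on \(\sigma\). If \(\sigma(\eta)=1\), take the non-zero class \([\omega_\eta]^{(2)}\in H^1(S;\mathbb F_2)\). The boundary winding is \(-6\), which is even, so this class vanishes on the boundary and descends to a non-zero vector of \(V\). If \(\sigma(\eta)=0\), then \(\omega_\eta(\partial S)=-6\in2+4\Z\), and we take the quadratic form \(\overline q_\eta\).

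For the inclusion of \(R_\eta\) in the stabilizer, note that a mapping class \(f\) preserving \(\eta\) preserves all winding numbers by naturality, since \(\omega_{f^*\eta}(c)=\omega_\eta(f(c))\). It therefore fixes the mod-two class, respectively the form. For the reverse inclusion, take \(A\) in the stabilizer and lift it to some \(f\in\Mod(S)\). Then \(f^*\eta\) has the same boundary winding and the same \(\sigma\), and in the even case the same Arf invariant, since its quadratic form is \(\overline q_\eta\circ A=\overline q_\eta\). By Theorem~\ref{thm:line-field-classification}(iii) there is \(h\) with \(h^*f^*\eta\simeq\eta\), but \(h\) may act non-trivially on homology, so this alone is not enough. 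The argument should instead be made at the level of actual windings. Write \(f^*\eta-\eta=\alpha\in H^1(S;\Z)\), and note that \(\alpha\) is even, because \(A\) fixes the mod-two (respectively mod-four) data. The task is then to correct \(f\) by an element of the Torelli-type kernel, together with twists about curves whose homology is trivial modulo two. Such corrections realize every even change \(2\beta\): by \eqref{eq:twist-line-field}, the product \(T_c^2T_{c'}^{-2}\)-type products and squares of twists about curves of odd winding give displacements \(2\omega c^\vee\), and these act trivially on \(\mathbb F_2\)-homology. In the \(\sigma=0\) case every winding is even, so single twists about curves with winding \(\equiv2\pmod4\) already give displacements in \(2\cdot\) odd; combining these covers all of \(2H^1\).

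This surjectivity onto the even displacements is the step I expect to be the main obstacle. The first attempt will be generators: choose a symplectic basis of curves \(c\) realizing each basis covector \(c^\vee\). Then argue that \(\omega_\eta\) can be made \(\pm2\) on suitable curves in each class, by band-summing with the boundary, which changes the winding by \(\omega(\partial S)+2=-4\), and by handle modifications. The resulting displacements \(\omega c^\vee\) then generate \(2H^1(S;\Z)\), while the mod-two action of the products stays trivial after pairing twists appropriately.

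Once \(R_\eta\) is identified with a stabilizer, the orbit counts are standard. \(\operatorname{Sp}_4(\mathbb F_2)\) acts transitively on the \(15\) non-zero vectors, giving a stabilizer of order \(720/15=48\). Quadratic forms refining the symplectic form come in \(10\) forms of Arf invariant \(0\) and \(6\) of Arf invariant \(1\), each set a single orbit, giving stabilizers of orders \(72\) and \(120\). The three orders are distinct. Since the boundary winding is forced to be \(-6\), Theorem~\ref{thm:line-field-classification}(iii) shows that \((\sigma,\operatorname{Arf})\) determines the orbit, and hence so does \(|R_\eta|\).
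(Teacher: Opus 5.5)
Your strategy matches the paper's. You lift an element $A$ of the stabilizer of the mod-two datum to some $f$, correct $f$ by a mapping class acting trivially on $H_1(S;\mathbb F_2)$ so that the result preserves $\eta$, and then count orbits. The inclusion $R_\eta\subseteq\operatorname{Stab}$ and the counts $15$, $10$, $6$ are fine. The gap is in the correction step, which is the real content of the lemma, and it fails as written in two places.

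\emph{The case $\sigma(\eta)=0$.} You propose single twists $T_c$ with $\omega_\eta(c)\equiv2\pmod4$. A single twist about a nonseparating curve acts on $H_1(S;\mathbb F_2)$ by the non-trivial transvection in $\bar c$, so composing with it changes $A$. Moreover, the claim you are aiming for is false here. When $\sigma(\eta)=0$, the function $q_\eta$ takes values in $2\Z/4$ and factors through $H_1(S;\mathbb F_2)$. Hence every $h$ acting trivially modulo two satisfies $q_{h^*\eta}(x)=q_\eta(h_*x)=q_\eta(x)$, so $h^*\eta-\eta\in4H^1(S;\Z)$. No admissible correction realizes a displacement in $2H^1\setminus4H^1$. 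None is needed, however: $\overline q_{f^*\eta}=\overline q_\eta\circ A=\overline q_\eta$ already gives $q_{f^*\eta}=q_\eta$, hence $\alpha\in4H^1(S;\Z)$, not merely $\alpha$ even.

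\emph{Realizing $4H^1(S;\Z)$.} You never exhibit corrections that do this. You name a ``Torelli-type kernel'' but give no element with a computed displacement. Squares of twists only control $\alpha$ modulo $4$, since displacement is a crossed homomorphism and level-two classes act trivially on $H^1(S;\Z)$ only modulo two.

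The paper uses genus-one bounding-pair maps $T_aT_b^{-1}$. They act trivially on integral homology, so their displacements add exactly. By \eqref{eq:twist-line-field} and \eqref{eq:bounding-pair-winding-difference}, $(T_aT_b^{-1})^*\eta-\eta=-4a^\vee$ for every line field. Every primitive class lies in such a pair on $\Sigma_{2,1}$, so these maps realize every $4u$. This finishes $\sigma=0$.

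For $\sigma=1$ the argument then has two stages:
\begin{enumerate}
\item Kill $\alpha=2\beta$ modulo $4$ using $T_c^2$ for odd-winding curves $c$. Their displacement is $2\omega(c)c^\vee\equiv2c^\vee\pmod4$. The odd-winding mod-two classes form the complement of a hyperplane, so they span $H_1(S;\mathbb F_2)$. This is the paper's use of $T_a^2$ or $T_d^2T_c^{-2}$.
\item Remove the residual $4u$ with bounding-pair maps.
\end{enumerate}
Your band-sum adjustment of windings is then unnecessary.
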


\begin{proof}
The Poincar\'e--Hopf formula gives \(\omega_\eta(\partial S)=-6\). The mod-two
intersection pairing identifies \(H_1(S;\mathbb F_2)\) equivariantly
with its dual. We may therefore compute the representation on
\(V=H_1(S;\mathbb F_2)\), whose intersection form is symplectic. Moreover,
\[
 |\operatorname{Sp}(V)|
 =|\operatorname{Sp}_4(\mathbb F_2)|=720.
\]
If \(\sigma(\eta)=1\), reduction of the winding function gives a non-zero
linear form on \(V\).  There are fifteen such forms, and
\(\operatorname{Sp}_4(\mathbb F_2)\) acts transitively on them.  The
stabilizer therefore has order \(720/15=48\).

If \(\sigma(\eta)=0\), the \(\Z/4\)-valued quadratic function of
the line field is even and, after division by two, gives a quadratic form
on \(V\).  There are ten even and six odd quadratic forms on a symplectic
four-space.  Their stabilizers therefore have orders \(720/10=72\) and
\(720/6=120\), respectively.

We must still prove that \(R_\eta\) is the full stabilizer of the indicated
datum.  The homological representation
\(\Mod(\Sigma_{2,1})\to\operatorname{Sp}_4(\Z)\) is surjective, and so is
its reduction modulo two \cite[Chapter~6]{FarbMargalit}.  Let
\(s\in\operatorname{Sp}_4(\mathbb F_2)\) stabilize the datum and choose a
mapping class \(f\) inducing \(s\).  Put \(\theta=f^*\eta\).
If \(\sigma(\eta)=1\), then the quadratic functions of \(\theta\) and
\(\eta\) have the same reduction modulo two. Their difference is an
even linear form, so there is \(\bar a\in V\) such that
\[
 q_\theta(x)-q_\eta(x)=2(\bar a\cdot\bar x)
 \qquad\text{for }x\in H_1(S;\Z/4),
\]
where \(\bar x\) denotes reduction modulo two. If \(\bar a=0\),
no correction is needed. Otherwise choose a nonseparating simple
curve \(a\) representing a primitive integral lift of \(\bar a\).
Formula~\textup{(1.7)} in the proof of
\cite[Theorem~1.8]{LekiliPolishchuk} gives
\[
 q_\eta(T_a^2x)-q_\eta(x)
 =2q_\eta([a])([a]\cdot x)\pmod4.
\]
If \(q_\eta([a])\) is odd, \(T_a^2\) corrects the difference.
If it is even, choose a nonseparating simple curve \(c\) with
\(q_\eta([c])\) odd. Then \(\bar a+\bar c\neq0\). Choose a
nonseparating simple curve \(d\) representing a primitive integral
lift of this class. Both \(q_\eta([c])\) and \(q_\eta([d])\) are
odd, and the same formula shows that \(T_d^2T_c^{-2}\) corrects
the difference. These corrections act trivially on
\(H_1(S;\mathbb F_2)\). If \(\sigma(\eta)=0\), stabilization of the
quadratic form \(q_\eta/2\) already gives \(q_\theta=q_\eta\), so no first
correction is needed.

After this correction, \cite[Lemma~1.7]{LekiliPolishchuk} leaves a
difference of the form \(4u\), with \(u\in H^1(S;\Z)\).  This
second correction may also be chosen in the level-two subgroup.  Indeed,
write the Poincar\'e dual of \(u\) as a sum of primitive nonseparating
classes. By the change-of-coordinates principle
\cite{FarbMargalit}, each such class is represented on
\(\Sigma_{2,1}\) by a curve \(a\) belonging to a genus-one bounding pair
\((a,b)\), and
\[
 (T_aT_b^{-1})^*\eta-\eta=-4a^\vee.
\]
Products of these bounding-pair maps realize every required class \(4u\),
and act trivially on integral homology.  We have therefore constructed a
mapping class \(h\), acting trivially on mod-two homology, such that
\(h^*f^*\eta\simeq\eta\). Thus
\(f\circ h\in\Stab_{\Mod(S)}(\eta)\) realizes \(s\).
The image \(R_\eta\) is the full stabilizer, and the three orders in
\eqref{eq:S21-image-orders} follow.

The orders are distinct.  Hence the conjugacy class of the representation
determines \(\sigma(\eta)\) and, when it is defined, the Arf invariant.
Together with the fixed boundary winding numbers, these are precisely the
invariants in Theorem~\ref{thm:line-field-classification}, and the final assertion
follows.
\end{proof}

\section{The three-boundary family and the Serre element}
\label{app:serre-family}

This appendix supplies the calculations used in
Section~\ref{sec:fibres}. Section~\ref{appsubsec:framed-stabilizer}
constructs the common stabilizer presentation.
Section~\ref{appsubsec:zero-winding-generators} gives the homological
generators needed to detect a rank-one local system.
Section~\ref{appsubsec:serre-normalization} then identifies the actual
Serre element, rather than only its image under geometrisation.

\subsection{The stabilizer presentation}
\label{appsubsec:framed-stabilizer}

Fix \(g\geq2\), put \(q=2g+1\), and let
\(S=\Sigma_{g,3}\) carry a vector framing \(\xi\) with boundary winding
numbers \((1,-1,-q)\).  Put \(\eta=\mathbb R\xi\), and let \(M_{r,s}\)
have \(r,s,1\) stops on the three boundary components in the
indicated order, where \(r,s\geq1\). Write
\[
 F=\Stab_{\Mod(S,\partial S)}(\eta),
 \qquad H_{r,s}=\Stab_{\Mod(S,M_{r,s})}(\eta).
\]
Let \(T_i=T_{B_i}\) be the positive boundary twist, and let
\(\rho_i\) be the positive one-step rotation on \(B_i\). Thus
\[
 \rho_1^r=T_1,
 \qquad \rho_2^s=T_2,
 \qquad \rho_3=T_3.
\]

\begin{lemma}\label{lem:framed-stabilizer-presentation}
There are a subgroup \(F_0\leq F\) and an element \(a\in Z(F_0)\) such
that
\begin{align}
 F
 &\simeq F_0\times\Z\langle T_1\rangle
                  \times\Z\langle T_2\rangle,
 \label{eq:F0-splitting}\\
 T_3&=aT_1^qT_2^{-q},
 \label{eq:central-element-a}\\
 H_{r,s}
 &\simeq F_0\times\Z\langle\rho_1\rangle
                  \times\Z\langle\rho_2\rangle,
 \label{eq:Hrs-splitting}\\
 \rho_3&=a\rho_1^{qr}\rho_2^{-qs}.
 \notag
\end{align}
\end{lemma}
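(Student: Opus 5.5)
The plan is to split off the boundary twists in \(F\) using the relative displacement homomorphism of Lemma~\ref{lem:relative-displacement}, applied to the framing \(\xi\) rather than to \(\eta\). The stopped stabilizer \(H_{r,s}\) is then obtained by adjoining roots via Lemma~\ref{lem:stopped-stabilizer-root-presentation}.

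\emph{Step 1: \(F\) is the absolute stabilizer of \(\xi\).} Since \(\eta=\mathbb R\xi\), for \(f\in\Mod(S,\partial S)\) the absolute differences satisfy \(f^*\eta-\eta=2(f^*\xi-\xi)\) in \(H^1(S;\Z)\). This group is torsion free, so \(F\) is exactly the absolute stabilizer of \(\xi\).

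\emph{Step 2: the displacement map.} For \(y\in F\), the relative difference \(\beta_\xi(y)=[y^*\xi]-[\xi]\), taken relative to \(\partial S\), lies in \(L_\partial\simeq\Z^3/\Z(1,1,1)\). The argument of Lemma~\ref{lem:relative-displacement} shows that \(\beta_\xi\) is a homomorphism. By \eqref{eq:relative-boundary-twist-framing},
\[
 \beta_\xi(T_1)=\varepsilon_0u_1,\qquad
 \beta_\xi(T_2)=-\varepsilon_0u_2,\qquad
 \beta_\xi(T_3)=-q\varepsilon_0u_3=q\varepsilon_0(u_1+u_2),
\]
where the last equality uses \(u_1+u_2+u_3=0\) in \(L_\partial\). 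The classes \(u_1,u_2\) form a basis of \(L_\partial\simeq\Z^2\). Hence \(\beta_\xi\) restricts to an isomorphism \(\langle T_1,T_2\rangle\to L_\partial\); in particular \(\langle T_1,T_2\rangle\simeq\Z^2\), which also follows from independence of boundary twists in genus at least two.

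\emph{Step 3: the splitting of \(F\).} Put \(F_0=\ker\beta_\xi\). By construction this is exactly the relative framing stabilizer defined in Section~\ref{sec:fibres}. The boundary twists are central in \(\Mod(S,\partial S)\), and \(\beta_\xi\) is split by \(L_\partial\to\langle T_1,T_2\rangle\). Together these give \eqref{eq:F0-splitting}.

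\emph{Step 4: the element \(a\).} Set \(a=T_3T_1^{-q}T_2^{q}\). From Step~2,
\[
 \beta_\xi(a)=q\varepsilon_0(u_1+u_2)-q\varepsilon_0u_1-q\varepsilon_0u_2=0,
\]
so \(a\in F_0\). It is central because it is a product of boundary twists, and \eqref{eq:central-element-a} holds by definition.

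\emph{Step 5: the stopped stabilizer.} The windings \((2,-2,-2q)\) of \(\eta\) are pairwise distinct, so every element of \(H_{r,s}\) fixes each boundary component, that is, \(H_{r,s}=H^0_\eta\). Lemma~\ref{lem:stopped-stabilizer-root-presentation} presents \(H_{r,s}\) as \(F\) with central commuting roots adjoined:
\[
 \rho_1^r=T_1,\qquad \rho_2^s=T_2,\qquad \rho_3=T_3.
\]
Substitute \(F=F_0\times\langle T_1\rangle\times\langle T_2\rangle\) into this presentation. Adjoining \(\rho_1\) and \(\rho_2\) replaces the two free factors by \(\Z\langle\rho_1\rangle\times\Z\langle\rho_2\rangle\), with \(T_1,T_2\) eliminated as \(\rho_1^r,\rho_2^s\). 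The generator \(\rho_3\) is eliminated by
\[
 \rho_3=T_3=aT_1^{q}T_2^{-q}=a\rho_1^{qr}\rho_2^{-qs}.
\]
No further relations arise, which gives \eqref{eq:Hrs-splitting} and the last displayed identity.

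The main obstacle is Step~2. The twist formula there must be justified for the framing relative to \(\partial S\), and the sign conventions must be tracked carefully: the choice of \(\varepsilon_0\), the identification of \(u_3\) as \(-u_1-u_2\), and the orientation of the boundary. Only with these fixed does \(\beta_\xi(a)\) vanish exactly. I would first verify it on arcs, as in the proof of Lemma~\ref{lem:relative-displacement} via \cite[Lemma~2.4]{CalderonSalter}, checking the value on one transverse arc from \(B_1\) to each of \(B_2\) and \(B_3\).
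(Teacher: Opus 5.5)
Your proof is correct and follows essentially the same route as the paper. Both define \(F_0\) as the kernel of the relative framing displacement \(\beta\colon F\to L_\partial\), and both use \(\beta(T_i)=\varepsilon_0\omega_\xi(B_i)u_i\) to split off \(T_1,T_2\) and to check \(\beta(a)=0\). Both then adjoin the roots \(\rho_1,\rho_2\) and eliminate \(\rho_3=T_3\).

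The differences are cosmetic. The paper rescales \(\beta\) to the coordinates \(\chi=\varepsilon_0(x,-y)\). It also derives the stopped presentation directly from the boundary-restriction sequence \(1\to F\to H_{r,s}\to\Z/r\Z\times\Z/s\Z\to1\), rather than quoting Lemma~\ref{lem:stopped-stabilizer-root-presentation}.
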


\begin{proof}
The group \(F\) is also the stabilizer of the absolute homotopy class
of \(\xi\).
Indeed, homotopy classes of vector framings and of line fields are affine
spaces over \(H^1(S;\Z)\), and projectivization multiplies difference
classes by two.  Since \(H^1(S;\Z)\) is torsion free, preservation of
\(\mathbb R\xi\) is equivalent to preservation of \(\xi\).

Fix the value of \(\xi\) on the boundary.  Relative framings inducing the
same absolute framing form a torsor over
\begin{equation*}
 L_\partial=
 \ker\bigl(H^1(S,\partial S;\Z)\longrightarrow H^1(S;\Z)\bigr)
 \simeq\Z^3/\Z(1,1,1).
\end{equation*}
Let \(u_i\) be the image of the \(i\)-th standard basis vector, so that
\(u_1+u_2+u_3=0\).  For \(f\in F\), choose a collar-fixed representative
and define
\[
 \beta(f)=[f^*\xi]-[\xi]\in L_\partial.
\]
A boundary-pointwise mapping class acts trivially on \(L_\partial\), by naturality
of the cohomology sequence of the pair.  The cocycle identity therefore
reduces to \(\beta(fh)=\beta(f)+\beta(h)\), so \(\beta:F\to L_\partial\) is a
homomorphism.

By Lemma~\ref{lem:relative-displacement}, one common sign
\(\varepsilon_0\in\{1,-1\}\) satisfies
\[
 \beta(T_i)=\varepsilon_0\omega_\xi(B_i)u_i.
\]
Write \(\beta(f)=xu_1+yu_2\) and set
\[
 \chi(f)=\varepsilon_0(x,-y)\in\Z^2.
\]
Using \(u_3=-u_1-u_2\) and the prescribed boundary windings gives
\begin{equation*}
 \chi(T_1)=(1,0),\qquad
 \chi(T_2)=(0,1),\qquad
 \chi(T_3)=(q,-q).
\end{equation*}
Put \(F_0=\ker\chi=\ker\beta\).
This is precisely the subgroup preserving the relative homotopy
class of \(\xi\), as defined in Section~\ref{sec:fibres}.  The twists \(T_1,T_2\) are central in \(F\), and
their images form the standard basis.  This proves
\eqref{eq:F0-splitting}.  The third identity above gives the central
element
\[
 a=T_3T_1^{-q}T_2^q\in F_0,
\]
which proves \eqref{eq:central-element-a}.

The three boundary components are fixed because the line-field windings
\((2,-2,-2q)\) are pairwise distinct.  Restriction to the boundary gives
\begin{equation}\label{eq:boundary-restriction-sequence}
 1\longrightarrow F\longrightarrow H_{r,s}
 \longrightarrow\Z/r\Z\times\Z/s\Z\longrightarrow1.
\end{equation}
The rotations \(\rho_1,\rho_2\) give surjectivity.  An element in the
kernel fixes every stop on each boundary circle.  Isotopy extension makes
it the identity on all boundary collars, so it lies in \(F\).

Collar-fixed representatives of \(F\) commute with the \(\rho_i\), and
\begin{equation*}
 \rho_1^r=T_1,\qquad \rho_2^s=T_2,\qquad \rho_3=T_3.
\end{equation*}
These are the only new relations.  Indeed, any relation first projects to
\eqref{eq:boundary-restriction-sequence}, so the exponents of
\(\rho_1,\rho_2\) are divisible by \(r,s\), respectively, after which it
is a relation in \(F\).  Substitution into
\eqref{eq:F0-splitting} and \eqref{eq:central-element-a} proves
\eqref{eq:Hrs-splitting} and the formula for \(\rho_3\).
\end{proof}

\subsection{Zero-winding generators}
\label{appsubsec:zero-winding-generators}

To eliminate the local-system factor in the Serre element, we shall test
its holonomy on zero-winding curves. The following spanning statement
ensures that these tests determine the entire local system.

\begin{lemma}\label{lem:gradable-curves-span}
Let \(S\) have genus at least two, and let \(\xi\) be a vector framing
such that \(\omega_\xi(B)\) is odd on every boundary component.
Then \(H_1(S;\Z)\) is generated by classes of nonseparating simple
closed curves \(c\) with \(\omega_\xi(c)=0\). These curves may be chosen
outside any prescribed boundary collars.
\end{lemma}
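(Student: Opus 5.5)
The plan is to build an explicit generating set of $H_1(S;\Z)$ from zero-winding nonseparating curves, using the twist formula \eqref{eq:twist-line-field} for vector framings to move winding numbers around. For a framing, Dehn twists change windings additively: $\omega_\xi(T_a(c))=\omega_\xi(c)+(c\cdot a)\,\omega_\xi(a)$. This follows from \eqref{eq:twist-line-field} and \eqref{eq:winding-doubling}, or directly from \cite[Lemma~2.4]{CalderonSalter}.

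Fix a standard chain of curves $a_1,b_1,\ldots,a_g,b_g$ forming a symplectic system, with $a_i\cdot b_i=1$ and all curves disjoint from the prescribed collars. Complete it by the boundary classes. As in the proof of Lemma~\ref{lem:stabilizer-determines-line-field}, each boundary class $[B_j]$ equals $[\gamma_j']-[\gamma_j]$, where $\gamma_j$ is nonseparating and $\gamma_j'$ is its band sum with $B_j$. Moreover, $\omega_\xi(\gamma_j')=\omega_\xi(\gamma_j)+\omega_\xi(B_j)+1$ up to the sign conventions fixed by the framing additivity for disjoint curves (Poincar\'e--Hopf on the pair of pants). Since $\omega_\xi(B_j)$ is odd, the winding of $\gamma_j'$ has the same parity as that of $\gamma_j$. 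This is exactly where the oddness hypothesis enters: it is the condition that lets both curves in the band-sum pair be made zero-winding simultaneously.

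The key step is to show that every primitive handle class, and every such band-sum pair, is represented by a zero-winding curve. First, for a nonseparating curve $c$, I claim there is a nonseparating curve $c'$ homologous to $c$ modulo classes already generated and with $\omega_\xi(c')=0$. To see this, choose a curve $d$ disjoint from $c$ with $\omega_\xi(d)=\pm1$, for example a curve bounding a pair of pants with $c$ and a boundary curve of suitable winding. Then replace $c$ by the band sum $c\# d$ or by twists $T_d^{k}(e)$ with $e\cdot d=1$, which shifts winding by $k\,\omega_\xi(d)$ while changing homology only by multiples of $[d]$. Genus at least two provides enough room: I would use the second handle to host the adjusting curve $d$ and the first to host $c$, then swap roles. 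Doing this for $a_1,b_1,\ldots,a_g,b_g$ and adding the corrections gives zero-winding curves whose classes, together with the classes of the adjusters, span the handle lattice. The adjusters themselves are then made zero-winding by the same trick applied to the other handle. Parity is the constraint to watch: twisting changes winding by multiples of $\omega_\xi(d)$, so I need some disjoint nonseparating $d$ of odd winding. Such a $d$ exists by band-summing a handle curve with a boundary component of odd winding.

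The main obstacle is this bookkeeping. The plan is to produce, in a fixed genus-two subsurface away from the collars, a pair $e,d$ with $e\cdot d=1$ and $\omega_\xi(d)$ odd. Then $T_d^k$ adjusts $e$ to winding zero when $\omega_\xi(e)$ has the right residue, and otherwise band summing with a boundary shifts the parity. Finally, I would verify unimodularity: the obtained classes differ from the standard basis by a unitriangular integral change of coordinates, so they generate $H_1(S;\Z)$ integrally and not just rationally.
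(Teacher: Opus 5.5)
Your proposal has a genuine gap. It ignores the mod-two constraint that a framing places on the windings of simple closed curves.

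For a vector framing, $Q_\xi([c])=\omega_\xi(c)+1\pmod 2$ is a well-defined quadratic form on $H_1(S;\Z/2)$. It is $q_\eta/2$ for $\eta=\mathbb R\xi$, and it satisfies $Q_\xi(x+y)=Q_\xi(x)+Q_\xi(y)+x\cdot y$. A zero-winding curve has $Q_\xi=1$, so a class with $Q_\xi=0$ has no zero-winding simple representative at all. For $g\geq2$ such primitive classes always exist, and your basis $a_1,b_1,\ldots,a_g,b_g$ is not chosen to avoid them. So your key step, that every primitive handle class is represented by a zero-winding curve, is false.

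Your parity repairs do not close this gap:
\begin{itemize}
\item Your own pants formula $\omega_\xi(\gamma')=\omega_\xi(\gamma)+\omega_\xi(B)+1$ shows that band-summing with a boundary of odd winding \emph{preserves} parity. This contradicts your later claim that it shifts parity.
\item Band-summing $c$ with a disjoint $d$ of winding $\pm1$ shifts the winding by $\pm\omega_\xi(d)+1\in\{0,2\}$, never by an odd amount.
\item Building $d$ from a pair of pants with $c$ and a boundary component gives $\omega_\xi(d)\equiv\omega_\xi(c)\pmod 2$. The boundary windings are fixed and odd, so you cannot get winding $\pm1$ on demand.
\end{itemize}
The only parity-changing move you have is $e\mapsto T_d^k(e)$ with $\omega_\xi(d)$ odd. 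It moves the class to $[e]\pm k[d]$, where $Q_\xi([d])=0$. The adjuster $d$ can never itself be made zero-winding. Applying the same trick to $d$ only produces $[d]\pm k'[d']$ with a new odd adjuster $d'$. So the bookkeeping never closes, and the final unimodularity check has no specified classes to act on.

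Two ingredients are missing, and they are what the paper uses.

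\emph{A homology-preserving move that changes $\omega_\xi(c)$ by $\pm2$.} Pick $d$ with $[d]\cdot[c]=1$ and complete it to a genus-one bounding pair $(d,e)$ containing no boundary component. The map $T_dT_e^{-1}$ acts trivially on $H_1(S;\Z)$. By twist-linearity together with \eqref{eq:winding-doubling} and \eqref{eq:bounding-pair-winding-difference}, it shifts $\omega_\xi(c)$ by $([c]\cdot[d])(\omega_\xi(d)-\omega_\xi(e))=\pm2$. Iterating gives a zero-winding representative of every class with $Q_\xi=1$ that has a nonseparating simple representative.

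\emph{An algebraic argument that these classes generate $H_1(S;\Z)$.} Odd boundary windings make $Q_\xi$ vanish on the boundary classes, so it descends to a nondegenerate form on $H_1(\widehat S;\Z/2)$. Since $2g\geq4$, there is a symplectic pair $a_1,b_1$ with $Q_\xi=1$ on both. Replace any $a_j$ or $b_j$ with $j>1$ and $Q_\xi=0$ by its sum with $a_1$, and write $[B_i]=(a_1+[B_i])-a_1$.

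This step, rather than extra room for adjusting curves, is the essential use of genus at least two. On a one-holed torus where $Q_\xi$ equals one on only one non-zero class mod two, the classes with $Q_\xi=1$ span only an index-two sublattice.
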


\begin{proof}
Let \(\eta\) be the line field induced by \(\xi\), so that
\(\omega_\eta=2\omega_\xi\). The quadratic form
\[
 Q_\xi([c])=\omega_\xi(c)+1\pmod2
\]
on \(H_1(S;\Z/2)\) vanishes on the boundary radical and descends to
the capped surface. See Theorem~\ref{thm:line-field-classification}.
We first show that every integral class \(x\) admitting a nonseparating
simple representative and satisfying \(Q_\xi(\bar x)=1\) has a
zero-winding simple representative. Choose such a representative
\(c\). Its winding is even. Choose a
nonseparating curve \(d\) with \([d]\cdot[c]=1\) and complete it to a
genus-one bounding pair \((d,e)\) containing no original boundary
component. Such a pair exists in genus at least two, as in the proof
of Lemma~\ref{lem:stabilizer-determines-line-field}. Its bounding-pair
map acts trivially on integral homology. Orient \(d\) and \(e\)
homologously. Since they bound a genus-one subsurface containing no
original boundary component,
\[
 \omega_\eta(d)-\omega_\eta(e)=\pm4.
\]
The formulas
\eqref{eq:weighted-bounding-pair},
\eqref{eq:winding-doubling}, and
\eqref{eq:bounding-pair-winding-difference} give
\[
 \Delta\omega_\eta(c)
 =\bigl(\omega_\eta(d)-\omega_\eta(e)\bigr)([d]\cdot[c])=\pm4.
\]
Thus the framing winding changes by \(2\) or \(-2\). Iterating this map
therefore gives a simple representative of \(x\) with winding zero.

A nondegenerate quadratic space of dimension at least four over
\(\Z/2\) has a symplectic pair on which the quadratic form is one
on both vectors. Lifting a symplectic basis with this first pair,
choose integral handle classes \(a_j,b_j\) with
\(Q_\xi(\bar a_1)=Q_\xi(\bar b_1)=1\). If \(j>1\) and
\(Q_\xi(\bar a_j)=0\), replace \(a_j\), for the purpose of generating
homology, by \(a_j+a_1\). Do the same with \(b_j\) when necessary.
All the resulting primitive nonseparating classes have quadratic
value one and, together with \(a_1,b_1\), generate the handle classes.
For each boundary class \(c_i=[B_i]\), a band sum represents
\(a_1+c_i\) by a nonseparating simple curve, and
\(Q_\xi(\bar a_1+\bar c_i)=1\). Thus \(c_i\) is the difference of
two classes of quadratic value one. These classes generate all of
\(H_1(S;\Z)\), and the first paragraph gives the required
zero-winding representatives. All curves may be chosen outside fixed
boundary collars.
\end{proof}

\subsection{Normalization of the Serre element}
\label{appsubsec:serre-normalization}

We now combine the stabilizer presentation with the zero-winding
generators. Return to the notation of
Lemma~\ref{lem:framed-stabilizer-presentation}. For \(r,s\geq1\), put
\(\mathbb S_{r,s}=(S,M_{r,s},\eta)\) and
\(N=H^1(S;\kk^\times)\). Normalize the grading extension at the unique
stop \(p\) on \(B_3\), whose stop count remains fixed throughout the
family.

\begin{lemma}\label{lem:framed-family-serre-normalization}
The resulting identifications may be chosen as
\[
 D(\mathbb S_{r,s})
 \simeq (N\rtimes F_0)\times
        \Z\langle\rho_1\rangle\times
        \Z\langle\rho_2\rangle\times
        \Z\langle[1]\rangle
\]
so that
\[
 \mathsf S_{r,s}
 =a^{-1}\rho_1^{-(qr+1)}\rho_2^{qs-1}[1]^{e_g},
\]
where \(e_g\in\Z\) is independent of \(r,s\).
\end{lemma}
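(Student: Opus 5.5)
The plan is to combine Opper's geometrisation (Theorem~\ref{thm:opper-geometrization}) with the stabilizer splitting of Lemma~\ref{lem:framed-stabilizer-presentation}, and then to pin down the Serre element in three pieces: its mapping-class part, its local-system part, and its shift exponent.

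\textbf{Step 1: the group identification.} All three boundary windings are non-zero, so $N(\mathbb S_{r,s})=N$. We normalize graded lifts at the stop $p\in B_3$, as in \eqref{eq:graded-extension-splitting-family}, giving $\MCGgr(\mathbb S_{r,s})\simeq H_{r,s}\times\Z\langle[1]\rangle$. The splitting of Theorem~\ref{thm:opper-geometrization} is given by the geometric section of \cite[Theorem~7.1]{Opper2025}, and we take that section. Then \eqref{eq:Hrs-splitting} yields the displayed product, because $\rho_1$, $\rho_2$ and $[1]$ act trivially on $N$. The factors $F_0$ and $N$ do not depend on $(r,s)$.

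\textbf{Step 2: the mapping-class part.} By Proposition~\ref{prop:geometric-serre-functor}, $\Psi(\mathsf S_{r,s})$ is a graded lift of the simultaneous inverse one-step rotation $\rho_1^{-1}\rho_2^{-1}\rho_3^{-1}$. Substituting $\rho_3=a\rho_1^{qr}\rho_2^{-qs}$ gives
\[
 \rho_1^{-1}\rho_2^{-1}\rho_3^{-1}=a^{-1}\rho_1^{-(qr+1)}\rho_2^{qs-1}.
\]
Hence
\[
 \mathsf S_{r,s}=n_{r,s}\,a^{-1}\rho_1^{-(qr+1)}\rho_2^{qs-1}[1]^{e_{r,s}},
 \qquad n_{r,s}\in N,\ e_{r,s}\in\Z.
\]
Since $\mathsf S_{r,s}$ is central, $n_{r,s}$ is $F_0$-invariant, but we need more than that.

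\textbf{Step 3: killing $n_{r,s}$ and fixing the shift (the main obstacle).} We compute directly on band objects. Take a zero-winding nonseparating curve $c$ lying outside the boundary collars, with a rank-one local system of holonomy $\lambda$. Such a band object is gradable, and since $\W(\mathbb S_{r,s})$ is smooth and proper it is $\tau$-periodic. Indeed, by the Opper--Plamondon--Schroll description, AR translation fixes band objects, so $\mathsf S(X)\simeq X[1]$ for bands in the proper setting. On the other side, the normalized section element $a^{-1}\rho_1^{-(qr+1)}\rho_2^{qs-1}$ is supported in the boundary collars. It therefore fixes $c$ pointwise and acts on $X$ by a shift $[k]$. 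Here $k$ is read off from the relative displacement (Lemma~\ref{lem:relative-displacement}) between the collar at $B_3$, where the lift is normalized, and the core. The element $n_{r,s}$ multiplies the holonomy by $n_{r,s}(c)$. Comparing the two sides gives $n_{r,s}(c)=1$ and $e_{r,s}=1-k$.

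By Lemma~\ref{lem:gradable-curves-span}, which applies because the framing windings $(1,-1,-q)$ are odd, such curves $c$ generate $H_1(S;\Z)$. Therefore $n_{r,s}=1$.

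The delicate point is showing that $k$ does not depend on $(r,s)$. The plan is as follows. The element $a\in F_0$ preserves the relative framing, so it contributes a fixed amount. The remaining rotations move only $B_1$ and $B_2$, while normalization is at $B_3$. Under the framed relative displacement, the product $\rho_1^{-(qr+1)}\rho_2^{qs-1}$ is designed so that, together with $\rho_3^{-1}$ (a single step at $B_3$ with one stop), its effect over the core is one fixed boundary step. More precisely, the composite is the global rotation $\rho_1^{-1}\rho_2^{-1}\rho_3^{-1}$, whose restriction to the core is isotopic to a one-step collar motion. That motion is independent of the stop counts once it is normalized at $p$.

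I expect this independence, together with the sign and orientation bookkeeping in the band comparison, to be the hardest part. I would verify it first on one explicit dissection per $(r,s)$, and then argue that the dissections differ only inside the collars of $B_1$ and $B_2$. Setting $e_g=e_{r,s}$ then gives the claimed formula.
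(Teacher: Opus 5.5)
Your proposal is correct and follows the paper's proof essentially step for step. Both use the geometric section of \cite[Theorem~7.1]{Opper2025} with lifts normalized at $p$, identify the ungraded part $(\rho_1\rho_2\rho_3)^{-1}=a^{-1}\rho_1^{-(qr+1)}\rho_2^{qs-1}$, and test on rank-one band objects over zero-winding core curves (Lemma~\ref{lem:gradable-curves-span}) to kill $n_{r,s}$ and fix the shift.

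The independence you flag as delicate is immediate and needs no dissection-by-dissection check. The normalized lifts of $\rho_1,\rho_2$ are the identity over the connected core, because their supports avoid both $p$ and the core. The normalized lift of $a$ acts there by a deck translation $\kappa_g$ that depends only on $\eta$, $a$ and $p$.

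The paper also makes explicit your ``comparing the two sides'' step: $X_{c,\lambda}[d]\simeq X_{c,1}$ forces $\lambda=1$ by the classification of graded band objects, and forces $d=0$ by properness.
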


\begin{proof}
The boundary winding numbers of \(\eta\) are
\((2,-2,-2q)\), so the additive factor in
Theorem~\ref{thm:opper-geometrization} is absent. Normalization at \(p\)
splits the grading extension. The displayed group identification now
follows from that theorem and
Lemma~\ref{lem:framed-stabilizer-presentation}. The collar rotations
act trivially on \(N\), since they become isotopic to the identity after
the stops are forgotten. Moreover, \(a\) is a product of boundary
twists. It acts trivially on \(N\) and remains central in
\(N\rtimes F_0\).

Use the geometric section constructed in
\cite[Theorem~7.1]{Opper2025}, together with the graded lifts
normalized at \(p\). By
Proposition~\ref{prop:geometric-serre-functor}, the underlying
ungraded mapping class of the Serre functor is
\((\rho_1\rho_2\rho_3)^{-1}\). Since
\[
 (\rho_1\rho_2\rho_3)^{-1}
 =a^{-1}\rho_1^{-(qr+1)}\rho_2^{qs-1},
\]
the split geometrisation sequence gives
\[
 \mathsf S_{r,s}
 =n_{r,s}a^{-1}\rho_1^{-(qr+1)}
  \rho_2^{qs-1}[1]^{e_{r,s}},
\]
where the local-system factor \(n_{r,s}\in N\) and the shift
exponent \(e_{r,s}\in\Z\) remain to be determined.

Let \(S^\circ\) be the complement of the interiors of small boundary
collars. The normalized lifts of \(\rho_1,\rho_2\) are the identity
on the grading cover over \(S^\circ\). Their supports avoid both
\(p\) and this connected core. The normalized lift of \(a\) is the
identity as an ungraded map on the core, and hence acts there by a
fixed deck translation \(\kappa_g\). It depends only on \(\eta\), \(a\),
and the normalization at \(p\), and is therefore independent of \(r,s\). Consequently, the normalized lift of \(a^{-1}\) acts on the
grading cover over the core by the deck translation \(-\kappa_g\).
For a nonseparating curve \(c\subset S^\circ\) with
\(\omega_\xi(c)=0\), and hence \(\omega_\eta(c)=0\), let
\(X_{c,\lambda}\) be its rank-one band object with monodromy
\(\lambda\in\kk^\times\). Thus \(X_{c,1}\) has trivial local system.
For \(n\in N\) and an oriented closed curve \(c\), write
\[
 n(c)=\langle n,[c]\rangle\in\kk^\times.
\]
The local-system factor \(n\) sends \(X_{c,\lambda}\) to
\(X_{c,n(c)\lambda}\). This follows from the description in
\cite[Proposition~10.2]{Opper2025}. For the fixed graded
nonseparating simple closed curve \(c\), the classification of graded
band objects in \cite[Theorem~3.5]{Opper2025} gives
\[
 X_{c,\lambda}[d]\simeq X_{c,1}
 \quad\Longrightarrow\quad \lambda=1\text{ and }d=0.
\]
Indeed, an overall grading shift does not change the rank-one
monodromy parameter and reversing the orientation replaces it by its
inverse, which does not affect comparison with \(1\). Thus the
classification first gives \(\lambda=1\). A non-zero shift cannot
then fix this non-zero object since iterating such a periodicity would give
non-zero self-morphisms in infinitely many degrees, contrary to
properness.

Every band object is fixed by Auslander--Reiten translation
\cite[Proposition~D.1]{OpperPlamondonSchroll}. Therefore
\[
 X_{c,1}\simeq\mathsf S[-1](X_{c,1})
 \simeq X_{c,n_{r,s}(c)}[-\kappa_g+e_{r,s}-1],
\]
and the preceding implication yields
\[
 n_{r,s}(c)=1,
 \qquad -\kappa_g+e_{r,s}-1=0.
\]
The boundary windings of \(\xi\) are all odd, so
Lemma~\ref{lem:gradable-curves-span} shows that these curves generate
\(H_1(S;\Z)\). Hence
\[
 n_{r,s}=1,\qquad e_{r,s}=1+\kappa_g=:e_g
\]
for every \(r,s\).
\end{proof}

\section{Twisted homology and integral boundary matching}
\label{app:twisted-homology}

This appendix provides the homological calculations for
Section~\ref{sec:grothendieck-actions}.
Section~\ref{appsubsec:collar-twisted-homology} establishes the local
rotation formulas and their integral gluing.
Section~\ref{appsubsec:scalar-integral-matching} proves scalar rigidity
on absolute homology and the restrictions on negative boundary
permutations. Section~\ref{appsubsec:exceptional-isomorphisms} verifies
the represented isomorphisms in the boundary counterexamples.

We now return to the general setting of
Section~\ref{sec:grothendieck-actions} and retain the notation fixed
there.

\subsection{Boundary rotations and twisted homology}
\label{appsubsec:collar-twisted-homology}

Positive and negative boundary holonomies give different rotation
actions and integral lattices. For each \(i\), let \(\rho_i\) be the positive
one-step rotation at \(B_i\). The graded lift which is the identity
away from its rotation collar induces an operator \(R_i\) on
\(K_{\mathbb Q}\). If \(\nu_i=1\), choose a local generator along
\(B_i\), write \(c_i=[B_i]_L\), and let \(\lambda_i\) be the total
endpoint coefficient on \(M^\circ\cap B_i\). If \(\nu_i=-1\), choose
a primitive local generator at every \(x\in M^\circ\cap B_i\).
The negative half of the twisted boundary loop based at \(x\) defines
a rational collar class \(q_x\). Set
\[
 Q_i=\bigoplus_{x\in M^\circ\cap B_i}\mathbb Qq_x.
\]

\begin{lemma}\label{lem:normalized-twisted-collar}
The operator \(R_i\) satisfies \(R_i|_{A_{\mathbb Q}}=I\).
Its induced endpoint action \(\overline R_i\) on
\(H_0(M^\circ\cap B_i;L)\) is a signed cyclic permutation with
\[
 \det(XI-\overline R_i)=X^{m_i}-\nu_i.
\]
Moreover, \(R_i^{m_i}\) is the normalized full boundary-twist action.

If \(\nu_i=1\), then
\[
 (R_i^{m_i}-I)x
 =\varepsilon_\partial c_i\lambda_i(x),
 \qquad (R_i^{m_i}-I)^2=0
\]
for \(x\in K_{\mathbb Q}\),
where \(\varepsilon_\partial\in\{1,-1\}\) is independent of \(i\).

If \(\nu_i=-1\), then \(Q_i\) maps isomorphically to the endpoint
block and
\[
 R_i^{m_i}|_{Q_i}=-I,
 \qquad R_i^{2m_i}=I.
\]
It is the identity on the complementary collar-fixed summand.

In either case, if \(g\geq2\) and \(b\geq2\), the algebraic
multiplicity of \(1\) for \(R_i\) is greater than that of \(-1\).
This distinguishes \(R_i\) from its negative, and
\[
 \rk_{\mathbb Q}(R_i-I)
 =\rk_{\mathbb Q}(R_i^{-1}-I)=m_i.
\]
\end{lemma}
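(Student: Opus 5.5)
The plan is a local computation in a collar of \(B_i\), glued to the core via the homology sequence of the pair.

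\textbf{Step 1: local model.} Represent \(\rho_i\) by a diffeomorphism supported in a collar \(U_i\cong B_i\times[0,1]\), and take the graded lift that is the identity on the grading cover over \(S\setminus U_i\). Then \(R_i\) fixes every cycle supported away from \(U_i\). Since \(A_{\mathbb Q}=H_1(S;L)\) is spanned by such cycles, \(R_i|_{A_{\mathbb Q}}=I\).

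\textbf{Step 2: endpoint action.} On \(H_0(M^\circ\cap B_i;L)\), the rotation sends each point to the next one. Transporting the chosen local generators along the collar arcs between consecutive points of \(M^\circ\cap B_i\), all but one of the identifications can be normalized to \(+1\). The last one is then forced to be the holonomy \(\nu_i\) around \(B_i\). Hence \(\overline R_i\) is the companion matrix of \(X^{m_i}-\nu_i\). The \(m_i\)-th power of the collar rotation is isotopic, supported in \(U_i\), to the full twist \(T_i\), with the same normalization away from the collar. So \(R_i^{m_i}\) is the normalized twist action.

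\textbf{Step 3: the two cases.}

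\emph{Case \(\nu_i=1\).} The twist adds \(\pm[B_i]_L\) for each endpoint on \(B_i\), as in \eqref{eq:boundary-twist-relative-homology}. The sign is a common convention, which gives \(\varepsilon_\partial\). Since \(c_i\in A_{\mathbb Q}\) and \(\lambda_i(c_i)=0\), we get \((R_i^{m_i}-I)^2=0\).

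\emph{Case \(\nu_i=-1\).} First show that \([B_i]_L=0\) rationally, because the twisted loop has no invariant local section. The half-loop classes \(q_x\) have boundary \(x-\nu_i x=2x\) up to sign, so \(Q_i\) maps isomorphically onto the rational endpoint block. The full twist drags each relative arc around \(B_i\), acquiring the holonomy \(-1\) on the collar part. I expect this to show \(R_i^{m_i}q_x=-q_x\); it is the one concrete collar calculation to carry out. Then \(K_{\mathbb Q}=Q_i\oplus(\text{classes supported away from } U_i)\) gives the collar-fixed complement, and \(R_i^{2m_i}=I\).

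\textbf{Step 4: multiplicities and rank.} Use the exact sequence of Lemma~\ref{lem:geometrization-charge-action}. The eigenvalue \(1\) has multiplicity at least \(\dim A_{\mathbb Q}\), which is \(2g+b-1\) or \(2g+b-2\) and hence at least \(2g\geq4\), plus the fixed endpoint directions on the other boundaries. The eigenvalue \(-1\) occurs only in the block \(X^{m_i}-\nu_i\), with multiplicity at most \(1\).

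For the rank, first take \(\nu_i=1\) and work in the quotient \(K_{\mathbb Q}/A_{\mathbb Q}\). There \(R_i-I\) has rank \(m_i-1\), namely the cyclic permutation minus the identity. On the lift, \(R_i\) is unipotent-by-finite with a nontrivial Jordan coupling into \(\mathbb Qc_i\), coming from \((R_i^{m_i}-I)\neq0\) and \(c_i\neq0\) because \(b\geq2\). This gives total rank \(m_i\).

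For \(\nu_i=-1\), \(R_i-I\) acts invertibly on \(Q_i\), since \(1\) is not a root of \(X^{m_i}+1\), and vanishes on the complement. This again gives rank \(m_i\). The case \(R_i^{-1}\) follows because \(\rk(R_i^{-1}-I)=\rk(I-R_i)\).

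\textbf{Main obstacle.} The main obstacle is the rank count in the positive case. One must show precisely that the image of \(R_i-I\) contains \(c_i\). I would check this by computing the image of a relative arc with one endpoint on \(B_i\) under \(R_i^{m_i}-I=(R_i-I)\sum_{k<m_i}R_i^k\).
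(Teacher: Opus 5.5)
Your proposal is correct and follows the paper's proof essentially step for step: the collar-normalized lift fixing absolute cycles, the signed cycle with sign product \(\nu_i\), the square-zero positive twist, the half-loop classes \(q_x\) negated by the full twist together with the rational Mayer--Vietoris splitting off the negative collar (the paper carries out the computation you left open by writing \(q_x=u_x-c/2\); the twist fixes \(c\) and sends \(u_x\) to \(c-u_x\)), and the rank count via \(R_i^{m_i}-I=(R_i-I)(I+R_i+\cdots+R_i^{m_i-1})\), which the paper applies on the generalized \(1\)-eigenspace where you instead filter by \(A_{\mathbb Q}\). To close your positive case, three points need stating: \((R_i-I)v=a\in A_{\mathbb Q}\) forces \((R_i^{m_i}-I)v=m_ia\in\mathbb Qc_i\), so the coupling is confined to \(\mathbb Qc_i\); for non-trivial \(L\), \(c_i\neq0\) comes from \(H_2(S,\partial S;L)=0\) rather than from \(b\geq2\); and \(\lambda_i\neq0\), which you also need for \(R_i^{m_i}\neq I\), follows from \(H_0(S;L)=0\) or, for trivial \(L\), from an arc to another boundary component.
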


\begin{proof}
The collar rotation is joined to the identity by an isotopy supported
in the collar, during which the stops need not be preserved. The
induced graded lift is the identity on the complement. Any two graded
lifts differ by a global deck transformation, so this normalization is
unique. Following an endpoint once around \(B_i\) multiplies its local
coefficient by \(\nu_i\). This gives the signed cycle and the assertion
about the full power. Absolute cycles may be moved away from the
interior of the collar.

For positive holonomy, a full turn adds the boundary cycle once for
each endpoint coefficient. The added class has no endpoint coefficient,
which proves the square-zero formula and the independence of the collar
sign.

For negative holonomy, choose a core vertex \(v\), a based boundary
loop \(c\), and paths \(u_x\) to the endpoints. Choose local generators
so that \(\partial c=-2v\) and \(\partial u_x=x-v\) in the absolute
chain complex. After quotienting by the endpoint chains, these become
\[
 \partial c=-2v,
 \qquad \partial u_x=-v,
 \qquad q_x=u_x-\frac{c}{2}.
\]
Thus \(q_x\) is a rational relative cycle whose image under the
boundary map of the pair is the chosen generator at \(x\).
The normalized full twist fixes \(c\) and sends \(u_x\) to
\(c-u_x\). It therefore sends \(q_x\) to \(-q_x\). Over
\(\mathbb Q\), the negative-holonomy boundary circle has zero homology.
Mayer--Vietoris splits off its relative collar homology, and the twist
is the identity on the complementary summand. Its square is the
identity over \(\mathbb Q\), and hence over \(\Z\) because the charge
lattice is torsion free.

We now prove the rank formula. The action on the core has a
\(1\)-eigenspace of dimension at least \(2g+b-2\), while a single
endpoint block contributes at most one \(-1\)-eigenvector. This
selects the stated sign. If \(\nu_i=-1\), the action has finite
order, and its signed cyclic block has no invariant vector. It therefore
contributes rank \(m_i\).

Suppose that \(\nu_i=1\). We claim that the class \(c_i\) is non-zero. For
trivial \(L\) this follows from \(b\geq2\), while for non-trivial
\(L\) it follows from \(H_2(S,\partial S;L)=0\) and the exact
sequence of the pair. The endpoint total \(\lambda_i\) is also
non-zero. For non-trivial \(L\), the map to \(H_0(M^\circ;L)\) is
surjective because \(H_0(S;L)=0\). For trivial \(L\), an arc to
another boundary component has non-zero total at \(B_i\). Hence
\(R_i^{m_i}-I\) has rank one. The non-trivial eigenvalues of the
cyclic block contribute \(m_i-1\) to \(\operatorname{rk}(R_i-I)\).
On the generalized \(1\)-eigenspace, taking a positive power does
not change the rank of the difference from the identity. Indeed, in the
factorization
\[
 R_i^{m_i}-I=(R_i-I)(I+R_i+\cdots+R_i^{m_i-1}),
\]
the second factor is invertible on that subspace. The remaining
contribution is therefore one, so the rank is \(m_i\) in both cases.
Finally, \(R_i^{-1}-I=-R_i^{-1}(R_i-I)\), and inversion does not
change the rank.
\end{proof}

We normalize the pairing \(L_{\Z}\otimes L_{\Z}\to\Z\) by sending
the square of a primitive local generator to \(1\), and write
\(\langle-,-\rangle_L\) for the induced twisted intersection pairing.
In the next lemma,
\(x\) lies in \(H_1(S;L)\) or \(H_1(S,M^\circ;L)\). In the latter case,
the intersection covector is induced by the natural map to
\(H_1(S,\partial S;L)\).

In the local-system calculations below, a lift means an isomorphism \(f^*L_{\Z}\simeq L_{\Z}\). For a nonseparating curve \(c\), the condition
	\(\operatorname{hol}_L(c)=1\) ensures that \(T_c\) preserves the
	local system, whereas preservation of the line-field homotopy class
	requires \(\omega_\eta(c)=0\).
    
\begin{lemma}\label{lem:twisted-picard-lefschetz}
The following statements hold.
\begin{enumerate}[label=\textup{(\roman*)}]
\item If \(c\) is a simple closed curve with
\(\operatorname{hol}_L(c)=1\), choose a primitive local generator along
\(c\). The lift of \(T_c\) fixed away from its twisting annulus satisfies
\[
 (T_c-I)x
 =\varepsilon_{\mathrm{tw}}
   \langle x,[c]_L\rangle_L[c]_L,
 \qquad \varepsilon_{\mathrm{tw}}\in\{1,-1\}.
\]
\item Suppose that \(a,b\) are simple closed curves meeting once, with
\(\operatorname{hol}_L(a)=-1\) and
\(\operatorname{hol}_L(b)=1\). If \(\gamma\) bounds their
one-holed-torus neighbourhood, choose a primitive generator along \(b\)
and the lift of \(T_\gamma\) fixed outside its twisting annulus. Then
\[
 [\gamma]_L=\pm2[b]_L,
 \qquad
 T_\gamma-I=\pm4[b]_L\langle-,[b]_L\rangle_L.
\]

\item If \(c\) is separating and \(\operatorname{hol}_L(c)=-1\), cutting
along \(c\) gives a rational Mayer--Vietoris decomposition. The lift of
\(T_c\) normalized to be the identity on one side acts by \(I\) there
and by \(-I\) on the other, on both absolute and relative twisted homology.
\end{enumerate}
\end{lemma}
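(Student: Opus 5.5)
All three parts are local computations in an annular (or one-holed-torus) neighbourhood, carried out with the twisted cellular chain complex of \(L_{\Z}\). Throughout, we choose the lift of each twist that is the identity outside its twisting annulus; this is possible because the twist is the identity there.

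\emph{Part (i).} Trivialize \(L_{\Z}\) along the annulus \(N(c)\). This uses the primitive generator along \(c\), which exists because \(\operatorname{hol}_L(c)=1\). A relative or absolute cycle \(x\) meets \(N(c)\) in transverse arcs carrying coefficients in \(L\). The twist replaces each arc by itself plus one copy of \(c\). This copy carries the parallel-transported coefficient, and the sign is the local intersection sign. Summing over the arcs gives \(\langle x,[c]_L\rangle_L[c]_L\), up to an overall orientation convention \(\varepsilon_{\mathrm{tw}}\), exactly as in the untwisted Picard--Lefschetz formula. The normalized pairing makes the coefficients multiply correctly. The relative case works the same way, since intersections with \(c\) only see the image of \(x\) in \(H_1(S,\partial S;L)\).

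\emph{Part (ii).} Work in the one-holed torus \(P\) spanned by \(a\) and \(b\). Its twisted \(1\)-skeleton is a wedge of two circles, with monodromies \(-1\) along \(a\) and \(+1\) along \(b\). The boundary \(\gamma\) is the commutator \(aba^{-1}b^{-1}\). Tracking the local generator around this word, the \(b\)-edges enter with coefficients \(1\) and \(-1\) after transport across \(a\). These do not cancel; they add, giving \(\pm2[b]_L\), while the \(a\)-edge contributions cancel. Then \(\operatorname{hol}_L(\gamma)=1\), so part~(i) applies to \(T_\gamma\). It yields \(T_\gamma-I=\pm\langle-,[\gamma]_L\rangle_L[\gamma]_L=\pm4[b]_L\langle-,[b]_L\rangle_L\).

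\emph{Part (iii).} Cut \(S\) along \(c\) into \(S_1\sqcup S_2\). Since \(\operatorname{hol}_L(c)=-1\), the twisted rational homology of the circle \(c\) vanishes, because multiplication by \(1-(-1)=2\) is invertible over \(\mathbb Q\). Mayer--Vietoris therefore splits rational absolute and relative twisted homology as a direct sum over the two sides. This uses the endpoint set \(M^\circ\) distributed between the sides. The twist is supported in a collar of \(c\), and the chosen lift is the identity on \(S_1\). On \(S_2\), the map agrees with the identity up to a single global deck transformation of the local system. That transformation is transport once around \(c\), which is multiplication by \(\operatorname{hol}_L(c)=-1\). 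Hence \(T_c\) acts by \(I\) on the first summand and by \(-I\) on the second.

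The main obstacle is the sign bookkeeping in part~(ii), where the coefficient \(2\) must be shown to arise from the \(b\)-edges rather than cancel. I would verify this first on explicit cellular chains of the one-holed torus before invoking part~(i).
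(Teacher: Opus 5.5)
Your proposal is correct. Parts (ii) and (iii) follow the paper's argument.

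\textbf{Part (ii).} The paper also reads $[\gamma]_L=\pm2[b]_L$ off the commutator word $aba^{-1}b^{-1}$ and then substitutes into (i). It packages your by-hand transport bookkeeping as Fox derivatives evaluated in the sign character:
\[
 \gamma=\bigl(1-\operatorname{hol}_L(b)\bigr)a+\bigl(\operatorname{hol}_L(a)-1\bigr)b=-2b.
\]
It also notes explicitly that $b$ is a twisted cycle because $\operatorname{hol}_L(b)=1$, which you use implicitly when passing to $[b]_L$.

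\textbf{Part (iii).} This is the same argument as the paper's: rational vanishing of the twisted homology of the cutting circle, the resulting Mayer--Vietoris splitting, and the sign $-1$ picked up by transport across the annulus.

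\textbf{Part (i).} Here you take a genuinely different route. The paper passes to the double cover $\widetilde S\to S$ defined by $L$ and identifies rational twisted homology with the anti-invariant part. It lifts $T_c$ to the product of the commuting twists about the two disjoint lifts of $c$ and restricts the ordinary Picard--Lefschetz formula, with the twisted pairing normalized as one half of the pairing on the cover. Only then does it deduce the integral statement, from integrality of both sides and torsion-freeness. Your chain-level computation in a trivialized collar avoids the cover, the factor $\tfrac12$, and the rational-to-integral step, so it gives the integral formula at once. The paper's route instead imports the classical formula and its sign conventions wholesale, and handles the relative case simply by lifting $M^\circ$.

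\textbf{A small correction.} You say a lift fixed outside the twisting annulus exists ``because the twist is the identity there''; that is not the right reason. When the complement of the annulus is connected, extending the identity lift across the annulus requires $\operatorname{hol}_L(c)=1$. In (iii), such a lift cannot be the identity on both sides, as your own computation there shows.
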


\begin{proof}
Let \(\widetilde S\to S\) be the double cover defined by \(L\).
Over \(\mathbb Q\), twisted homology identifies with the anti-invariant
part of the homology of \(\widetilde S\). We normalize its intersection
form as one half of the intersection form on the cover. A
positive-holonomy curve has two disjoint lifts, and its twist lifts to
the product of their commuting twists. Restricting the ordinary
Picard--Lefschetz formula to the anti-invariant part proves the first
assertion over \(\mathbb Q\). The same argument applies after lifting
the relative endpoint set.

These formulas are compatible with the integral lattices. Indeed, the
transported-coefficient intersection pairing is integral and a primitive local generator gives an integral
class \([c]_{L_{\Z}}\). Both sides of the formula therefore preserve the
integral twisted homology lattice. Since that lattice is torsion free,
the rational equality implies the integral equality.

Orient \(a,b\) so that the boundary word of their regular
neighbourhood is \(aba^{-1}b^{-1}\). Write \(a,b\) also for the
corresponding based twisted \(1\)-chains. The Fox derivatives, evaluated
in the sign character of \(L\), give the chain identity
\[
 \gamma=\bigl(1-\operatorname{hol}_L(b)\bigr)a
 +\bigl(\operatorname{hol}_L(a)-1\bigr)b=-2b.
\]
Since \(b\) has positive holonomy, it is a twisted cycle. Passing to
homology gives \([\gamma]_L=-2[b]_L\), and substitution into the first
formula gives the second assertion.

For negative holonomy, the homology of the cutting circle vanishes over
\(\mathbb Q\), so Mayer--Vietoris gives the asserted direct sum.
Crossing the twisting annulus transports a local coefficient once
around that circle. Relative to the normalization on the first side,
the coefficient on the other side is multiplied by \(-1\).
\end{proof}

\begin{lemma}\label{lem:twisted-homology-normal-forms}
Assume that \(g\geq2\) and that \(L_{\Z}\) is non-trivial. The positive
boundary classes form a basis of the radical of the twisted intersection
form on \(A_{\mathbb Q}\).

If every boundary has positive holonomy, there is a geometric handle
basis on which the character of \(L\) is non-trivial only on the first
handle. The remaining handles span a non-degenerate complement of
dimension \(2g-2\).

If some boundary has negative holonomy, there is a geometric handle basis on which \(L\) is
trivial. Its handle core has connected planar complement. The handle
homology is non-degenerate of dimension \(2g\), and the remaining
rational homology is generated by the positive boundary curves and by
planar curves enclosing pairs of negative boundaries.
\end{lemma}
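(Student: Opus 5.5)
The plan is to pass to the orientation double cover $p\colon\widetilde S\to S$ defined by $L$, as in the proof of Lemma~\ref{lem:twisted-picard-lefschetz}. Over $\mathbb Q$, this identifies $A_{\mathbb Q}=H_1(S;L)$ with the anti-invariant part of $H_1(\widetilde S;\mathbb Q)$, and identifies the twisted intersection form with one half of the restricted ordinary form.

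\textbf{Radical.} We first compute dimensions. Since $L$ is non-trivial, $H_0(S;L)=0$, so $\dim A_{\mathbb Q}=-\chi(S)=2g+b-2$. The radical of the intersection form on $H_1(\widetilde S)$ is spanned by the boundary classes of $\widetilde S$, and restricting to the anti-invariant part gives the following.
\begin{itemize}
\item A positive boundary lifts to two circles exchanged by the deck involution. Their difference gives the anti-invariant class $[B_i]_L$.
\item A negative boundary lifts to a single circle mapped to itself. Its anti-invariant part vanishes rationally.
\end{itemize}
So the radical is spanned by the $[B_i]_L$ with $\nu_i=1$. To see that these classes are independent, use the exact sequence of the pair and $H_2(S;L)=0$. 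The only possible relation would come from $H_2(S,\partial S;L)$, which is dual to $H_0(S;L)=0$. Hence the radical has dimension $p$, the number of positive boundaries, and the quotient is non-degenerate of dimension $2g+b-2-p$.

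\textbf{All boundaries positive.} Here the holonomy character $\chi\in H^1(S;\mathbb F_2)$ vanishes on every boundary class, so it descends to a non-zero class on the capped surface. By the change-of-coordinates principle~\cite{FarbMargalit}, applied via the transitive action of $\operatorname{Sp}_{2g}(\mathbb F_2)$ on non-zero vectors realized by mapping classes, we choose a geometric symplectic handle basis $a_1,b_1,\ldots,a_g,b_g$ with $\chi(a_1)=1$ and all other values zero. The handles $j\ge2$ then carry trivial holonomy. Each of their handle classes gives an integral twisted cycle, and their intersection matrix is the standard symplectic matrix of size $2g-2$.

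Counting dimensions, $2g+b-2-b=2g-2$, so these handles form a non-degenerate complement of the radical. The first handle contributes nothing rationally: by the Fox-derivative identity in Lemma~\ref{lem:twisted-picard-lefschetz}(ii), its twisted classes are either torsion or lie in the span of the boundary.

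\textbf{Some boundary negative.} Now $\chi$ takes the value $1$ on some boundary class. Adding to the handle classes suitable multiples of $\chi$-dual arcs (band sums with a negative boundary) kills $\chi$ on every handle. This gives a geometric symplectic basis on which $L$ is trivial. We choose the handle core as a regular neighbourhood of a chain of these $g$ handles, whose complement is a connected planar surface containing all the boundaries. The $2g$ handle classes span a non-degenerate subspace. The complementary planar piece contributes $b-2$ further dimensions, and a rational Mayer--Vietoris computation shows they are spanned as follows.
\begin{itemize}
\item The positive boundary curves contribute $p$ dimensions.
\item For the $t$ negative boundaries, curves enclosing pairs of them have trivial holonomy. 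They span a space of dimension $t-2$: the curves for consecutive pairs are independent, apart from the relation coming from the enclosing curve of the whole planar piece.
\end{itemize}
The totals agree, since $p+t-2=b-2$.

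\textbf{Main obstacle.} The step I expect to be hardest is the geometric realization of a handle basis with prescribed holonomy whose core still has connected planar complement. The plan is to use transitivity of the mapping class group on geometric symplectic bases with given mod-two data, combined with band sums. The dimension bookkeeping via $-\chi(S)$ and $H_0(S;L)=0$ then closes each case.
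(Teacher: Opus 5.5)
Your proposal is correct and follows essentially the paper's proof. The radical is spanned by the positive boundary classes, which are independent because $H_2(S,\partial S;L)=0$. In the all-positive case the mod-two character is concentrated on one handle of a geometric symplectic basis. Otherwise, band sums with a negative boundary (equivalent to the paper's boundary pushes) make $L$ trivial on the handles, and a planar computation glued to the handle core, which kills the outer core curve, gives the spanning set.

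The differences are minor. You compute the radical through the anti-invariant part of the double cover rather than by twisted Poincar\'e--Lefschetz duality; this tacitly uses that the orientation-preserving deck involution makes the invariant and anti-invariant parts orthogonal. Your count $t-2$ for the pair-enclosing curves holds only modulo the positive boundary classes, because for $p\geq1$ the relation from the outer core curve also involves them. The combined count $p+t-2$ and the spanning conclusion are unaffected.
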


\begin{proof}
Poincar\'e--Lefschetz duality identifies the radical with the image of
boundary homology. Since \(L\) is non-trivial, we have
\(H_2(S,\partial S;L)=0\). Hence, the radical has dimension equal to
the number of positive boundary components.

When every boundary holonomy is positive, the character extends over
the capped surface. A geometric symplectic basis concentrates its
non-zero mod-two character on the first handle. The remaining handle
homology is non-degenerate, and dimensions give the asserted rational
decomposition.

Suppose now that \(B\) has negative holonomy. Pushing \(B\) along a
handle curve \(a\) acts on ordinary homology, up to the push convention,
by
\[
 x\longmapsto x+\langle x,a\rangle[B].
\]
The mod-two character evaluates non-trivially on \([B]\). Therefore pushing along
\(a\) changes the value of the character on the dual handle generator. Since boundary classes lie in the intersection radical, successive pushes
eliminate all handle evaluations while leaving the boundary evaluations
unchanged. In
the resulting planar complement, choose a graph spine with core vertex
\(v\). Positive boundary loops have twisted differential zero, while
negative boundary loops have differential \(-2v\). The kernel is generated by the positive loops and differences
of negative loops. Curves enclosing pairs of negative boundaries realize
these differences modulo the positive boundaries. Gluing the handle
core kills the outer-boundary class and gives the asserted spanning set
in \(H_1(S;L)\).
\end{proof}

We record the integral forms used below. Whenever a boundary is
negative, we use the handle basis from the preceding lemma on which
\(L_{\Z}\) is trivial. Suppose first that exactly two boundary
components are negative, numbered \(B_1\) and \(B_2\). Write
\(b_+\) for the number of positive boundary components. Choose a graph
spine with core vertex \(v\), based negative boundary chains \(c_1,c_2\),
and positive boundary cycles \(d_1,\ldots,d_{b_+}\). If \(H_{\Z}\) is the
handle lattice, local generators may be chosen so that
\begin{equation}\label{eq:two-negative-absolute-normal-form}
 \partial c_1=\partial c_2=-2v,
 \qquad c_2-c_1=d_1+\cdots+d_{b_+},
 \qquad
 A_{\Z}=H_{\Z}\oplus\bigoplus_{i=1}^{b_+}\Z d_i.
\end{equation}
Indeed, the twisted surface relation is
\(c_1-c_2+\sum_i d_i=0\), and the planar chain complex has no further
relation.

For the other normal form used below, assume that \(b=4\) and every
boundary is negative. Choose a graph spine with core vertex \(v\) and
based boundary chains \(c_1,\ldots,c_4\) with
\(\partial c_i=-2v\). The surface relation gives
\[
 c_4=c_1-c_2+c_3.
\]
The kernel of the planar boundary map is then
\[
 \left\{\sum_{i=1}^3x_ic_i\ \middle|\
 x_i\in\Z,\ \sum_{i=1}^3x_i=0\right\}.
\]

For the next lemma, assume that there is a negative boundary and that
\(L_{\Z}\) is trivial on a chosen handle basis. Fix one such boundary,
denoted \(B_{i_0}\).
Choose a graph spine with core vertex \(v\), based negative boundary
chains \(c_i\) with \(\partial c_i=-2v\), and paths \(u_x\) from
\(v\) to the endpoints. For \(x\in M^\circ\cap B_i\), set
\[
 q_x=
  \begin{cases}
  u_x-c_i/2,&\nu_i=-1,\\
  u_x-c_{i_0}/2,&\nu_i=1.
  \end{cases}
\]
For a family of coefficients \((n_x)_{x\in M^\circ}\), write
\(n_i=\sum_{x\in M^\circ\cap B_i}n_x\).

\begin{lemma}\label{lem:integral-collar-gluing}
The collar classes give a rational decomposition
\[
 K_{\mathbb Q}=A_{\mathbb Q}\oplus
 \bigoplus_{x\in M^\circ}\mathbb Qq_x.
\]
In these coordinates,
\[
 K=\left\{a+\sum_xn_xq_x\ \middle|\
 \begin{array}{c}
  a\in A_{\mathbb Q},\ n_x\in\Z,\ \sum_xn_x\in2\Z,\\[2pt]
  a-\frac12\sum_{\nu_i=-1}n_i(c_i-c_{i_0})\in A_{\Z}
 \end{array}\right\}.
\]
\end{lemma}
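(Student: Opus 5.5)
The plan is to compute twisted relative homology from a cellular model of \((S,M^\circ)\) adapted to the normal forms of Lemma~\ref{lem:twisted-homology-normal-forms}. Choose a graph spine \(\Gamma\) for the core, with the single core vertex \(v\), the handle loops on which \(L_{\Z}\) is trivial, the based boundary chains \(c_i\) for negative boundaries and cycles \(d_j\) for positive boundaries, and adjoin the paths \(u_x\) from \(v\) to the endpoints \(x\in M^\circ\). The pair \((S,M^\circ)\) deformation retracts onto \((\Gamma\cup\bigcup_x u_x,M^\circ)\), so \(K\) is the kernel of the boundary map on integral twisted \(1\)-chains modulo the endpoint chains. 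In this relative complex \(\partial u_x=-v\) and \(\partial c_i=-2v\) for negative \(i\), while handle loops and positive boundary loops are cycles. Hence a relative \(1\)-chain \(z=h+\sum_i y_ic_i+\sum_x n_xu_x\), with \(h\) an integral combination of the cycle-type edges, is a cycle exactly when \(\sum_x n_x+2\sum_{\nu_i=-1}y_i=0\).

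First I will prove the rational decomposition. Each \(q_x\) is a rational relative cycle, since \(\partial(u_x-c_j/2)=-v+v=0\), and its image under the connecting map to \(H_0(M^\circ;L)\) is the generator at \(x\). Together with the exact sequence of Lemma~\ref{lem:geometrization-charge-action}, which is surjective onto the endpoint group over \(\mathbb Q\) because \(H_0(S;L)=0\) for non-trivial \(L\), this shows that \(\{q_x\}\) spans a complement to \(A_{\mathbb Q}\) of the correct dimension.

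Next I will express an integral cycle \(z\) in these coordinates. Write \(z=a+\sum_x n_xq_x\). Since \(\sum_x n_xu_x=\sum_x n_xq_x+\tfrac12\sum_{\nu_i=-1}n_ic_i+\tfrac12\bigl(\sum_{\nu_i=1}n_i\bigr)c_{i_0}\), we obtain
\[
 a=h+\sum_i y_ic_i+\tfrac12\sum_{\nu_i=-1}n_ic_i+\tfrac12\Bigl(\sum_{\nu_i=1}n_i\Bigr)c_{i_0}.
\]
The cycle condition gives \(\sum_x n_x=-2\sum y_i\in2\Z\). Using \(\sum_x n_x=\sum_{\text{all }i}n_i\), I will rewrite the \(c_{i_0}\) coefficient so that
\[
 a-\tfrac12\sum_{\nu_i=-1}n_i(c_i-c_{i_0})=h+\sum_iy_ic_i+\tfrac12\Bigl(\sum_xn_x\Bigr)c_{i_0}.
\]
The right-hand side is an integral chain with zero boundary, because \(\tfrac12\sum_x n_x=-\sum y_i\). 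It therefore lies in \(A_{\Z}\). This proves the inclusion of \(K\) in the displayed set.

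Conversely, given \(n_x\in\Z\) with even sum and \(a\) satisfying the congruence, I reverse the computation. Set \(w=a-\tfrac12\sum_{\nu_i=-1}n_i(c_i-c_{i_0})\in A_{\Z}\), represent \(w\) by an integral cycle of the spine, and take \(z=w+\sum_xn_xu_x-\tfrac12\bigl(\sum_xn_x\bigr)c_{i_0}\). This is an integral relative chain because the sum is even, its boundary is \(-\sum_x n_x v+\sum_x n_x v=0\), and it equals \(a+\sum n_xq_x\).

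I expect the main obstacle to be the bookkeeping of local generators and signs. The chains \(c_i\) must carry consistent signs (\(\partial c_i=-2v\)) with the chosen primitive generators at the endpoints. I also need to justify that, with \(L_{\Z}\) trivial on the handle basis, the integral cycles of the spine are exactly \(A_{\Z}\), including the negative-pair differences \(c_i-c_j\). I will check this against the normal form \eqref{eq:two-negative-absolute-normal-form} and the four-negative normal form.
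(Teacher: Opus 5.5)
Your proposal is correct and follows the paper's proof. The key identity
\(a+\sum_xn_xq_x=\bigl(a-\tfrac12\sum_{\nu_i=-1}n_i(c_i-c_{i_0})\bigr)+\sum_xn_xu_x-\tfrac12(\sum_xn_x)c_{i_0}\),
in which the last two terms form an integral relative cycle once \(\sum_xn_x\) is even, is exactly the paper's argument. The only difference is that you read off \(\sum_xn_x\in2\Z\) from the explicit cycle condition in the relative chain complex, whereas the paper derives it from \(H_0(S;L_{\Z})\simeq\Z/2\).

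The obstacle you anticipate is harmless. The spine has no \(2\)-cells, so its integral twisted cycles are \(A_{\Z}\) automatically. One boundary chain is simply a combination of the others via the surface relation, and it still satisfies \(\partial c_i=-2v\) when that boundary is negative. No check against the normal forms is needed.
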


\begin{proof}
The endpoint map first forces every \(n_x\) to be integral. Since
\(H_0(S;L_{\Z})=\Z/2\), its image consists exactly of the endpoint
vectors of even total. Thus \(\sum_xn_x\in2\Z\). Expanding the
collar coordinates gives
\[
 a+\sum_xn_xq_x
 =a-\frac12\sum_{\nu_i=-1}n_i(c_i-c_{i_0})
  +\sum_xn_xu_x-\frac12\left(\sum_xn_x\right)c_{i_0}.
\]
The last two terms are now integral. The whole expression is therefore
integral if and only if the first term belongs to \(A_{\Z}\).
Conversely, these conditions produce an integral relative cycle, proving
the formula.
\end{proof}

\subsection{Scalar rigidity and boundary matching}
\label{appsubsec:scalar-integral-matching}

We now pass from the recovered absolute charge lattice to the geometric
assignment of endpoint blocks.

\begin{lemma}\label{lem:separating-twisted-scalar-commutant}
Assume that \(S\) has genus at least two and that \(L\) is non-trivial.
For every curve
\(\gamma\) bounding a one-holed torus in the interior of \(S\), give
\(T_\gamma\) the lift fixed away from its twisting annulus. The
commutant of these actions in
\(\operatorname{End}_{\mathbb Q}(A_{\mathbb Q})\) is
\(\mathbb Q I_{A_{\mathbb Q}}\).
\end{lemma}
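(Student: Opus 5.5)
The approach is to describe the operators $T_\gamma - I$ explicitly on $A_{\mathbb Q}=H_1(S;L)$ and then run a double-commutant argument.

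First I need to know how a curve $\gamma$ bounding an interior one-holed torus $Y$ sits relative to $L$. Since $\gamma$ is separating, its holonomy equals the product of the holonomies of the commutator loop, so $\operatorname{hol}_L(\gamma)=1$. Then Lemma~\ref{lem:twisted-picard-lefschetz}\textup{(i)} gives
\[
 T_\gamma-I=\pm\,[\gamma]_L\langle-,[\gamma]_L\rangle_L .
\]
There are two cases.
\begin{itemize}
\item If $L$ is non-trivial on $Y$, choose $a,b\subset Y$ meeting once with $\operatorname{hol}_L(a)=-1$ and $\operatorname{hol}_L(b)=1$. Lemma~\ref{lem:twisted-picard-lefschetz}\textup{(ii)} then gives $T_\gamma-I=\pm4[b]_L\langle-,[b]_L\rangle_L$, a rank-one transvection along $[b]_L$.
\item If $L$ is trivial on $Y$, then $[\gamma]_L$ is a boundary of the twisted chain $Y$, so $T_\gamma=I$ and the twist contributes nothing.
\end{itemize}
So the useful operators are the transvections $x\mapsto x\pm4\langle x,v\rangle_L v$ with $v=[b]_L$. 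Here $b$ ranges over positive-holonomy simple curves that meet some negative-holonomy curve $a$ exactly once, with the neighbourhood of $a\cup b$ lying in the interior.

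Next, an endomorphism $X$ commuting with $v\langle-,v\rangle_L$ satisfies
\[
 Xv\,\langle y,v\rangle_L=v\,\langle Xy,v\rangle_L \qquad\text{for all } y .
\]
If $\langle-,v\rangle_L\neq0$, then $Xv\in\mathbb Qv$, so every such $v$ is an eigenvector of $X$. The plan has three steps.
\begin{enumerate}
\item Show that these vectors $v$ span $A_{\mathbb Q}$.
\item Show that the graph on them, with an edge whenever $\langle v,v'\rangle_L\neq0$, is connected. An edge forces equal eigenvalues: comparing $X(v\langle v',v\rangle_L)=v\langle Xv',v\rangle_L$ gives $\mu'\langle v',v\rangle_L=\mu\langle v',v\rangle_L$.
\item Handle the radical. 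The positive boundary classes lie in the radical of $\langle-,-\rangle_L$ by Lemma~\ref{lem:twisted-homology-normal-forms}, so they are never of the form $v$ with $\langle-,v\rangle_L\neq0$. Instead, realise each positive boundary class as a difference $v-v'$ of admissible classes. For example, take $b$ and its band sum $b'$ with $B_i$. Both have positive holonomy, both meet $a$ once, and both pair non-trivially with $a$. Since $v$ and $v-v'$ are not proportional, a common eigenvalue on $v$ and $v'$ forces $X$ to be that scalar on $v-v'$ as well.
\end{enumerate}
Together these give $X\in\mathbb QI$.

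For the spanning and connectivity steps I will use the normal forms of Lemma~\ref{lem:twisted-homology-normal-forms}. Consider first the case where some boundary is negative. Take the handle basis $a_j,b_j$ on which $L$ is trivial. Take a negative-holonomy curve $a'$, obtained as the band sum of $a_1$ with a negative boundary. Then $b_1$ meets $a'$ once, so $[b_1]$ is admissible. By change of coordinates, every primitive handle class $x$ that is disjoint from the negative boundaries and has trivial holonomy can be paired with such a negative curve, provided the remaining genus allows it, which uses $g\geq2$. So all handle classes are admissible: $a_j$, $b_j$, and sums such as $a_1+a_2$ that connect the symplectic blocks. The remaining generators are the positive boundary classes and the planar curves enclosing pairs of negative boundaries. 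The latter have positive holonomy, since they enclose two negative boundaries. Their band sums with a handle curve meeting a negative curve once give admissible classes. Differences then produce the pair classes and the boundary classes, as in step~3.

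When all boundaries are positive, the character is non-trivial only on the first handle, say $\operatorname{hol}(a_1)=-1$. The admissible $v$ are $b_1$, $b_1$ band-summed with other handles, and classes $a_j,b_j$ for $j\geq2$ obtained after multiplying by band sums with $a_1$. These again span $A_{\mathbb Q}$ and are connected through the non-degenerate pairing.

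The main obstacle will be this geometric bookkeeping. I need to check, in each normal form, that every needed class is represented by a positive-holonomy simple curve meeting some negative curve exactly once inside an interior one-holed torus, and that the eigenvalue-equality chain reaches both the radical directions and the pair classes. The rank-one and eigenvector algebra is routine. For the connectivity step, I will first try to use the symplectic pairs $a_j,b_j$ together with the sums $a_j+a_{j+1}$.
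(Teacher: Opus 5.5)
Your proposal is correct and is essentially the paper's argument. Lemma~\ref{lem:twisted-picard-lefschetz}\textup{(ii)} turns the one-holed-torus twists into the transvections $\pm4\langle-,[b]_L\rangle_L[b]_L$, so the lines $\mathbb Q[b]_L$ become eigenlines. Non-zero pairings then propagate one eigenvalue over a symplectic basis $e_i,f_i$ of the non-degenerate handle part $W$ and the band sums $e_1+e_i$. The generators $u$ of $W^\perp$ are reached through the band sums $e_1\pm u$, which is your difference step.

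Two points need tightening.

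\begin{enumerate}
\item The paper settles your bookkeeping uniformly: every positive-holonomy nonseparating curve has a negative-holonomy partner meeting it once. To get one, band-sum any once-meeting partner with a negative curve in the complement of their one-holed torus.
\item Having such a partner does not make $[b]_L$ nonradical. In the all-positive normal form, the positive curve $b_1$ of the first handle meets $a_1$ once but has radical class. Also, ``pairing with $a$'' is meaningless, since $a$ is not a twisted cycle. So the base curves in your step~3 should be handle curves of $W$, as in the paper.
\end{enumerate}
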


\begin{proof}
First consider a positive-holonomy nonseparating simple curve \(b\).
It has a negative-holonomy simple partner \(a\) meeting it once.
Indeed, choose any once-intersecting partner. If both curves have
positive holonomy, their one-holed torus has trivial local system, so
its connected complement has non-trivial local system. This complement
has a negative-holonomy simple closed curve. Simple handle and boundary
curves generate its first homology, and the character is
non-trivial on at least one such generator. Join that curve to the
chosen partner by an embedded band whose interior avoids \(b\).
The resulting simple curve still meets \(b\) once, while its holonomy
is the product of the two original holonomies and is therefore negative.
The band and the curves may all be chosen in the interior of \(S\).

The boundary \(\gamma\) of a regular neighbourhood of \(a\cup b\)
belongs to the family in the statement. By
Lemma~\ref{lem:twisted-picard-lefschetz},
\[
 [\gamma]_L=\pm2[b]_L,\qquad
 T_\gamma-I=\pm4N_b,\qquad
 N_b(x)=\langle x,[b]_L\rangle_L[b]_L.
\]
Thus an endomorphism commuting with the stated twists commutes with
\(N_b\) for every positive-holonomy nonseparating curve \(b\).
Whenever \(N_b\neq0\), it preserves
\(\operatorname{im}N_b=\mathbb Q[b]_L\).

Choose the normal form in
Lemma~\ref{lem:twisted-homology-normal-forms}. It gives a
non-degenerate handle subspace \(W\) of dimension \(2g-2\) when all
boundary holonomies are positive, and of dimension \(2g\) otherwise.
Put \(V_{\mathrm{comp}}=W^\perp\), so that
\(
 A_{\mathbb Q}=W\oplus V_{\mathrm{comp}}\).
The complementary subspace is spanned by positive boundary classes and,
when needed, by positive-holonomy curves enclosing pairs of negative
boundaries. These curves may be chosen disjoint from the handle core.

For a twisted homology class \(v\), we also write
\(N_v(x)=\langle x,v\rangle_Lv\). Let \(U\) commute with the stated
twists, and choose a geometric symplectic basis \(e_1,f_1,\ldots,e_h,f_h\) of \(W\), where
\(h=\tfrac12\dim W\geq1\). The associated handle curves have positive
holonomy. Commutation with \(N_{e_i}\) and \(N_{f_i}\) makes the
lines \(\mathbb Qe_i\) and \(\mathbb Qf_i\) invariant. Applying
\(UN_{e_i}=N_{e_i}U\) to \(f_i\) shows that the two eigenvalues agree.
Write them as \(a_i\). Moreover,
\[
 V_{\mathrm{comp}}
 =\bigcap_{i=1}^h\bigl(\ker N_{e_i}\cap\ker N_{f_i}\bigr),
\]
so this subspace is also \(U\)-invariant. For \(i>1\), a band sum of
handle curves represents \(e_1+e_i\), after compatible choices of local
generators. Its transvection is non-zero, and invariance of
\(\mathbb Q(e_1+e_i)\) gives \(a_i=a_1\). Hence
\(U|_W=\lambda I_W\), with \(\lambda=a_1\). This also covers
\(h=1\), when no comparison between different handles is required.

For each generator \(u\) of \(V_{\mathrm{comp}}\) described above,
band-sum its curve with the handle curve representing \(e_1\). The
result is positive-holonomy and nonseparating, with twisted class
\(e_1\pm u\). Since its pairing with \(f_1\) is non-zero, its
transvection is non-zero. Thus \(U\) preserves
\(\mathbb Q(e_1\pm u)\). Comparing the \(W\)-coordinate with
\(Ue_1=\lambda e_1\), and using the invariance of
\(V_{\mathrm{comp}}\), gives \(Uu=\lambda u\). These generators span
\(V_{\mathrm{comp}}\), so \(U=\lambda I_{A_{\mathbb Q}}\). Conversely,
scalar endomorphisms commute with every action, proving the assertion.
\end{proof}

\begin{lemma}\label{lem:absolute-intertwiner-scalar}
Let \(\mathbb S=(S,M,\eta)\) and \(\mathbb S'=(S',M',\eta')\) have genus
at least two and at least two boundary components. Let
\((\Phi,\Theta):\mathfrak R(\mathbb S)\to\mathfrak R(\mathbb S')\) be
an isomorphism, and let \(f\) and \(\phi\) be supplied by
Corollary~\ref{cor:geometric-reduction}. After
choosing an integral identification of the orientation local systems,
\[
 \Theta|_{H_1(S;L_{\Z})}=\varepsilon f_*,
 \qquad \varepsilon\in\{1,-1\}.
\]
\end{lemma}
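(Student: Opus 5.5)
The plan is to compare \(\Theta|_{A}\) with the geometric map \(f_*\) and show that their ratio lies in the commutant of a family of twist actions that is known to consist only of scalars. First, \(\Theta\) preserves the absolute lattices. By Proposition~\ref{prop:represented-boundary-data}, the subspace \(A_{\mathbb Q}=K_{\mathbb Q}^{C}\) is recovered intrinsically from the normalized rotation action. Since \(\Phi\) carries \(C\) to \(C'\) (Corollary~\ref{cor:geometric-reduction}) and \(\Theta\) intertwines the representations up to the sign ambiguity absorbed by normalization, \(\Theta\) carries \(A_{\mathbb Q}\) onto \(A'_{\mathbb Q}\). By saturation it then carries \(A_{\Z}\) onto \(A'_{\Z}\). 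Next fix the integral identification \(f^*L'_{\Z}\simeq L_{\Z}\); this exists because \(f^*\eta'\simeq\eta\) forces equal orientation characters. This gives an integral isomorphism \(f_*\colon A_{\Z}\to A'_{\Z}\). Put \(U=f_*^{-1}\circ\Theta|_{A_{\mathbb Q}}\in\operatorname{Aut}(A_{\mathbb Q})\).

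The core step is to show that \(U\) commutes with a family of twist actions whose commutant is scalar. In the non-trivial case we take the twists \(T_\gamma\) about curves bounding one-holed tori, using Lemma~\ref{lem:separating-twisted-scalar-commutant}. In the trivial case we take all separating twists together with transvections, or rather the full \(\mathcal C_0\)-action. Separating twists lie in \(H_\eta\), and on absolute twisted homology their charge action is that of the chosen lift up to sign. For \(y=T_\gamma\), Corollary~\ref{cor:geometric-reduction} gives \(q_{M'}(\phi(y))=fq_M(y)f^{-1}=T_{f(\gamma)}\), up to the rotation kernel, which acts trivially on \(A\) (Lemma~\ref{lem:normalized-twisted-collar}). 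Lemma~\ref{lem:geometrization-charge-action} then shows that the actions of \(\Phi(\widetilde y)\) and \(f\widetilde y f^{-1}\) on \(A'_{\mathbb Q}\) agree up to a sign \(\pm1\). Thus \(U\varrho(y)|_A=\pm\varrho(y)|_A U\). I would remove the sign by noting that \(T_\gamma|_A\) is unipotent with \(T_\gamma-I\) nilpotent and nonzero: conjugation cannot turn a unipotent operator into the negative of a unipotent one, since \(-I\) changes the eigenvalue. So \(U\) commutes with these actions, and Lemma~\ref{lem:separating-twisted-scalar-commutant} gives \(U=\lambda I\) with \(\lambda\in\mathbb Q\).

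In the trivial local-system case, separating twists act trivially on \(H_1(S;\mathbb Q)\), so I would use instead elements of \(H_\eta\) such as zero-winding nonseparating twists, or genus-one bounding-pair products with winding balance, as in Lemma~\ref{lem:stabilizer-determines-line-field}. Their transvections generate an irreducible action, and Schur's lemma gives a scalar commutant.

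Finally, integrality forces \(\lambda=\pm1\). Both \(\Theta\) and \(f_*\) restrict to isomorphisms \(A_{\Z}\to A'_{\Z}\), so \(\lambda I\) is an automorphism of the lattice \(A_{\Z}\), and hence \(\lambda\in\Z^\times=\{\pm1\}\). The choice of identification of local systems only changes \(f_*\) by a global sign, which is absorbed into \(\varepsilon\).

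I expect the main obstacle to be the sign bookkeeping in the intertwining step. This means matching graded lifts of \(\phi(y)\) with conjugated lifts via \(f\), possibly orientation-reversing, and also handling the trivial-\(L\) case, where no scalar-commutant lemma is stated earlier in the paper and I would need to supply the irreducibility argument directly.
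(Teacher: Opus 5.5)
\textbf{Verdict: genuine gap in the trivial-\(L\) case.} Your lattice-preservation step, your handling of non-trivial \(L\), and the integrality step \(\lambda=\pm1\) coincide with the paper's proof. For non-trivial \(L\) you use one-holed-torus twists, remove the sign by unipotence, and then apply Lemma~\ref{lem:separating-twisted-scalar-commutant}, exactly as the paper does.

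The failing step is your claim that the available transvections ``generate an irreducible action'' on \(A_{\mathbb Q}=H_1(S;\mathbb Q)\) when \(L\) is trivial. This is false under the standing hypothesis \(b\geq2\). The boundary span \(R_\partial=\operatorname{span}_{\mathbb Q}\{[B_1],\ldots,[B_b]\}\) has dimension \(b-1\geq1\) and is the radical of the intersection form. It is therefore fixed pointwise by every transvection \(x\mapsto x+\langle x,a\rangle a\), and it is invariant under all of \(H_\eta\). So Schur's lemma cannot be invoked. The real content of this case is not addressed: you must show that \(U\) is scalar on \(R_\partial\), with the same scalar as on a symplectic complement \(W\). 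Your first suggestion, the \(\mathcal C_0\)-action, is vacuous here, since \(\mathcal C_0(S)\subseteq\mathcal I_0(S)\) acts trivially on absolute homology by definition.

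The paper closes the gap with \cite[Theorem~C]{CalderonSalterRelative}:
\begin{itemize}
\item The relative framing stabilizer has finite-index image in the pure homological automorphism group.
\item Its Zariski closure is the connected parabolic group with Levi part \(\operatorname{Sp}(W)\) and unipotent radical \(\operatorname{Hom}(W,R_\partial)\).
\item Connectedness disposes of the sign character \(e\).
\item Commuting with \(\operatorname{Sp}(W)\) makes \(V\) block diagonal and scalar on \(W\).
\item Commuting with the unipotent radical forces the same scalar on \(R_\partial\).
\end{itemize}

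Your elementwise route can also be completed without that input, but it needs a line-preserving argument rather than Schur:
\begin{itemize}
\item For every nonseparating \(a\), take a weighted product \(T_a^uT_b^{-v}\in H_\eta\) as in \eqref{eq:weighted-bounding-pair}. By \eqref{eq:bounding-pair-winding-difference} it has \(u\neq v\), so it acts on \(A_{\mathbb Q}\) as \(I+(u-v)N_a\) with \(N_a=\langle-,a\rangle a\neq0\).
\item Unipotence removes the sign, so \(U\) commutes with every \(N_a\). Hence \(U\) preserves each line \(\mathbb Q[a]\) and the common kernel \(\bigcap_a\ker N_a=R_\partial\).
\item Comparing eigenvalues on handle classes and their band sums gives a single scalar \(\lambda\) on \(W\).
\item The class \(e_1+[B_i]\) is represented by a nonseparating band sum, so it is an eigenvector, while \(U[B_i]\in R_\partial\). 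Together these force \(U[B_i]=\lambda[B_i]\).
\end{itemize}
This is the argument of Lemma~\ref{lem:separating-twisted-scalar-commutant} with trivial coefficients. Its last step, tying the radical to the handle part, is precisely what your proposal omits.
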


\begin{proof}
By Proposition~\ref{prop:represented-boundary-data}, \(\Theta\)
carries the saturated absolute lattice on the source to the one on the
target. Transport the target by \(f\) and write \(V\) for the restriction
of the resulting intertwiner to \(A_{\mathbb Q}\).

Suppose first that \(L\) is trivial. Graded equivariance gives a character
\(e:H_\eta\to\{1,-1\}\) such that
\(
 Vy_*=e(y)y_*V\). The kernel of \(e\) has finite index. Choose a vector framing
inducing \(\eta\) and fix its boundary values. The relative framing
stabilizer is contained in the stabilizer of the absolute framing, hence in \(H_\eta\). By
\cite[Theorem~C]{CalderonSalterRelative}, for \(g\geq2\), its
action on \(H_1(S,\partial S;\Z)\) has finite-index image in the pure
homological automorphism group. This image is the kernel of a crossed
homomorphism with finite target. Poincar\'e--Lefschetz duality
transfers this statement to absolute homology. Let \(R_\partial\)
be the boundary radical and choose a symplectic complement \(W\).
In the splitting \(A_{\mathbb Q}=R_\partial\oplus W\), the resulting
Zariski closure is the pure parabolic group
\[
 \left\{\begin{pmatrix}I&B\\0&S\end{pmatrix}:
 S\in\operatorname{Sp}(W),\quad
 B\in\operatorname{Hom}(W,R_\partial)\right\}.
\]
This algebraic group is connected. Therefore, intersecting the relative framing
stabilizer with \(\ker e\) does not change its Zariski closure, so \(V\) commutes with the displayed group.
The commutant of its symplectic subgroup is block diagonal and scalar
on \(W\). Commutation with every \(B\) makes it scalar on the boundary
radical with the same scalar. Hence \(V=aI\). Integrality gives
\(a=\pm1\). The equivariance equation then forces \(e(y)=1\) for every
\(y\).

Now suppose that \(L\) is non-trivial. For every one-holed-torus
separating curve \(\gamma\), let \(G_\gamma\) be the local-system action
normalized outside its twisting annulus. Corollary~\ref{cor:geometric-reduction}
and Lemma~\ref{lem:geometrization-charge-action} give
\[
 VG_\gamma V^{-1}=e_\gamma G_\gamma,
 \qquad e_\gamma\in\{1,-1\}.
\]
The operator \(G_\gamma\) is unipotent, while \(-G_\gamma\) has only
the eigenvalue \(-1\). Thus \(e_\gamma=1\). Lemma~\ref{lem:separating-twisted-scalar-commutant}
shows that \(V\) is scalar. Preservation of the integral lattice again
gives \(V=\pm I\). Restoring \(f_*\) proves the lemma.
\end{proof}

Fix an isomorphism
\[
 (\Phi,\Theta):\mathfrak R(\mathbb S)
 \xrightarrow{\sim}\mathfrak R(\mathbb S')
\]
between represented invariants of surfaces of genus at least two with
at least two boundary components. Transport the target by the
diffeomorphism \(f\) supplied by
Corollary~\ref{cor:geometric-reduction}, assume that \(L\) is non-trivial,
and let \(\pi\) be the induced permutation of the negative endpoint
blocks.

\begin{lemma}\label{lem:integral-negative-boundary-matching}
The following statements hold.
\begin{enumerate}[label=\textup{(\roman*)}]
\item For distinct negative boundaries \(B_i,B_j\),
\[
 [B_i]+[B_j]=[B_{\pi(i)}]+[B_{\pi(j)}]
 \quad\text{in }H_1(S;\mathbb F_2).
\]
If there are at least four negative boundaries, every negative block
is fixed unless \(b=4\) and every boundary is negative. In that case
\(\pi\) belongs to the Klein four group.
\item If there are exactly two negative boundaries and at least two
positive boundaries, then both negative blocks are fixed.
\end{enumerate}
\end{lemma}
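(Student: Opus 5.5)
Both parts rest on comparing the integral lattice $K$ with its rational collar decomposition from Lemma~\ref{lem:integral-collar-gluing}. We use three facts proved earlier. By Lemma~\ref{lem:absolute-intertwiner-scalar}, after transporting by $f$, the intertwiner $\Theta$ acts as $\pm I$ on $A_{\mathbb Q}$; replacing $\Theta$ by $-\Theta$ if needed, we take it to be the identity there. By Proposition~\ref{prop:geometric-boundary-matching}, it fixes every positive endpoint block. It carries each negative endpoint block $Q_i$ (which is intrinsic by Proposition~\ref{prop:represented-boundary-data}) onto $Q_{\pi(i)}$. Since $\Theta$ commutes with the normalized rotation actions, which act on $Q_i$ by a signed cyclic permutation with characteristic polynomial $X^{m_i}+1$, its restriction $Q_i\to Q_{\pi(i)}$ intertwines two such signed cycles. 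Integrality of $\Theta$ on $K$ should force this restriction to send primitive generators $q_x$ to $\pm$ primitive generators, up to a rational rotation-polynomial factor that we control by its action on endpoint totals.

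The key step is to evaluate $\Theta$ on test vectors lying in $K$. For distinct negative $B_i,B_j$, choose $x\in M^\circ\cap B_i$ and $y\in M^\circ\cap B_j$. By Lemma~\ref{lem:integral-collar-gluing}, the element $q_x+q_y+\tfrac12(c_i-c_{i_0})+\tfrac12(c_j-c_{i_0})$ lies in $K$, because the total is even and the correction term is exactly the one required. Apply $\Theta$. It is the identity on $A_{\mathbb Q}$, and it sends $q_x+q_y$ to an element of $Q_{\pi(i)}\oplus Q_{\pi(j)}$ whose endpoint totals are $\pm1$ on each block; these totals are odd, which is the information that matters. The image must again satisfy the criterion of Lemma~\ref{lem:integral-collar-gluing}. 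This forces
\[
\tfrac12(c_i+c_j-2c_{i_0})-\tfrac12(c_{\pi(i)}+c_{\pi(j)}-2c_{i_0})\in A_{\Z}+\text{(even multiples)},
\]
that is, $\tfrac12\bigl(c_i+c_j-c_{\pi(i)}-c_{\pi(j)}\bigr)\in A_{\Z}$. Recalling that $c_i$ is a lift of $2[B_i]$ (the planar curve enclosing two negative boundaries has class $\pm(c_i-c_j)/\ldots$ via the normal form of Lemma~\ref{lem:twisted-homology-normal-forms}), reduction modulo two gives the identity $[B_i]+[B_j]=[B_{\pi(i)}]+[B_{\pi(j)}]$ in $H_1(S;\mathbb F_2)$.

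The combinatorial conclusion of (i) then follows. In $H_1(S;\mathbb F_2)$ the only relation among the boundary classes is $\sum_i[B_i]=0$. A pair sum $[B_i]+[B_j]$ therefore determines $\{i,j\}$, except that it also equals the sum over the complementary set of boundaries, and that complement is itself a pair only when $b=4$. If there are at least four negative boundaries and $b\geq5$, the permutation $\pi$ preserves every pair and hence fixes every block. If $b=4$ and all boundaries are negative, each pair can go to itself or to its complement, and a check shows that the consistent permutations are exactly the double transpositions.

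For (ii), take the normal form \eqref{eq:two-negative-absolute-normal-form} with $i_0=1$. Suppose $\pi$ exchanges the two negative blocks. The test vector $q_x+q_{x'}$, with $x\in B_1$ and $x'$ on a positive boundary $B_k$, together with a test vector involving $B_2$, leads after the same computation to the requirement that $\tfrac12(c_2-c_1)=\tfrac12\sum_i d_i$ be integral modulo the positive blocks. The positive blocks are fixed, but their collar classes were normalized using $c_{i_0}$, and exchanging the negative blocks shifts that normalization by $\tfrac12(c_2-c_1)$. Since $\sum_i d_i$ is primitive in $A_{\Z}$ when $b_+\geq2$, this half-class is not integral, which is a contradiction. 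The main obstacle is this last bookkeeping: controlling how $\Theta$ acts on the positive $q_x$, whose normalization depends on $i_0$, under the exchange. We would handle it by computing the image of $q_x-q_{x''}$ for two endpoints on distinct positive boundaries, which is forced to be fixed, and comparing it with the image of a mixed vector.
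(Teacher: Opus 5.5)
Your part (i) follows the paper's argument. The paper packages your test vector intrinsically as a class $\delta_{ij}\in A_{\Z}/2A_{\Z}$, built from an integral class with odd endpoint totals on $Q_i$ and $Q_j$, and sends it to $[B_i]+[B_j]$ under $A_{\Z}/2A_{\Z}\to H_1(S;\mathbb F_2)$. One step needs repair: nothing justifies your claim that $\Theta(q_x)$ has endpoint total $\pm1$, and the ``rotation-polynomial factor'' remark does not control it. Only the parity is needed, and the parity is intrinsic. The projection $\Lambda_i$ of $K$ to $Q_i$ and its index-two sublattice $Q_i\cap K$ of even-total vectors are canonical, so $\Theta$ carries odd-total vectors of $\Lambda_i$ to odd-total vectors of $\Lambda'_{\pi(i)}$. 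With this fix, your conclusion $\tfrac12(c_i+c_j-c_{\pi(i)}-c_{\pi(j)})\in A_{\Z}$, its reduction modulo two, and the pair combinatorics are fine.

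Part (ii), however, has a genuine gap, exactly at what you call the main obstacle. The argument needs control of $\Theta$ on the positive collar classes, and your proposed control is that $\Theta(q_x-q_{x''})$ is forced to be fixed. That claim is unjustified and false in general. By Lemma~\ref{lem:normalized-twisted-collar}, $R_k^{m_k}=I+\varepsilon_\partial\,d_k\otimes\lambda_k$, and when $H_\eta$ preserves $B_k^+$ this operator commutes with the whole representation. So $\Theta R_k^{m_k}$ is another integral intertwiner, still the identity on $A_{\mathbb Q}$, and for $x\in B_k^+$ it moves $\Theta(q_x-q_{x''})$ by $\pm d_k$.

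Your closing contradiction also cannot be right, since it never uses $b_+\geq2$. The class $\sum_i d_i$ is primitive in $A_{\Z}=H_{\Z}\oplus\bigoplus_i\Z d_i$ already when $b_+=1$. Yet for $b=3$, $t=2$ the exchange does occur (Example~\ref{ex:three-boundary-represented-failure}): there the intertwiner sends the positive collar class $z$ to $z+d/2$, which absorbs the half-class.

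The missing ingredient is that the correction on the positive part is diagonal. Write $\Theta|_{V_0}=I+\Delta$ on the generalized trivial-character subspace $V_0$ of the positive rotations. Then $\Delta=\sum_i a_i\,d_i\otimes\lambda_i$, so $\Theta(q_y)-q_y\in\mathbb Q d_j\oplus W_j$ for $y\in B_j^+$, where $W_j$ is the non-trivial primary part of the rotation at $B_j^+$. The paper obtains this in four steps:
\begin{enumerate}
\item write the induced map on $F$ as $y\mapsto y\,z(y)$, with $z(y)$ a product of boundary twists;
\item prove $\lambda_i\Theta=\lambda_i$ from one-holed-torus twists acting by $I+N_\beta$;
\item deduce $\Delta(V_0)\subseteq\sum_i\mathbb Q d_i$ from invariant handle covectors;
\item kill the off-diagonal entries with twists about separating curves around $B_1^-\cup B_i^+$, whose negative holonomy gives different signs at $B_i^+$ and $B_j^+$.
\end{enumerate}
Only then does the integral vector $q_x+q_y$, with $x\in B_1^-$ and $y\in B_j^+$, give a contradiction. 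After an exchange, its image would need coefficient $\tfrac12$ modulo $\Z$ along every $d_i$. But its coefficient along $d_i$ is zero for $i\neq j$, and such an $i$ exists precisely because $b_+\geq2$.
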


\begin{proof}
For a negative boundary, let \(Q_i\subset K_{\mathbb Q}\) be its
canonical sum of non-trivial primary spaces. A local generator at an
endpoint \(x\) gives a collar class \(q_x\), the negative half of the
twisted boundary loop based at \(x\), with endpoint coefficient \(x\).
Thus \(Q_i\) maps isomorphically to its endpoint block.
Projection of \(K\) to \(Q_i\) gives a lattice
\(\Lambda_i\cong\Z^{m_i}\). Its intersection
\(\Lambda_i^{\mathrm{ev}}=Q_i\cap K\) consists of the vectors with even
total coefficient. Hence
\(\Lambda_i/\Lambda_i^{\mathrm{ev}}\cong\mathbb F_2\).
The assertion follows directly from the relative chain complex of
the negative-holonomy circle. Boundary loops have endpoint coefficient
\(\pm2\), while paths between endpoints give signed differences.
Projection realizes an odd coordinate because another boundary can
supply the second odd endpoint. Both lattices are intrinsic.

For two negative blocks choose \(v_i\in\Lambda_i\) and
\(v_j\in\Lambda_j\) with odd parity. There is an integral
class \(x\in K\) with these endpoint coordinates and no others.
The image of the endpoint map is the even-total lattice, as
\(H_0(S;L_{\Z})=\Z/2\). Write
\[
 a=2x-2v_i-2v_j\in A_{\Z}.
\]
Its class \(\delta_{ij}\in A_{\Z}/2A_{\Z}\) is
independent of all choices. For another choice
\(x',v_i',v_j'\), the differences \(v_i'-v_i\) and
\(v_j'-v_j\) lie in \(\Lambda_i^{\mathrm{ev}}\) and
\(\Lambda_j^{\mathrm{ev}}\). They are integral, and
cancellation of the endpoint coordinates gives
\[
 x'-x-(v_i'-v_i)-(v_j'-v_j)\in A_{\Z}.
\]
The two resulting absolute classes therefore differ by an element of
\(2A_{\Z}\). An integral intertwiner carries \(\delta_{ij}\) to the
corresponding class for the permuted blocks.

Reduction of the twisted chain complex modulo two gives the ordinary
chain complex. Since \(H_0(S;L_{\Z})\simeq\Z/2\), the universal
coefficient sequence is
\[
 0\longrightarrow A_{\Z}/2A_{\Z}
 \longrightarrow H_1(S;\mathbb F_2)
 \longrightarrow
 \operatorname{Tor}_1^{\Z}(H_0(S;L_{\Z}),\mathbb F_2)
 \longrightarrow0.
\]
The class of \(2x\) reduces to zero, while \(2v_i\) and \(2v_j\) reduce
to the two ordinary boundary loops.
Consequently \(\delta_{ij}\)
maps to \([B_i]+[B_j]\). The injection in the displayed universal
coefficient sequence is essential here. In particular, one must not
identify \(A_{\Z}/2A_{\Z}\) with all of \(H_1(S;\mathbb F_2)\), since the
final Tor term is non-zero. By
Lemma~\ref{lem:absolute-intertwiner-scalar}, the absolute intertwiner
is \(\pm f_*\). Naturality of the displayed injection therefore
identifies the two boundary sums. The common sign disappears modulo
two, giving the asserted identity.

Over \(\mathbb F_2\), the boundary classes have the single relation
\(\sum_i[B_i]=0\). If \(b\ne4\) and there
are at least four negative boundaries, equality of two sums of
distinct pairs forces equality of the unordered pairs. A permutation
of at least three elements fixing every unordered pair is the
identity.

For \(b=4\), a pair may instead be replaced by its
complement. The permutations preserving the three partitions into
complementary pairs form the Klein four group. This proves
\textup{(i)}.

By Proposition~\ref{prop:geometric-boundary-matching}, the positive
boundaries have matching stop numbers. Identify their stop sets by
isotopies in boundary collars. Multiplying the intertwiner by a common
sign, we may assume
\(
 \Theta|_{A_{\mathbb Q}}=I\). Write \(b_+\) for the number of positive boundary components.
Write \(B^-_1,B^-_2\) for the negative boundaries and
\(B^+_1,\ldots,B^+_{b_+}\) for the positive boundaries. Denote the
positive boundary classes by \(d_1,\ldots,d_{b_+}\), and set
\[
 R=\operatorname{span}_{\mathbb Q}\{d_1,\ldots,d_{b_+}\}.
\]
The classes \(d_i\) are independent by
Lemma~\ref{lem:twisted-homology-normal-forms}.

Set \(F=\Stab_{\Mod(S,\partial S)}(\eta)\).
Proposition~\ref{prop:represented-boundary-data} identifies the
boundary-pointwise stabilizers. After the geometric transport, the
induced isomorphism on their common underlying group \(F\) has the form
\[
 y\longmapsto yz(y),
 \qquad z(y)=\prod_B T_B^{n_B(y)}.
\]
Here \(B\) ranges over the boundary components and
\(n_B(y)\in\Z\).
Indeed, the two maps have the same image after the stops are forgotten,
while their difference on \(F\) lies in the central lattice generated
by the full boundary twists.
The full twists, with the complement-fixed normalization of
Lemma~\ref{lem:normalized-twisted-collar}, act trivially on absolute
twisted homology. Since \(\Theta|_{A_{\mathbb Q}}=I\), the equivariance equation has no
additional common sign when the same local-system lift of \(y\) is
used on both sides.

We now prove that
\begin{equation}\label{eq:two-negative-fixed-totals}
 \lambda_i\Theta=\lambda_i
 \qquad\text{ for }1\leq i\leq b_+,
\end{equation}
where \(\lambda_i\) is the endpoint total at the \(i\)-th positive
boundary \(B_i^+\). Let \(\beta\) be a
positive-holonomy nonseparating curve with nonradical twisted class.
Choose a negative-holonomy partner meeting it once, as in the proof of
Lemma~\ref{lem:separating-twisted-scalar-commutant}. The boundary
\(\gamma\) of their regular neighbourhood is separating. By
Lemma~\ref{lem:twisted-picard-lefschetz}, its twist acts on relative
homology as
\[
 I+N_\beta,
 \qquad N_\beta=a_\beta[\beta]\otimes\lambda_\beta,
 \qquad a_\beta\neq0,
\]
where \(\lambda_\beta\) is the twisted intersection covector. This action
is the identity on the endpoint quotient.

The image of \(T_\gamma\) under the represented isomorphism must also
act identically on that quotient. The negative-boundary exponents in
\(z(T_\gamma)\) are therefore even, and their relative actions are
trivial by Lemma~\ref{lem:normalized-twisted-collar}. The remaining
central correction has the form
\[
 I+D,
 \qquad
 D=\varepsilon_\partial\sum_i n_i d_i\otimes\lambda_i,\qquad
 n_i\in\Z.
\]
Since \(N_\beta D=DN_\beta=0\) and \(\Theta N_\beta=N_\beta\), expansion
of the equivariance equation gives
\[
 N_\beta-N_\beta\Theta=D\Theta.
\]
The left-hand side has image in \(\mathbb Q[\beta]\), while the right-hand
side has image in \(R\). These spaces intersect trivially because
\([\beta]\) is nonradical. Hence, we have
\(\lambda_\beta\Theta=\lambda_\beta\).

Band-sum \(\beta\) with \(B_i^+\). The resulting
curve is positive-holonomy and nonseparating, with class
\([\beta]\pm d_i\). The same argument applies to it. With compatible choices of signs, the
twisted intersection pairing gives
\[
 \lambda_{\beta\pm d_i}=\lambda_\beta\pm\lambda_i.
\] Subtracting the invariant
covectors proves \eqref{eq:two-negative-fixed-totals}.

Let \(M_+^\circ\) be the set of endpoints on positive boundaries and put
\[
 C_-=\operatorname{span}_{\mathbb Q}\{r_i:\nu_i=-1\}
 \subset C_{\mathbb Q}.
\]
The collar
decomposition and Lemma~\ref{lem:normalized-twisted-collar} identify
\[
 K_{+,\mathbb Q}
 =K_{\mathbb Q}^{C\cap C_-}
 =H_1(S,M_+^\circ;L).
\]
The action on this fixed subspace factors through
\(C/(C\cap C_-)\). Let \(V_0\) be the simultaneous generalized
trivial-character subspace for this quotient action. This definition
is intrinsic and does not require a choice of lifts of the positive
quotient axes to \(C\). In particular, the represented intertwiner
carries \(V_0\) to its target counterpart. Collar rotations commute
with the \(F\)-action, so \(V_0\) is an \(F\)-submodule. It fits into
\[
 0\longrightarrow A_{\mathbb Q}
 \longrightarrow V_0
 \xrightarrow{(\lambda_1,\ldots,\lambda_{b_+})}
 \mathbb Q^{b_+}\longrightarrow0.
\]
The positive endpoint sets have already been identified. The inclusion
of pairs therefore identifies both copies of \(K_{+,\mathbb Q}\) with
the same \(H_1(S,M_+^\circ;L)\), including their uncorrected \(F\)-actions
and the subspace \(V_0\). The actual actions differ only by the central
boundary corrections above. Together with
\(\Theta|_{A_{\mathbb Q}}=I\), equation
\eqref{eq:two-negative-fixed-totals} now gives
\[
 \Theta|_{V_0}=I+\Delta,
 \qquad \Delta|_{A_{\mathbb Q}}=0,
 \qquad \Delta(V_0)\subseteq A_{\mathbb Q}.
\]

The normal form in
Lemma~\ref{lem:twisted-homology-normal-forms} gives a
non-degenerate \(2g\)-dimensional handle complement to \(R\). The
restrictions to \(A_{\mathbb Q}\) of the intersection covectors
of its positive-holonomy handle curves have common annihilator \(R\).
Their invariance under \(\Theta\) therefore
implies \(\Delta(V_0)\subseteq R\). Write
\[
 \Delta=\sum_{i,j}\Delta_{ij}d_i\otimes\lambda_j.
\]

For \(y\in F\), let \(\mu_i(y)\in\{1,-1\}\) be its coefficient sign
at the \(i\)-th positive boundary. Naturality gives
\[
 y_*d_i=\mu_i(y)d_i,
 \qquad \lambda_i y_*=\mu_i(y)\lambda_i.
\]
On \(V_0\), a central correction has the form \(I+D_y\), with
\(D_y\) in the span of the diagonal operators
\(d_i\otimes\lambda_i\). Since both \(D_y\) and \(\Delta\) vanish on
absolute homology, the equivariance equation gives
\[
 \Delta y_*-y_*\Delta=y_*D_y.
\]
Its off-diagonal coefficients satisfy
\[
 \bigl(\mu_j(y)-\mu_i(y)\bigr)\Delta_{ij}=0
 \qquad\text{ for }i\neq j.
\]

For each \(i\), choose a separating curve enclosing \(B^-_1\) and
\(B_i^+\), and no other boundary component. Its
holonomy is negative. Normalize the local-system lift of its twist
to be the identity on the side containing the other positive boundaries.
By Lemma~\ref{lem:twisted-picard-lefschetz}, this gives
\(\mu_i=-1\) and \(\mu_j=1\) for \(j\neq i\). Changing the lift
changes all signs at once and does not affect the displayed equation.

For a fixed ordered pair \(i\ne j\), this choice gives
\(\mu_j-\mu_i=2\), up to a common sign. Over \(\mathbb Q\), the coefficient equation therefore forces \(\Delta_{ij}=0\).
Varying \(i\) eliminates every off-diagonal coefficient. Consequently,
\begin{equation}\label{eq:two-negative-diagonal-correction}
 \Theta|_{V_0}=I+\sum_i a_i d_i\otimes\lambda_i.
\end{equation}

We now retain the integral information. Use the two-negative normal
form in \eqref{eq:two-negative-absolute-normal-form} and write
\[
 c_2-c_1=d_1+\cdots+d_{b_+}=:d,
 \qquad
 A_{\Z}=H_{\Z}\oplus\bigoplus_{i=1}^{b_+}\Z d_i.
\]
Choose the collar coordinates \(q_x\) from
Lemma~\ref{lem:integral-collar-gluing} with
\(B^-_1\) as the base boundary. 
An expression with absolute part
\(a\in A_{\mathbb Q}\) and endpoint coefficients \(n_x\) is integral
if and only if
\begin{equation}\label{eq:two-negative-integral-lattice}
 n_x\in\Z,
 \qquad \sum_xn_x\in2\Z,
 \qquad a-\frac{n^-_2}{2}d\in A_{\Z},
 \qquad n^-_2=\sum_{x\in M^\circ\cap B^-_2}n_x.
\end{equation}

Let \(Q_k\) be the canonical negative block at \(B^-_k\). Projection of
\(K\) to \(Q_k\) gives
\[
 \Lambda_k=\bigoplus_{x\in M^\circ\cap B^-_k}\Z q_x,
 \qquad
 Q_k\cap K=
 \left\{\sum_{x\in M^\circ\cap B^-_k}n_xq_x\ \middle|\
             \sum_{x\in M^\circ\cap B^-_k}n_x\in2\Z\right\}.
\]
These are the intrinsic lattices from the first part of the proof. If \(\Theta\) exchanges
\(Q_1\) and \(Q_2\), the image of any \(q_x\) with
\(x\in M^\circ\cap B^-_1\)
has integral coordinates and odd total coefficient in \(Q_2\).

It remains to control the extra endpoints on a positive boundary. Let
\(W_j\) be the sum of the non-trivial primary spaces of its rotation.
For two endpoints \(y,y'\) on \(B_j^+\), set
\[
 U_j=\operatorname{span}_{\mathbb Q}
 \{d_j,\ q_y-q_{y'}\mid y,y'\in M^\circ\cap B_j^+\}.
\]
The full twist is trivial on \(U_j\), so the one-step rotation
has finite order there and is semisimple. Its endpoint quotient
is the zero-total cyclic module, which has no invariant vector.
Hence its invariant subspace is exactly \(\mathbb Qd_j\).
Consequently,
\(
 U_j=\mathbb Qd_j\oplus W_j\). In particular, \(W_j\) has no absolute boundary coordinate along
\(d_i\) for \(i\neq j\). Decompose
\(q_y=v_y+w_y\), with \(v_y\in V_0\) and \(w_y\in W_j\). The vector
\(v_y\) has endpoint-total vector equal to the \(j\)-th standard
basis vector. The positive block at \(B_j^+\) has already been matched,
so \(\Theta(W_j)=W_j\). Equation
\eqref{eq:two-negative-diagonal-correction} now gives
\(
 \Theta q_y-q_y\in U_j\). Thus \(\Theta q_y\) has total endpoint coefficient one on \(B_j^+\), no
endpoint coefficients on the other positive boundaries, and no
absolute coordinate along \(d_i\) for \(i\neq j\).

Choose \(x\in M^\circ\cap B^-_1\) and an endpoint \(y\) on \(B_j^+\).
The vector
\(q_x+q_y\) is integral by
\eqref{eq:two-negative-integral-lattice}. If the negative blocks were
exchanged, its image would have odd total coefficient on \(B^-_2\).
The same integral lattice condition would force its absolute
coefficient along every \(d_i\) to be \(1/2\) modulo \(\Z\). For
\(i\neq j\), the preceding arguments show that this coefficient is
zero. Such an \(i\) exists because \(b_+\geq2\). This contradiction
proves that the negative blocks cannot be exchanged, completing
\textup{(ii)}.
\end{proof}

\subsection{Exceptional represented isomorphisms}
\label{appsubsec:exceptional-isomorphisms}

We first record the extension step used in all three examples. Let
\(\mathbb S=(S,M,\eta)\) and \(\mathbb S'=(S',M',\eta')\) satisfy the hypotheses of
Theorem~\ref{thm:opper-geometrization}. Assume that all boundary
windings are non-zero and that both line-field stabilizers fix a
chosen stop. Normalizing graded lifts at these stops gives
\[
 D(\mathbb S)\cong
 \bigl(H^1(S;\kk^\times)\rtimes H_\eta\bigr)
 \times\Z\langle[1]\rangle,
\]
and likewise for \(\mathbb S'\).

\begin{lemma}\label{lem:normalized-representation-extension}
Suppose that \(\phi:H_\eta\to H_{\eta'}\) is an isomorphism and that
\(\Theta:K(\mathbb S)\to K(\mathbb S')\) intertwines the normalized
stabilizer representations. If there is an isomorphism
\(h:H_1(S;\Z)\to H_1(S';\Z)\) satisfying
\(hy_*=\phi(y)_*h\) for every \(y\in H_\eta\), then \((\phi,\Theta)\)
extends to a shift-preserving isomorphism of represented invariants.
\end{lemma}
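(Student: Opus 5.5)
We must extend $\phi$ to the full groups
\[
 D(\mathbb S)\cong\bigl(H^1(S;\kk^\times)\rtimes H_\eta\bigr)\times\Z\langle[1]\rangle,
 \qquad
 D(\mathbb S')\cong\bigl(H^1(S';\kk^\times)\rtimes H_{\eta'}\bigr)\times\Z\langle[1]\rangle,
\]
and check that $\Theta$ remains an intertwiner. We define $\Phi$ factorwise. On the shift factor, $\Phi$ is the identity $[1]\mapsto[1]$. On $H_\eta$ it is $\phi$. On the local-system torus we use the map induced by $h$, namely
\[
 h^\dagger=\operatorname{Hom}(h^{-1},\kk^\times)\colon
 H^1(S;\kk^\times)=\operatorname{Hom}(H_1(S;\Z),\kk^\times)
 \longrightarrow H^1(S';\kk^\times),
 \qquad n\longmapsto n\circ h^{-1}.
\]

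We must verify that these pieces assemble into a homomorphism of semidirect products. The action of $y\in H_\eta$ on $N=H^1(S;\kk^\times)$ is $y\cdot n=(y^{-1})^*n$, that is, $n\circ y_*^{-1}$ on $H_1$. By Lemma~\ref{lem:characteristic-geometrization-kernel} this is the conjugation action. The compatibility needed is $h^\dagger(y\cdot n)=\phi(y)\cdot h^\dagger(n)$, which amounts to $n\circ y_*^{-1}\circ h^{-1}=n\circ h^{-1}\circ\phi(y)_*^{-1}$. This is exactly the hypothesis $hy_*=\phi(y)_*h$, inverted. Since $h$ is an integral isomorphism, $h^\dagger$ is a group isomorphism. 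Hence $(n,y,j)\mapsto(h^\dagger n,\phi(y),j)$ is an isomorphism, and it fixes $[1]$. The one point to confirm is that the normalized splittings on both sides use the same conventions. The shift factor is central and split off, so $\Phi$ respects the direct product.

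For the intertwining, we check generators. By Lemma~\ref{lem:geometrization-charge-action}, the torus $N$ acts trivially on $K(\mathbb S)$ and on $K(\mathbb S')$, so $\Theta\cdot I=I\cdot\Theta$ holds for $n\in N$. The shift acts by $-I$ on both sides, which commutes with $\Theta$. The representation $\varrho_{\mathbb S}$ restricted to the normalized copy of $H_\eta$ is the normalized stabilizer representation, and the same holds for $\mathbb S'$. The hypothesis therefore gives $\Theta\varrho_{\mathbb S}(y)=\varrho_{\mathbb S'}(\phi(y))\Theta$. Both sides of the intertwining identity are multiplicative in $x$, so the identity holds on all of $D(\mathbb S)$.

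The main obstacle is bookkeeping. One must make sure that the identification of the normalized graded lift of $y$ in $D(\mathbb S)$ with the copy of $H_\eta$ in the displayed product is the one used to define the normalized stabilizer representations. Otherwise a sign $(-1)^n$ could appear, coming from a mismatch by $[n]$. To settle this, I would fix the normalization at the chosen stop on both sides, as in the setup before the lemma, and define the normalized representation through exactly those lifts. The sign rule of Lemma~\ref{lem:geometrization-charge-action} then shows that no discrepancy arises.
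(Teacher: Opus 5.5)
Your proposal is correct and follows essentially the same route as the paper. Like the paper, you transport the local-system torus by \(\chi\mapsto\chi\circ h^{-1}\) and use \(hy_*=\phi(y)_*h\) to make this \(\phi\)-equivariant, so that it extends \(\phi\) to the semidirect product while fixing the shift factor. You then check the intertwining on generators: the torus acts trivially and the shift by \(-I\) by Lemma~\ref{lem:geometrization-charge-action}, and the normalized copy of \(H_\eta\) is handled by the hypothesis on \(\Theta\).
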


\begin{proof}
The non-zero windings eliminate the additive factors in the
geometrisation kernel of Theorem~\ref{thm:opper-geometrization}.
Normalization at the fixed stop splits the grading extension.
Choose a splitting of the geometrisation sequence and restrict it
to the normalized copy of \(H_\eta\) in the graded mapping class
group. Together with the actual central shift \([1]\), this gives
the displayed decomposition of \(D(\mathbb S)\). We also make the same
choices for \(\mathbb S'\).
Define
\[
 h^\vee:H^1(S;\kk^\times)\longrightarrow H^1(S';\kk^\times),
 \qquad h^\vee(\chi)=\chi\circ h^{-1}.
\]
The assumed ordinary homological equivariance makes \(h^\vee\)
equivariant for \(\phi\). It therefore extends \(\phi\) to the
semidirect products in the displayed decomposition. By
Lemma~\ref{lem:geometrization-charge-action}, the local-system torus
acts trivially on the charge lattices, while the shift acts by \(-I\). Sending
\((\chi,y,n)\) to \((h^\vee(\chi),\phi(y),n)\) and using \(\Theta\)
on the charge lattice gives the required represented isomorphism.
\end{proof}

\begin{lemma}\label{lem:exceptional-representation-calculations}
In each of the following cases, the formula for \(\phi\) defines an
isomorphism of the stopped line-field stabilizers and admits an integral
intertwiner of the normalized charge representations:
\begin{enumerate}[label=\textup{(\roman*)}]
\item the two-boundary construction of
Example~\ref{ex:two-boundary-represented-failure};
\item the three-boundary construction of
Example~\ref{ex:three-boundary-represented-failure};
\item the four-boundary construction of
Example~\ref{ex:four-boundary-represented-failure}.
\end{enumerate}
Consequently, the two represented invariants in each example are
isomorphic.
\end{lemma}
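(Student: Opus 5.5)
The plan is to verify, in each of the three cases, the two hypotheses of Lemma~\ref{lem:normalized-representation-extension}. These are, first, that $\phi$ is a well-defined isomorphism $H_\eta\to H_{\eta'}$, and second, that there is an integral $\Theta$ intertwining the normalized charge representations together with a homological intertwiner $h$ on $H_1(S;\Z)$. That lemma then produces the isomorphism of represented invariants. Since all boundary windings are distinct and non-zero, $H_\eta=H_\eta^0$. The unique-stop boundary is fixed by both stabilizers, which justifies normalizing graded lifts there.

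\emph{Group isomorphism.} By Lemma~\ref{lem:stopped-stabilizer-root-presentation}, $H_\eta$ is $F$ with central roots $\rho_i$ adjoined, subject to $\rho_i^{m_i}=T_i$. The same holds on the target with the roots attached to the other boundaries. First I check that $\alpha$ is an automorphism of $F$: the elements $k$ (respectively $k_{12},k_{34}$) are central and lie in the kernels of the characters $h$, so $\alpha$ is a homomorphism whose inverse is $y\mapsto yk^{-h(y)}$ (similarly for the others).

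Next I check the root relations. In case (i), $\phi(\rho_2)^m=(\rho_1')^{-q^2m}T_2^{2m}=T_1^{-q^2}T_2^{2m}$, and this must equal $\alpha(T_2)=T_2k^{\beta(T_2)}=T_2(T_1^{-q}T_2)^q$. Comparing exponents reduces to $2m=1+q$, which holds since $m=2g+1$ and $q=4g+1$. The same bookkeeping handles (ii), using $q=2m-1$ and $h(T_2)=1$, and (iii), using $q_m,q_n$ and the table of $h_{12},h_{34}$. To see that $\phi$ is surjective, I verify that $T_1$ is hit: the image contains $\alpha(F)=F\ni T_1$. The target root $\rho_1'$ is then recovered from $\phi(\rho_2)$ and $T_2,T_1$ via a Bezout relation between $q^2$ and $m$, using $\gcd(q,m)=1$. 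Injectivity follows by comparing the quotients $H/F\cong\Z/m$ on both sides.

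\emph{Homological intertwiner $h$.} Boundary twists and the elements $k$ act trivially on $H_1(S;\Z)$. Moreover, $\rho_2$ and $\rho_1'$ act trivially after the stops are forgotten. Hence $\phi$ induces the identity on the image in $\operatorname{Aut}H_1$, and $h=\mathrm{id}$ works.

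\emph{Integral charge intertwiner (main obstacle).} I would use the explicit collar model of Section~\ref{appsubsec:collar-twisted-homology}. The decomposition $K_{\mathbb Q}=A_{\mathbb Q}\oplus\bigoplus_x\mathbb Q q_x$ together with the integrality description of Lemma~\ref{lem:integral-collar-gluing} (or its trivial-$L$ analogue) gives integral coordinates. I define $\Theta$ to be the identity on $A_\Z$. It sends the $m$-point endpoint block on the source boundary to the $m$-point block on the target boundary by matching the cyclic rotations $\rho_2\mapsto\rho_1'$ up to the power $-q^2$; this is a permutation of endpoints because $q^2$ is a unit mod $m$. Finally, $\Theta$ adds absolute corrections of the form $\sum a_i d_i\otimes\lambda_i$ so that the full-twist logarithms $c_i\otimes\lambda_i/m_i$ match: the $k^{\beta(y)}$ twists in $\alpha$ produce exactly such rank-one corrections, and choosing $a_i$ cancels them. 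The checks are then $\Theta y_*=\phi(y)_*\Theta$ on generators $y\in F$ and on $\rho_2$. The step I expect to be hardest is integrality: the correction coefficients $a_i$ involve $\beta(y)$ and factors $1/m_i$, and one must show that the resulting $\Theta$ preserves the parity and gluing conditions, in particular the half-integral $c_i-c_{i_0}$ terms when $L$ is nontrivial. I would first treat case (i) by a direct matrix computation in rank $4g+2$, then adapt the bookkeeping to (ii) and (iii), where the two pairs of boundaries decouple.
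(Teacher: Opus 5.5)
Your verification that \(\phi\) is an isomorphism is fine and agrees with the paper: \(\alpha\) is invertible, the root relations reduce to \(2m=1+q\) and its analogues, surjectivity follows by Bezout, and injectivity follows from the quotients by \(F\). The choice \(h=\mathrm{id}\) is also fine. The gap is in the integral charge intertwiner, which is the real content of the lemma, and there you use the wrong mechanism.

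The exchanged blocks lie on boundaries of odd winding, i.e.\ negative boundaries: both boundaries in (i), \(B_1,B_2\) in (ii), and all four in (iii). On a negative block the full twist acts by \(-I\) (Lemma~\ref{lem:normalized-twisted-collar}), and there is no boundary class \(d_i\). In (i) and (iii) there are no positive boundaries at all. So corrections \(\sum a_id_i\otimes\lambda_i\) cancelling rank-one effects of \(k^{\beta(y)}\) do not exist.

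What the correcting factors actually fix is a sign. In (i) and (iii) the source is normalized at the unique stop on \(B_1\) and the target at the unique stop on \(B_2\). By Lemma~\ref{lem:relative-displacement} these normalizations differ on absolute homology by \((-1)^{\beta(y)}\), resp.\ the corresponding characters. The normalized lift of \(k\) acts by \(-I\) on absolute homology and trivially on the endpoint blocks, so \(k^{\beta(y)}\) cancels exactly this discrepancy. In (iii), \(k_{12},k_{34}\) do the same through their sign vectors \((-1,-1,1,1)\) and \((1,1,-1,-1)\). This is the only reason the identity on \(A_{\Z}\) is equivariant. In (ii) both sides are normalized at \(B_3\), and \(k^{qh(y)}\) with \(q\) odd swaps the endpoint sign characters, since \(\delta_1\delta_2=(-1)^{h(y)}\). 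Your proposal never confronts the fact that the two surfaces are normalized at different boundaries.

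The integrality step, which you leave open, needs a specific idea in each case.
\begin{itemize}
\item In (i), the two negative chains coincide (\(c_2-c_1=\sum d_i=0\)). The gluing term disappears, \(K=A_{\Z}\oplus\{\text{even-total endpoint vectors}\}\), and any block exchange preserves it.
\item In (ii), exchanging \(E_1,E_2\) breaks the condition \(a-n_2/2\in\Z\). One composes with the half-shear \(I+\varepsilon_\partial N_3/2\), \(z\mapsto z+d/2\), along the positive boundary \(B_3\); it commutes with the stabilizer action. The condition \(n_1+n_2+n_3\in2\Z\) makes \((n_1,n_2,n_3,a)\mapsto(n_2,n_1,n_3,a+n_3/2)\) integral with integral inverse. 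So a correction of your shape does occur, but with coefficient \(1/2\) and for integrality, not to cancel \(k\).
\item In (iii), the change of the gluing term \(\Gamma\) under \(\pi=(12)(34)\) is a cycle vanishing in \(H_1(S;\mathbb F_2)\), because \(\bar c_{\pi(i)}-\bar c_i=\bar c_1+\bar c_2\). The universal coefficient injection \(A_{\Z}/2A_{\Z}\hookrightarrow H_1(S;\mathbb F_2)\) then puts it in \(A_{\Z}\).
\end{itemize}

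Finally, negative root blocks are signed cycles of order \(2m\). Matching \(\rho_2\) with \((\rho'_1)^{-q^2}\) therefore needs \(q^2\equiv1\pmod{2m}\), not merely that \(q^2\) is a unit mod \(m\). A signed reversal \(e_j\mapsto-e'_{m-j}\) then conjugates \(R\) to \((R')^{-1}\). In (ii) one uses \(-q\equiv1\pmod{2m}\) instead, so no reversal is needed.
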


\begin{proof}
We retain the notation of the respective examples. In each case the
source \(\mathbb S\) and target \(\mathbb S'\) have the same underlying
surface and orientation local system, and we use common notation for
absolute homology. With the line fields, stop counts, characters, and
explicit formulas specified there, we verify the root relations and
integral equivariance separately in each case.

In all three cases, the central correction on \(F\) is invertible.
Its defining characters vanish on the correcting central elements,
so negating the correction exponents gives the inverse. We also check
the root relations and the induced maps on the finite quotients by \(F\).

\medskip
\noindent\emph{Proof of \textup{(i):} two boundary components.}
The root relation is preserved because
\[
 \phi(\rho_2)^m=T_1^{-q^2}T_2^{2m}=\alpha(T_2).
\]
On the quotient by \(F\), the map is multiplication by
\(-q^2\equiv-1\pmod m\), so it is an isomorphism. It induces the
identity after the boundary twists are forgotten and hence on ordinary
homology.

The orientation local system is positive on the handles and negative
on both boundaries. Equation~\eqref{eq:two-negative-absolute-normal-form}
then shows that the two negative boundary chains agree.
By Lemma~\ref{lem:integral-collar-gluing}, after exchanging the source
and target boundary blocks, both charge lattices have the model
\[
 K=A_{\Z}\oplus
 \left\{\sum_x n_xq_x:n_x\in\Z,\quad
                         \sum_x n_x\equiv0\pmod2\right\}.
\]
Normalize source lifts at the unique stop on \(B_1\) and target lifts
at the unique stop on \(B_2\). For \(y\in F\), relative displacement gives the signs
\[
 \begin{array}{c|cc}
  &B_1&B_2\\\hline
  \text{source}&1&(-1)^{\beta(y)}\\
  \text{target}&(-1)^{\beta(y)}&1.
 \end{array}
\]
The lift of \(k\) which is the identity off its boundary collars changes
both endpoint signs and fixes absolute homology. After normalization it
acts by \(-I\) on absolute homology and trivially on the endpoint
blocks. By
Lemma~\ref{lem:relative-displacement}, changing the normalization of \(y\) from \(B_1\) to \(B_2\)
multiplies its absolute action by \((-1)^{\beta(y)}\). The factor \(k^{\beta(y)}\)
contributes the same sign and cancels this discrepancy. Thus the target
action of \(\alpha(y)\) agrees with the source action of \(y\) on
absolute homology, while the two endpoint sign characters agree after
exchanging the blocks. Choose cyclic bases
\(e_0,\ldots,e_{m-1}\) and \(e'_0,\ldots,e'_{m-1}\) for the non-trivial
source and target root blocks, and denote the corresponding signed cycles
by \(R\) and \(R'\). Then \(R^m=(R')^m=-I\). Since
\(q^2\equiv1\pmod{2m}\), its target image acts as \((R')^{-1}\).
The signed reversal
\[
 e_0\longmapsto e'_0,
 \qquad e_j\longmapsto-e'_{m-j}\quad\text{ for }1\leq j<m
\]
intertwines these cycles. Exchanging the two endpoint blocks by this
map, and taking the identity on \(A_{\Z}\), preserves the even-total
lattice. Lemma~\ref{lem:normalized-representation-extension} gives the
required isomorphism of represented invariants.

\medskip
\noindent\emph{Proof of \textup{(ii):} three boundary components.}
Here
\[
 \phi(\rho_2)^m=T_1^{-q}T_2^{2m}=\alpha(T_2),
 \qquad -q\equiv1\pmod m.
\]
The map is therefore an isomorphism. Its matrix on the first two
rotation directions has determinant
\(2m(1-q)+q^2=1\), and it fixes the third direction.

Let \(c_1,c_2\) be the two negative boundary chains and put
\(d=c_2-c_1=[B_3]_L\), up to the common collar convention. Write
\[
 A_{\Z}=H_{\Z}\oplus\Z d,
 \qquad
 K_{\mathbb Q}=H_{\mathbb Q}\oplus\mathbb Qd
       \oplus\mathbb Qz\oplus E_1\oplus E_2,
\]
for the source, where \(H_{\mathbb Q}=H_{\Z}\otimes\mathbb Q\). The
target has the analogous decomposition with \(E_1\) and \(E_2\)
exchanged. Choose the source collar coordinates
\(q_x\) from Lemma~\ref{lem:integral-collar-gluing} with \(B_1\) as
the base boundary. Here
\[
 E_i=\operatorname{span}_{\mathbb Q}
 \{q_x:x\in M^\circ\cap B_i\}
 \quad\text{ for }i=1,2,
\]
and \(x_3\) is the unique endpoint on \(B_3\), with collar coordinate
\(z=q_{x_3}\).
An element of \(K_{\mathbb Q}\) has a unique expression
\[
 x_H+ad+n_3z+
 \sum_{x\in (M^\circ\cap B_1)\sqcup(M^\circ\cap B_2)}n_xq_x,
 \qquad
 x_H\in H_{\mathbb Q},\quad a,n_3,n_x\in\mathbb Q.
\]
Lemma~\ref{lem:integral-collar-gluing} shows that this element is
integral exactly when
\begin{equation}\label{eq:three-boundary-integral-conditions}
 \begin{gathered}
 x_H\in H_{\Z},\qquad n_3,n_x\in\Z,\qquad
 n_1+n_2+n_3\equiv0\pmod2,\\
 a-\frac{n_2}{2}\in\Z,
 \qquad n_i=\sum_{x\in M^\circ\cap B_i}n_x\quad\text{ for }i=1,2.
 \end{gathered}
\end{equation}
Normalize both sides at the endpoint on \(B_3\). The lift of
\(k=T_1^{-1}T_2\) is \(-I\) on \(E_1\oplus E_2\) and the identity on
\(
 U=H_{\mathbb Q}\oplus\mathbb Qd\oplus\mathbb Qz\), which is the image of
\(H_1(S,\{x_3\};L)\). The two inclusions are \(F\)-equivariant for
the common normalization, so the source and target actions agree on
\(U\). For \(y\in F\), let \(\delta_i(y)\) be the scalar by which this
normalized lift acts on \(E_i\). Relative displacement gives
\(\delta_1(y)\delta_2(y)=(-1)^{h(y)}\). Since \(q\) is odd, the factor
\(k^{qh(y)}\) exchanges the two endpoint sign characters. The full
twist at \(B_3\) has the form \(I+N_3\), where
\[
 N_3(z)=\varepsilon_\partial d,
 \qquad N_3(H_{\mathbb Q}\oplus\mathbb Qd)=0.
\]
It commutes with the stabilizer action. Since
\(\varepsilon_\partial^2=1\), the operator
\(I+\varepsilon_\partial N_3/2\) fixes
\(H_{\mathbb Q}\oplus\mathbb Qd\) and sends \(z\) to \(z+d/2\).
Composing it with the exchange of \(E_1\) and \(E_2\) therefore
intertwines \(F\) with \(\alpha(F)\). On the coordinates in
\eqref{eq:three-boundary-integral-conditions}, it sends
\[
 (n_1,n_2,n_3,a)\longmapsto(n_2,n_1,n_3,a+n_3/2).
\]
The even-total condition shows that this map and its inverse are
integral. Let \(E'_1\) be the target endpoint block at \(B_1\). The root
actions on \(E_2\) and \(E'_1\) are signed cycles \(R,R'\) with
\(R^m=(R')^m=-I\), and
\[
 \varrho_{\mathbb S'}\bigl(\phi(\rho_2)\bigr)|_{E'_1}
 =(R')^{-q}=R'.
\]
Thus the root actions are also intertwined. The coarse homological
action is unchanged, so
Lemma~\ref{lem:normalized-representation-extension} applies.

\medskip
\noindent\emph{Proof of \textup{(iii):} four boundary components.}
The root relations follow from \(1+q_m=2m\) and \(1-q_n=-2n\).
The induced maps on the cyclic quotients are invertible because
\[
 q_m^2\equiv1\pmod{2m},
 \qquad q_n^2\equiv1\pmod{2n}.
\]
Thus \(\phi\) is an isomorphism and induces the identity on the
common image of the two stabilizers in \(\Mod(S)\).

Choose a core vertex \(v\) and negative boundary chains with
\[
 c_4=c_1-c_2+c_3,
 \qquad \partial c_i=-2v.
\]
For
\(x\in M^\circ\cap B_i\), write \(i(x)=i\) and choose the collar
coordinate \(q_x\) from Lemma~\ref{lem:integral-collar-gluing}
with \(B_1\) as the base boundary. For endpoint coordinates
\(\mathbf n=(n_x)\), put
\[
 \Lambda_{\mathrm{ev}}=
 \left\{\mathbf n\in\Z^{M^\circ}:\sum_xn_x\equiv0\pmod2\right\},
 \qquad
 \Gamma(\mathbf n)=\frac12\sum_xn_x(c_{i(x)}-c_1).
\]
Lemma~\ref{lem:integral-collar-gluing} identifies the charge lattice as
\begin{equation}\label{eq:four-negative-integral-gluing}
 K=\left\{a+\sum_xn_xq_x:\begin{array}{l}
 a\in A_{\mathbb Q},\quad \mathbf n\in\Lambda_{\mathrm{ev}},\\
 a-\Gamma(\mathbf n)\in A_{\Z}
 \end{array}\right\}.
\end{equation}
Let \(\Gamma'\) be the corresponding correction term for the target,
again based at \(B_1\). Its charge lattice has the same form with
\(\Gamma\) replaced by \(\Gamma'\).
For \(\pi=(12)(34)\), let \(\bar c_i\) denote the class represented by
the reduction of \(c_i\) in \(H_1(S;\mathbb F_2)\). Then
\[
 \bar c_{\pi(i)}-\bar c_i=\bar c_1+\bar c_2
 \qquad\text{for every }i.
\]
Let \(\Pi\) be any signed source-to-target endpoint permutation inducing
\(\pi\). Since
vectors in \(\Lambda_{\mathrm{ev}}\) have even total, we have
\[
 2\bigl(\Gamma'(\Pi\mathbf n)-\Gamma(\mathbf n)\bigr)
 \equiv
 \left(\sum_x n_x\right)(\bar c_1+\bar c_2)=0\pmod2.
\]
The integral chain on the left is a cycle which vanishes in
\(H_1(S;\mathbb F_2)\). The universal coefficient injection
\[
 A_{\Z}/2A_{\Z}\longrightarrow H_1(S;\mathbb F_2)
\]
therefore shows that
\(2(\Gamma'(\Pi\mathbf n)-\Gamma(\mathbf n))\in2A_{\Z}\). Thus
\(\Gamma'(\Pi\mathbf n)-\Gamma(\mathbf n)\in A_{\Z}\), and \(\Pi\),
extended by the identity on \(A_{\mathbb Q}\), carries the source lattice
in \eqref{eq:four-negative-integral-gluing} onto the target lattice.

Since all four boundaries are negative, there is an \(F\)-stable
decomposition
\[
 K_{\mathbb Q}=A_{\mathbb Q}\oplus Q_1\oplus Q_2\oplus Q_3\oplus Q_4.
\]
For \(y\in F\), choose one of its two lifts to \(L\) and use it in both
representations. Let \(s_i(y)\in\{1,-1\}\) be its boundary sign on
\(Q_i\). The complement-fixed lifts of \(k_{12}\) and
\(k_{34}\) have sign vectors
\[
 (-1,-1,1,1),
 \qquad (1,1,-1,-1).
\]
Lemma~\ref{lem:relative-displacement} then gives
\[
 (-1)^{h_{12}(y)}=s_1(y)s_2(y),
 \qquad
 (-1)^{h_{34}(y)}=s_3(y)s_4(y).
\]
Normalize the source at the unique stop on \(B_1\) and the target at
the unique stop on \(B_2\). Writing \(s_i=s_i(y)\), the source signs are
\(
 (1,s_1s_2,s_1s_3,s_1s_4)\),
while the signs of the target action of \(\alpha(y)\) are
\(
 (s_1s_2,1,s_1s_4,s_1s_3)\). They are intertwined by \(\pi=(12)(34)\). Before normalization, the
central corrections change the boundary-sign vector to
\((s_2,s_1,s_4,s_3)\) and act trivially on \(A_{\mathbb Q}\). The sign
at the target normalization point on \(B_2\) is therefore \(s_1\),
the same sign used to normalize the source at \(B_1\). The two
normalizations multiply the common absolute action by the same sign,
so the normalized absolute actions also agree.

On the non-trivial root blocks, the formulas for \(\phi\) act as the
inverses of the target signed cycles. Signed reversal conjugates each
cycle to its inverse. Together with \(\pi\), these reversals give an
integral intertwiner of the full stabilizer representations. The
ordinary homological action is unchanged, and
Lemma~\ref{lem:normalized-representation-extension} completes the
extension to \(\mathfrak R\).
\end{proof}

\section*{Acknowledgments}

The first author was supported by the Fundamental Research Funds for the
Central Universities (grant no.~GK202403003) and by the National Natural
Science Foundation of China (grant nos.~12271321 and 12671047). The second author would like to thank Shunsuke Kano for answering questions about mapping class groups and for helpful suggestions concerning references.

\section*{AI usage statement}
The authors thank ChatGPT for its assistance during the preparation of this article. After completing a full
draft, the authors used ChatGPT to proofread the text and audit
the arguments. This audit exposed a gap in the original form of
Theorem~\ref{thm:general-boundary-reconstruction} and suggested the
counterexample recorded as
Example~\ref{ex:two-boundary-represented-failure}. The authors subsequently
verified and rewrote the retained AI-assisted material. Both authors
have carefully checked this material and take full responsibility for
the correctness of the paper.

\end{document}